\documentclass[10pt]{amsart}

\usepackage{amssymb,a4wide
}
\usepackage{comment}
\usepackage{amscd}
\usepackage{amsthm}
\usepackage[english]{babel}
\usepackage[T1]{fontenc}
\usepackage[utf8]{inputenc}
\usepackage{mathrsfs}
\usepackage{tensor}
\usepackage{amsmath}
\usepackage{amssymb}
\usepackage{graphics}
\usepackage{graphicx}
\usepackage{color}

\newcommand{\weg}[1]{}
\newcommand{\bq}{\begin{equation}}
\newcommand{\eq}{\end{equation}}
\renewcommand{\d}{\mathrm{d}}
\newcommand{\dd}{\mathrm{d}}
\newcommand{\gr}{\mathrm{grad}}
\newcommand{\D}{\partial}

\newcommand{\R}{\mathbb{R}}
\newcommand{\C}{\mathbb{C}}

\newcommand{\tr}{\mathrm{tr}\,}

\newcommand{\Id}{\mathrm{Id}}

\newcommand{\CC}{\mathcal{C}}

\newtheorem{thm}{Theorem}[section]
\newtheorem{lem}[thm]{Lemma}
\newtheorem{cor}[thm]{Corollary}
\newtheorem{prop}[thm]{Proposition}
\theoremstyle{definition}
\newtheorem{rem}{Remark}[section]

\newtheorem{ex}{Example}

\newtheoremstyle{named}
 {\topsep}   
  {\topsep}   
  {\normalfont}  
  {0pt}       
  {\bfseries} 
  {.}         
  {5pt plus 1pt minus 1pt} 
{\thmname{#1}\thmnumber{ #2}\thmnote{ #3}}
\theoremstyle{named}
\newtheorem{remthm}{Remark}

\date{\today}

\title[C-projectively equivalent pseudo-K\"ahler metrics]{Local normal forms for 
c-projectively equivalent  metrics and  proof of the Yano-Obata conjecture 
in arbitrary signature.  Proof of the projective Lichnerowicz conjecture  for Lorentzian metrics} 

\author[A.V. Bolsinov, V.S. Matveev, and S. Rosemann]{Alexey V. Bolsinov, Vladimir S. Matveev, and Stefan Rosemann}
\numberwithin{equation}{section}

\address{Department of Mathematical Sciences,
 Loughborough University,
 LE11 3TU, UK }
 \email{A.Bolsinov@lboro.ac.uk} 
\address{Institute of Mathematics, Friedrich-Schiller-Universit\"at Jena, Jena, Germany.}
\email{vladimir.matveev@uni-jena.de}
\address{Institute of Mathematics, Friedrich-Schiller-Universit\"at Jena, Jena, Germany.}
\email{stefan.rosemann@uni-jena.de}

\begin{document}

\begin{abstract}
Two K\"ahler metrics on a complex manifold are called c-projectively equivalent if their $J$-planar curves coincide. These curves are defined by the property that the acceleration is complex proportional to the velocity. 
We give an explicit local description of all pairs of c-projectively equivalent K\"ahler metrics of arbitrary signature and  use  this description  to prove  the classical Yano-Obata conjecture: 
we show that on a closed connected K\"ahler manifold of arbitrary signature, any c-projective vector field is an affine vector field unless the manifold is $\C P^n$  with (a multiple of) the  Fubini-Study metric. As a by-product, we prove the projective Lichnerowicz conjecture for metrics of Lorentzian signature: we show that on a closed connected Lorentzian  manifold, any projective vector field is an affine vector field. 
\end{abstract}

\maketitle

\tableofcontents

\section{Introduction} 

\subsection{Definitions and description of results}

Let $(M,g,J)$ be a  K\"ahler manifold of arbitrary signature of real dimension $2n\ge 4$. We denote by $\nabla$ the Levi-Civita connection of  $g$ and let $\omega=g(J\cdot,\cdot)$ denote the K\"ahler form. All objects under consideration are assumed to be sufficiently  smooth.

A regular curve $\gamma: \R\supseteq I \to M$ is called \emph{$J$-planar} if there exist functions $\alpha,\beta:I\rightarrow\mathbb{R}$ such that 
\begin{equation}
\label{eq:eq1} 
\nabla_{\dot \gamma(t)}\dot \gamma(t)=\alpha\dot \gamma(t)+\beta J(\dot \gamma(t))\mbox{ for all } t\in I,
\end{equation} 
where $\dot \gamma =\tfrac{\d}{\d t} \gamma$.

From the definition we see  immediately  that the property of $J$-planarity is independent of the parameterisation of the curve, and that geodesics are $J$-planar curves.   
We also see that  $J$-planar curves form a much bigger family than the family of geodesics;  at every point and in every direction there exist 
infinitely many geometrically different  $J$-planar curves.

Two metrics $g$ and $\hat g$ of arbitrary signature  that are K\"ahler w.r.t the same complex structure $J$ are \emph{c-projectively equivalent} if any $J$-planar curve of $g$ is a $J$-planar curve of $\hat g$.  Actually, the condition that the metrics are K\"ahler with respect to the same complex structure is not essential; it is an easy exercise to show that if any $J$-planar curve of a K\"ahler structure $(g, J)$ is a $\hat J$-planar  curve of another K\"ahler structure 
$(\hat g, \hat J)$, then $\hat J=\pm J$.

C-projective equivalence was  introduced (under the name ``h-projective equivalence'')  by  Otsuki and Tashiro in  \cite{Otsuki,Tashiro}. Their motivation was to generalise the notion of 
 projective equivalence  to the K\"ahler situation. Since the notion of projective equivalence plays an essential role in our paper let us recall it. 
Two metrics $g$ and $\hat g$ of arbitrary signature are \emph{projectively equivalent}, if each $g$-geodesic is, up to an appropriate  reparameterisation, 
  a  $\hat g$-geodesic. 
  
Otsuki and Tashiro have shown that projective equivalence is not interesting in the  K\"ahler situation, since only simple examples are possible, and suggested c-projective equivalence as an  interesting object of study instead. This suggestion appeared to be very fruitful and between the 1960s and the 1970s, the theory of c-projectively equivalent metrics and c-projective transformations  was  one of the main research topics  in Japanese and Soviet (mostly Odessa and Kazan) differential geometry schools. For a collection of results of these times, see for example the survey \cite{Mikes} with more than 150 references. Moreover, two classical  books  \cite{Sinjukov,Yanobook} contain chapters on c-projectively equivalent metrics and connections.

Relatively recently c-projective equivalence was re-introduced, under different names and  because of different motivation. 
In fact, c-projectively  equivalent metrics are essentially the same as 
 { hamiltonian $2$-forms}, defined  and investigated in Apostolov {\it et al.}  \cite{ApostolovI,ApostolovII,ApostolovIII,ApostolovIV}. Though the definition of hamiltonian 2-forms is visually  different from that of c-projectively equivalent metrics,  the  defining equation \cite[equation $(12)$]{ApostolovI} of a hamiltonian $2$-form is algebraically equivalent to 
  a reformulation (see \eqref{eq:main} below) of the condition ``$\hat g$ is c-projectively equivalent to $g$'' into the language of PDE. The motivation of Apostolov {\it et al.}  to study hamiltonian $2$-forms is different from that of Otsuki and Tashiro.  Roughly speaking, in \cite{ApostolovI,ApostolovII} Apostolov {\it et al.}  observe that many interesting problems in K\"ahler geometry lead to  hamiltonian $2$-forms and suggest studying them. The motivation is justified in \cite{ApostolovIII,ApostolovIV}, where the authors indeed construct interesting and useful examples of K\"ahler manifolds.  In dimension $\ge 6$, c-projectively equivalent metrics are also essentially the same as hermitian conformal Killing  (or twistor) $(1,1)$-forms  studied in \cite{Moroianu,Semmelmann0,Semmelmann1}, see  \cite[Appendix A]{ApostolovI} or \cite[\S 1.3]{MatRos}  for details. Finally, such metrics are closely related to the so-called K\"ahler-Liouville integrable systems of type $A$  introduced by Kiyohara {\it et al.} in \cite{Kiyohara2010}. 
 
Most recent  interest in c-projectively equivalent metrics is due to the observation that one can study them with the help of so-called parabolic geometry. We refer to \cite{CEMN} for details. Certain non-trivial  applications of the methods of  parabolic geometry in the theory of c-projective transformations are  in \cite{melnik}.
 
 Our paper contains three  main results. The first result is a local description (near a generic point) of c-projectively equivalent K\"ahler metrics of arbitrary signature. If $g$ is positive  definite, such a description follows from the local description of hamiltonian 2-forms due to  Apostolov {\it et al.} \cite{ApostolovI}.
 Although the precise statements  are slightly lengthy, we indeed provide an explicit description of the components of the metrics and  of the K\"ahler form $\omega= g(J\cdot, \cdot)$. 
 The parameters in this description are almost arbitrary numbers and functions of one variable  
and, in certain cases, almost arbitrary affinely equivalent K\"ahler metrics of smaller dimension (note that 
the description of affinely equivalent K\"ahler metrics was recently obtained by Boubel in \cite{Boubel}). 

 It is hard to overestimate the future  role of a local  description in the local and global theory of c-projectively  equivalent  metrics. Almost all known local results can easily
be proved using it. Roughly speaking, using the local description, one can reduce any problem
that can be stated using geometric PDEs (for example, any problem involving the curvature) to
the analysis of a system of ODEs.  As we mentioned above,  in the positive definite case, 
the description of c-projectively equivalent metrics in the language of hamiltonian 2-forms is due  
to Apostolov {\it et al.} \cite{ApostolovI}, and  with the help of such a 
 description they did a lot. In particular they  described possible  topologies  of closed manifolds admitting c-projectively equivalent metrics and  constructed new examples of Einstein and extremal metrics on closed manifolds, see \cite{ApostolovII,ApostolovIII,ApostolovIV}. We expect similar applications of our description  as well and, in particular, plan to study the topology of c-projectively equivalent closed K\"ahler manifolds of arbitrary signature in further papers.

A demonstration of the importance of the local description  is our  second main result, which is a proof of  the natural generalisation of the Yano-Obata conjecture
for K\"ahler manifolds of arbitrary signature.  A vector field on a K\"ahler manifold   is called \emph{c-projective} if its local flow  sends $J$-planar curves to $J$-planar curves, and \emph{affine} if its local flow preserves the Levi-Civita connection.

\begin{thm}[Yano-Obata conjecture]
\label{thm:yano_obata}
Let $(M,g,J)$ be a closed connected  K\"ahler manifold of arbitrary signature and 
of real dimension $2n\geq 4$ such that it admits a c-projective vector field that is not an affine vector field. Then the manifold is isometric to  $(\mathbb{C}P^n,c\cdot g_{FS},J_{\mathrm{standard}})$ for some non-zero  constant $c$, where $g_{FS}$ is the Fubini-Study metric.
\end{thm}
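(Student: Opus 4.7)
The plan is to combine the local normal form provided by the first main result of the paper with a compactness argument driven by the flow of the c-projective vector field. The overall strategy is modelled on Matveev's proof of the projective Lichnerowicz conjecture and on the positive definite Yano--Obata conjecture proved by Matveev and Rosemann, but has to be extended to handle the complications of arbitrary signature, where the natural hermitian $(1,1)$-tensor associated to a c-projective structure need not be diagonalisable over $\mathbb{R}$.

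First I would convert the existence of a non-affine c-projective vector field $v$ into the existence of a non-trivial solution of the fundamental c-projective equation. Let $\phi_t$ denote the local flow of $v$. Since $v$ is c-projective, $\phi_t^\ast g$ is c-projectively equivalent to $g$ for each small $t$, so differentiating at $t=0$ the symmetric tensor $L_v g$ satisfies the linearisation of the c-projective PDE. A standard algebraic manipulation (passing to the associated hermitian $(1,1)$-tensor) then yields a tensor $A$ solving the fundamental equation for c-projective equivalence; the hypothesis that $v$ is not affine guarantees that $A$ is not parallel, so the spectrum of $A$ is non-constant on an open dense subset $M_0\subset M$.

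I would then apply the local normal form on $M_0$. On this set the eigenvalues $\mu_1,\dots,\mu_k$ of $A$ (possibly complex, in the indefinite case) parametrise the leaves of a canonical integrable distribution, and both $g$ and $J$ are determined by these eigenvalues together with functions of one variable and a lower-dimensional affinely equivalent K\"ahler factor. The c-projective condition on $v$ translates into an action of $v$ on these normal-form parameters: either $v$ preserves all of them, in which case $v$ turns out to be affine and we contradict our assumption, or $v$ shifts the eigenvalues in an affine fashion along its orbits, so that $\mu_i\circ\phi_t=\mu_i+c_i t$ for constants $c_i$ at least one of which is non-zero. Since $M$ is closed, each $\mu_i$ is bounded, which together with the linear shift forces every orbit of $v$ through $M_0$ to reach the boundary $\partial M_0$ in finite time. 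On $\partial M_0$ the normal form must degenerate: eigenvalues collide or their multiplicities change, and the ambient K\"ahler structure has to remain smooth through the degeneration. This smoothness requirement is extremely restrictive and, as in the positive definite case treated in \cite{ApostolovI,MatRos}, selects a very narrow class of admissible spectra; a continuation argument then shows that the only closed K\"ahler manifold compatible with a non-affine c-projective vector field is $(\mathbb{C}P^n, c\cdot g_{FS}, J_{\mathrm{standard}})$.

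The main obstacle will be the analysis of the signature-specific phenomena that are absent from the positive definite theory of hamiltonian $2$-forms. In arbitrary signature $A$ may carry complex-conjugate pairs of eigenvalues and nilpotent Jordan blocks; each contributes its own piece to the normal form, and the dynamics of $v$ on these non-diagonalisable components --- together with the smoothness condition across loci where blocks appear, collide or change type --- does not follow directly from the arguments in \cite{ApostolovI}. A substantial part of the work will therefore be to show that no such non-diagonalisable structure can coexist globally on a closed manifold with a non-affine c-projective vector field, reducing the situation to the diagonalisable case and hence to $\mathbb{C}P^n$.
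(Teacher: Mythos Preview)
Your proposal contains a genuine error at its core. The evolution of the eigenvalues of $A$ along the flow of $v$ is \emph{not} a linear shift $\mu_i\circ\phi_t=\mu_i+c_it$. Because $\mathcal L_v$ acts linearly on the (finite-dimensional) solution space of the c-projectively invariant equation, one obtains $\mathcal L_v A=-\beta A^2+(\delta-\alpha)A+\gamma\,\mathrm{Id}$ for constants $\alpha,\beta,\gamma,\delta$, and hence each eigenvalue satisfies the Riccati ODE $\dot\rho=-\beta\rho^2+(\delta-\alpha)\rho+\gamma$ along $v$. If the shift were truly affine, compactness (the flow of $v$ is complete on a closed manifold) would force every $c_i=0$ and therefore $v$ affine, contradicting the hypothesis; your own argument would then prove there are no non-affine c-projective vector fields on any closed K\"ahler manifold, which is false for $\mathbb{C}P^n$. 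The Riccati dynamics is precisely what leaves room for the Fubini--Study exception: after normalisation one reduces to $\dot\rho=\rho(1-\rho)$, with bounded non-constant solutions living in $(0,1)$.

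A second structural gap is the missing reduction by degree of mobility. The case $D(g,J)\ge 3$ does not fall under the local-normal-form-plus-dynamics scheme at all; it is handled by an independent rigidity theorem (Fedorova--Kiosak--Matveev--Rosemann). Only when $D(g,J)=2$ does the $2\times 2$ linear action of $v$ on the solution space yield the quadratic evolution above, and everything downstream depends on that. Finally, the ``smoothness through degeneration'' heuristic is too soft to finish the argument: what actually pins down the parameters is computing specific globally defined scalar quantities (e.g.\ $g(\mathrm{grad}\,\mathrm{tr}\,A,\mathrm{grad}\,\mathrm{tr}\,A)$ and certain eigenvalues of the curvature operator) in the normal form and using their boundedness on a compact manifold to force the number of non-constant eigenvalues down to one and to fix the remaining constants; this eventually produces the Tanno-type system $\nabla\Lambda=\mu\,\mathrm{Id}+B A$, from which positive definiteness (up to sign) and the reduction to the known Riemannian case follow. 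Your concern about non-trivial Jordan blocks for non-constant eigenvalues is also misplaced in the c-projective setting: such blocks are excluded a priori by the basic structural lemma on eigenvalue multiplicities; only constant-eigenvalue blocks need to be ruled out, and that is done via the curvature-boundedness argument, not smooth extension.
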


For positive definite metrics, Theorem~\ref{thm:yano_obata} was  
 proved in \cite{YanoObata},  where also a history  including  a list of previously proven special cases can be found.  The 4-dimensional version of Theorem~\ref{thm:yano_obata} was recently proved in \cite{BMMR}. 

We see that a closed K\"ahler manifold with a non-affine c-projective vector field has definite signature. 
This phenomenon is, of course, essentially global since locally  we can construct  counterexamples in any signature.
In dimension  4,  such examples are described in  \cite{BMMR}, and  in Proposition~\ref{prop:localclassgvFi} we explicitly construct  K\"ahler metrics  of any dimension and any signature admitting non-affine c-projective vector fields. Let us also mention   (see, e.g. \cite[Example 2]{YanoObata}) that  $(\mathbb{C}P^n,c\cdot g_{FS},J_{\mathrm{standard}})$
 admits many non-affine c-projective vector fields.

As a by-product of our proof of the Yano-Obata conjecture (we explain in the next section why it is a by-product), we prove the  possibly more popular {\it projective  Lichnerowicz} conjecture for metrics of Lorentzian signature. Recall that a vector field is \emph{projective} with respect to a (arbitrary,  not necessarily K\"ahler) metric $g$, if its local flow sends geodesics viewed as unparameterised curves to geodesics.

\begin{thm}[Projective  Lichnerowicz  conjecture for metrics of Lorentzian signature]
\label{thm:lichnerowicz} 
Let $(M,g)$ be a closed connected Lorentzian  manifold  
of  dimension $n\geq 2$.  Then any projective vector field on $M$ is an   affine vector field.
\end{thm}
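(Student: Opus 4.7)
The plan is to deduce Theorem~\ref{thm:lichnerowicz} from the Yano-Obata conjecture (Theorem~\ref{thm:yano_obata}) by showing that a non-affine projective vector field on a closed Lorentzian manifold would give rise to a non-affine c-projective vector field on a closed pseudo-K\"ahler manifold of indefinite signature --- a configuration ruled out by Theorem~\ref{thm:yano_obata}.

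I would first encode a projective vector field $v$ on $(M,g)$ as a non-parallel symmetric $(0,2)$-tensor $a$ (extracted from the one-parameter family $\phi_t^*g$ of projectively equivalent metrics coming from the flow of $v$) satisfying the projective Sinjukov equation
\begin{equation*}
\nabla_k a_{ij} = \lambda_i g_{jk} + \lambda_j g_{ik}
\end{equation*}
for some 1-form $\lambda_i$; non-affinity of $v$ translates into non-parallelism of $a$. Since $g^{-1}a$ is self-adjoint with respect to a Lorentzian inner product, standard linear algebra shows that it has at most one pair of complex conjugate eigenvalues, with all remaining eigenvalues real. The eigenvalues yield Poisson-commuting first integrals of the geodesic flow.

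If all eigenvalues of $g^{-1}a$ are real on an open set, the Levi-Civita-type block decomposition of projectively equivalent pseudo-Riemannian metrics reduces the situation to (or directly parallels) the positive-definite projective Lichnerowicz conjecture proved by Matveev, and one concludes that $v$ is affine. If instead a complex conjugate pair of eigenvalues appears, then the invariant distribution corresponding to this pair carries a naturally induced almost complex structure $J$, obtained algebraically from $a$, with respect to which the restriction of $g$ is pseudo-K\"ahler of indefinite signature. The restriction of the family $\phi_t^* g$ then exhibits a non-trivially c-projectively equivalent pair of K\"ahler metrics on this block, and $v$ acts as a non-affine c-projective vector field on it. Using the local normal forms for c-projectively equivalent K\"ahler metrics established earlier in the paper together with the compactness of $M$, one assembles these local K\"ahler data into a closed pseudo-K\"ahler manifold of indefinite signature carrying a non-affine c-projective vector field. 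By Theorem~\ref{thm:yano_obata}, such a manifold must be isometric to $(\mathbb{C}P^n,c\cdot g_{FS},J_{\mathrm{standard}})$, whose metric is positive definite --- contradicting the indefinite signature inherited from the Lorentzian origin. Hence $v$ is affine.

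The main obstacle is the globalisation in the complex-eigenvalue case: the K\"ahler structure is defined a priori only on the open subset of $M$ where $g^{-1}a$ has constant algebraic type, and one must control the degenerations along its complement in order to produce a closed pseudo-K\"ahler manifold to which Theorem~\ref{thm:yano_obata} can be meaningfully applied; the local normal forms developed in the body of the paper will be essential here. A secondary technical point is adapting the Riemannian projective Lichnerowicz argument to the all-real-eigenvalue case in Lorentzian signature, where indefinite sub-blocks may still require signature-sensitive modifications.
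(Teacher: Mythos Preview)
Your proposal contains a genuine gap in both branches of the case split.

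\textbf{Real-eigenvalue case.} You write that when all eigenvalues of $g^{-1}a$ are real, the situation ``reduces to (or directly parallels) the positive-definite projective Lichnerowicz conjecture proved by Matveev''. This is incorrect. The introduction of the paper explains explicitly that the Riemannian proof relies on global ordering of eigenvalues and on sectional-curvature bounds along integral curves of $v$, and that \emph{both} ingredients fail in indefinite signature. More seriously, a Lorentzian self-adjoint endomorphism with only real eigenvalues can still have a nontrivial $2\times 2$ or $3\times 3$ Jordan block, and this is precisely the obstruction to carrying the c-projective argument over to the projective setting. The paper devotes all of Section~\ref{sec:Jblock} to eliminating these Jordan-block configurations via an explicit curvature-eigenvalue computation; this is not covered by any Riemannian result. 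In short, the real-eigenvalue case is where essentially \emph{all} the work lies, and your proposal treats it as already done.

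\textbf{Complex-eigenvalue case.} The construction you sketch --- extracting from the 2-dimensional complex-eigenvalue distribution a closed pseudo-K\"ahler manifold on which $v$ acts c-projectively --- is not well defined. The eigendistribution is a rank-2 subbundle of $TM$, not a submanifold; there is no reason for its leaves (if integrable) to be compact, and the restriction of $g$ to them would be a 2-dimensional metric, to which Theorem~\ref{thm:yano_obata} (which needs real dimension $\ge 4$) does not even apply. The paper's actual treatment of complex eigenvalues goes in a completely different direction: it cites \cite[Theorem~1.11]{BM} to show that on a closed manifold every non-real eigenvalue of a projectively compatible endomorphism is constant, so non-constant complex eigenvalues simply do not occur (Remark for Theorem~5.1 following Proposition~\ref{thm:Bols}). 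No K\"ahler construction is involved.

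The relationship between the two theorems in the paper runs in the opposite direction from what you propose: c-projectively equivalent K\"ahler metrics \emph{descend}, via the Killing quotient, to projectively equivalent metrics, and the Yano-Obata proof is largely carried out on that quotient. The Lichnerowicz proof is obtained by running the same analysis directly in the projective setting --- hence the string of ``Remarks for Theorem~5.1'' throughout Section~\ref{sec:yano_obata} --- with Section~\ref{sec:Jblock} handling the one genuinely new case (non-constant Jordan blocks) that has no c-projective counterpart.
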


For Riemannian metrics, the analogue of Theorem~\ref{thm:lichnerowicz} was proved in \cite{CMH} (dimension 2)  and \cite{Matveev2007} (dimension greater that 2 -- this paper also contains a historical overview  and  a  
  list of previously known special cases), see also \cite{zeghib}.  In Japanese mathematics, this statement, at least in the Riemannian setting,  is also known as \emph{ projective  Obata conjecture}
  and was published  many times as an  important  conjecture.   For 2-dimensional   Lorentzian manifolds, Theorem~\ref{thm:lichnerowicz}    was proved in \cite{japan}.

We would like to  emphasize here that our proofs of the Yano-Obata and Lichnerowicz conjectures are not generalisations of the proofs from \cite{CMH,Matveev2007,japan,YanoObata,zeghib}, and are based on a different circle of ideas. In general, it is    difficult
to extend global statements about Riemannian metrics to the pseudo-Riemannian setting, since many ``global'' methods  require   definiteness of the metrics. This is also the case in our situation;  the main ingredients of the proofs of \cite{Matveev2007,CMH,YanoObata,zeghib} are the global ordering of the eigenvalues of the endomorphisms  $A$  and $L$ (given by \eqref{eq:defA} and  \eqref{eq:defL} -- these endomorphisms   play an important role in our paper), and an  investigation of the behaviour of the sectional  and holomorphic sectional   curvature along the integral curves   of the projective and c-projective  vector fields (in \cite{Matveev2007,  YanoObata}  it is  shown
 that these curvatures are either constant or unbounded). Both ingredients do not exist in the  case of indefinite signature; examples show that  the eigenvalues of $A$ (resp. $L$) are not globally ordered anymore, and  the unboundedness of 
the sectional curvature does not lead to  any  contradiction, since for metrics of indefinite signature the sectional or holomorphic sectional curvature is either constant or unbounded. Moreover, as  follows from our  calculations  in \S\ref{ssec:l23}, in the indefinite case, 
all  curvature invariants along  the integral curves   of the projective and c-projective  vector fields  can be bounded.  Other proofs of special cases of the Yano-Obata and Lichnerowicz conjecture (see e.g. \cite{Yamauchi1,Yano1981}) 
are based on  the Bochner technique, which  also requires that the metric is  definite.

\subsection{Main idea} \label{sec:idea} 

The local description of c-projectively equivalent metrics will be given in 
Theorem~\ref{thm:localclassification} in \S\ref{sec:lc} (which does not require this  paragraph so a 
hurried reader can directly   go there).  The goal  of this section is to explain the
 main idea of our solution. We hope that this allows the reader to see the geometry behind the formulas 
and also  may be used in many other problems related to c-projectively equivalent metrics. 

Experts always expected that  projectively equivalent metrics must have a close relation with 
c-projectively equivalent metrics. The expectation  is based on the following informal observation: 
most mathematicians that studied c-projectively  equivalent  metrics and c-projective vector fields   
studied  projectively equivalent metrics and projective vector fields before. 
It appears that many ideas and many results in the theory of projectively equivalent metrics
 have their counterparts in the c-projective setting, though most of the proofs in the c-projective 
setting are longer  and are more involved than their projective analogues.\footnote{
This analogy between c-projective and projective geometry fails  on the level of affine connections
 (note that the definition of c-projective equivalence makes also sense for affine connections which 
are not necessarily Levi-Civita connections): 
though both affine projective and affine c-projective geometries are parabolic geometries,
 there are essential differences between these theories if only connections are involved, see e.g. \cite{CEMN}. }

 We suggest an explanation why the theories are closely related, which is simultaneously  
 the main idea of our description.    The following observation, which we
 formalise (and give a self-contained proof) 
 in  \S\ref{sec:killing}, is crucial:  c-projectively, but not affinely  equivalent metrics 
 $g$ and $\hat g$ allow us to construct  vector fields $K_1,\dots,K_\ell$
 which preserve the complex structure and which are Killing 
with respect to both metrics. 
For hamiltonian 2-forms (at least for a positive definite metric), 
the existence of these Killing vector fields was shown by Apostolov {\it et al.} \cite{ApostolovI}, and in the framework of 
K\"ahler-Liouville manifolds (under certain non-degeneracy assumptions) their existence was 
observed by Kiyohara and Topalov \cite{Kiyohara2010}. 

We consider the local action of these vector fields and the local quotient $Q$ of $M$ 
with respect to this action (it will be shown that such a quotient is  well-defined near a generic point). 
Let us denote the quotient metrics by $g_{Q}$ and $\hat g_{Q}$. 
Notice  that $Q$ is not a K\"ahler  quotient and the metrics $g_{Q}$ and $\hat g_{Q}$  are in general not K\"ahler.  

\vspace{1ex} 
{\bf   Main Observation.}  The following statements hold: 

\begin{enumerate}

\item  $g_{Q}$ and $\hat g_{Q}$ are projectively equivalent;

  \item  the metrics $g$ and $\hat g$ can be reconstructed from $g_{Q}$ and $\hat g_{Q}$ 
   in a relatively straightforward way. 

\end{enumerate}

Recently, projectively equivalent metrics have been explicitly locally described in \cite{BM}. 
We obtain our description of c-projectively equivalent metrics by
taking the formulas from \cite{BM} for the quotient metrics $g_{Q}$ and $\hat g_{Q}$ and then ``reconstructing''
$g$ and $\hat g$.

However, not  every pair of projectively equivalent metrics $g_{Q} $, $\hat g_{Q}$
 as considered in \cite{BM} can be obtained    from a pair $g$, $\hat g$ of c-projectively 
equivalent metrics: we will describe the conditions that $g_{Q}$ and $\hat g_{Q}$ have to satisfy 
in order to arise as quotients from c-projectively equivalent metrics. 
These additional conditions actually simplify the formulas for the metrics $g_{Q} $, $\hat g_{Q}$  as 
compared to the formulas from  \cite{BM} for the general case. 
Moreover, we show, assuming these conditions are satisfied, 
  how to effectively reconstruct the initial metrics $g$ and $\hat g$. 
	This yields our description of c-projectively equivalent K\"ahler metrics.

The relation between projectively and c-projectively  equivalent metrics plays also an 
important role in the proof of the Yano-Obata conjecture. We will see that under the additional assumption 
that the degree of mobility is $2$ (which means that the ``space of c-projectively equivalent metrics'' 
is two-dimensional -- the formal definition is in \S\ref{conv} where it is also explained why   it is 
 the most   non-trivial case  in the proof of the Yano-Obata conjecture), 
a c-projective vector field on the initial manifold reduces to a projective vector field on the quotient. 
 
We expect further applications of this observation  which suggests, in the metric setting, 
 an almost algorithmic way to produce results in c-projective geometry from results in projective geometry 
and the latter is much better  developed.

 Unfortunately, this almost algorithmic way does not automatically work in the other (c-projective $\to $  projective) direction. The reason is that the quotient metrics $g_{Q}$ and $\hat g_{Q}$, as already noticed, satisfy certain additional conditions.  The most important of them is as follows:  for the metrics $h=g_Q$ and $\hat h= \hat g_Q$  the endomorphism $L$ given by   \eqref{eq:defL} below  has no Jordan blocks with nonconstant eigevalues.  
 For general projectively equivalent metrics,  $L$  may have non-trivial Jordan blocks with nonconstant eigevalues. This is   the only reason  why we can not  modify the proof of the Yano-Obata conjecture to obtain the proof of  the   projective Lichnerowicz conjectures {\it for metrics  of all signatures}.   For the metrics {\it of Lorentzian signature}, at most  one non-trivial Jordan block may occur and after some additional  work in \S~\ref{sec:Jblock} we exclude this case in the proof of the projective  Lichnerowicz conjecture. The rest  of the proof of the projective  Lichnerowicz conjecture  is a straightforward modification (actually, a simplification) of the   proof of the Yano-Obata conjecture and when proving the {projective} Lichnerowicz conjecture in the 
 ``no-Jordan-blocks'' case (Theorem \ref{specialcase2}), we confine ourselves with a series of remarks  explaining necessary amendments.

\subsection{Local description of c-projectively equivalent metrics} 
\label{sec:lc}

Let $(M,g,J)$ be a K\"ahler manifold of real dimension $2n\geq 4$ and let $\nabla$ and $\omega=g(J\cdot,\cdot)$ denote the Levi-Civita 
connection and K\"ahler form respectively. 
We do not require that $g$ or any other K\"ahler metric that appears has positive signature. 

Instead of the pair $(g,\hat g)$ of c-projectively equivalent metrics it is appropriate 
to consider the pair $(g,A)$, where $A:TM\rightarrow TM$ is a  hermitian (i.e. $g$-selfadjoint and $J$-commuting) 
endomorphism  constructed from $g$ and $\hat{g}$ by 
\begin{align}
A=A(g,\hat{g})=\left(\frac{\mathrm{det}\,\hat{g}}{\mathrm{det}\,g}\right)^{\frac{1}{2(n+1)}}\hat{g}^{-1}g .\label{eq:defA}
\end{align}
In this formula, we view $g,\hat g:TM\rightarrow T^*M$ as bundle isomorphisms. In tensor notation (with summation convention in force), 
$$
A^i_j=\left(\frac{\mathrm{det}\,\hat{g}}{\mathrm{det}\,g}\right)^{\frac{1}{2(n+1)}}\hat{g}^{ik}g_{kj},
$$
where $\hat g^{ij}$ denotes the inverse to $\hat g_{ij}$, i.e. $\hat g^{ik}\hat g_{kj}=\delta^i_j$.

Clearly, one can reconstruct $\hat g$ from the pair $(g,A)$ and obtains
\begin{align}
\hat{g}=(\det A)^{-\frac{1}{2}}g(A^{-1}\cdot,\cdot).\label{eq:defhatg}
\end{align}
The endomorphism  $A$, introduced in  \cite{DomMik},  plays an important role in the theory of 
c-projectively equivalent metrics. One of the reasons for this  is that the condition that $g$ and $\hat{g}$ 
are c-projectively equivalent amounts to the fact that the tensor $A$ satisfies the \emph{linear} partial differential equation
\begin{align}
\label{eq:main}
\nabla_X A=X^\flat \otimes \Lambda+\Lambda^\flat\otimes X+(JX)^\flat \otimes J\Lambda+(J\Lambda)^\flat\otimes JX,
\end{align}
for all $X\in TM$, where $\Lambda=\frac{1}{4}\gr(\tr A)$ and $X^\flat=g(X,\cdot)$.  We say that $g$ and $A$ are {\it compatible in the c-projective sense} 
or just {\it c-compatible} if $A$ is a hermitian endomorphism solving \eqref{eq:main}.
In particular, any  hermitian endomorphism $A$ with nowhere vanishing determinant  and c-compatible with $g$ gives us  a 
c-projectively equivalent metric $\hat g$  by \eqref{eq:defhatg}, this metric is automatically K\"ahler with respect to $J$. 

Another reason for working with $A$ instead of $\hat g$ is that in our local description, 
the formulas for $(g,A)$ are much simpler than those for $(g, \hat g)$.

We describe locally all K\"ahler structures $(g,J,\omega)$ admitting solutions $A$ to \eqref{eq:main}
in  Theorem~\ref{thm:localclassification} below. Since the description  is relatively complicated, 
we first consider two special cases corresponding to the ``weakest'' (Theorem~\ref{ex:constant}) and 
``strongest''  (Theorem~\ref{thm:nonconstant}) case of c-projective equivalence. 

Note that  any parallel hermitian endomorphism $A$ (i.e.,  satisfying $\nabla A=0$), in particular the identity 
$\Id:TM\rightarrow TM$, is a solution to \eqref{eq:main}. Such solutions  correspond to  K\"ahler metrics $\hat g$ 
which are affinely equivalent to $g$, i.e., which have the same Levi-Civita connection as $g$.

\begin{thm}
[Well-known special case of Theorem~\ref{thm:localclassification}]
\label{ex:constant}
Let $(M,g,J)$ be a K\"ahler manifold of arbitrary signature and  $A:TM\rightarrow TM$ a parallel hermitian endomorphism. 
Then locally $(M,g,J)$ is a direct product of K\"ahler manifolds $(M_\gamma,g_\gamma,J_\gamma)$, $\gamma=1,\dots,N$, and $A$ decomposes as 
$A=A_1+\dots+A_N$, where $A_\gamma:TM_\gamma\rightarrow TM_\gamma$ is a parallel hermitian endomorphism on $(M_\gamma,g_\gamma,J_\gamma)$ 
having either a single real eigenvalue $c_\gamma$ or a pair of complex-conjugate eigenvalues $c_\gamma,\bar c_\gamma$.
\end{thm}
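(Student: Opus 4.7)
The plan is to apply a local de Rham--Wu-type decomposition to the parallel distributions given by the generalised eigenspaces of $A$. Since $\nabla A=0$, parallel transport commutes with $A$, so for every $\lambda\in\C$ the generalised eigenspace $V_\lambda\subset TM\otimes\C$ of the complex-linear extension $A_{\C}$ is a $\nabla$-parallel subbundle of constant rank. Because $A$ commutes with $J$, each $V_\lambda$ is invariant under the complex-linear extension of $J$; because $A$ is $g$-selfadjoint, the standard identity $\lambda g_{\C}(X,Y)=g_{\C}(AX,Y)=g_{\C}(X,AY)=\mu g_{\C}(X,Y)$ (extended to generalised eigenspaces via the usual nilpotent argument) shows that $V_\lambda$ and $V_\mu$ are $g_{\C}$-orthogonal whenever $\lambda\ne\mu$, whereas the pairing $g_{\C}|_{V_\lambda\times V_{\bar\lambda}}$ is non-degenerate.

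Next I would group the eigenvalues into real ones and into pairs $\{\lambda,\bar\lambda\}$ of complex conjugates, and let $W_\gamma\subset TM$ denote the real part of $V_{\lambda_\gamma}$ in the real case, respectively of $V_{\lambda_\gamma}\oplus V_{\bar\lambda_\gamma}$ in the complex case. Each $W_\gamma$ is a smooth, $\nabla$-parallel, $J$-invariant distribution, and the previous paragraph guarantees that distinct $W_\gamma$ are $g$-orthogonal while $g$ remains non-degenerate on each $W_\gamma$. Thus $TM=W_1\oplus\dots\oplus W_N$ is an orthogonal splitting into parallel, non-degenerate, $J$-invariant subbundles.

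Applying the local pseudo-Riemannian de Rham decomposition theorem (Wu's theorem) to this splitting yields a local isometric product decomposition $(M,g)=(M_1,g_1)\times\dots\times(M_N,g_N)$ with $TM_\gamma=W_\gamma$. Since $J$ is parallel and preserves each $W_\gamma$, it restricts to a parallel almost complex structure $J_\gamma$ on each factor $M_\gamma$, so each $(M_\gamma,g_\gamma,J_\gamma)$ is K\"ahler. The endomorphism $A$ then splits as $A=A_1+\dots+A_N$ (with $A_\gamma$ understood to act by zero on the complementary factors), each $A_\gamma:TM_\gamma\to TM_\gamma$ is parallel and hermitian, and by construction of $W_\gamma$ has either a single real eigenvalue $c_\gamma$ or a complex-conjugate pair $c_\gamma,\bar c_\gamma$.

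The main obstacle is the well-known pseudo-Riemannian subtlety that a parallel distribution need not be non-degenerate, so Wu's decomposition does not apply to an arbitrary orthogonal parallel splitting; the key technical point is therefore verifying the non-degeneracy of $g|_{W_\gamma}$, which follows from the duality between $V_\lambda$ and $V_{\bar\lambda}$ under $g_{\C}$ established in the first step. Once this is secured, the statement follows from standard arguments about parallel transport and the de Rham--Wu decomposition.
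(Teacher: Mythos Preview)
Your proposal is correct and follows exactly the approach the paper indicates: the paper simply states that Theorem~\ref{ex:constant} ``is just the de Rham--Wu decomposition \cite{deRham,Wu} of the K\"ahler manifold into components corresponding to the parallel distributions given by the generalised eigenspaces of $A$,'' and you have supplied the details of precisely this argument, including the key non-degeneracy check needed in indefinite signature. One small remark: with the $\C$-bilinear extension $g_{\C}$, selfadjointness of $A$ gives $V_\lambda\perp V_\mu$ for all $\lambda\ne\mu$ (including $\mu=\bar\lambda$), so each $V_\lambda$ is itself $g_{\C}$-non-degenerate rather than dually paired with $V_{\bar\lambda}$; either way the real subbundle $W_\gamma$ is non-degenerate and the rest of your argument goes through unchanged.
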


The above theorem  is just the de Rham--Wu decomposition \cite{deRham,Wu} of the K\"ahler manifold into components 
corresponding to the parallel distributions given by the generalised eigenspaces of $A$. 
This  is not a complete description  of pairs $((g,J), A)$, where $(g,J)$ is K\"ahler and $A$ is a parallel hermitian endomorphism: 
what is left is an explicit description of the blocks $(g_\gamma,J_\gamma)$ and $A_\gamma$. 
In the positive definite case, the description of these blocks is trivial since in this case  $A_\gamma$ is a constant multiple of 
$\mathrm{Id}:TM_\gamma\rightarrow TM_\gamma$. If the signature of $g_\gamma$ is arbitrary, 
the local description of $(g_\gamma, J_\gamma)$ and $A_\gamma$ has recently been obtained by C. Boubel in \cite{Boubel}.  
Boubel's  description of  $(g_\gamma, J_\gamma,A_\gamma)$ is quite complicated,  we will not repeat it here and     
refer to \cite{Boubel} for  more details. 

\begin{rem} 
Let us reformulate the statement from Theorem~\ref{ex:constant} in matrix notation: 
we can find local coordinates such that the matrices  of $g$, $J$ and $A$ in these coordinates 
are block-diagonal with the same structure of blocks:
\begin{equation}
\label{matrices:constant} 
g= 
\begin{pmatrix} 
g_1&& \\ 
&\ddots & \\ 
&&g_N
\end{pmatrix}\, , \quad
J= 
\begin{pmatrix} 
J_1&& \\ 
&\ddots & \\ &&J_N
\end{pmatrix}
\, , \quad 
A= 
\begin{pmatrix} 
A_1&& \\ 
&\ddots & \\ &&A_N
\end{pmatrix}.   
\end{equation} 
In all matrices, the components of each block only depend on the corresponding coordinates and 
for each $\gamma=1,\dots,N$ the endomorphism $A_\gamma$ is hermitian and parallel w.r.t. the K\"ahler structure $(g_\gamma, J_\gamma)$. 
\end{rem}

The ``main idea'' and ``main observation'' described in \S\ref{sec:idea} become vacuous in the setting of  Theorem~\ref{ex:constant}: 
the number of ``canonical''  Killing vector fields $K_1,\dots,K_\ell$ is zero, hence, the quotient of the manifold is 
the manifold itself.  The ``main observation'' remains, of course, formally true but in this case projective equivalence 
is affine equivalence. 

A special feature  of the situation described in Theorem~\ref{ex:constant} is that the eigenvalues of $A$ are constant, 
and may have high multiplicities.  Let us now consider  the ``strongest'' special case of c-projective equivalence: 
 all  eigenvalues of $A$ are non-constant (when considered as functions on $M$), and  their  multiplicity is minimal possible.

Consider two projectively equivalent pseudo-Riemannian metrics $h$ and $\hat h$ (i.e., metrics having the same unparametrised geodesics) 
and define the endomorphism  $L$  by
\begin{align}
\label{eq:defL}
L=L(h,\hat{h})=\left|\frac{\mathrm{det}\,\hat{h}}{\mathrm{det}\,h}\right|^{\frac{1}{n+1}}\hat{h}^{-1}h.
\end{align}

It is well known that $L$ satisfies the equation
\begin{equation} 
\label{eq:main:proj} 
\nabla_X L=X^\flat \otimes \Lambda+\Lambda^\flat\otimes X,  \quad \mbox{for all $X\in TM$},
\end{equation} 
where $\Lambda=\frac{1}{2}\gr(\tr L)$, $X^\flat=h(X,\cdot)$  and $\nabla$ denotes the Levi-Civita connection of $h$.
Moreover, if $L$ is $h$-selfadjoint and non-degenerate, then  \eqref{eq:main:proj}  is equivalent to the projective equivalence of $h$ and  $\hat h$ given by 
\begin{equation}
\label{eq:defhath}
\hat h=|\mathrm{det}\,L|^{-1}h(L^{-1}\cdot,\cdot)
\end{equation}
see \cite{Sinjukov} and e.g. \cite{BMgluing}.   
To emphasize both the difference and similarity with  c-compatibility introduced above, we will say that $h$ and an  
$h$-selfadjoint endomorphism $L$ satisfying \eqref{eq:main:proj}  are  {\it compatible in the projective sense} or just {\it compatible}.

\begin{ex} 
\label{ex:modelB}
Assume that on a certain domain $U\subset \R^\ell$ we have a compatible pair $h$ and $L$ for which the following conditions hold:

\begin{itemize}

\item[A1.] The eigenvalues $\rho_1, \dots, \rho_\ell$ of $L$ are all distinct at each point of $U$ 
(complex conjugate pairs $\rho$, $\bar\rho$ with $\mathrm{Im}\,\rho\ne 0$ are allowed too), which allows us to view them as  smooth functions on $U$;  

\item[A2.] $\d\rho_i\ne 0$ at each point of $U$, $i=1,\dots,\ell$.  

\end{itemize} 

We now explain how, starting from such a compatible  pair 
\begin{equation}
\label{eq:handL}
h=\sum_{i,j=1}^\ell B_{ij}(x)\d x_i\d x_j \quad \mbox{and} \quad L = \sum_{i,j=1}^\ell L^i_j  \d x_j\otimes\frac{\D}{\partial x_i},
\end{equation}
one can naturally construct a c-compatible pair, i.e., a K\"ahler structure $(g,  J, \omega)$ and a 
hermitian endomorphism $A$ satisfying \eqref{eq:main}.  By $\mu_1,\dots,\mu_\ell$ we denote the elementary 
symmetric polynomials in $\rho_1,\dots, \rho_\ell$ (i.e., $(\tau +\rho_1)...(\tau+\rho_\ell)= \tau^\ell + \mu_1 \tau^{\ell-1}+...+\mu^\ell$.)  Notice that under the assumption that $\rho_i$ are 
all distinct and $\d\rho_i\ne 0$, both systems of functions $\rho_i$'s and $\mu_i$'s can be considered 
as local coordinates on $U$.

Consider a domain $V\subset \mathbb{R}^\ell$ with local coordinates $t_1,\dots, t_\ell$ and define $g$, $\omega$ on $V\times U$ in the following way
\begin{equation}
\label{eq:ex1gandomega}
\begin{aligned}
g&=\sum_{\alpha,\beta=1}^\ell H_{\alpha\beta} (x) \d t_\alpha \d t_\beta + \sum_{i,j=1}^\ell B_{ij}(x)\d x_i \d x_j, \\
\omega&= \sum_{\alpha=1}^\ell \d\mu_\alpha\wedge \d t_\alpha,
\end{aligned}
\end{equation}
where $H_{\alpha\beta} = \sum_{ij} B^{ij} (x)\dfrac{\partial \mu_\alpha}{\partial x_i} 
\dfrac{\partial \mu_\beta}{\partial x_j}$ and $B^{ij}$ are the components of the matrix inverse to $B_{ij}$, i.e., $\sum_{k} B_{ik}B^{kj}=\delta^j_i$. 
We also set
\begin{equation}
\label{eq:ex1A}
A=\sum_{\alpha,\beta=1}^\ell  M^{\beta}_\alpha (x) \d t_\beta \otimes \frac{\partial}{ \partial {t_\alpha}} + \sum_{i,j=1}^\ell L^i_j (x) \d x_j \otimes \frac{\partial} {\partial {x_i}},
\end{equation}
where 
$M^{\beta}_\alpha = \sum_{i,j}L^i_j \dfrac{\partial \mu_\beta}{\partial x_i} \dfrac{\partial x_j}{\partial \mu_\alpha}$.

Equivalently, in matrix form w.r.t. the coordinates $t_1,\dots,t_\ell,x_1,\dots,x_\ell$, the above expressions take the form
\begin{equation}
\label{eq:gomegaAmatrix}
g = \begin{pmatrix}  
P h^{-1}P^\top & 0 \\ 0 & h
\end{pmatrix}, \quad 
\omega = 
\begin{pmatrix}
0 & -P \\ 
P^\top & 0
\end{pmatrix}, \quad
A = 
\begin{pmatrix}
(P L P^{-1})^\top & 0 \\
0 &  L
\end{pmatrix}
\end{equation}
where $P=\left(  \dfrac{\partial \mu_\alpha}{\partial x_i}  \right)$ is the Jacobi matrix of the system of 
functions $\mu_1,\dots, \mu_\ell$  (w.r.t. the local coordinates $x_1,\dots, x_\ell$).  
\end{ex}

The following theorem, which describes c-projectively equivalent metrics under the assumption that $A(g, \hat g)$ 
has the maximal number of non-constant eigenvalues,  shows that in this case the relation between projective equivalence 
and c-projective equivalence is rather straightforward.

\begin{thm}
\label{thm:nonconstant}
Let  $(h, L)$ be a compatible pair on $U$ satisfying  {\rm A1} and {\rm A2}. Then the above formulas \eqref{eq:ex1gandomega} and \eqref{eq:ex1A} (or equivalently \eqref{eq:gomegaAmatrix} in matrix form) define a K\"ahler structure $(g,\omega)$ and a hermitian endomorphism $A$ which are c-compatible, i.e., satisfy \eqref{eq:main}.   Conversely, if a K\"ahler structure $(g,\omega)$ and a hermitian endomorphism $A$ are c-compatible and the eigenvalues of $A$ (as a complex endomorphism) satisfy {\rm A1} and {\rm A2} in a neighbourhood of some point, then locally, in a neighbourhood of this point, $g, \omega$ and $A$ can be written in the form \eqref{eq:ex1gandomega} and \eqref{eq:ex1A}, where $h=\sum_{i,j} B_{ij}(x)\d x_i\d x_j$ and $L = \sum_{i,j} L^i_j  \d x_j\otimes\partial_{x_i}$ are compatible.
\end{thm}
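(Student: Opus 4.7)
The plan is to prove both directions by exploiting the quotient construction outlined in the "Main Observation" of \S\ref{sec:idea}. The formulas in \eqref{eq:ex1gandomega}--\eqref{eq:gomegaAmatrix} are engineered so that the coordinate vector fields $\partial/\partial t_\alpha$ act as commuting Killing vector fields preserving $J$ and $A$, and the projection $(t,x)\mapsto x$ realises $(U,h,L)$ as the local quotient of $(M,g,A)$.

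For the forward direction, I would first verify that \eqref{eq:ex1gandomega} defines a K\"ahler structure. Non-degeneracy of $g$ and the claimed block structure of $\omega$ follow from the invertibility of $P=(\partial\mu_\alpha/\partial x_i)$, which is guaranteed by A1+A2 since then $\mu_1,\dots,\mu_\ell$ form local coordinates on $U$. Closedness of $\omega = \d(\sum_\alpha \mu_\alpha\,\d t_\alpha)$ is immediate. Integrability of the induced $J$ and the K\"ahler condition $\nabla J=0$ can be verified directly in the $(t_\alpha,x_i)$ coordinates from the block form \eqref{eq:gomegaAmatrix}. Hermiticity of $A$ is visible from \eqref{eq:gomegaAmatrix}. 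The essential identity is \eqref{eq:main}: since $\tr A = 2\tr L = 2\mu_1$ up to an additive constant is a function of $x$ alone, $\Lambda = \tfrac14\gr(\tr A) = \tfrac12\gr_h(\tr L)$ is horizontal, while $J\Lambda$ is vertical. Writing \eqref{eq:main} block-by-block, the $(x,x)$-block coincides with the projective compatibility \eqref{eq:main:proj} for $(h,L)$ and holds by assumption; the mixed and $(t,t)$-blocks reduce, after the pairing of $X^\flat\otimes\Lambda$ with $(JX)^\flat\otimes J\Lambda$, to algebraic identities in $P$ and $L$ that are consequences of the chain rule for $\partial\mu_\alpha/\partial x_i$ and the fact that $A$ restricts to $(PLP^{-1})^\top$ in the $t$-block.

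For the converse, start from a c-compatible pair $(g,A)$ satisfying A1 and A2 near a point. Apply the construction of \S\ref{sec:killing} to obtain $\ell$ commuting $J$-preserving vector fields $K_1,\dots,K_\ell$ that are Killing for both $g$ and the c-projective partner $\hat g$. Under A1+A2 these fields are linearly independent near a generic point, hence can be straightened to $K_\alpha = \partial/\partial t_\alpha$, with transverse coordinates $x_i$ on the local quotient. The Main Observation then gives that the quotient metric $h$ and the induced endomorphism $L$ form a compatible pair in the projective sense, and $L$ inherits A1 and A2 from $A$. It remains to put $g$, $\omega$, $A$ into the block form \eqref{eq:gomegaAmatrix}: the one-forms $\iota_{K_\alpha}\omega$ are closed and horizontal (by Cartan's formula and the Killing property), hence equal to $\d\mu_\alpha$ up to a constant, which fixes the off-diagonal block of $\omega$; invariance of $g$ under the $K_\alpha$'s together with $\omega = g(J\cdot,\cdot)$ then forces the $(t,t)$-block to equal $Ph^{-1}P^\top$.

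The main obstacle is the block-by-block verification of \eqref{eq:main} in the forward direction: the Christoffel symbols of $g$ involve derivatives of $P$ and $h$ that must be reorganised into the right-hand side of \eqref{eq:main}. The key observation making this tractable is that, thanks to the Killing property of $\partial/\partial t_\alpha$ and the hermiticity of $A$, the mixed and $(t,t)$-blocks of \eqref{eq:main} are forced by its $(x,x)$-block together with $\mathcal L_{\partial_{t_\alpha}} A = 0$ and $\mathcal L_{\partial_{t_\alpha}} g = 0$, so the only truly independent input is the projective compatibility \eqref{eq:main:proj} of $(h,L)$.
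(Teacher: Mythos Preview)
Your converse direction is essentially the paper's approach: it constructs the canonical Killing vector fields, passes to the quotient, and invokes the Main Observation that the reduced pair $(h,L)=(g_Q,A_Q)$ is compatible in the projective sense (Lemma~\ref{lem:redtorealproj}); the block form then follows from Proposition~\ref{prop:locform}.

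Your forward direction, however, differs from the paper's and contains two genuine gaps.

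First, the K\"ahler verification is not a triviality that ``can be verified directly''. Writing $h=\sum G_{\alpha\beta}\,\d\mu_\alpha\,\d\mu_\beta$, the integrability of $J$ (equivalently $\nabla J=0$) forces the Hessian condition $\partial_{\mu_i}G_{jk}=\partial_{\mu_j}G_{ik}$; see condition~(3) in Proposition~\ref{prop:locrigid}. This is equivalent to the fact that the gradients $\gr_h\mu_1,\dots,\gr_h\mu_\ell$ commute, and the paper proves it as a consequence of the projective compatibility of $(h,L)$ in Lemma~\ref{lem:anotherone}. You should at least flag this step.

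Second, your claim that the mixed and $(t,t)$-blocks of \eqref{eq:main} are ``forced'' by the $(x,x)$-block together with $\mathcal L_{\partial_{t_\alpha}}A=0$ and $\mathcal L_{\partial_{t_\alpha}}g=0$ is not justified. Invariance under the Killing fields tells you that $\nabla A$ is $t$-independent, but it does not determine $\nabla_{K_\alpha}A$ from $\nabla_X A$ for horizontal $X$: the relevant Christoffel symbols mixing $t$- and $x$-directions depend on the full metric and must still be computed. The paper avoids this computation entirely by an indirect argument (Proposition~\ref{prop:realising}): it defines $\hat g=(\det_{\mathbb C}A)^{-1}g(A^{-1}\cdot,\cdot)$, shows $\hat g$ is K\"ahler by checking $\d\hat\omega=0$ via Lemma~\ref{lem:aboutmu}, and then proves $g$ and $\hat g$ are c-projectively equivalent by comparing the two Levi-Civita connections through the O'Neill formula and the already-known \emph{projective} equivalence of $g_Q$ and $\hat g_Q$ (Lemma~\ref{lem:hatgcproequivg}). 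That route reduces everything to the quotient, where only \eqref{eq:main:proj} is needed. If you want to pursue your direct block approach you will need an honest computation of $\nabla_{K_\alpha}A$; as it stands the ``forcing'' argument is a gap.
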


\begin{ex}
The simplest example of the situation described in Theorem \ref{thm:nonconstant} is obtained by starting with a 
2-dimensional compatible pair $(h,L)$ such that $L$ has two real non-constant eigenvalues $\rho,\sigma$ 
satisfying {\rm A1} and {\rm A2}. The description of such a pair is due to Dini \cite{Dini}, see also \cite{BM}: locally, we find coordinates $x,y$ 
such that  $\rho=\rho(x)$ and $\sigma=\sigma(y)$ and
$$
h=(\rho-\sigma)(\d x^2\pm\d y^2),\quad L=\rho\d x\otimes \frac{\D}{\D x}+\sigma\d y\otimes \frac{\D}{\D y}.
$$
Applying Theorem \ref{thm:nonconstant} to these formulas, we obtain the 
formulas for the K\"ahler structure $(g,\omega)$ and the c-compatible endomorphism $A$. These formulas can be found in  \cite[(3.1) and (3.2)]{BMMR}.
\end{ex}

We see  that in the situation of Theorem~\ref{thm:nonconstant}, the entries  of $g$, $\omega$ and $A$ do not  
depend on the coordinates $t_1,\dots,t_\ell$. This  implies that $\tfrac{\partial}{\partial t_1},\dots,\tfrac{\partial}{\partial t_\ell}$ 
are $J$-preserving  Killing vector fields, and they are   precisely the Killing vector fields $K_1,\dots,K_\ell$   
which we mentioned in \S\ref{sec:idea}.  The quotient with respect to the local action of these vector fields is  $n$-dimensional with local coordinates 
$x_1,\dots, x_\ell$,  and the metric $g$ descends to the metric $g_{Q}=h$ on the quotient. 
As claimed in the ``main observation'' of \S\ref{sec:idea},  $g_{Q}$ admits a projectively equivalent metric $\hat g_{Q}$ defined by the endomorphism $L$ which also can be treated as the quotient $L=A_Q$ of the hermitian endomorphism $A$.  

In the next example and theorem, we present the most general local expression which a K\"ahler structure $(g,\omega)$ together with 
a solution $A$ to equation \eqref{eq:main} can take. The construction below combines the previous two cases from  Theorem~\ref{ex:constant} and Example~\ref{ex:modelB}.

\begin{ex}
\label{ex:now2}
We start with two ingredients:

\begin{itemize}

\item  a compatible pair $h$ and $L$ defined on a domain $U\subset \mathbb R^\ell$ and satisfying the conditions A1 and A2 as in Example~\ref{ex:modelB}, see \eqref{eq:handL};

\item a K\"ahler structure $(g_{\mathrm{c}}, \omega_{\mathrm{c}})$ defined on some domain $S$ with a parallel hermitian endomorphism $A_{\mathrm{c}}$  (notice that the eigenvalues of $A_{\mathrm{c}}$ are constant).

\end{itemize}  

In addition, we assume that the eigenvalues of $L$ at each point $p\in U$ are all different from those of $A_{\mathrm{c}}$.

Consider the direct product $V\times U\times S$, where $V\subset \R^\ell$ is a certain domain of the same dimension $\ell$ as $U$, 
and denote local coordinates on $V$, $U$ and $S$ by $(t_1,\dots, t_\ell)$, $(x_1,\dots, x_\ell)$ and $(y_1,\dots, y_{2k})$ respectively. On this product 
$V\times U\times S$, we now 
define a pseudo-Riemannian metric $g$ and a 2-form $\omega$:
\begin{equation}
\label{eq:gomegainv}
\begin{aligned}
g&=\sum_{\alpha,\beta=1}^\ell H_{\alpha\beta} (x) \theta_\alpha \theta_\beta + \sum_{i,j=1}^\ell B_{ij}(x)\d x_i \d x_j + 
 g_{\mathrm{c}} \bigl(\chi_L (A_{\mathrm{c}})\cdot,\cdot\bigr), \\
\omega &= \sum_{\alpha=1}^\ell \d\mu_\alpha\wedge \theta_\alpha + 
\omega_{\mathrm{c}} \bigl( \chi_L (A_{\mathrm{c}})\cdot,\cdot\bigr),
\end{aligned}
\end{equation}
where $\chi_L(t)=\det(t{\cdot} \mathrm{Id} - L)$ is the characteristic polynomial of $L$,  
$\theta_i = \d t_i + \alpha_i$ and the 1-forms $\alpha_i$ on $S$ are chosen in such a way that
$\d\alpha_i = (-1)^i \omega_{\mathrm{c}}(A_{\mathrm{c}}^{\ell-i}\cdot ,  \cdot)$ 
(which is possible since $\omega_{\mathrm{c}}(A_{\mathrm{c}}^{\ell-i}\cdot ,  \cdot)$ is a parallel 2-form on $S$). 
The other ingredients,  $H_{\alpha\beta}$ and $\mu_i$, are defined as above in Example~\ref{ex:modelB}  
and in addition we set  $\mu_0= 1$. 

Further we define the endomorphism
\begin{equation}
\label{eq:Ainv}
A=\sum_{\alpha,\beta=1}^\ell  M^{\beta}_\alpha (x) \, \theta_\beta  \otimes  \frac{\partial}{\partial {t_\alpha}}  + \sum_{i,j=1}^\ell L^i_j (x) \,\d x_j  \otimes \frac{\partial}{\partial {x_i}} 
+  \sum_{p,q=1}^{2k}    (A_{\mathrm{c}})^q_p  \,  \d y_p  \otimes \left( \frac{\partial}{\partial {y_q}}    - \sum_{i=1}^\ell\alpha_{iq} \frac{\partial}{\partial {t_i}}   \right)
\end{equation}
where 
 $M^{\beta}_\alpha = \sum_{i,j}L^i_j \dfrac{\partial \mu_\beta}{\partial x_i} \dfrac{\partial x_j}{\partial \mu_\alpha}$ and $\alpha_{iq}$ resp. $(A_{\mathrm{c}})^q_p$ denote the components
of $\alpha_i$ resp. $A_{\mathrm{c}}$ w.r.t. the coordinates $y_1,\dots,y_{2k}$, i.e., $\alpha_i = \sum_{q}  \alpha_{iq} \d y_q$ and
$A_{\mathrm{c}} = \sum_{p,q}    (A_{\mathrm{c}})^q_p  \,  \d y_p\otimes  \partial_{y_q}$. 

Equivalently, in matrix form (w.r.t. the  basis $\theta_1, \dots, \theta_\ell, \d x_1,\dots, \d x_\ell,  \d y_1,\dots, \d y_{2k}$), the above formulas take the form:
$$
g = 
\begin{pmatrix}  
P h^{-1}P^\top & 0 & 0 \\ 0 & h & 0  \\
0 & 0 &  g_{\mathrm{c}} {\cdot} \chi_L (A_{\mathrm{c}})
\end{pmatrix}, 
\quad 
\omega = 
\begin{pmatrix}
0 & -P  & 0 \\ P^\top & 0 & 0 \\ 0 & 0 &  \omega_{\mathrm{c}} {\cdot} \chi_L (A_{\mathrm{c}})
\end{pmatrix}, \quad
A = 
\begin{pmatrix}
(P L P^{-1})^\top & 0 & 0 \\0 &  L  &  0 \\ 0 & 0 & A_{\mathrm{c}}
\end{pmatrix}
$$
where $P=\left(  \dfrac{\partial \mu_\alpha}{\partial x_i}  \right)$ is the Jacobi matrix 
of the system of functions $\mu_1,\dots, \mu_\ell$  (w.r.t. the local coordinates $x_1,\dots, x_\ell$). 
\end{ex}

\begin{rem}
Each of the 1-forms $\alpha_i$ on $S$ is determined by $(\omega_{\mathrm {c}}, A_{\mathrm {c}})$  up to adding the differential
of a function. However, replacing $\theta_i$ by the 1-forms $\tilde \theta_i=\theta_i+\d f_i$ in the formulas of Example \ref{ex:now2}
for functions $f_i$ on $S$, it is easy to construct a local transformation $f:M\rightarrow M$ identifying the formulas in 
Example \ref{ex:now2} written down w.r.t. $\theta_i$ and $\tilde\theta_i$ respectively, 
see also the discussion after Proposition \ref{prop:locform} below.
\end{rem}

\vspace{1ex} 

We will call a point $p\in M$ \emph{regular} with respect to a solution $A$ of \eqref{eq:main}, 
if in a neighbourhood of this point  the number of different eigenvalues of $A$ is constant 
(which implies that the eigenvalues are smooth functions in some neighbourhood of $p$), and 
for each eigenvalue $\rho$ either $\d\rho\ne 0$, or $\rho$ is constant in a neighbourhood of $p$.   
Clearly, the  set $M^0$ of regular points  is open and dense in $M$.
Further (see Lemma~\ref{lem:properties}~\eqref{l3} below) we will see that the number of 
non-constant eigenvalues of $A$ is the same near every regular point. 
The following theorem generalises Theorems~\ref{ex:constant} and~\ref{thm:nonconstant}:

\begin{thm}
\label{thm:invform} 
The metric $g$ and 2-form $\omega$ defined    by  \eqref{eq:gomegainv} are a K\"ahler structure and $A$ defined by \eqref{eq:Ainv} is a hermitian solution of \eqref{eq:main}. 

Conversely, let $(M,g,\omega)$ be a  K\"ahler manifold of arbitrary signature and $A$ be a hermitian solution of 
\eqref{eq:main}.  Then in a neighbourhood of a regular  point,  the K\"ahler structure $(g, \omega)$ and the endomorphism $A$ can be written in the form  
\eqref{eq:gomegainv} and \eqref{eq:Ainv} from Example {\rm \ref{ex:now2}}. 
\end{thm}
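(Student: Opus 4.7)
The strategy is to use the two special cases Theorem~\ref{ex:constant} and Theorem~\ref{thm:nonconstant} as building blocks, glued together by the twist 1-forms $\alpha_i$, and to exploit the ``main observation'' of Section~\ref{sec:idea} for the converse.

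\emph{Direct direction.} I would first check that $g$ is non-degenerate and symmetric, $\omega$ is a non-degenerate 2-form, and that the almost complex structure $J$ determined by $\omega(\cdot,\cdot)=g(J\cdot,\cdot)$ satisfies $J^2=-\mathrm{Id}$; this is a block-by-block calculation using the formula $H_{\alpha\beta}=\sum_{i,j}B^{ij}\partial_i\mu_\alpha\,\partial_j\mu_\beta$. Closedness of $\omega$ is the first real computation: with the expansion $\chi_L(A_{\mathrm{c}})=\sum_{i=0}^{\ell}(-1)^i\mu_i\,A_{\mathrm{c}}^{\ell-i}$ (setting $\mu_0=1$), the differential of $\omega_{\mathrm{c}}\,\chi_L(A_{\mathrm{c}})$ produces $\sum_{i\ge 1}(-1)^i d\mu_i\wedge\omega_{\mathrm{c}}(A_{\mathrm{c}}^{\ell-i}\cdot,\cdot)$, which exactly cancels the terms $-\sum_\alpha(-1)^\alpha d\mu_\alpha\wedge\omega_{\mathrm{c}}(A_{\mathrm{c}}^{\ell-\alpha}\cdot,\cdot)$ coming from $d\theta_\alpha=d\alpha_\alpha$. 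For integrability of $J$ and for equation~\eqref{eq:main}, I would reduce to the known sub-cases: in the $(t,x)$ block, Theorem~\ref{thm:nonconstant} applies; in the $y$ block, $\nabla A_{\mathrm{c}}=0$ as in Theorem~\ref{ex:constant}; the cross-terms produced by the non-integrability of the $\theta_i$'s are again proportional to $\omega_{\mathrm{c}}(A_{\mathrm{c}}^{\ell-i}\cdot,\cdot)$ and vanish after contraction with the $y$-block of $g$, which contains the factor $\chi_L(A_{\mathrm{c}})$, via the Cayley--Hamilton identity $\chi_L(L)=0$.

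\emph{Converse direction.} Here I would exploit the main observation. At a regular point $p$, split the spectrum of $A$ (as a complex endomorphism) into the constant eigenvalues and the non-constant ones $\rho_1,\ldots,\rho_\ell$, each with $d\rho_i\ne 0$. Construct the $\ell$ canonical $J$-preserving, mutually commuting Killing vector fields $K_1,\ldots,K_\ell$ from Section~\ref{sec:killing}; their number equals precisely the number of non-constant eigenvalues of $A$. Introduce flow coordinates $t_1,\ldots,t_\ell$ with $K_i=\partial/\partial t_i$, transverse coordinates $x_1,\ldots,x_\ell$ so that the elementary symmetric functions $\mu_\alpha$ depend only on $x$, and coordinates $y_1,\ldots,y_{2k}$ along the leaves of the constant-eigenvalue generalized eigen-distribution $V_{\mathrm{c}}\subset TM$. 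By the main observation, the local quotient $Q=M/\langle K_1,\ldots,K_\ell\rangle$ inherits a metric $h$ and endomorphism $L$ which are projectively compatible; regularity of $p$ forces $(h,L)$ to satisfy A1--A2 on the non-constant part, while on the constant part the quotient realises a parallel pair $(g_{\mathrm{c}},A_{\mathrm{c}})$ as in Boubel~\cite{Boubel}. One then reconstructs $(g,\omega,A)$ from these data: the 1-forms $\alpha_i$ appear as connection 1-forms of the principal $\R^\ell$-bundle $M\to Q$, and the identities $d\alpha_i=(-1)^i\omega_{\mathrm{c}}(A_{\mathrm{c}}^{\ell-i}\cdot,\cdot)$ are forced by $d\omega=0$ together with the block structure of $g$.

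\emph{Main obstacle.} The principal difficulty lies in the converse direction and is twofold. First, one must show that the non-constant-eigenvalue generalized eigenspaces of $A$ ``unentangle'' globally from $V_{\mathrm{c}}$ in the strong sense required by the formulas, i.e., that one can find honest coordinates $(t,x,y)$ (not merely a local direct-sum splitting at the level of tangent spaces) in which $A$ and $g$ take block form up to the prescribed twist. Second, one must prove that the connection 1-forms $\alpha_i$ governing the bundle $M\to Q$ are determined precisely by the parallel 2-forms $\omega_{\mathrm{c}}(A_{\mathrm{c}}^{\ell-i}\cdot,\cdot)$ rather than by some other closed 2-forms on $S$. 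Both points amount to a careful analysis of how the Killing fields $K_i$ interact with $V_{\mathrm{c}}$, and rest on the structural properties of solutions to~\eqref{eq:main}, in particular the parallelism of the constant-eigenvalue generalized eigen-distribution together with the specific algebraic form of $\nabla A$ dictated by~\eqref{eq:main}.
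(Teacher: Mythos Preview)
Your converse direction is broadly aligned with the paper's approach: take the quotient by the canonical Killing fields, observe that the reduced pair $(g_Q,A_Q)$ is compatible in the projective sense (Lemma~\ref{lem:redtorealproj}), and then split into constant and non-constant blocks. The paper, however, does not improvise the splitting or the bundle structure; it invokes two ready-made structural results. The bundle description $M\simeq V\times U\times S$ with $\theta_i=\d t_i+\alpha_i$ and the identification $\d\alpha_i=\omega_i$ comes from the general theory of rigid isometric hamiltonian $\R^\ell$-actions (Propositions~\ref{prop:rigidity} and~\ref{prop:locrigid}), and the decomposition of $(g_Q,A_Q)$ into the $(h,L)$ and $(g_{\mathrm c},A_{\mathrm c})$ blocks is the splitting construction of \cite{BMgluing} (Proposition~\ref{prop:split1}). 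Your ``main obstacle'' is exactly what these two results resolve, so you should cite and use them rather than re-derive the coordinate structure by hand.

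Your direct direction diverges substantially from the paper and contains a real gap. The paper does \emph{not} verify integrability of $J$ or equation~\eqref{eq:main} by block reduction and cancellation of cross-terms. Instead it proceeds indirectly: (i) to show $(g,\omega)$ is K\"ahler it again appeals to Proposition~\ref{prop:locrigid}, the only nontrivial hypothesis being the Hessian condition $\partial_{\mu_i}G_{jk}=\partial_{\mu_j}G_{ik}$, which is equivalent to the commutativity $[\gr\mu_i,\gr\mu_j]=0$ and is proved from the compatibility of $(h,L)$ alone (Lemma~\ref{lem:anotherone}); (ii) to show $A$ solves~\eqref{eq:main} it forms $\hat g=(\det_{\mathbb C}A)^{-1}g(A^{-1}\cdot,\cdot)$, checks that $\hat\omega$ is closed (Proposition~\ref{prop:hatgiskaehler}, via the identity of Lemma~\ref{lem:aboutmu}), and then shows $g$ and $\hat g$ are c-projectively equivalent by comparing their connections through the O'Neill formula on the quotient (Lemma~\ref{lem:hatgcproequivg}). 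Your proposed shortcut, that the cross-terms ``vanish after contraction with the $y$-block of $g$ \dots\ via the Cayley--Hamilton identity $\chi_L(L)=0$'', is not correct as stated: $\chi_L(L)=0$ is an identity on the $x$-block, whereas the cross-terms live in the $y$-block and involve $\chi_L(A_{\mathrm c})$, which is \emph{invertible} precisely because the spectra of $L$ and $A_{\mathrm c}$ are disjoint. There is no cancellation of this kind, and a direct verification of~\eqref{eq:main} would require computing the full Levi--Civita connection including the mixing introduced by the $\alpha_i$; the paper's indirect route avoids exactly this.
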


\begin{ex}
The simplest example of the situation described in Theorem \ref{thm:invform} is obtained by starting with a 
1-dimensional compatible pair $h=\d x^2$, $L=\rho \d x\otimes \D_x$ for a function $\rho=\rho(x)$ satisfying $\d \rho\neq 0$
and a 2-dimensional K\"ahler structure $(g_{\mathrm {c}},\omega_{\mathrm {c}})$ with parallel hermitian endomorphism $A_{\mathrm {c}}=c\cdot\Id$ for a constant $c$.
Applying Theorem \ref{thm:nonconstant} to these formulas, we obtain the 
formulas for the K\"ahler structure $(g,\omega)$ and the c-compatible endomorphism $A$ given by \cite[formulas (3.5) and (3.6)]{BMMR}
(up to a slight change of notation).
\end{ex}

Theorem~\ref{thm:invform} gives us a description of a c-compatible pair $(g,  \omega)$ and $A$ (at a generic point) 
 provided we know a  description of compatible pairs $(h,L)$ and also of 
 K\"ahler structures $(g_{\mathrm{c}}, \omega_{\mathrm{c}})$ admitting a parallel hermitian endomorphism $A_{\mathrm{c}}$. 
As we already mentioned above, the latter have been described in \cite{Boubel}.  The local normal forms for compatible pairs $(h,L)$  
have been obtained in \cite{BM} and this combined with Theorem~\ref{thm:invform} implies the local normal forms 
for a c-compatible pair $(g,\omega)$ and $A$, see Example~\ref{ex:main} and Theorem~\ref{thm:localclassification} below. 
We also refer to \cite[Theorem 3.1]{BMMR} for the formulas in the 4-dimensional case.

\begin{ex}[Main example]
\label{ex:main}
Let $2n\geq 4$ and consider an open subset $W$ of $\R^{2n}$ of the form $W=V\times U\times  S_1\times\dots\times S_N$ 
for open subsets $V,U\subseteq \R^\ell$ and $S_\gamma\subseteq \R^{2m_\gamma}$. Let $t_1,\dots,t_\ell$ denote the coordinates on $V$ 
and let the coordinates on $U$ be separated into $r$ complex coordinates $z_1,\dots,z_{r}$ and $q=\ell-2r$ 
real coordinates $x_{r+1},\dots,x_{r+q}$.

Suppose the following data is given on these open subsets:

\begin{itemize}

\item K\"ahler structures $(g_\gamma,J_\gamma,\omega_\gamma)$ on $S_\gamma$ for $\gamma=1,\dots,N$.

\item For each $\gamma=1,\dots,N$, a parallel hermitian endomorphism $A_\gamma:TS_\gamma\rightarrow TS_\gamma$ for $(g_\gamma,J_\gamma)$ 
having a pair of complex conjugate eigenvalues $c_\gamma,\bar{c}_\gamma\in \C\setminus \R$ for $\gamma=1,\dots,R$ 
and a single real eigenvalue $c_\gamma\in\R$ for $\gamma=R+1,\dots,N$ such that the algebraic multiplicity 
of $c_\gamma$ equals $m_\gamma/2$ for $\gamma=1,\dots,R$ and $m_\gamma$ for $\gamma=R+1,\dots,N $. 

\item Holomorphic functions $\rho_j(z_j)$ of $z_j$ for $1\leq j\leq r$ and smooth functions $\rho_j(x_j)$ 
for $r+1\leq j\leq r+q$.

\end{itemize}
Moreover, we choose 1-forms $\alpha_1,\dots,\alpha_\ell$ on $S=S_1\times\dots\times S_N$ which satisfy
\begin{align}
\d\alpha_i=(-1)^i \sum_{\gamma=1}^N \omega_\gamma(A_\gamma^{\ell-i}\cdot ,\cdot).\label{eq:theta}
\end{align}
We introduce some notation to be used throughout the paper. The function $\Delta_i$ for $1\leq i\leq r+q$ is given by
$\Delta_i=\prod_{\rho\in E_{\mathrm{nc}}\setminus\{\rho_i\}}(\rho_i-\rho)$, where 
$E_{\mathrm{nc}}=\{\rho_1,\bar{\rho}_1,\dots,\rho_r,\bar{\rho}_r,\rho_{r+1},\dots,\rho_{r+q}\}$.
The $1$-forms $\theta_1,\dots,\theta_\ell$ on $W$ are defined by $\theta_i=\d t_i+\alpha_i$.
The function $\mu_i$ denotes the $i$th elementary symmetric polynomial in the $\ell$ variables $E_{\mathrm{nc}}$, 
$\mu_{i}(\hat{\rho_s})$ denotes the $i$th elementary symmetric polynomial in the $\ell-1$ variables 
$E_{\mathrm{nc}}\setminus\{\rho_s\}$ and the notation ``$c.c$'' refers to the conjugate complex of the preceding term.

Suppose that at every point of $W$ the values of the functions $\rho_1,\bar\rho_1,\dots,\rho_{r+q}$ 
are mutually different and different from the constants $c_1,\bar c_1,\dots ,c_N$ and their differentials are non-zero. 
Then $(g,\omega,J)$ given by the formulas
\begin{align}
\begin{array}{c}
\displaystyle
g=-\frac{1}{4}\sum_{i=1}^r \left(\Delta_i \d z_i^2+c.c.\right)+\sum_{i=r+1}^{r+q}\varepsilon_i\Delta_i \d x_i^2+\sum_{i=0}^{\ell}(-1)^{i}\mu_i \sum_{\gamma=1}^N g_\gamma(A_\gamma^{\ell-i} \cdot , \cdot)\vspace{1mm}\\
\displaystyle+\sum_{i,j=1}^\ell\left[-4\sum_{s=1}^r\left(\frac{\mu_{i-1}(\hat{\rho_s})\mu_{j-1}(\hat{\rho_s})}{\Delta_s}\left(\frac{\partial \rho_s}{\partial z_s}\right)^2+c.c.\right)+\sum_{s=r+1}^{r+q}\varepsilon_s\frac{\mu_{i-1}(\hat{\rho}_s)\mu_{j-1}(\hat{\rho}_s)}{\Delta_s}\left(\frac{\partial \rho_s}{\partial x_s}\right)^2\right]\theta_i\theta_j,\vspace{1mm}\\
\displaystyle\omega=\sum_{i=1}^\ell \d \mu_i \wedge \theta_i + \sum_{i=0}^{\ell}(-1)^i\mu_i\sum_{\gamma=1}^N \omega_\gamma(A_\gamma^{\ell-i}\cdot , \cdot),
  \vspace{1mm}\\
\displaystyle \d z_i\circ J=4\frac{1}{\Delta_i}\frac{\partial \rho_i}{\partial z_i}\sum_{j=1}^\ell\mu_{j-1}(\hat{\rho_i})\theta_j,\quad 
\d x_i\circ J=-\frac{\varepsilon_i}{\Delta_i}\frac{\partial \rho_i}{\partial x_i}\sum_{j=1}^\ell\mu_{j-1}(\hat{\rho}_i)\theta_j,\vspace{1mm}\\
\displaystyle \theta_i\circ J=\frac{(-1)^{i}}{4}\sum_{j=1}^r \rho_{j}^{\ell-i}\left(\frac{\partial \rho_j}{\partial z_j}\right)^{-1}\d z_j+c.c+(-1)^{i-1}\sum_{j=r+1}^{r+q}\varepsilon_j\rho_{j}^{\ell-i}\left(\frac{\partial \rho_j}{\partial x_j}\right)^{-1}\d x_j
\end{array}
\label{eq:localclassificationgJw}
\end{align}
is K\"ahler, where $\varepsilon_i=\pm 1$ depending on the signature of $g$. Moreover, writing 
$\alpha_i = \sum_{q}  \alpha_{iq} \d y_q$ and $A_{\gamma} = \sum_{p,q}    (A_{\gamma})^q_p  \,  \d y_p\otimes  \partial_{y_q}$  w.r.t. local coordinates $y_1,\dots,y_{2k}$ on $S=\prod_{\gamma} S_\gamma$, we have that the endomorphism $A$ given by
\begin{align}
\begin{array}{c}
\displaystyle A=\sum_{i,j=1}^\ell\left(\mu_i\delta_{1j}-\delta_{i(j-1)}\right)\theta_i\otimes \frac{\D }{\D t_j} 
+\sum_{s=1}^r(\rho_s\d z_s \otimes \frac{\D}{\D z_s}+c.c.)+\sum_{s=r+1}^{r+q} \rho_s \d x_s\otimes \frac{\D}{\D x_s}\vspace{1mm}\\
\displaystyle
+\sum_{\gamma=1}^N \sum_{p,q=1}^{2k}    (A_{\gamma})^q_p  \,  \d y_p  \otimes \left( \frac{\partial}{\partial {y_q}}    - \sum_{i=1}^\ell\alpha_{iq} \frac{\partial}{\partial {t_i}}   \right)
\end{array}
\label{eq:localclassificationA}
\end{align}
is a hermitian solution to \eqref{eq:main}.
\end{ex}

\vspace{1ex} 

Example~\ref{ex:main} is an explicit  construction of a K\"ahler metric \eqref{eq:localclassificationgJw} and 
a solution \eqref{eq:localclassificationA} of \eqref{eq:main}. This fact can be verified by a straightforward, 
though non-trivial computation. Another proof will be given in Sections~\ref{sec:realisation} and~\ref{explicitform}.   
The next theorem shows that in a neighbourhood of a generic point,  a  K\"ahler metric $g$ (of any signature) and a solution 
$A$ of \eqref{eq:main} are, in a certain coordinate system, as in Example~\ref{ex:main}.

\begin{thm}[Local description of c-projectively equivalent metrics]
\label{thm:localclassification}
Suppose  $(M,g,J)$ is  a  K\"ahler manifold of arbitrary signature and  $A$ is  a hermitian solution of 
\eqref{eq:main}.  Assume  that in a small neighbourhood $W\subseteq M^0$ of a regular point, $A$ has 

\begin{itemize}

\item $\ell=2r+q$ non-constant eigenvalues on $W$ which separate into $r$ pairs of complex-conjugate 
eigenvalues $\rho_1,\bar\rho_1,\dots,\rho_{r},\bar\rho_{r}:W\rightarrow \C$ and $q$ real eigenvalues $\rho_{r+1},\dots,\rho_{r+q}:W\rightarrow \R$,

\item $N+R$ constant eigenvalues which separate into $R$ pairs of complex conjugate eigenvalues 
$c_1,\bar c_1,\dots,c_R,\bar c_R$ and $N-R$ real eigenvalues $c_{R+1},\dots,c_{N}$.

\end{itemize}
Then the K\"ahler structure $(g,J,\omega)$ and $A$ are given on $W$  by the formulas \eqref{eq:localclassificationgJw} 
and \eqref{eq:localclassificationA} from Example~\ref{ex:main}.
\end{thm}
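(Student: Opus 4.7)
The plan is to derive the explicit formulas of Example~\ref{ex:main} by combining Theorem~\ref{thm:invform} with two further pieces of information: the local normal form for compatible pairs $(h,L)$ with distinct non-constant eigenvalues established in \cite{BM}, and the de Rham--Wu decomposition from Theorem~\ref{ex:constant} on the ``constant'' part.

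First, I would observe that at a regular point $p\in M^0$ the number of non-constant eigenvalues of $A$ equals $\ell=2r+q$, exactly as in the statement. Applying Theorem~\ref{thm:invform}, one finds a neighbourhood $W$ of $p$ together with local coordinates $(t_\alpha,x_i,y_p)$ in which $(g,\omega)$ and $A$ take the invariant form \eqref{eq:gomegainv}--\eqref{eq:Ainv}. Here $(h,L)$ is a compatible pair on the $\ell$-dimensional factor $U$ whose eigenvalues satisfy conditions~A1 and~A2, and $(g_{\mathrm{c}},\omega_{\mathrm{c}},A_{\mathrm{c}})$ is a K\"ahler structure on $S$ equipped with a parallel hermitian endomorphism.

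Next, I would decompose the parallel factor. By Theorem~\ref{ex:constant}, the K\"ahler manifold $(S,g_{\mathrm{c}},\omega_{\mathrm{c}})$ splits locally as a product $S_1\times\cdots\times S_N$ with $A_{\mathrm{c}}=A_1+\cdots+A_N$, where each $A_\gamma$ has a single real eigenvalue $c_\gamma$ (for $\gamma>R$) or a pair of complex conjugate eigenvalues $c_\gamma,\bar c_\gamma$ (for $\gamma\leq R$). The algebraic multiplicities determine the real dimensions $m_\gamma$ in the statement. This identifies the parallel data of Example~\ref{ex:main} with the one produced by Theorem~\ref{thm:invform}, and since $\chi_L(A_\gamma)=\sum_{i=0}^\ell(-1)^i\mu_i\,A_\gamma^{\ell-i}$ acts blockwise, the terms involving $\chi_L(A_{\mathrm{c}})$ in \eqref{eq:gomegainv} expand precisely into the sums $\sum_{i=0}^\ell(-1)^i\mu_i\, g_\gamma(A_\gamma^{\ell-i}\cdot,\cdot)$ and $\sum_{i=0}^\ell(-1)^i\mu_i\,\omega_\gamma(A_\gamma^{\ell-i}\cdot,\cdot)$ of \eqref{eq:localclassificationgJw}, while the condition on $\alpha_i$ in Example~\ref{ex:now2} becomes \eqref{eq:theta}.

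Third, I would invoke the normal form of \cite{BM} for compatible pairs $(h,L)$ with distinct non-constant eigenvalues (Dini-type coordinates): in suitable coordinates $(z_1,\dots,z_r,x_{r+1},\dots,x_{r+q})$ on $U$ with $\rho_j=\rho_j(z_j)$ holomorphic for $j\leq r$ and $\rho_j=\rho_j(x_j)$ real for $j>r$,
\begin{align*}
h &= -\tfrac{1}{4}\sum_{j=1}^r\left(\Delta_j\,\d z_j^2+c.c.\right)+\sum_{j=r+1}^{r+q}\varepsilon_j\Delta_j\,\d x_j^2,\\
L &= \sum_{s=1}^r\left(\rho_s\,\d z_s\otimes\tfrac{\D}{\D z_s}+c.c.\right)+\sum_{s=r+1}^{r+q}\rho_s\,\d x_s\otimes\tfrac{\D}{\D x_s}.
\end{align*}
Substituting these formulas and the block decomposition of $(g_{\mathrm{c}},\omega_{\mathrm{c}},A_{\mathrm{c}})$ into the invariant expressions \eqref{eq:gomegainv}--\eqref{eq:Ainv}, and computing the Jacobi matrix $P=(\partial\mu_\alpha/\partial x_i)$ together with its inverse, should yield term by term the formulas \eqref{eq:localclassificationgJw}--\eqref{eq:localclassificationA}. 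The complex structure $J$ is then recovered via $\omega(\cdot,\cdot)=g(J\cdot,\cdot)$, and the claimed expressions for $\d z_i\circ J$, $\d x_i\circ J$ and $\theta_i\circ J$ follow by direct inversion on the $(t,x)$-block.

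The hard part will be the algebraic bookkeeping in this third step. The entries $H_{\alpha\beta}=(Ph^{-1}P^\top)_{\alpha\beta}$ have to be matched with the explicit coefficients of $\theta_i\theta_j$ in \eqref{eq:localclassificationgJw}, which requires the identity
$$(P^{-1})^{s}{}_{\alpha}=\frac{(-1)^{\alpha-1}\mu_{\alpha-1}(\hat\rho_s)}{\Delta_s}\left(\frac{\D\rho_s}{\D x_s}\right)^{-1},$$
a consequence of the partial fraction expansion of $\prod_i(t-\rho_i)^{-1}$ (equivalently, the inverse of a Vandermonde-type matrix). Once this identity is installed, checking that $(PLP^{-1})^\top$ gives the first summand of \eqref{eq:localclassificationA}, that the correction terms $-\sum_i\alpha_{iq}\D/\D t_i$ on the parallel block reconcile with $\theta_i=\d t_i+\alpha_i$, and that the resulting $J$ satisfies $J^2=-\Id$ and integrability, amounts to routine but lengthy manipulations of symmetric polynomials; the geometric content has already been captured by Theorem~\ref{thm:invform} and the Dini normal form for $(h,L)$.
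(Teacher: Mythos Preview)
Your proposal is correct and follows essentially the same route as the paper: apply Theorem~\ref{thm:invform} to reach the invariant form of Example~\ref{ex:now2}, split the constant block via the de Rham--Wu decomposition (Theorem~\ref{ex:constant}), insert the Dini-type normal form of \cite{BM} for $(h,L)$, and then perform the Vandermonde bookkeeping to obtain the explicit coefficients in \eqref{eq:localclassificationgJw}--\eqref{eq:localclassificationA}. The paper organises the final algebra slightly differently---computing $H_{ij}=h(\gr\,\mu_i,\gr\,\mu_j)$ directly from \eqref{eq:normalformquotient} rather than via $Ph^{-1}P^\top$, and deriving the action of $A$ on the Killing directions from the identity $AK_i=\mu_i K_1-K_{i+1}$ rather than from $(PLP^{-1})^\top$---but these are the same computations in different packaging; your Vandermonde inversion formula is exactly what underlies both. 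One small redundancy: you need not verify $J^2=-\Id$ or integrability at the end, since Theorem~\ref{thm:invform} already guarantees that $(g,\omega)$ is K\"ahler, so the formulas for $J$ are automatically consistent once $g$ and $\omega$ are matched.
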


\begin{rem}  
As stated above, the corresponding local description of a positive definite K\"ahler structure $(g,J,\omega)$ 
admitting a hermitian solution $A$ of \eqref{eq:main} has been obtained in \cite{ApostolovI} in the language of 
hamiltonian $2$-forms. To obtain a better understanding of the relation between the formulas in \cite{ApostolovI} and the
 formulas  \eqref{eq:localclassificationgJw} and \eqref{eq:localclassificationA} above, note that if 
$g$ is positive definite, all the eigenvalues of the hermitian endomorphism $A$, both constant and non-constant,  
are real. Thus, for a positive definite metric $g$, the pairs of conjugate complex non-constant eigenvalues
 $\rho_1,\bar\rho_1,\dots,\rho_r,\bar\rho_r$ disappear from the formulas \eqref{eq:localclassificationgJw} 
and \eqref{eq:localclassificationA} and the endomorphisms $A_\gamma:TS_\gamma\rightarrow TS_\gamma$ are just a constant multiple
 of the identity $\mathrm{Id}:TS_\gamma\rightarrow TS_\gamma$, that is, $A_\gamma=c_\gamma \cdot \mathrm{Id}$. Moreover, 
in \cite{ApostolovI}, the non-constant eigenvalues $\rho_1,\dots,\rho_\ell$ of $A$ have been used as
 part of the coordinate system instead of $x_1,\dots,x_\ell$ used in \eqref{eq:localclassificationgJw}
 and \eqref{eq:localclassificationA}. The change of coordinates $x_i\longmapsto \rho_i$ for $i=1,\dots,\ell$ yields
$$
\d x_i^2=\frac{\d \rho_i^2}{\Theta_i(\rho_i)},
$$
where $\Theta_i(\rho_i)=\left(\frac{\D \rho_i}{\D x_i}(x_i(\rho_i))\right)^2$. The functions $F_i(t)$ appearing 
in the local classification of \cite{ApostolovI} relate to the $\Theta_i$'s by
$$
F_i(t)=\Theta_i(t)\prod_{\gamma=1}^N(t-c_\gamma)^{m_\gamma}.
$$
The formula for the hamiltonian 2-form $\phi$ (corresponding to $A$) is obtained by inserting \eqref{eq:localclassificationgJw} and \eqref{eq:localclassificationA}
into $\phi=\omega(A\cdot,\cdot)$. Up to changing notations (for instance $\rho_i\longmapsto \xi_i$, $\mu_i\longmapsto \sigma_i$), 
this relates \eqref{eq:localclassificationgJw} and \eqref{eq:localclassificationA} in the case when $g$
is positive definite to the description obtained in \cite{ApostolovI}.
\end{rem}

\begin{rem}
As mentioned above, Theorem~\ref{thm:localclassification} yields an ``almost'' 
explicit description of a K\"ahler metric $g$ admitting a c-projectively equivalent metric. 
What is not described explicitly are the K\"ahler structures $(g_\gamma,\omega_\gamma)$ that admit parallel
 hermitian endomorphisms $A_\gamma$. The formulas for such a triple $(g_\gamma,\omega_\gamma,A_\gamma)$ in local
 coordinates can be found in \cite{Boubel}.
\end{rem}

\subsection{Structure of the paper}

In \S\ref{sec:killing}, we recall that the existence of a c-projectively equivalent
 metric $\hat{g}$ for a K\"ahler metric $g$ implies the existence of a family of 
independent commuting hamiltonian (w.r.t. the K\"ahler form $\omega$) Killing vector
 fields $K_1,\dots,K_\ell$. These vector fields are also Killing w.r.t. $\hat{g}$.

We can form the quotient of $M$ w.r.t. the local $\R^\ell$-action induced by these vector 
fields and obtain a bundle structure $M\rightarrow Q$ with fibers being the leaves of the 
foliation generated by the vector fields $K_i$. Since $g,\hat{g}$ are invariant w.r.t. 
the action of the vector fields $K_i$ and the orthogonal complements to the fibers w.r.t.   $g$ 
and $\hat g$ coincide, they descend to metrics $g_Q,\hat{g}_Q$ on the quotient. This 
reduction will be explained in detail in \S\ref{sec:reduction}.  As we already mentioned, in \S\ref{sec:idea}, 
the crucial observation  is that the metrics
 on the quotient are \emph{projectively equivalent}. 
We prove this property in  \S\ref{sec:cprorpro}.  More precisely,  as explained in Example~\ref{ex:modelB}, 
instead of $\hat{g}_Q$ we consider the endomorphism $A_Q$ obtained from  $g_Q$ and $\hat{g}_Q$ by \eqref{eq:defL} 
and check the compatibility condition for the pair $g_Q$, $A_Q$.

The local classification of 
pseudo-Riemannian projectively equivalent metrics, or equivalently,  compatible pairs $g_Q$ and $A_Q$ has been derived in \cite{BM}. We apply 
these results in \S\ref{sec:normalformsquotient}  to obtain the normal forms for 
$g_Q$, $A_Q$ on the quotient. These normal forms are, in fact, simpler than the generic ones 
for projectively equivalent metrics: the tensor $A=A(g,\hat{g})$ from Theorem~\ref{thm:localclassification} 
has non-constant eigenvalues of (complex) algebraic multiplicity equal to one such that the corresponding 
tensor $A_Q$ on the quotient has no non-trivial Jordan blocks corresponding to the non-constant eigenvalues. 
This makes the formulas from \cite{BM} much easier.

The requirement that $g$ is K\"ahler and $A$ is hermitian  implies
 that they are completely determined by the reduced objects $g_Q$ and $A_Q$ on the quotient. In \S\ref{sec:normalformsquotient}
 we derive the formulas  for $(g,\omega)$ and  
 $A$ which are in essence equivalent to  \eqref{eq:gomegainv}   and \eqref{eq:Ainv}  (Proposition~\ref{prop:locform}). 
The next step is to show that there are no further restrictions on $g$, $\omega$ and $A$ so that the formulas 
from Proposition~\ref{prop:locform} give us a desired local description, see \S\ref{sec:realisation}.

Finally in \S\ref{explicitform}, we complete the proof of Theorem~\ref{thm:localclassification} 
by deriving the explicit formulas \eqref{eq:localclassificationgJw} and \eqref{eq:localclassificationA} from Example~\ref{ex:main}.

The second part of the article contains the proof of Theorems~\ref{thm:yano_obata} and~\ref{thm:lichnerowicz}.  
As we already pointed out,  there is a close relationship between c-projective and projective equivalence.  
This makes the proofs of these theorems rather similar. In Section~\ref{sec:yano_obata} we focus on the proof 
of the Yano-Obata conjecture (Theorem~\ref{thm:yano_obata})  and explain in a series of remarks how this proof 
can be adapted for the Lichnerowicz conjecture (Theorem~\ref{thm:lichnerowicz}).  This is done  under one 
additional algebraic condition: the endomorphism $A$ compatible with the metric $g$ and induced by the 
projective vector field $v$ has no Jordan blocks with non-constant eigenvalues\footnote{According to the 
local description given by Theorem~\ref{thm:localclassification}, in the c-projective setting such blocks do not occur.}.   
The latter case when $A$ admits a ``non-constant'' Jordan block is treated in Section~\ref{sec:Jblock}.

\section{Canonical Killing vector fields for  c-projectively equivalent metrics}
\label{sec:killing}

Let  $(M,g,J)$ be a connected K\"ahler manifold of real dimension $2n\geq 4$. Since by definition any $A$ which is c-compatible with $(g,J)$ commutes with $J$, 
we can consider $A$ as an endomorphism of the $n$-dimensional  complex vector space $T_p M$ (with complex multiplication
given by $(a+ib)X=aX+bJX$). The determinant of $A$ considered as complex endomorphism will be denoted by $\mathrm{det}_{\C}\,A$. It  is a smooth function on $M$ and since $A$ is hermitian,
it is real valued. Up to a sign $\mathrm{det}_{\C}\,A$ equals $\sqrt{\mathrm{det} \, A},$ though the latter is always positive and smoothness may fail at the points where it vanishes.

\vspace{1ex}  
 
Recall that a vector field is called a \emph{ Killing vector field} w.r.t. the metric $g$, if its local flow 
preserves $g$. Similarly, a vector field is called \emph{holomorphic} if its local flow preserves the complex structure $J$. 

Since the local flow  of a  holomorphic Killing vector field $K$ preserves the symplectic  form $\omega= g( J\cdot, \cdot)$, the vector field $K$ 
is hamiltonian in a neighbourhood of any point or, more generally, on every simply connected open subset. Recall that a vector field $K$ is called hamiltonian if there 
exists a function $f$ such that 
$$
i_K \omega = -\d f\mbox{ or, equivalently, }K=J\gr f.
$$
Such a function $f$ is called a hamiltonian for $K$ and it is only unique up to adding a constant.
Conversely, since every hamiltonian vector field preserves $\omega$, it is Killing if and only if it is holomorphic. 
Recall also that holomorphic vector fields are characterised by the property that their covariant differential is complex-linear 
(when considered as endomorphism of $T_p M$) and therefore a hamiltonian vector field $K=J\gr f$ is holomorphic if and only if 
the hessian $\nabla^2 f$ is hermitian.

\begin{lem} \label{lem:killing} 
For any  $A$ which is c-compatible with $(g, J)$ the function $ \mathrm{det}_{\C}\,A$ is a  hamiltonian for a Killing vector field. 
\end{lem}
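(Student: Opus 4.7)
The function $f:=\det_{\mathbb{C}} A$ is smooth and real-valued. The vector field $K:=J\operatorname{grad} f$ is automatically hamiltonian with hamiltonian $f$, since for every $Y$
$$\omega(K,Y)=g(JK,Y)=-g(\operatorname{grad} f,Y)=-\mathrm{d}f(Y),$$
so $i_{K}\omega=-\mathrm{d}f$. As recalled in the paragraph preceding the lemma, a hamiltonian vector field of the form $J\operatorname{grad} f$ is Killing if and only if the Hessian $\nabla^{2}f$ commutes with $J$, i.e.\ is hermitian. Thus the lemma reduces to verifying this single hermiticity property.

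The first step is an explicit formula for $\operatorname{grad} f$. Starting from the identity
$$\mathrm{d}(\det A)(X)=\mathrm{tr}(\operatorname{adj}(A)\,\nabla_{X}A)$$
and substituting \eqref{eq:main}, each of the four rank-one terms on the right-hand side contributes $g(X,\operatorname{adj}(A)\Lambda)$ to the trace. This uses that $\operatorname{adj}(A)$ is hermitian (a real polynomial in the hermitian $A$) together with the K\"ahler identities $g(JX,JY)=g(X,Y)$ and $J\operatorname{adj}(A)=\operatorname{adj}(A)J$. One obtains $\operatorname{grad}(\det A)=4\operatorname{adj}(A)\Lambda$. Now $\det A=(\det_{\mathbb{C}} A)^{2}$ and $\operatorname{adj}(A)=(\det_{\mathbb{C}} A)\cdot B$ with $B:=\operatorname{adj}_{\mathbb{C}}(A)$ (hermitian), so dividing yields
$$\operatorname{grad}(\det_{\mathbb{C}} A)=2B\Lambda$$
on $\{f\neq 0\}$, and hence everywhere by smoothness of both sides.

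It remains to show that $X\mapsto\nabla_{X}(B\Lambda)$ is $J$-linear in $X$. By complex Cayley--Hamilton, $B$ is a polynomial in $A$ with coefficients built from the complex traces $\mathrm{tr}_{\mathbb{C}}(A^{k})$, hence $\nabla_{X}B$ can be expanded using \eqref{eq:main}; in parallel, differentiating the trace of \eqref{eq:main} gives a formula for $\nabla_{X}\Lambda=\tfrac{1}{4}\nabla^{2}(\mathrm{tr}\,A)(X)$. Together these present $\nabla_{X}(B\Lambda)$ as a sum of terms in $X$, $JX$, $\Lambda$, $J\Lambda$ and powers of $A$. The key structural observation is the $\mathbb{Z}_{2}$-symmetry of the right-hand side of \eqref{eq:main} under the simultaneous swap $(X,\Lambda)\leftrightarrow (JX,J\Lambda)$: every potentially ``non-hermitian'' term comes paired with its image under this swap, which forces the final expression to commute with $J$.

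The main obstacle I expect is the algebraic bookkeeping in the last step rather than any conceptual difficulty: one must carefully track the transformations $X\mapsto JX$ while using $AJ=JA$, $g(JX,JY)=g(X,Y)$, and $g(JX,Y)+g(X,JY)=0$, and verify that all terms either vanish or combine into $J$-symmetric pairs. A more conceptual alternative I would consider is to replace $A$ throughout by $A_{t}:=A+t\cdot\mathrm{Id}$ (still c-compatible and hermitian since $\mathrm{Id}$ is parallel, and $\operatorname{tr} A_{t}$ differs from $\operatorname{tr} A$ by a constant) and to exploit polynomial dependence on $t$: a uniform calculation for $A_{t}$ then yields simultaneously the Killing property for every coefficient of the polynomial $t\mapsto\det_{\mathbb{C}}(A_{t})$, i.e.\ for every elementary symmetric polynomial of the complex eigenvalues of $A$.
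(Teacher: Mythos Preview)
Your reduction to showing that the Hessian of $f=\det_{\mathbb C} A$ is hermitian is correct, and the computation $\operatorname{grad} f = 2B\Lambda$ with $B=\operatorname{adj}_{\mathbb C}(A)$ is fine. The problem is the final step, which you flag as ``bookkeeping'' but which in fact hides a missing ingredient. In $\nabla_X(B\Lambda)=(\nabla_X B)\Lambda + B\,\nabla_X\Lambda$, the second term involves the second-order quantity $\nabla\Lambda$, and this is \emph{not} determined algebraically by \eqref{eq:main}. (Your sentence ``differentiating the trace of \eqref{eq:main} gives a formula for $\nabla_X\Lambda$'' is not right: tracing \eqref{eq:main} only recovers the definition $\operatorname{grad}(\operatorname{tr} A)=4\Lambda$.) The $\mathbb Z_2$-symmetry you invoke is a symmetry of the first-order expression for $\nabla_X A$; it does not automatically propagate to $\nabla_X\Lambda$. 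In fact, $J$-linearity of $\nabla\Lambda$ is equivalent to the Hessian of $\operatorname{tr} A$ being hermitian, which is exactly the lemma applied to the function $\operatorname{tr} A$---so you are implicitly assuming the result you want (for one particular coefficient of $\det_{\mathbb C}(t\cdot\mathrm{Id}-A)$). This circularity can be broken, e.g.\ by deriving hermiticity of $\nabla\Lambda$ from the Ricci identity for $A$, but that is a genuine additional argument, not bookkeeping.

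The paper's proof avoids this entirely by a different route. After shifting $A\mapsto A+c\cdot\mathrm{Id}$ to make it invertible, it brings in the c-projectively equivalent metric $\hat g$ from \eqref{eq:defhatg}. The classical relation between the Ricci tensors of $g$ and $\hat g$ reads $\widehat{\mathrm{Ric}}-\mathrm{Ric}=-2(n+1)\bigl(\nabla\Phi-\Phi^2+(\Phi\circ J)^2\bigr)$, where $\Phi=\d\phi$ and $\exp(-2\phi)=|\det_{\mathbb C} A|$. Since both Ricci tensors of K\"ahler metrics are hermitian, so is $\nabla\Phi-\Phi^2+(\Phi\circ J)^2$, and a two-line computation then shows that $\nabla^2|\det_{\mathbb C} A|$ is hermitian. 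This is both shorter and conceptually cleaner than the direct approach, because it packages the needed second-order information into the known hermiticity of the two Ricci tensors rather than trying to extract $\nabla\Lambda$ by hand.
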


We do not pretend that Lemma~\ref{lem:killing} is new: for positive definite metrics it is equivalent to 
\cite[Proposition 3]{ApostolovI} and this proof can be generalised to all signatures. 
We give a different and shorter proof, which is based on the same observation as the proof given  
in  \cite{CEMN}  but does not require introducing c-projectively invariant objects.

\begin{proof}  
Since the statement is local, w.l.o.g.  we may  assume    that  $\mathrm{det}\,A\ne 0$,   
otherwise we can locally replace $A$ by $A+\textrm{const}  \cdot \Id$.   Then, as explained  in \S\ref{sec:lc}, the metric $\hat g$ given by \eqref{eq:defhatg} 
is c-projectively equivalent to $g$.  We denote by $\nabla$ and $ \hat \nabla$ the Levi-Civita connections of $g$ and $\hat g$. 
It is well known (see for example the survey \cite{Mikes}), and follows directly from the definition of c-projective equivalence, 
that the connections $\nabla $ and $\hat\nabla$ are related by 
the equation 
\begin{equation}
\label{M1}
 \hat\nabla_X Y - \nabla_X Y=  \Phi(X)Y+\Phi(Y)X-\Phi(JX)JY-\Phi(JY)JX, 
 \end{equation}
 where  $\Phi$  is an exact $1$-form equal to the differential of the function 
 \begin{equation}
 \label{M2}
 \phi= \tfrac{1}{4(n+1)}\ln \left(\frac{\det  \hat g}{\det g}\right). 
 \end{equation}
Combining \eqref{eq:defhatg} and \eqref{M2}, we see that  $$
\exp(-2\phi)=  \left|\mathrm{det}_{\C}\,A\right|.$$

Now, it follows from straightforward calculations using \eqref{M1} (see e.g. \cite{Mikes}),  that 
the Ricci tensors $\mathrm{Ric}$ and $\widehat{\mathrm{Ric}}$ of the metrics $g$ and $\hat g$ are related by
$$
\widehat{\mathrm{Ric}} - \mathrm{Ric}= -2(n+1)(\nabla\Phi- \Phi^2 + (\Phi\circ J)^2).
$$ 
Note  that $\nabla\Phi$ is a symmetric $(0,2)$-tensor. For a K\"ahler metric, the Ricci tensor is hermitian 
w.r.t. the complex structure.  Then the above equation implies that $\nabla\Phi- \Phi^2 + (\Phi\circ J)^2$ is hermitian. 
Hence, 
\begin{eqnarray*}
\nabla^2\left|\mathrm{det}_{\C}\,A\right|
& = &\nabla^2\exp(-2\phi) = 2\exp(-2\phi) (- \nabla \Phi + 2 \Phi^2) \\ & =& 
2\exp(-2\phi) (- (\nabla \Phi - \Phi^2+(\Phi\circ J)^2)+\Phi^2+(\Phi\circ J)^2) 
\end{eqnarray*}
is hermitian as well. This implies that $J\gr\left|\mathrm{det}_{\C}\,A\right|$ is Killing. 
\end{proof}

\vspace{1ex} 

For  each $A$ which is c-compatible with $(g,J)$,  Lemma~\ref{lem:killing} gives us  a hamiltonian  Killing vector field with the hamiltonian 
function $\det_\mathbb{C}A$. If $A$ is not parallel, this function is non-constant and therefore the Killing vector field is non-trivial.

Since equation \eqref{eq:main} is linear in $ A $ and admits $\Id:TM\rightarrow TM$ as a solution, we actually have a whole family 
$A(t) = t \cdot \Id-A $ of endomorphisms c-compatible with $(g,J)$. 
For any fixed  $t$,  the function ${\det_\mathbb{C}A(t)}$  is a hamiltonian for a  Killing vector field which we denote by $K(t)$.  We will call  these   
vector fields, and also all their linear combinations with constant coefficients,  \emph{canonical Killing vector fields 
corresponding to the solution $A$ of \eqref{eq:main}} (or to the c-projectively equivalent metric $\hat g$), or simply \emph{canonical Killing vector fields}.

\begin{lem} 
\label{lem:properties} 
The following statements hold for any endomorphism $A$ which is c-compatible with $(g,J)$:
 
 \begin{enumerate} 
 
 \item  Suppose for a smooth function $\rho$ on an open subset $U\subseteq M$ and for any point 
$p\in U$ the number $\rho(p)$ is an eigenvalue of $A$ at $p$ of algebraic  multiplicity $\ge 4$. 
Then this function $\rho$ is a constant on $U$.   {Moreover,   for any point of the manifold 
the constant  $\rho$ is an eigenvalue of $A$.} \label{l1} 
   
 \item The vectors  $\gr\, \rho$  and $J\gr\, \rho$ are  eigenvectors 
		of $A$ with eigenvalue $\rho$ at the points where the eigenvalue  $\rho$ is a smooth function.  \label{l4}
  
 \item  At a generic  point, the number of linearly independent canonical Killing vector fields coincides with the number of 
non-constant eigenvalues of $A$. \label{l2} 
      
\item At each regular point the number of eigenvalues $\rho$ with $\d\rho\ne 0$ is the same. \label{l3}

\item At regular points, the  restriction of  $g$ to the distribution 
spanned by the canonical Killing vector fields is non-degenerate. \label{l5} 

\item The canonical Killing vector fields $K(t)$, and also the vector fields $JK(t)$ commute: for any  $t_1, t_2\in \R $  
  we have \label{l6}
 $$   [K(t_1),K(t_2)]=[K(t_1),JK(t_2)]=[JK(t_1),JK(t_2)]=0 \quad\mbox{and}\quad \omega(K(t_1), K(t_2))=0.$$

 \item The local flow of every canonical  Killing vector field preserves $A$.  \label{l7} 
 
\item  For any two canonical Killing vector  fields $K(t_1), K(t_2)$ 
the vector $J\nabla_{ K(t_1)}K(t_2) $ at any point is contained in the span of the vector fields $K(t)$, $t\in \R$.  \label{l8}
  
 \end{enumerate}
\end{lem}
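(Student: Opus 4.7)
The strategy is to extract consequences of the linear PDE \eqref{eq:main} for $A$ and interpret them through the family $K(t)=J\gr f_t$, $f_t:=\det_{\C}(t\,\Id-A)$. Expanding $f_t=\sum_{k=0}^n(-1)^k\mu_k\,t^{n-k}$ with $\mu_k$ the elementary symmetric polynomials in the complex eigenvalues of $A$, the span of $\{K(t):t\in\R\}$ coincides with $\mathrm{span}\{J\gr\mu_1,\ldots,J\gr\mu_n\}$, so the counting assertions reduce to understanding the $\gr\mu_k$.

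For (\ref{l4}) and (\ref{l1}), I would fix a smooth eigenvalue function $\rho$ with $\C$-eigenvector $v$ normalised (off the null cone) by $g(v,v)=\pm 1$, and differentiate $\rho\,g(v,v)=g(Av,v)$ along $X$; the symmetric cancellations, together with \eqref{eq:main} and the identity $g(JX,v)=-g(X,Jv)$, yield
\[
d\rho(X)=\pm 2\,g(\Lambda,v)\,g(X,v)\pm 2\,g(\Lambda,Jv)\,g(X,Jv),
\]
so that $\gr\rho\in\mathrm{span}\{v,Jv\}\subset E_\rho$, which is (\ref{l4}). For (\ref{l1}), a real algebraic multiplicity $\ge 4$ lets me choose a second $\C$-independent eigenvector $u\in E_\rho$ with $v,Jv,u,Ju$ $\R$-independent; the identity applied to both $v$ and $u$ forces $\gr\rho\in\mathrm{span}\{v,Jv\}\cap\mathrm{span}\{u,Ju\}=\{0\}$, so $\rho$ is locally constant. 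The ``moreover'' statement that this constant remains an eigenvalue at every point of $M$ is then a connectedness argument: the set $\{p:\det(A_p-\rho\,\Id)=0\}$ is closed and, using that \eqref{eq:main} induces an ODE for $A$ along any curve, also open, hence equals the connected manifold $M$.

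For (\ref{l2}), (\ref{l3}) and (\ref{l5}), the central ingredient is the $g$-orthogonality of distinct eigenspaces of the hermitian $A$ (from $(\rho_i-\rho_j)g(v_i,v_j)=0$) combined with (\ref{l4}). This shows the $\gr\rho_i$ for distinct non-constant eigenvalues are pairwise $g$-orthogonal and hence $\R$-linearly independent wherever they are non-zero, and writing $d\mu_k$ triangularly in the $d\rho_i$'s gives (\ref{l2}). Part (\ref{l3}) then follows because the rank of $\{K(t)\}_{t\in\R}$ is upper semi-continuous and locally constant on the open set $M^0$. For (\ref{l5}), the orthogonality reduces non-degeneracy of $g$ on the span to non-degeneracy on each plane $\mathrm{span}\{v_i,Jv_i\}$; the $J$-invariance $g(Jv_i,Jv_i)=g(v_i,v_i)$ collapses the Gram matrix of this plane to $g(v_i,v_i)\cdot I_2$, and one uses (\ref{l4}) at a regular point where $d\rho_i\neq 0$ to exclude the isotropic case $g(v_i,v_i)=0$.

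Items (\ref{l6}), (\ref{l7}), (\ref{l8}) all pivot on the Poisson commutativity $\{f_{t_1},f_{t_2}\}=0$, which is the main obstacle. Granted this, $[K(t_1),K(t_2)]=\pm X_{\{f_{t_1},f_{t_2}\}}=0$, $\omega(K(t_1),K(t_2))=\{f_{t_1},f_{t_2}\}=0$, and $[K(t_1),JK(t_2)]=J[K(t_1),K(t_2)]=0$ since each $K(t)$ is holomorphic (the hermitian-Hessian observation from the proof of Lemma~\ref{lem:killing}), giving (\ref{l6}). For (\ref{l7}), the $K(t)$-flow preserves every $f_s$ and hence the entire characteristic polynomial of $A$; since it also preserves $g$ and $J$ while, on a dense open set, $A$ is algebraically reconstructible from a generic element of this commuting family together with its eigenspaces, $L_{K(t)}A=0$ follows. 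For (\ref{l8}), differentiating $K(t)=J\gr f_t$ and substituting \eqref{eq:main} into the resulting Hessian-type expression writes $\nabla_{K(t_1)}K(t_2)$ as a sum of vectors lying in individual eigenspaces $E_{\rho_i}$, whose images under $J$ are precisely the contributions to the $K(t)$'s. The Poisson commutativity itself I would prove by direct computation $\{\mu_i,\mu_j\}=-g(\gr\mu_i,J\gr\mu_j)$ and then using (\ref{l4}) together with the $J$-invariance and pairwise $g$-orthogonality of eigenspaces to see that all cross-terms vanish.
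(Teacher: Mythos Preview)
Your approach to parts (\ref{l6}) and the Poisson-commutativity computation is essentially the paper's, and for (\ref{l2}), (\ref{l3}), (\ref{l5}) you are on the right track. However, there is a genuine gap in your argument for (\ref{l1}), and it propagates into (\ref{l4}).

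For (\ref{l1}) you say that algebraic multiplicity $\ge 4$ ``lets me choose a second $\C$-independent eigenvector $u\in E_\rho$''. In indefinite signature this is not justified: a $g$-selfadjoint, $J$-commuting endomorphism can have non-trivial Jordan blocks, so algebraic multiplicity $4$ (a single complex $2\times 2$ Jordan block) may coexist with geometric multiplicity $2$, i.e.\ a \emph{one}-dimensional complex eigenspace. Exactly in that case every eigenvector is null (for the canonical pairing $\bigl(\begin{smallmatrix}0&1\\1&0\end{smallmatrix}\bigr)$ making such a block selfadjoint, the eigenvector $e_1$ satisfies $\langle e_1,e_1\rangle=0$), so your normalisation $g(v,v)=\pm1$ in the derivation of (\ref{l4}) also breaks down. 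Thus neither step of your argument for (\ref{l1}) goes through in the Jordan-block situation, which is precisely the delicate case.

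The paper avoids this by a different mechanism. It evaluates the one-parameter family of Killing potentials at $t=\tilde\lambda:=\rho(p)$: the value $f(\tilde\lambda)$ and its differential vanish at $p$ (multiplicity $\ge 2$), so the Hessian $\nabla^2 f(\tilde\lambda)(p)$ reduces to a rank-$\le 1$ expression proportional to $d\lambda_1^2$. But this Hessian is \emph{hermitian} (that is the content of Lemma~\ref{lem:killing}), and a hermitian form cannot have rank~$1$; hence it vanishes. A Killing field with vanishing $1$-jet is identically zero, so $f(\tilde\lambda)\equiv\mathrm{const}=0$, which simultaneously yields both the constancy of $\rho$ and the ``moreover'' clause that $\tilde\lambda$ is an eigenvalue at every point. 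Your proposed ``moreover'' via an ODE for $A$ along curves is not an argument: \eqref{eq:main} is not a closed ODE along a curve.

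Two smaller points. For (\ref{l7}), ``algebraically reconstructible from the commuting family'' is too vague, especially over the constant-eigenvalue eigenspaces; the paper instead shows that each $K(t)$ is a constant multiple of a canonical Killing field $\hat K(1/t)$ for $\hat g$, hence Killing for both metrics, and therefore preserves $A=A(g,\hat g)$. For (\ref{l8}), the paper argues geometrically: $\mathcal F=\mathcal V\oplus J\mathcal V$ is totally geodesic (from \eqref{eq:covderiv}), and inside a leaf the sub-distribution $J\mathcal V$ is orthogonal to Killing fields, hence also totally geodesic; this gives $\nabla_{JK(t_1)}JK(t_2)\in J\mathcal V$, equivalently $J\nabla_{K(t_1)}K(t_2)\in\mathcal V$. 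Your sketch via ``substituting \eqref{eq:main} into the Hessian expression'' could perhaps be made to work, but as written it is not a proof.
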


Most statements of the  lemma can be found in \cite{CEMN}. The proofs in the present paper are  
different from those in \cite{CEMN}, are shorter and do  not require introducing c-projectively 
invariant objects. For $g$ positive definite, the statements of the lemma have been obtained in the 
language of hamiltonian 2-forms in \cite{ApostolovI}. It is  not possible (that is, we did not 
find an easy way to do it) to directly generalise the proof from  \cite{CEMN} to metrics of all signatures. 

\begin{proof} 
  Let  $\lambda_1(x),\dots,\lambda_k(x)$ be the eigenvalues of $A$ at a  point $x\in M$. In the  
	proof of the \ref{l1}st  statement  of Lemma~\ref{lem:properties}  
	we  will  work in a neighbourhood of a generic point, which implies that we may assume w.l.o.g. that   
	the algebraic multiplicities of the eigenvalues are $2m_1,\dots,2m_k$, they do not change in this neighbourhood and 
 all  $\lambda_i$ are smooth, possibly complex-valued functions.   Now, evidently 
$f(t)= \det_\mathbb{C}(t \cdot \Id- A)= (t -\lambda_1)^{m_1}\dots(t- \lambda_k)^{m_k}$. 
     Note that the formula for $f(t)$  makes sense also   if $t\in \mathbb{C}\setminus \mathbb{R}$. 
 Indeed, because of linearity of the Killing equation, for a hamiltonian function $f(t)$ with   
$t\in \mathbb{C}\setminus \mathbb{R}$  the   hamiltonian vector field, which is now  complex-valued, 
is still a holomorphic Killing vector field in the sense that its real and imaginary parts are 
holomorphic Killing vector fields. To see this, note that 
 $f(t)$ is a polynomial in  $t$ so all of its coefficients are hamiltonians for holomorphic 
Killing vector fields. Thus,  for complex-valued $t$ the real and imaginary parts of $f(t)$
are still  linear combinations of the coefficients. 
 
  Consider now the family $\d f(t)$ of  differentials  of hamiltonians of the canonical  Killing vector fields. 
	It is   given by 
 \begin{equation}
 \label{kill}
\begin{aligned} 
m_1(t -\lambda_1)^{m_1-1}  ( t-\lambda_2)^{m_2}\dots(t-\lambda_k )^{m_k} \d\lambda_1
&+ m_2(t -\lambda_1)^{m_1}  (t -\lambda_2)^{m_2-1}\dots(t-\lambda_k )^{m_k} \d\lambda_2 +
\\ 
 \dots &+
  m_k( t-\lambda_1)^{m_1}  ( t-\lambda_2)^{m_2}\dots(t-\lambda_k )^{m_k-1}\d\lambda_k. \end{aligned} 
 \end{equation} 
 
 Suppose now that a (possibly complex-valued) eigenvalue  $\lambda_i$    has  algebraic multiplicity $2 m_i\ge 4$. 
W.l.o.g. we may think that $i=1$. We take an arbitrary  point $p$,   set $\tilde \lambda= \lambda_1(p)$ and    
consider the hamiltonian  $f(t)$ with  $t=\tilde \lambda$. Since $m_1\ge 2$, we see  that 
   $\d f(\tilde \lambda)=0$ at $p$. Then the components of the matrix of the hessian $\nabla^2 f(\tilde \lambda) $ at $p$ in any coordinate 
	system $x_i$ are simply given by the components $\D_i\D_j f(\tilde \lambda)$ of the usual hessian  at $p$ and, hence,    
	$$
   \left.\nabla^2 f(\tilde \lambda)(p)=m_1(m_1-1)(\tilde \lambda -\lambda_1)^{m_1-2}  (\tilde \lambda-\lambda_2 )^{m_2}
	\dots(\tilde \lambda-\lambda_k )^{m_k} \d\lambda_1^2\right..
   $$
 We see that  if $\lambda_1$ is actually real-valued, the hessian $\nabla^2 f(\tilde \lambda) $  at the point  $p$  
vanishes  or  has rank $1$. But it cannot have rank $1$ because it is hermitian. 
Thus, $\nabla^2 f(\tilde \lambda) $ has to vanish.  
   
   Suppose now $\lambda_1= \alpha + i \beta$,  where $\alpha$ and $\beta$ are real-valued functions. 
	Then, 
	$$ \d\lambda_1^2= 
   \d\alpha^2 -  \d\beta^2+ 2i \d\alpha\d\beta.
   $$ 
  If $\d\alpha$ and  $\d\beta$ are linearly dependent,   $\d\alpha^2 -  \d\beta^2$ and  
	$\d\alpha\d\beta $ have  rank $1$ or $0$. Since     rank $1$ is  
	impossible they  vanish. The case when  $\d\alpha$ and  $\d\beta$ are linearly independent cannot occur 
	because in this case the bilinear forms $\d\alpha^2 -  \d\beta^2$ and  $\d\alpha\d\beta$
	have signature $(1,1,2n-2)$, which contradicts that they are hermitian. Finally, $\nabla^2 f(\tilde \lambda) =0$ at $p$.

   It is well  known that the first jet (i.e, the   vector field and its first covariant derivative)  
	of a Killing  vector field at a point determines the Killing vector field on the whole manifold. As we 
	just proved,  the first jet of the  Killing vector field corresponding to the hamiltonian $ f(\tilde \lambda) $
	vanishes at $p$. Then it vanishes on the whole manifold which implies that the function $ f(\tilde \lambda) $  
	is a constant. It is clearly zero at the point $p$ so it is identically zero and     $\tilde \lambda$ is an 
	eigenvalue at every point of the manifold.     The \ref{l1}st statement of Lemma~\ref{lem:properties} is proved.
  
    Let us now  prove the  \ref{l4}nd  statement.  Denote by $X$ a vector   field of   eigenvectors 
		corresponding to a non-constant eigenvalue  $\rho$ (viewed as a function on the manifold). 
   First observe that for  any vector   $Y$   we have  
\begin{align}
\begin{array}{c}
(A-\rho\cdot\Id)\nabla_Y X=\d\rho(Y)X-g(X,Y)\Lambda-g(X,\Lambda)Y
-g(X,JY)J\Lambda+g(X,J\Lambda)JY.
\end{array}
\label{eq:covderiv}
\end{align}
To obtain  \eqref{eq:covderiv},  take the covariant derivative  in the direction of $Y$ of the equation 
$(A-\rho\cdot\Id) X=0$, substitute \eqref{eq:main} and rearrange the terms.  
 
Taking $Y$ orthogonal to $X$ and to $JX$,  we see that the right hand side of \eqref{eq:covderiv} is a linear 
combination of the linearly independent vectors $X$, $Y$ and $JY$. Since the left hand side $(A-\rho\cdot\Id)\nabla_Y X$ 
is orthogonal to  the kernel of  $(A-\rho\cdot\Id)$, the coefficient of $X$, which is $\d\rho(Y)$,  is zero. 
Thus, the function $\rho$ is constant in any direction orthogonal to $X$ and to $JX$. By the \ref{l1}st statement of Lemma~\ref{lem:properties}, 
the algebraic multiplicity of $\rho$ is $2$ and it follows that
$\gr\,\rho$ and $J\gr\, \rho$ are eigenvectors of $A$ corresponding to the eigenvalue $\rho$.
   
   To prove the \ref{l2}rd statement, consider the non-constant eigenvalues of $A$ and denote them  by  $\rho_1,\dots,\rho_\ell$. 
	We will work near a generic point so we may assume that $\rho_1,\dots,\rho_\ell$ are smooth functions with non-zero differentials.
	Observe that   for any $t$   the function   $( t-\lambda_i)^{m_i}$ is a constant if the eigenvalue $\lambda_i$ is a constant and, in view of  the proved \ref{l1}st statement,
   if $m_i\ge 2$. Then each $f(t)$ is proportional  with  a constant coefficient to $(t-\rho_1 )\dots(t-\rho_\ell)$. 
	The function  $\tilde f(t)= (t-\rho_1 )\dots(t-\rho_\ell)$ is a polynomial of degree $\ell$ with  leading coefficient equal to $1$  
	and has at most $\ell$ non-constant coefficients. Thus, the number of linearly independent canonical Killing vector fields is at most $\ell$. 
   
   Since the gradients $\gr\, \rho_i$ belong to different eigenspaces, they are linearly independent and in view of 
	\eqref{kill}, the differentials of  $\tilde f(t_1)$  and $\tilde f(t_2)$ are linearly independent for $t_1\ne t_2$ so the number of linearly 
	independent canonical Killing vector fields is precisely $\ell$.

To prove the \ref{l3}th  statement, recall that  a Killing vector field which vanishes on 
an open set vanishes everywhere.  Then, by the \ref{l2}rd statement of  the lemma,  the number of  non-constant eigenvalues   
is the same on every open subset. Then   the number of  constant eigenvalues is also the same on every open subset   and the claim follows. 
   
In order to proof the \ref{l5}th statement, observe that the distribution spanned by the canonical Killing vector fields, at regular points, 
 coincides with the distribution   spanned by the hamiltonian vector fields generated by the non-constant eigenvalues.  
By the \ref{l4}nd  statement, such hamiltonian vector fields  have non-zero length at regular points and  are mutually orthogonal, and the claim follows.  
   
Let us prove the \ref{l6}th statement. By the \ref{l4}nd  statement, we have 
$$
\omega(K(t_1),K(t_2))=g(JK(t_1),K(t_2))=0
$$ 
for any real numbers $t_1,t_2$.
By definition of a Poisson bracket, this equation is equivalent to say that the hamiltonian functions $f(t_1),f(t_2)$ corresponding to $K(t_1),K(t_2)$ Poisson commute,
$\{f(t_1),f(t_2)\}=0$. On the other hand, recall that $[K(t_1),K(t_2)]$ is the hamiltonian vector field corresponding to the hamiltonian $\{f(t_1),f(t_2)\}$.
We obtain $[K(t_1),K(t_2)]=0$. The remaining equations follow from the fact that the vector field $K(t)$ is holomorphic and therefore also $JK(t)$ is holomorphic.

To prove the \ref{l7}th statement, assume w. l. o. g. that $A$ is non-degenerate. Then we can consider the metric $\hat g$ from \eqref{eq:defhatg}
c-projectively equivalent to $g$. It is sufficient to show that the canonical Killing vector fields for $g$ are also canonical Killing vector  fields for 
$\hat g$. W.l.o.g.  we may work in a neighbourhood of a regular point. Let 
$\rho_1,\dots,\rho_\ell$ denote the non-constant eigenvalues of $A $. If we swap the metrics $g$ and $\hat g$ in the definition \eqref{eq:defA},
   the tensor constructed by the pair of metrics $\hat g$, $g$ is clearly the inverse of the initial  $A$, therefore its non-constant eigenvalues are 
   $\tfrac{1}{\rho_1},\dots,\tfrac{1}{\rho_\ell}$. 
   
 We will show that the canonical  Killing vector field $K(t)$ for $g$, whose hamiltonian  is $ \det_\mathbb{C} ( t\cdot \Id-A )$,  
  is proportional with a non-zero constant coefficient  to the  canonical  Killing vector field $\hat K\left(\tfrac{1}{t}\right) $  for $\hat g$,
	whose hamiltonian is  $ {\det_\mathbb{C} \left( \tfrac{1}{t}\cdot \Id- A^{-1}\right)}$. 
 
 Since the multiplicity of the non-constant eigenvalues of $A$ is two, 
 up to multiplication by  a constant, for any $t$, 
 the differential of  ${\det_\mathbb{C} (t \cdot\Id -A )}$ coincides  with  the differential 
 of $(t-\rho_1 )\dots(t-\rho_\ell)$ which is 
$$
    (t -\rho_2)\dots(t-\rho_\ell)\d\rho_1   + ( t-\rho_1) (t -\rho_3 )\dots(t-\rho_\ell) \d\rho_2  + \dots +  (t -\rho_1)\dots(t-\rho_{\ell-1})\d \rho_\ell . 
 $$ 
Similarly,   for any $t\ne 0$, 
 the differential of  $\det_\mathbb{C}\left(\tfrac{1}{t} \cdot\Id - A^{-1} \right)$ is proportional 
  with a constant coefficient  to the differential  
 of 
 $(\tfrac{1}{t}- \tfrac{1}{\rho_1})\dots(\tfrac{1}{t}-\tfrac{1}{\rho_\ell})$ 
which is, up to multiplication by a non-zero  constant, given by  
$$
   \frac{1}{\det_\mathbb{C}(A)} \left((\rho_2 -t)\dots(\rho_\ell-t)\tfrac{1}{\rho_1}\d\rho_1   
	+ (\rho_1 -t) (\rho_3 - t)\dots(\rho_\ell-t) \tfrac{1}{\rho_2}\d\rho_2  + \dots +  (\rho_1 -t)\dots(\rho_{\ell-1}-t)\tfrac{1}{\rho_\ell}\d \rho_\ell\right) . 
 $$
 Now, the canonical vector fields $K(t)$ and $\hat  K \left(\tfrac{1}{t}\right)$ are related to the differentials 
of $\det_\mathbb{C} ( t \cdot\Id-A )$ and $\det_\mathbb{C}\left(\tfrac{1}{t}\cdot \Id - A^{-1} \right)$ by 
 $$
 K(t)= J\gr_g \mathrm{det}_{\mathbb{C}} ( t \cdot\Id-A ) \textrm{ and }   \hat  K \left(\tfrac{1}{t}\right) 
= J\gr_{\hat g}\mathrm{det}_{\mathbb{C}}\left(\tfrac{1}{t} \cdot\Id - A^{-1} \right). 
 $$
Combining this with  \eqref{eq:defhatg} and the \ref{l4}nd statement, we conclude that $ K(t)$ 
is proportional to $\hat  K \left(\tfrac{1}{t}\right)$  with a constant factor.

Let us now prove the \ref{l8}th statement.  It is sufficient to prove it on the dense and open subset $M^0$ of regular points. 
As usual, by $\rho_1,\dots,\rho_\ell$ we denote  the non-constant eigenvalues of $A$.
From the definition, it follows that the integrable distribution $\mathcal V$ spanned by the canonical Killing vector fields 
$K(t)$, $t\in \R$, coincides with the distribution spanned by the vector fields  $J\gr \rho_i$, $i=1,\dots,\ell$.  
Consider the distribution $\mathcal F=\mathcal V\oplus J\mathcal V$. It is spanned by the family of vector fields $K(t),JK(t)$, $t\in \R$,
is integrable by the \ref{l6}th statement and coincides with the span of $\gr \rho_i,J\gr \rho_i$, $i=1,\dots,\ell$. 
From formula \eqref{eq:covderiv} combined with  the  \ref{l4}nd statement, it follows immediately  that the distribution $\mathcal F$ is 
totally geodesic. By the \ref{l5}th statement, the restriction $g|_{\mathcal L}$ of $g$ to an integral leaf $\mathcal L\subseteq M^0$ of $\mathcal F$ 
is nondegenerate. Then it follows that the integral leafs $\tilde {\mathcal L}\subseteq \mathcal L$ of the integrable 
subdistribution $J\mathcal V\subset \mathcal F$ are totally geodesic since they are orthogonal in $(\mathcal L,g|_{\mathcal L})$ to a distribution 
spanned by Killing vector fields. This implies that $\nabla_{JK(t_1)}JK(t_2)$ is tangent to $J\mathcal V$, or equivalently (since $J$ is parallel 
and $K(t)$ is holomorphic), that $J\nabla_{K(t_1)}K(t_1)$ is tangent to
$\mathcal V$ as we claimed. This completes the proof of Lemma~\ref{lem:properties}.
\end{proof}

Let $\mu_i$ denote the $i$th elementary symmetric
polynomial in $\rho_1,\dots,\rho_\ell$,  i.e., in the non-constant eigenvalues of $A$ c-compatible with $(g,J)$. 
Note that although the $\rho_i$ may fail to be smooth at certain points, the $\mu_i$
are globally defined smooth functions on $M$: clearly we have $\mathrm{det}_{\C}(t\cdot\Id-A)=P(t)\sum_{i=0}^\ell (-1)^i\mu_i t^{\ell-i}$, where
$P(t)$ is a polynomial of degree $n-\ell$ with constant coefficients and we put $\mu_0=1$. 
In what follows we will mainly work with a special set of canonical Killing vector fields 
$K_1,\dots,K_\ell$ corresponding to $A$, where $K_i$ is defined to be the hamiltonian vector field with $\mu_i$ as a hamiltonian function, i.e.,
\begin{equation}
\label{eq:defKi}
K_i=J\gr \mu_i.
\end{equation}
These Killing vector fields have been considered in \cite{ApostolovI}. By Lemma~\ref{lem:properties}, the span of these vector fields 
at each point coincides with the span of the vector fields $K(t)$, $t\in \R$, and therefore, they share all the properties that have 
been proven for the vector fields $K(t)$ in Lemma~\ref{lem:properties}. For instance, $\omega(K_i,K_j)=0$, hence, 
$[K_i,K_j]=[JK_i,K_j]=[JK_i,JK_j]=0$ and $K_1\wedge\dots\wedge K_\ell\neq 0$ at each point of $M^0$.

\section{Reduction to the real projective setting}
\label{sec:reduction}

We recall the description of a K\"ahler manifold with a local isometric hamiltonian $\R^\ell$-action
 in \S\ref{sec:generalreduction}. In the setting of c-projectively equivalent K\"ahler metrics, 
this action is given by the commuting Killing vector fields $K_1,\dots,K_\ell$ from \eqref{eq:defKi} 
induced by a hermitian solution $A$ of  \eqref{eq:main}. As stated in \S\ref{sec:idea}, the quotient of the
 K\"ahler manifold $(M,g,J)$ w.r.t. to this action yields a manifold $(Q,g_Q)$ and $g_Q$ admits 
a projectively equivalent metric. This will be described in detail in \S\ref{sec:cprorpro}.

\subsection{The K\"ahler quotient w.r.t. a local isometric hamiltonian $\R^\ell$-action}
\label{sec:generalreduction}

Recall from \cite[\S 3.1]{ApostolovI} that a \emph{local isometric hamiltonian $\R^\ell$-action} 
on a K\"ahler manifold $(M,g,J)$ is given by holomorphic Killing vector fields $K_1,\dots,K_\ell$ 
satisfying 
\begin{align}
\omega(K_i,K_j)=0\nonumber
\end{align}
and $K_1\wedge \dots\wedge K_\ell\neq 0$ on a dense and open subset $M^0\subset M$ called the 
set of regular points. 

Note that in \cite{ApostolovI}, the name ``$\ell$-torus action'' was used instead of ``$\R^\ell$-action''.  
The point is that the metrics in \cite{ApostolovI,ApostolovII} are positive definite so that under the additional 
assumption of compactness, the isometric $\R^\ell$-action described above generates a
 commutative subgroup of the compact group of isometries, its closure being a torus.

The conditions above imply that the vector fields $K_1,\dots,K_\ell$ commute 
with each other and that they are locally hamiltonian, i.e. we have $K_i=J\gr\,\mu_i$ 
for certain local functions $\mu_i$, $i=1,\dots,\ell$. 

By Lemma~\ref{lem:properties}, the canonical Killing vector fields \eqref{eq:defKi} 
coming from a solution $A$  of \eqref{eq:main} generate a local isometric hamiltonian 
$\R^\ell$-action, where $\ell$ is the number of non-constant eigenvalues of $A$ at a regular point. 
The notion of regular points as introduced above coincides with the notion of regular points introduced in  
\S\ref{sec:lc}. However, for the time being, we will forget about c-projective geometry and will first 
restrict to the general setting of a local isometric hamiltonian $\R^\ell$-action. 

Since we are dealing with metrics
 of arbitrary signature, we have to take care of the non-degeneracy of orbits of an $\R^\ell$-action.
A local isometric hamiltonian $\R^\ell$-action given by Killing vector fields $K_1,\dots,K_\ell$
 is called \emph{non-degenerate} if the restriction of the metric $g$ to the (regular) distribution 
$$\mathcal{V}=\mathrm{span}\{K_1,\dots,K_\ell\}$$ on the regular set $M^0$ is non-degenerate.

As shown in Lemma~\ref{lem:properties}~(\ref{l5}), the $\R^\ell$-action coming from the canonical Killing vector fields
corresponding to a solution $A$ of \eqref{eq:main} is non-degenerate in the above sense.

Given a local isometric non-degenerate hamiltonian $\R^\ell$-action, we will now reduce the 
setting by considering the quotient of $M$ w.r.t. the action of the Killing vector 
fields. The procedure of this reduction and the local description of K\"ahler metrics admitting 
a local isometric hamiltonian $\R^\ell$-action can be found in \cite[\S 3.1]{ApostolovI} 
and \cite{PedersenPoon}. For the sake of completeness, we will recall these results. 
The only difference in our case is that the metric $g$ is allowed to have arbitrary 
signature but assuming non-degeneracy, there is actually no difference to the 
procedure described in \cite{ApostolovI}.

Consider a non-degenerate local isometric hamiltonian $\R^\ell$-action on $(M,g,J)$ by
 holomorphic Killing vector fields $K_1,\dots,K_\ell$. Let us restrict our attention to 
the regular set $M^0$ and let $G$ denote the commutative (pseudo-)group generated by the
 local flows of $K_1,\dots,K_\ell$. Consider the (local) quotient $Q=M/G$ 
of $M$ w.r.t. the $G$-action and the (local) fiber bundle  
$$
\pi:M\rightarrow Q=M/G.
$$
The vertical distribution of this bundle coincides with the distribution $\mathcal{V}$ 
 and we define a ($G$-invariant) horizontal distribution $\mathcal{Q}=\mathcal{V}^\perp$.
 Let $\theta=(\theta_1,\dots,\theta_\ell):TM\rightarrow \R^\ell$ be the corresponding connection
 $1$-form on $M$, where the components $\theta_i$ have been chosen to be dual to the
 Killing vector fields $K_i$, that is, the 1-forms $\theta_i$ are defined by
$$
\theta_i(K_j)=\delta_{ij}\mbox{ and }\theta_i(\mathcal Q)=0.
$$

As above, the local generators for the vector fields $K_i$ will be denoted by $\mu_i$
 (so that $K_i=J\gr\,\mu_i$) and we can gather these functions into a (locally defined)
 moment map $\mu=(\mu_1,\dots,\mu_\ell):M\rightarrow (\R^\ell)^*$ for the hamiltonian action of $G$. 
 Lemma~\ref{lem:properties}~\eqref{l6}   implies that the moment map $\mu$ is $G$-invariant,  thus
 it descends to a mapping $\mu:Q\rightarrow (\R^\ell)^*$ on the quotient. 
 The level sets 
$S_\mu$ in $Q$ of this mapping are the K\"ahler quotients of $(M,g,J)$ w.r.t. the isometric 
hamiltonian action of $G$. We refer the reader to \cite[\S 3]{Hitchin} for some background 
on symplectic reduction and K\"ahler quotients.

On the other hand, we can also take the (local) quotient of $M$ w.r.t. the action of the 
commutative (pseudo-)group $G^{\C}$ generated by the local flows of the commuting vector 
fields $K_1,\dots,K_\ell,JK_1,\dots,JK_\ell$. The result is a manifold $S=M/G^{\C}$ and since 
the tangent spaces of the fibers of the bundle $M\rightarrow S$ are $J$-invariant and the
 action of $G^{\C}$ is by holomorphic transformations, $S$ inherits a canonical complex 
structure $J_S$. As a complex manifold, $S$ can canonically be identified with the K\"ahler 
quotients $S_\mu$. In view of this, $S$ carries a family of K\"ahler structures $(g_\mu,\omega_\mu)$ 
which are compatible with the complex structure $J_S$. The quotient $Q$ may locally be written 
in the form $Q=S\times U$, where the open subset $U\subseteq (\R^\ell)^*$ parametrises the
 level sets of $\mu$. In this picture, the subset $U$ can be viewed as the parameter space
 for the family of compatible K\"ahler structures $(g_\mu,\omega_\mu)$ on $S$. 

Since the forms $\theta_i\circ J$ and $\d\mu_i$ span the same subspace of $T^* M$,
 we can define a point-wise non-degenerate matrix of functions $G_{ij}$ and its inverse with
 components $H_{ij}$ by  
\begin{align}
\theta_i\circ J=\sum_{j=1}^\ell G_{ij}\d\mu_j\mbox{ and }\d\mu_i\circ J=-\sum_{j=1}^\ell H_{ij}\theta_j.\label{eq:formulaforJ}
\end{align}
Note that it follows from \eqref{eq:formulaforJ} that 
$$H_{ij}=g(K_i,K_j),$$
in particular, $H_{ij}$ and $G_{ij}$ are symmetric in $i,j$.

The K\"ahler structure can now be written in the form
\begin{align}
\begin{array}{c}
g=g_\mu+\sum_{i,j=1}^\ell H_{ij}\theta_i\theta_j+\sum_{i,j=1}^\ell G_{ij}\d\mu_i \d\mu_j,\vspace{1mm}\\
\omega=\omega_\mu+\sum_{i=1}^\ell\d\mu_i\wedge \theta_i.
\end{array}
\label{eq:formulaforgw}
\end{align}

In our case,  the $\R^\ell$-action induced by a solution  $A$ of \eqref{eq:main} satisfies
 one additional property called {\it rigidity} that essentially simplifies the above local 
formulas for $g$ and $\omega$.

Recall from \cite[\S 3.2]{ApostolovI} that a local hamiltonian $\R^\ell$-action on a K\"ahler 
manifold $(M,g,J)$ given by holomorphic Killing vector fields $K_1,\dots,K_\ell$ is called \emph{rigid}
  if the leaves of the distribution 
$$\mathcal{F}=\mathcal{V}\oplus J\mathcal{V}$$
are totally geodesic (where $\mathcal{V}$ is the vertical distribution of $M\rightarrow Q$).

There are a number of equivalent conditions to this rigidity property, see  
\cite[Proposition 8]{ApostolovI}. We recall this result for convenience of the reader. Note that, although it has been proven 
in \cite{ApostolovI} for positive signature, the proof still works in arbitrary signature assuming non-degeneracy of the $\R^\ell$-action.

\begin{prop}
\cite{ApostolovI}
\label{prop:rigidity}
Consider a local isometric hamiltonian $\R^\ell$-action given by holomorphic Killing vector fields $K_1,\dots,K_\ell$. 
The following assumptions are equivalent:

\begin{enumerate}

\item The action is rigid.

\item  The functions $H_{ij}=g(K_i,K_j)$ are constant on the level surfaces of the 
moment map $\mu:M\rightarrow (\R^\ell)^*$. 

\item $\nabla_{K_i} K_j \in \mathcal{F}$ for all $i,j=1,\dots,\ell$. 

\item The K\"ahler quotient forms $\omega_\mu$ depend affinely on the components $\mu_i$ 
of the moment map $\mu:Q\rightarrow (\R^\ell)^*$
and their linear part pulls back to the curvature of $(\theta_1,\dots,\theta_\ell)$. 

\end{enumerate}
\end{prop}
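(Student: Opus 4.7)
The plan is to establish the equivalences via the chain $(1)\Leftrightarrow(3)\Leftrightarrow(2)$ together with $(1)\Leftrightarrow(4)$. Throughout, I will use the $g$-orthogonal decomposition $TM=\mathcal{V}\oplus J\mathcal{V}\oplus\mathcal{H}$ with $\mathcal{H}=\mathcal{F}^\perp$, which is valid because $g|_{\mathcal{V}}$ is non-degenerate by the standing assumption, $J$ is a $g$-isometry, and the assumption $\omega(K_i,K_j)=0$ forces $\mathcal{V}\perp J\mathcal{V}$.

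To prove $(1)\Leftrightarrow(3)$, I will exploit that $\mathcal{F}$ is $J$-invariant and spanned by the frame $K_1,\dots,K_\ell,JK_1,\dots,JK_\ell$, so total geodesicity of $\mathcal{F}$ reduces to checking $\nabla_X Y\in\mathcal{F}$ for $X,Y$ in this frame. The identities
$$\nabla_{K_i}(JK_j)=J\nabla_{K_i}K_j,\quad \nabla_{JK_i}K_j=J\nabla_{K_j}K_i,\quad \nabla_{JK_i}(JK_j)=J\nabla_{JK_i}K_j$$
follow from $\nabla J=0$, torsion-freeness and the vanishing commutators $[K_i,K_j]=[K_i,JK_j]=[JK_i,JK_j]=0$, which in turn come from commutativity of the $\R^\ell$-action combined with holomorphicity of the $K_i$. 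Together these reduce the total-geodesicity condition to the single requirement $\nabla_{K_i}K_j\in\mathcal{F}$, which is (3).

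The equivalence $(2)\Leftrightarrow(3)$ is a short computation. The Killing property of $K_i$ together with $[K_i,K_j]=0$ (hence $\nabla_{K_i}K_j=\nabla_{K_j}K_i$) give
$$dH_{ij}(X)=X\bigl(g(K_i,K_j)\bigr)=-2\,g(\nabla_{K_i}K_j,X).$$
Since each $H_{ij}$ is $G$-invariant, it descends to $Q$; the condition ``constant on the level sets of $\mu:Q\to U$'' then translates to $dH_{ij}|_{\mathcal{H}}=0$, i.e.\ $\nabla_{K_i}K_j\perp\mathcal{H}$, which is $\nabla_{K_i}K_j\in\mathcal{H}^\perp=\mathcal{F}$.

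Finally, for $(1)\Leftrightarrow(4)$ I start from the global expression \eqref{eq:formulaforgw} and differentiate, using that $\omega_\mu$ is fiberwise closed on $S$, to rewrite $d\omega=0$ as
$$\sum_{i=1}^\ell d\mu_i\wedge\Bigl(\tfrac{\partial\omega_\mu}{\partial\mu_i}+d\theta_i\Bigr)=0.$$
Because $\R^\ell$ is abelian, each $d\theta_i$ descends to $Q=S\times U$. Contracting the identity above with the $K_j$ and using $\theta_i(K_j)=\delta_{ij}$ and $d\mu_i(K_j)=-\omega(K_i,K_j)=0$, I can identify the $S$-horizontal component of $d\theta_i$ with $-\partial\omega_\mu/\partial\mu_i$. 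Rigidity, in the form (3), precisely says that $d\theta_i$ has no $\mu$-dependence, which forces $\omega_\mu$ to be affine in $\mu$ with linear part given by the curvature; running the argument backward yields the converse. The main obstacle is this last equivalence: one must carefully separate the horizontal and vertical components of $d\theta_i$ and keep track of which tensors are basic, $\mu$-dependent, or purely vertical, and rigidity is precisely the assumption that makes this separation compatible with the splitting $TM=\mathcal{F}\oplus\mathcal{H}$.
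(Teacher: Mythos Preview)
The paper does not actually prove this proposition: it is quoted from \cite{ApostolovI} with the remark that the proof given there ``still works in arbitrary signature assuming non-degeneracy of the $\R^\ell$-action.'' So there is no proof in the paper to compare against; your attempt is a self-contained reconstruction.

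Your arguments for $(1)\Leftrightarrow(3)$ and $(2)\Leftrightarrow(3)$ are correct. The reduction of total geodesicity of $\mathcal F$ to the single condition $\nabla_{K_i}K_j\in\mathcal F$ via $\nabla J=0$ and the vanishing commutators is clean, and the computation $dH_{ij}(X)=-2g(\nabla_{K_i}K_j,X)$ together with the $G$-invariance of $H_{ij}$ gives exactly the equivalence with~(2). These two equivalences are the ones the present paper actually uses later (see the Remark following the proposition and the proof of Lemma~\ref{lem:properties}(8)), so for the purposes of this paper you have done enough.

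Your treatment of $(1)\Leftrightarrow(4)$, however, has a genuine gap. You correctly derive from $d\omega=0$ the identity $\sum_i d\mu_i\wedge(\partial_{\mu_i}\omega_\mu-d\theta_i)=0$ (up to a sign), but this holds \emph{always}, not just under rigidity, and only pins down the $S$-tangential part of $d\theta_i$. The crucial assertion ``rigidity, in the form (3), precisely says that $d\theta_i$ has no $\mu$-dependence'' is stated without justification. To close this, you would need to compute the mixed components $d\theta_i(JK_j,Y)$ for $Y$ a horizontal lift from $S$ and show that their vanishing (equivalently, the vanishing of the $d\mu\wedge dy$-part of $d\theta_i$) together with the $\mu$-independence of the $dy\wedge dy$-part is equivalent to $\nabla_{K_i}K_j\in\mathcal F$; this is where condition (2), i.e.\ $H_{ij}=H_{ij}(\mu)$, enters to control the connection form $\theta_i$ itself. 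You flag this as ``the main obstacle'' but do not resolve it.
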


\begin{rem}
Condition (3) of the proposition can be replaced by ``$\nabla_{K_i} K_j\in J\mathcal V$ for all $i,j=1,\dots,\ell$''.
Indeed, if this holds, $\mathcal F$ is obviously totally geodesic, hence, the action is rigid. The converse direction
follows from the same line of arguments that has been used in the proof of Lemma~\ref{lem:properties}~(8). 
We see that ``$J\mathcal V$ is totally geodesic'' is another condition equivalent to rigidity of the action.
\end{rem}

Proposition~\ref{prop:rigidity}  gives rise to some simplifications in \eqref{eq:formulaforgw} and
 we come to the following local description:

\begin{prop}
\label{prop:locrigid}
Let $(M, g,J,\omega)$ be a K\"ahler $2n$-manifold together with 
a rigid  non-degenerate (local) isometric hamiltonian $\R^\ell$-action
generated by Hamiltonian Killing vector fields $K_i=J\gr\,\mu_i$, $i=1,\dots,\ell$.  
Then locally $M$ can be presented as direct product
$$
V(t_1,\dots,t_\ell) \times U(\mu_1,\dots,\mu_\ell) \times S(y_1,\dots, y_{2k}),
$$  
and $g$, $\omega$ and $J$ take the following form:
\begin{equation}
\label{eq1}
g= \sum_{i,j=1}^\ell H_{ij} (\mu) \theta_i\theta_j  +  \sum_{i,j=1}^\ell G_{ij} (\mu) \d\mu_i \d\mu_j  +  
\sum_{i=1}^\ell \mu_i g_i + g_0,  \quad  \sum_{j=1}^\ell H_{ij}G_{jk}=\delta_{ik}
\end{equation}
\begin{equation}
\label{eq3}
\omega = \sum_{i=1}^\ell d\mu_i \wedge \theta_i +  \sum_{i=1}^\ell \mu_i \omega_i + \omega_0
\end{equation}
and 
\begin{equation}
\label{eq4}
\theta_i\circ J=\sum_{j=1}^\ell G_{ij}\d\mu_j,\,\,\,\d\mu_i\circ J=-\sum_{j=1}^\ell H_{ij}\theta_j,\,\,\,
\d y_i\circ J=\d y_i\circ J_S.
\end{equation}
where the ingredients in these formulas are as follows:

\begin{enumerate}

\item  $\theta_i= \d t_i + \alpha_i$ with $\d\alpha_i=\omega_i$;

\item  $\bigl(g_\mu{=}\sum_i\mu_i g_i+g_0, \  \omega_\mu{=}\sum_i\mu_i\omega_i+\omega_0, \ J_S\bigr)$, 
is a K\"ahler structure on $S$ for any $\mu\in U$  
(compatible with the same complex structure $J_S$ independent of $\mu$);

\item  $\partial_{\mu_i} G_{jk} = \partial_{\mu_j} G_{ik}$.
\end{enumerate}

Conversely,  if on $M=V\times U \times S$ we consider $g$, $\omega$, $J$ as above, 
then $(g, \omega)$ is a K\"ahler structure on $M$ and the generators $\mu_1,\dots, \mu_\ell$ 
define a rigid non-degenerate (local) isometric hamiltonian $\R^\ell$-action. In particular,  
the vector fields $K_i=J\gr\, \mu_i$ are holomorphic Killing vector fields.

\end{prop}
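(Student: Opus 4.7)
The plan is to start from the general description \eqref{eq:formulaforgw} of a K\"ahler manifold with a non-degenerate local isometric hamiltonian $\R^\ell$-action, and then translate the rigidity hypothesis, via Proposition~\ref{prop:rigidity}, into the specific simplifications needed to arrive at \eqref{eq1}--\eqref{eq4}. Locally the quotient $Q=M/G$ is $S\times U$, where $S=M/G^\C$ carries a family of K\"ahler structures $(g_\mu,\omega_\mu,J_S)$ parametrised by $\mu\in U\subseteq(\R^\ell)^*$. I would first verify that condition~(2) of Proposition~\ref{prop:rigidity}, namely that $H_{ij}=g(K_i,K_j)$ is constant along the fibres of $\mu$, implies $H_{ij}=H_{ij}(\mu)$; since $G_{ij}$ is the inverse of $H_{ij}$, the same holds for $G_{ij}$. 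Next, condition~(4) of that proposition says $\omega_\mu=\sum_i\mu_i\omega_i+\omega_0$ is affine in $\mu$ with $\omega_i$ pulling back to the curvature of the connection $\theta=(\theta_1,\dots,\theta_\ell)$; since $g_\mu(\cdot,\cdot)=\omega_\mu(\cdot,J_S\cdot)$ and $J_S$ is $\mu$-independent, we immediately deduce $g_\mu=\sum_i\mu_i g_i+g_0$ with the same affine structure.

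Now I would choose the coordinates. Because $K_1,\dots,K_\ell$ commute, on a small neighbourhood of a regular point one finds coordinates $t_1,\dots,t_\ell$ along the orbits with $K_i=\D_{t_i}$. The $G$-invariant 1-forms $\theta_i$ dual to $K_i$ (with $\theta_i(\mathcal{Q})=0$) then have the form $\theta_i=\d t_i+\alpha_i$ with $\alpha_i$ horizontal. Since the curvature $\d\theta_i$ pulls back from $S$ and agrees with $\omega_i$ on $S$ by condition~(4), we may take $\alpha_i$ to be a 1-form on $S$ with $\d\alpha_i=\omega_i$. Choosing local coordinates $y_1,\dots,y_{2k}$ on $S$ and recalling that on $U$ the functions $\mu_1,\dots,\mu_\ell$ themselves form coordinates, we present $M$ as the local direct product $V\times U\times S$. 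Substituting into \eqref{eq:formulaforgw} now gives exactly \eqref{eq1} and \eqref{eq3}, while the formulas for $J$ in \eqref{eq4} come directly from \eqref{eq:formulaforJ} together with the fact that $J$ restricts to $J_S$ on the horizontal $S$-directions.

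For the converse, I would proceed backwards: given the data (1)--(3), directly compute $\d\omega$ and verify $\d\omega=0$. The $\d t_i\wedge(\cdots)$ part vanishes because the $\mu$-dependence of $\omega_\mu$ is only through the coefficients $\mu_i$ and because $\d\alpha_i=\omega_i$; the purely horizontal $\d\mu_i\wedge\d\mu_j\wedge\theta_k$ terms vanish precisely due to the symmetry $\D_{\mu_i}G_{jk}=\D_{\mu_j}G_{ik}$ (together with the closedness of $\omega_\mu$ on $S$ for each $\mu$). The integrability of $J$ (i.e.\ that $(g,J)$ is K\"ahler) then follows automatically from $\d\omega=0$, the compatibility $\sum_j H_{ij}G_{jk}=\delta_{ik}$ which encodes $J^2=-\Id$, and $J|_S=J_S$. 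Finally, the vector fields $K_i=\D_{t_i}$ satisfy $J\gr\,\mu_i=K_i$ by direct inspection of \eqref{eq1} and \eqref{eq4}, their orbits are totally geodesic together with the $JK_i$-directions (giving rigidity), and the non-degeneracy of the action reduces to non-degeneracy of $H_{ij}$.

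The main obstacle I anticipate is bookkeeping: organising the several pieces (the $S$-part, the $U$-part, and the $V$-part) so that the closedness of $\omega$ produces exactly the symmetry $\D_{\mu_i}G_{jk}=\D_{\mu_j}G_{ik}$ and no hidden extra conditions, and verifying that the choice of $\alpha_i$ with $\d\alpha_i=\omega_i$ is always possible locally (which is immediate since $\omega_i$ is closed on $S$ as the linear coefficient of a closed affine family $\omega_\mu$). Once these points are handled, the two implications are essentially a direct reading of Proposition~\ref{prop:rigidity} and formulas \eqref{eq:formulaforJ}--\eqref{eq:formulaforgw}.
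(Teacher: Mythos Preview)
Your overall strategy---derive \eqref{eq1}--\eqref{eq4} from the general formulas \eqref{eq:formulaforJ}--\eqref{eq:formulaforgw} using the equivalent rigidity conditions of Proposition~\ref{prop:rigidity}, and then verify the converse by direct computation---is exactly what the paper intends (the paper itself does not spell out a proof, treating the proposition as a reformulation of results from \cite{ApostolovI,PedersenPoon}). The forward direction is fine as you describe it.

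However, your converse argument contains a concrete misattribution. You claim that the symmetry condition $\partial_{\mu_i}G_{jk}=\partial_{\mu_j}G_{ik}$ is what kills the ``$\d\mu_i\wedge\d\mu_j\wedge\theta_k$ terms'' in $\d\omega$. But compute $\d\omega$ directly from \eqref{eq3}: since $\d\theta_i=\d\alpha_i=\omega_i$ and each $\omega_i,\omega_0$ is closed on $S$, one gets
\[
\d\omega=-\sum_i \d\mu_i\wedge\omega_i+\sum_i \d\mu_i\wedge\omega_i=0
\]
identically, with no appearance of $G_{ij}$ at all. So $\d\omega=0$ needs only items~(1) and~(2), not~(3).

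Where condition~(3) actually enters is precisely the step you dismiss as ``automatic'': the integrability of $J$. Taking the $(1,0)$-forms $\eta_i=\theta_i+i\sum_j G_{ij}\d\mu_j$ (together with the $(1,0)$-forms of $J_S$), one finds
\[
\d\eta_i=\omega_i+i\sum_{j<k}\bigl(\partial_{\mu_k}G_{ij}-\partial_{\mu_j}G_{ik}\bigr)\,\d\mu_k\wedge\d\mu_j.
\]
The term $\omega_i$ is of type $(1,1)$ on $S$ and hence lies in the ideal generated by $(1,0)$-forms; but $\d\mu_k\wedge\d\mu_j$, rewritten via $\d\mu_m=\tfrac{1}{2i}\sum_l H_{ml}(\eta_l-\bar\eta_l)$, has a non-trivial $(0,2)$-component $\bar\eta_l\wedge\bar\eta_m$. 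Its coefficient vanishes exactly when $\partial_{\mu_k}G_{ij}=\partial_{\mu_j}G_{ik}$. Thus condition~(3) is equivalent to the Nijenhuis integrability of $J$ in the $V\times U$ directions, and this is the genuine content you must supply for the converse. Once you relocate condition~(3) to this step, the rest of your outline goes through.
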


\subsection{Reduction from the c-projective to the projective setting}
\label{sec:cprorpro}

We continue to use  the notation introduced in the preceding section but assume that 
the nondegenerate local isometric hamiltonian $\R^\ell$-action is given by the Killing vector fields $K_1,\dots,K_\ell$ 
from \eqref{eq:defKi}  that come from a certain solution $A$ of \eqref{eq:main}. Let $g_Q$ denote the metric 
on the quotient $Q=M/G$ obtained from $g$.  In the notation of Proposition~\ref{prop:locrigid}, 
$Q$ can locally be identified with $U\times S$ and $g_Q$ can be obtained from \eqref{eq1} by removing the first term, i.e.
\begin{equation}
\label{eq:formforg_Q}
g_Q =  \sum_{i,j=1}^\ell G_{ij} (\mu) \d\mu_i \d\mu_j  +  
\sum_{i=1}^\ell \mu_i g_i + g_0. 
\end{equation}

Recall that the vertical distribution $\mathcal{V}=\mathrm{span}\{K_1,\dots,K_\ell\}$ coincides 
with the span of the vector fields $J\gr\,\rho_1,\dots,J\gr\,\rho_\ell$,
 where $\rho_1,\dots,\rho_\ell$ are the non-constant eigenvalues of $A$. Since by Lemma~\ref{lem:properties}, 
the vector fields  $J\gr\,\rho_i$ take values in the eigenspaces of $A$, 
 the distribution $\mathcal{V}$  is $A$-invariant and consequently, $A$ preserves also $\mathcal{Q}=\mathcal{V}^\perp$.
 On the other hand, according to Lemma~\ref{lem:properties}~(\ref{l7}),  $A$ is preserved by 
the Killing vector fields $K_i$ and it follows that $A$ 
descends to a $g_{Q}$-selfadjoint endomorphism $A_Q:TQ\rightarrow TQ$.

Recall the O'Neill formula \cite{ONeill} for a Riemannian submersion relating the Levi-Civita 
connection $\nabla^Q$ of the quotient metric $g_Q$ to the Levi-Civita connection $\nabla$ of $g$ by
\begin{align}
\nabla^Q_X Y=\mathrm{pr_{\mathcal{Q}}}(\nabla_{X} Y),\label{eq:ONeill}
\end{align}
where $\mathrm{pr_{\mathcal{Q}}}:TM\rightarrow \mathcal{Q}$ is the projection onto the horizontal 
distribution $\mathcal{Q}$ and we adopted the convention to denote vector fields 
on $Q$ and their horizontal lifts to $\mathcal{Q}$ by the same symbol. Note that the vector field
 $\Lambda$ in \eqref{eq:main} is tangent to the horizontal distribution $\mathcal{Q}$ and it is 
invariant w.r.t. the action of the $K_i$'s. Thus, $\Lambda$ is (the horizontal lift of) a 
vector field on $Q$.

\begin{lem}
\label{lem:redtorealproj}
The endomorphism $A_Q:TQ\rightarrow TQ$ obtained from $A$ by reduction satisfies the equation 
\begin{align}
\nabla^Q_X A_Q=X^\flat \otimes \Lambda+\Lambda^\flat\otimes X\label{eq:mainreal0}
\end{align}
for all $X\in TQ$, where $X^\flat=g_Q(X,\cdot)$.  In other words, $g_Q$ and $A_Q$ are 
compatible in the projective sense.
\end{lem}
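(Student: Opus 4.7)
The plan is to combine O'Neill's formula \eqref{eq:ONeill} with the c-projective equation \eqref{eq:main}, observing that the reduction kills exactly the two terms which distinguish \eqref{eq:main} from its projective counterpart \eqref{eq:main:proj}.

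I would first note that $A$ preserves $\mathcal{Q}=\mathcal{V}^\perp$: since $\mathcal{V}$ is spanned by the $A$-eigenvectors $J\gr\,\rho_1,\dots,J\gr\,\rho_\ell$ (Lemma~\ref{lem:properties}~(\ref{l4})) it is $A$-invariant, and $g$-selfadjointness of $A$ then forces the $g$-orthogonal complement $\mathcal{Q}$ to be $A$-invariant too. Consequently $A$ commutes with $\mathrm{pr}_{\mathcal{Q}}$, and for horizontal vector fields $X,Y$ the formula \eqref{eq:ONeill} gives
$$
(\nabla^Q_X A_Q)(Y) \;=\; \mathrm{pr}_{\mathcal{Q}}\bigl(\nabla_X(AY)\bigr) - A\,\mathrm{pr}_{\mathcal{Q}}(\nabla_X Y) \;=\; \mathrm{pr}_{\mathcal{Q}}\bigl((\nabla_X A)(Y)\bigr),
$$
where we used that $AY$ is horizontal.

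The heart of the argument is the observation that $J\Lambda$ is vertical. Horizontality of $\Lambda$ is already noted in the text preceding the lemma, so it suffices to compute $\tr A$: by Lemma~\ref{lem:properties}~(\ref{l1}) every non-constant eigenvalue of $A$ has complex algebraic multiplicity one, which combined with $J$-commutativity of $A$ yields $\tr A = 2\mu_1 + \mathrm{const}$, and hence $\Lambda = \tfrac14\gr(\tr A) = \tfrac12\gr\,\mu_1$ and $J\Lambda = \tfrac12 K_1\in\mathcal{V}$. Substituting \eqref{eq:main} into the display above, the first two terms $g(X,Y)\Lambda+g(\Lambda,Y)X$ are horizontal and survive projection as $g_Q(X,Y)\Lambda+g_Q(\Lambda,Y)X$; the third term $g(JX,Y)J\Lambda$ is vertical and is killed by $\mathrm{pr}_{\mathcal{Q}}$; and the fourth term $g(J\Lambda,Y)JX$ vanishes pointwise since $Y\in\mathcal{Q}$ is orthogonal to $J\Lambda\in\mathcal{V}$. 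This proves \eqref{eq:mainreal0}.

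To conclude that \eqref{eq:mainreal0} really is the projective compatibility condition \eqref{eq:main:proj} for $(g_Q,A_Q)$, I would verify the normalisation $\Lambda = \tfrac12\gr_{g_Q}(\tr A_Q)$. A computation analogous to the one above (using Lemma~\ref{lem:properties}~(\ref{l4}) to identify the $A$-action on $\mathcal{V}$) gives $\tr(A|_{\mathcal{V}})=\mu_1$, whence $\tr A_Q = \tr A - \tr(A|_{\mathcal{V}}) = \mu_1 + \mathrm{const}$; since $\mu_1$ descends to $Q$, its $g$-gradient is the horizontal lift of $\gr_{g_Q}\mu_1$, and thus $\tfrac12\gr_{g_Q}(\tr A_Q) = \tfrac12\gr\,\mu_1 = \Lambda$, as required. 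The central point---the basis for the whole ``main observation'' of \S\ref{sec:idea}---is the identification $J\Lambda = \tfrac12 K_1$: it is precisely this fact which causes the two terms of \eqref{eq:main} involving $J\Lambda$ and $JX$ to vanish upon reduction, collapsing the c-projective equation to its projective counterpart.
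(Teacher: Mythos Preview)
Your proof is correct and follows essentially the same approach as the paper: derive $(\nabla^Q_X A_Q)Y=\mathrm{pr}_{\mathcal Q}\bigl((\nabla_X A)Y\bigr)$ from O'Neill's formula and the $A$-invariance of $\mathcal Q$, then observe that $J\Lambda\in\mathcal V$ kills the two ``complex'' terms of \eqref{eq:main}. You supply more detail than the paper does---in particular the explicit identification $J\Lambda=\tfrac12 K_1$ and the verification of the normalisation $\Lambda=\tfrac12\gr_{g_Q}(\tr A_Q)$, which the paper leaves implicit---but the argument is the same.
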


\begin{proof}
From the O'Neill formula \eqref{eq:ONeill},  the definition of $A_Q$ and  commutativity 
of $A$ with $\mathrm{pr}_{\mathcal{Q}}$, it follows that
$$(\nabla^Q_X A_Q)Y=\nabla^Q_X(A_Q Y)-A_Q(\nabla^Q_X Y)=\mathrm{pr}_{\mathcal{Q}}( (\nabla_{X}A)Y).$$
Inserting \eqref{eq:main} into this equation and using the fact that $J\Lambda$ is tangent to 
$\mathcal{V}=\mathcal{Q}^\perp$, we obtain 
$$(\nabla^Q_X A_Q)Y=g(X,Y)\Lambda+g(\Lambda,Y)X=g_Q(X,Y)\Lambda+g_Q(\Lambda,Y)X$$
as we claimed.
\end{proof}

The local description of compatible pairs $(g_Q, A_Q)$ has been recently
 obtained in \cite{BM} (see also \cite{BMgluing}).  These results, after some adaptation,
 will lead us to the local description of pairs $(g, A)$.

\section{Local description of c-projectively equivalent metrics}
\label{sec:locclass}

In this section we prove Theorems~\ref{thm:invform}   and~\ref{thm:localclassification}.

\subsection{Local description of the quotients of c-projectively equivalent metrics and lifting}
\label{sec:normalformsquotient}

We have shown above that by taking the quotient of $M$ w.r.t. the action of the Killing 
vector fields $K_1,\dots,K_\ell$, the local description of a K\"ahler manifold 
$(M,g,J)$ of arbitrary signature admitting a hermitian solution $A$ of \eqref{eq:main}  is reduced to the classification of  pseudo-Riemannian
 manifolds $(Q,g_Q)$ admitting a $g_Q$-selfadjoint solution $A_Q$ to \eqref{eq:mainreal0}. In other words, a description of 
c-compatible pairs $g, A$ is reduced to a similar problem for compatible pairs $g_Q$, $A_Q$ on the quotient $Q=M/G$ which 
has been solved  in \cite{BM} and we apply this result to our situation. 


Before deriving the local description for the pair $(g_Q, A_Q)$ in our specific situation, we briefly 
recall the ``splitting and gluing constructions'' from \cite{BMgluing} appropriately reformulated for our purposes, 
we refer to \cite[\S 1.2]{BM} for a more detailed summary.

Let $(Q,g_Q)$ be a pseudo-Riemannian manifold  and $A_Q:TQ\rightarrow TQ$ be a $g_Q$-selfadjoint endomorphism compatible 
with $g_Q$ in the projective sense, i.e., satisfying \eqref{eq:mainreal0}.

In a neighbourhood of a generic point, the eigenvalues of $A_Q$ are smooth (possibly complex valued functions).   
Some of them,  say $c_1, \dots, c_n$, are constant.   Then the characteristic polynomial 
$\chi(t)=\mathrm{det}(t\cdot\Id-A_Q)$ of $A_Q$ can be written as
$$
\chi(t) = \chi_{\mathrm{nc}}(t) \cdot \chi_{\mathrm{c}}(t)
$$
where the roots of $\chi_{\mathrm{c}}$  are the constant eigenvalues of $A_Q$  
(with multiplicities), whereas the roots of $\chi_{\mathrm{nc}}$ are the non-constant eigenvalues.  
Assume that these polynomials $\chi_{\mathrm{nc}}(t)$ and $\chi_{\mathrm{c}}(t)$ are relatively 
prime, i.e.,  the non-constant eigenvalues cannot take the values  $c_1, \dots, c_n$.  In other words,
 we divide the spectrum of $A_Q(p)$, $p\in Q$ into the ``constant'' and ``non-constant'' parts,
 and assume that these parts are disjoint for any $p\in Q$.

\begin{prop}
\label{prop:split1}\cite{BMgluing}
Locally $Q$ can be presented as $U(x_1,\dots, x_\ell)\times S(y_1,\dots,y_{s})$
so that the  endomorphism $A_Q$ and the metric $g_Q$ take the following block-diagonal form
\begin{equation}
\label{eq:glspecial}
A_Q(x,y) = 
\begin{pmatrix} 
L (x) & 0\\0 &  A_{\mathrm{c}}(y) 
\end{pmatrix}
\quad\mbox{and}\quad
g_Q (x,y)= 
\begin{pmatrix}   
h (x) & 0 \\ 0&  g_{\mathrm{c}}(y) \cdot \chi_{\mathrm{nc}}\bigl(A_{\mathrm{c}}(y)\bigr)   
\end{pmatrix},
\end{equation}
where $L$ and $h$ are compatible (that is, satisfy 
\eqref{eq:main:proj}) on $U$,  and $A_{\mathrm{c}}$ is parallel on $S$  w.r.t. $g_{\mathrm{c}}$.

Conversely, $A_Q(x,y)$ and $g_Q(x,y)$  defined by \eqref{eq:glspecial}  are compatible in the projective sense, 
i.e., satisfy \eqref{eq:mainreal0} on $U\times S$,  whenever $h$ and $L$ are compatible, $A_{\mathrm{c}}$ is 
parallel w.r.t. $g_{\mathrm{c}}$ and the spectra of $L$ and $A_{\mathrm{c}}$ are disjoint.

\end{prop}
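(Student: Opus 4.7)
The plan is to prove the converse direction by direct verification and the splitting direction by decomposing $TQ$ into the eigendistributions associated with the constant and non-constant parts of the spectrum of $A_Q$. For the converse, I would substitute the block-diagonal data from \eqref{eq:glspecial} into \eqref{eq:mainreal0} and verify the equation on pairs of tangent vectors split according to whether they lie in $U$ or in $S$. Since $\operatorname{tr} A_Q = \operatorname{tr} L + \operatorname{const}$, the vector $\Lambda = \tfrac{1}{2}\gr\operatorname{tr}A_Q$ is entirely tangent to $U$ and coincides with the $\Lambda$ attached to $(h,L)$. The $(U,U)$-block of the equation then reduces to the compatibility of $(h,L)$, the $(S,S)$-block (after stripping off the warp factor $\chi_{\mathrm{nc}}(A_{\mathrm{c}})$ via the mixed Christoffel symbols) reduces to $\nabla^{g_{\mathrm{c}}}A_{\mathrm{c}} = 0$, and the mixed blocks hold because $A_{\mathrm{c}}$ commutes with $\chi_{\mathrm{nc}}(A_{\mathrm{c}})$ and is parallel along $S$.

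For the splitting direction, I would introduce the generalised eigendistributions
\[
\mathcal{D}_{\mathrm{c}} := \ker\chi_{\mathrm{nc}}(A_Q), \qquad \mathcal{D}_{\mathrm{nc}} := \ker\chi_{\mathrm{c}}(A_Q).
\]
Coprimality of $\chi_{\mathrm{c}}$ and $\chi_{\mathrm{nc}}$ yields $TQ = \mathcal{D}_{\mathrm{c}}\oplus\mathcal{D}_{\mathrm{nc}}$, while $g_Q$-selfadjointness of $A_Q$ together with the disjointness of the two parts of the spectrum force orthogonality. The projective analogue of Lemma~\ref{lem:properties}(\ref{l4}), whose proof goes through using \eqref{eq:mainreal0} in place of \eqref{eq:main} (all $J$-terms drop out), shows that $\gr\rho_i \in \mathcal{D}_{\mathrm{nc}}$ for each non-constant eigenvalue $\rho_i$. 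Hence $\mathcal{D}_{\mathrm{c}}$ is the common level set of the $\rho_i$ and is integrable, while $\mathcal{D}_{\mathrm{nc}}$ is spanned by the gradients $\gr\rho_i$, hence also integrable. Choosing adapted coordinates $(x,y)$ on $U\times S$, both $A_Q$ and $g_Q$ take block-diagonal form; restricting \eqref{eq:mainreal0} to $U$-tangent vectors then recovers compatibility of $(h,L)$ on $U$.

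The main obstacle is pinning down the exact warp factor $\chi_{\mathrm{nc}}(A_{\mathrm{c}})$ in the $S$-block of $g_Q$. I would extract it by applying \eqref{eq:mainreal0} to pairs $(X,Y)$ with $X\in\mathcal{D}_{\mathrm{nc}}$ and $Y\in\mathcal{D}_{\mathrm{c}}$: the mixed part of $\nabla_X A_Q$ encodes how $g_Q|_S$ varies along the $\rho_i$-directions, and using $\Lambda\in\mathcal{D}_{\mathrm{nc}}$ this reduces to an ODE along each $\gr\rho_i$. With the $A_{\mathrm{c}}$-block held fixed, that ODE has $\chi_{\mathrm{nc}}(A_{\mathrm{c}})\cdot g_{\mathrm{c}}$ as its unique solution; invertibility of this polynomial operator, which is guaranteed by the disjointness of the two parts of the spectrum, then lets us read off $g_{\mathrm{c}}$ and verify that the $(S,S)$-case of \eqref{eq:mainreal0} amounts to $\nabla^{g_{\mathrm{c}}} A_{\mathrm{c}} = 0$.
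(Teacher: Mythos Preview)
The paper does not give its own proof of this proposition; it is quoted from \cite{BMgluing} (the ``splitting and gluing'' paper of Bolsinov--Matveev) and used as a black box. So the comparison is really with the argument in \cite{BMgluing}, which hinges on the vanishing of the Nijenhuis torsion of $A_Q$ --- a consequence of compatibility that you never invoke.

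Your outline for the converse direction is fine. In the splitting direction, however, there are two genuine gaps. First, the sentence ``$\mathcal D_{\mathrm{nc}}$ is spanned by the gradients $\gr\rho_i$, hence also integrable'' is not justified: a distribution spanned by gradients need not be integrable, and the orthogonal complement of the integrable distribution $\mathcal D_{\mathrm c}$ has no reason to be integrable either. (In fact your spanning claim already presupposes that the non-constant block has simple eigenvalues, which is true in the application to the quotient from the c-projective setting but is \emph{not} assumed in the general statement being quoted.) The device that actually delivers integrability of $\mathcal D_{\mathrm{nc}}$ --- indeed of every generalised eigendistribution and of any sum of them --- is the vanishing of the Nijenhuis torsion of $A_Q$; this is what \cite{BMgluing} uses, and it simultaneously produces coordinates in which $A_Q$ is block-diagonal with each block depending only on its own variables.

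Second, even granting the block-diagonal form of $g_Q$ in adapted coordinates, you only explain why the $S$-block carries the warp factor $\chi_{\mathrm{nc}}(A_{\mathrm c})$ and do not argue why the $U$-block $h$ is independent of $y$. Orthogonal integrable distributions give a block-diagonal metric, but a priori both blocks can depend on all coordinates; one needs the compatibility equation (or, again, the Nijenhuis machinery plus \eqref{eq:mainreal0}) to force $h=h(x)$ and to identify $g_{\mathrm c}=g_{\mathrm c}(y)$. Your ODE idea for the warp factor is the right kind of computation, but it should be paired with a symmetric argument (or a direct appeal to \cite{BMgluing}) showing that nothing in the $U$-block sees the $y$-variables.
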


Notice that the formula for the metric $g_Q$ can be equivalently rewritten as follows
\begin{equation}
\label{splitforg}
g_Q = \sum_{i,j=1}^\ell B_{ij}(x) \d x_i\d x_j  + \sum_{i=1}^\ell \mu_i (x) g_i + g_0,  
  \quad  g_i = (-1)^i g_{\mathrm{c}}(A_\mathrm{c}^{\ell-i}\cdot, \cdot)
\end{equation}
which completely agrees with the formula \eqref{eq:formforg_Q} for the reduced metric $g_Q$.  
Here the first term corresponds to 
the metric $h$ and the remaining terms represent the other block, i.e., the 
metric $g_{\mathrm{c}}(y) \cdot \chi_{\mathrm{nc}}\bigl(A_{\mathrm{c}}(y)\bigr)$ which can 
be understood as a family  $g_\mu=\sum \mu_i g_i + g_0$ of metrics on $S$ parametrised by the 
coefficients $\mu_1,\dots, \mu_\ell$ of the characteristic polynomial $\chi_{\mathrm{nc}}=\chi_L$ 
of the ``non-constant'' block $L$.
Notice that the splitting of $Q$ into the direct product $U\times S$ in both cases is 
determined by the decomposition of $T_pQ$ into two  $A_Q$-invariant subspaces 
corresponding to the partition of the spectrum of $A_Q$  into two parts, ``constant'' and ``non-constant''.  
 Also notice that in the both cases $\mu_i$ are the same:  these are the elementary symmetric 
polynomials of non-constant eigenvalues of $L$ (or, which is the same, of $A$).

Formula \eqref{splitforg} describes, however, a more general situation than \eqref{eq1}. 
 In particular,  in Proposition~\ref{prop:split1},  the non-constant eigenvalues may have 
arbitrary multiplicities  and the ``constant''  block  $(S, g_{\mathrm{c}}, A_{\mathrm{c}})$
  carries no K\"ahler structure.  Thus, some additional specific properties of $g_Q$ and $A_Q$ should be taken into account.
		In particular,  we need  local formulas for the metric which simultaneously satisfies
		\eqref{splitforg} and \eqref{eq1}.

 As we know from  Lemma~\ref{lem:properties}~(1),  the multiplicities of the non-constant
 eigenvalues $\rho_1,\dots, \rho_\ell$ of $A_Q$ equal one and moreover  $\d\rho_i  \ne 0$ on $Q$.
 This condition guaranties that both  the eigenvalues $\rho_1,\dots, \rho_\ell$  and the symmetric 
polynomials $\mu_1,\dots,\mu_\ell$ can be taken as local coordinates on $U$.  
Also we know from \eqref{eq1} that $S$ is endowed with a natural complex structure $J_S$ and 
for each $\mu\in U$, the metric 
$$
g_\mu = \sum_{i=1}^\ell \mu_i g_i + g_0
$$
on $S$ is K\"ahler and $A_{\mathrm{c}}$  on $S$ is hermitian w.r.t. $(g_\mu,J_S)$.    
In addition $A_{\mathrm{c}}$ is parallel  w.r.t.  
$g_{\mathrm{c}} =g_\mu(\chi_{\mathrm{nc}}(A_{\mathrm {c}})^{-1} \cdot , \cdot)$ 
 by Proposition~\ref{prop:split1}.   This obviously implies that the metrics $g_{\mathrm{c}}$ 
and $g_\mu$  are affinely equivalent for each $\mu$, i.e.,  their Levi-Civita connections coincide.
  Hence, if we introduce 
	$\omega_{\mathrm{c}} = g_{\mathrm{c}}( J_S\cdot,\cdot) = \omega_\mu (\left(\chi_{\mathrm{nc}}(A_{\mathrm{c}})\right)^{-1}\cdot,\cdot)$, 
	then  $\omega_{\mathrm{c}}$ is parallel and therefore  $(g_{\mathrm{c}},\omega_{\mathrm{c}}, J_S)$ 
	is a K\"ahler structure on $S$ admitting a parallel hermitian endomorphism $A_{\mathrm{c}}$  
	(in other words the conclusion about the constant block in Proposition~\ref{prop:split1} now holds in the K\"ahler setting).

Summarizing, we see that the pair $(g_Q, A_Q)$  admits the following local description.

\begin{prop}
\label{prop:gQAQ}
Using the natural decomposition $Q=U(x_1,\dots,x_\ell)\times S(y_1,\dots, y_{2k})$, we can write $g_Q$ 
and $A_Q$ as follows
\begin{equation}
A_Q (x,y)= 
\begin{pmatrix} 
L (x) & 0\\0 &  A_{\mathrm{c}}(y) 
\end{pmatrix}
\quad\mbox{and}\quad
g_Q(x,y)= 
\begin{pmatrix}   
h (x) & 0 \\0 &  g_{\mathrm{c}}(y) \cdot \chi_{L}(A_{\mathrm{c}})  
\end{pmatrix}
\end{equation}
where 

\begin{itemize}

\item $(L, h)$ is a compatible pair on $U$ (in the projective sense)  such that the eigenvalues
 $\rho_1, \dots, \rho_\ell$ of $L$ are all distinct and $\d\rho_i\ne 0$. Moreover, 
$\chi_L(t)=\mathrm{det}(t\cdot\Id-L)$ denotes the characteristic polynomial of $L$;

\item $(S, g_{\mathrm{c}}, J_S)$ is a K\"ahler manifold and $A_{\mathrm{c}}$ is a parallel hermitian endomorphism on $S$.

\end{itemize}
\end{prop}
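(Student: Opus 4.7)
The plan is to combine the projective-geometric splitting of Proposition~\ref{prop:split1} with the K\"ahler-quotient structure recalled in \S\ref{sec:generalreduction}, exploiting the specific properties of $A$ established in Lemma~\ref{lem:properties}. First I would apply Proposition~\ref{prop:split1} to the compatible pair $(g_Q, A_Q)$ produced by Lemma~\ref{lem:redtorealproj}, partitioning the spectrum of $A_Q$ into its constant and non-constant parts. To invoke the splitting one has to know that the two parts are disjoint at every point, so that $\chi_{\mathrm{nc}}$ and $\chi_{\mathrm{c}}$ are coprime; this is exactly the global statement in Lemma~\ref{lem:properties}~(\ref{l1}) that a constant eigenvalue of $A$ is an eigenvalue at every point, and it descends to $A_Q$ by $G$-invariance. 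The splitting then yields local coordinates realising $Q = U \times S$ together with the block presentations of $A_Q$ and $g_Q$, with $(h,L)$ compatible and $A_{\mathrm{c}}$ parallel with respect to $g_{\mathrm{c}}$.

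Next I would verify the additional conditions imposed on the non-constant block. By Lemma~\ref{lem:properties}~(\ref{l1}) every non-constant eigenvalue of $A$ has complex multiplicity one, and by Lemma~\ref{lem:properties}~(\ref{l4}) its real two-dimensional eigenspace is spanned by $\gr\rho_i$ and $J\gr\rho_i$. Since $\mathcal V$ contains $J\gr\rho_i$ while $\mathcal Q = \mathcal V^\perp$ contains $\gr\rho_i$, upon descending to $Q$ the non-constant eigenvalues of $A_Q$ (equivalently, of $L$) are simple. Their differentials $\d\rho_i$ are non-zero because we work near a regular point, so both A1 and A2 hold for $(h,L)$ and the functions $\rho_1,\dots,\rho_\ell$ (or $\mu_1,\dots,\mu_\ell$) can be used as coordinates on $U$.

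The final step is to upgrade $(S, g_{\mathrm{c}}, A_{\mathrm{c}})$ from a pseudo-Riemannian block to a K\"ahler datum. From the K\"ahler-quotient description of Proposition~\ref{prop:locrigid}, $S$ carries a canonical complex structure $J_S$ and a family of K\"ahler metrics $g_\mu = \sum_i \mu_i g_i + g_0$. Comparing \eqref{splitforg} with \eqref{eq:formforg_Q} identifies this family with the constant block of $g_Q$, so $g_{\mathrm{c}} = g_\mu\bigl(\chi_L(A_{\mathrm{c}})^{-1}\cdot,\cdot\bigr)$. Parallelism of $A_{\mathrm{c}}$ with respect to $g_{\mathrm{c}}$ (established in the first step) forces $\chi_L(A_{\mathrm{c}})$ to be parallel, hence $g_{\mathrm{c}}$ and $g_\mu$ are affinely equivalent, and therefore $\omega_{\mathrm{c}} := g_{\mathrm{c}}(J_S\cdot,\cdot) = \omega_\mu\bigl(\chi_L(A_{\mathrm{c}})^{-1}\cdot,\cdot\bigr)$ is parallel. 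Thus $(S, g_{\mathrm{c}}, J_S, \omega_{\mathrm{c}})$ is K\"ahler and $A_{\mathrm{c}}$ is a parallel hermitian endomorphism.

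The main technical obstacle I anticipate is reconciling the two \emph{a priori} different splittings of $Q$ available: the spectral splitting of Proposition~\ref{prop:split1} driven by the decomposition of the spectrum of $A_Q$, and the K\"ahler-quotient splitting $Q = U \times S$ of Proposition~\ref{prop:locrigid} parametrised by the moment map. Showing these coincide comes down to recognising that in both descriptions the functions $\mu_1,\dots,\mu_\ell$ play the same role — moment-map components on $M$ on one side and coefficients of the characteristic polynomial of the non-constant block $L$ on the other — which is implicit in the c-projective equation \eqref{eq:main} but must be made explicit before the formulas \eqref{eq1} and \eqref{splitforg} can be identified term by term.
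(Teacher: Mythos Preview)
Your proposal is correct and mirrors the paper's own argument almost step for step: apply Proposition~\ref{prop:split1} to the compatible pair $(g_Q,A_Q)$ from Lemma~\ref{lem:redtorealproj}, use Lemma~\ref{lem:properties} to pin down the structure of the non-constant block, and then upgrade the constant block to a K\"ahler datum via the affine equivalence of $g_{\mathrm{c}}$ and $g_\mu$. The technical obstacle you flag---identifying the spectral splitting with the moment-map splitting---is exactly the point the paper pauses on, and it is resolved in the same way, by observing that both $U$-factors are cut out by the $A_Q$-invariant subspace corresponding to the non-constant spectrum and that the $\mu_i$ are simultaneously the moment-map components and the coefficients of $\chi_L$.
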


The metric $h=\sum_{i,j} B_{ij} \d x_i \d x_j$ can be 
rewritten in coordinates $\mu_1,\dots, \mu_\ell$
$$
h=\sum_{i,j=1}^\ell B_{ij} \d x_i \d x_j = \sum_{\alpha,\beta=1}^\ell G_{\alpha\beta} \d\mu_\alpha \d\mu_\beta,  
\qquad   B_{ij} = \sum_{\alpha,\beta=1}^\ell G_{\alpha\beta}\frac{\D\mu_\alpha}{\D x_i} \frac{\D\mu_\beta}{\D x_j}
$$
as in Proposition~\ref{prop:locrigid}.     
As we know from this proposition,  the components $G_{ij}$ must satisfy one additional condition,
 namely  
$\frac{\D G_{ij}}{\D \mu_k} = \frac{\D G_{kj}}{\D \mu_i}$.  
It turns out (see Proposition~\ref{prop:realising} and Lemma~\ref{lem:anotherone}  below)
 that this property follows automatically from the compatibility of $h$ and $L$. 
 This means that we have no more restrictions onto the reduced pair $g_Q$ and $A_Q$, 
and  can now summarize the above discussion as follows.

\begin{prop}
\label{prop:locform}
Let  $g, \omega$ and $A$ be as above.  Then in a neighbourhood of a regular point $p\in M^0$ 
we can introduce a local coordinate system
$$
V(t_1,\dots, t_\ell) \times U(x_1,\dots,x_\ell) \times S(y_1,\dots, y_{2k})
$$ 
in which $g$,  $\omega$ and $A$ take the following form
\begin{equation}
\label{eq:forH}
g= \sum_{\alpha,\beta=1}^\ell H_{\alpha\beta} \theta_\alpha \theta_\beta + \sum_{i,j=1}^\ell B_{ij}\d x_i \d x_j 
+  \sum_{i=0}^\ell \mu_i \cdot (-1)^i g_{\mathrm{c}}(A_{\mathrm{c}}^{\ell -i} \cdot, \cdot)
\end{equation}

\begin{equation}
\label{eq:foromega}
\omega = \sum_{\alpha=1}^\ell \d\mu_\alpha \wedge \theta_\alpha + \sum_{i=0}^\ell \mu_i \cdot (-1)^i  \omega_{\mathrm{c}}(A_{\mathrm{c}}^{\ell -i} \cdot, \cdot)
\end{equation}

\begin{equation}
\label{eq:forA}
A=\sum_{\alpha,\beta=1}^\ell  M^{\beta}_\alpha (x) \, \theta_\beta  \otimes  \frac{\partial}{\partial {t_\alpha}}  
+ \sum_{i,j=1}^\ell L^i_j (x) \,\d x_j  \otimes \frac{\partial}{\partial {x_i}} 
+  \sum_{p,q=1}^{2k}    (A_{\mathrm{c}})^q_p  \,  \d y_p  \otimes \left( \frac{\partial}{\partial {y_q}}    - \sum_{i=1}^\ell\alpha_{iq} \frac{\partial}{\partial {t_i}}   \right),
\end{equation}
 

where the ingredients in these formulas are as follows:

\begin{enumerate}

\item $(g_{\mathrm{c}},\omega_{\mathrm{c}})$ is a K\"ahler structure and 
$A_{\mathrm{c}}= \sum_{p,q}    (A_{\mathrm{c}})^q_p  \,  \d y_p\otimes  \partial_{y_q}$
 is a parallel hermitian endomorphism on $S$;

\item  $h=B_{ij}(x)\d x_i \d x_j$ is a pseudo-Riemannian metric and $L(x)$
 is an endomorphism on $U$ forming a compatible pair (in the projective sense);

\item  the eigenvalues $\rho_1,\dots, \rho_\ell$ of $L$ are pairwise distinct 
and satisfy $\d\rho_i\ne 0$ on $U$;  they are also different from the constant eigenvalues 
of $A_{\mathrm{c}}$;

\item  $\mu_i$ denote the elementary symmetric polynomials in $\rho_1,\dots,\rho_\ell$, $i=1,\dots, \ell$, 
and we set $\mu_0=1$;

\item $\theta_i=\d t_i + \alpha_i$, where $\alpha_i = \sum_{q}  \alpha_{iq} \d y_q$ is a 1-form on $S$ satisfying 
$\d\alpha_i = (-1)^i  \omega_{\mathrm{c}}(A_{\mathrm{c}}^{\ell -i} \cdot, \cdot)$;

\item and finally $H_{\alpha\beta} = \sum_{i,j} B^{ij} \frac{\D \mu_\alpha}{\D x_i} \frac{\D \mu_\beta}{\D x_j}$, 
where $B^{ij}$  is the inverse of  $B_{ij}$ and  $M^{\beta}_\alpha = \sum_{i,j} L^i_j \frac{\partial \mu_\beta}{\partial x_i} \frac{\partial x_j}{\partial \mu_\alpha}$.

\end{enumerate}
\end{prop}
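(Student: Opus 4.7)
The plan is to assemble Proposition~\ref{prop:locform} directly from the structural results built up in Sections~\ref{sec:killing}--\ref{sec:reduction}, leaving only bookkeeping. First, by Lemma~\ref{lem:properties} the canonical Killing vector fields $K_i = J\gr \mu_i$ of \eqref{eq:defKi} generate a non-degenerate isometric hamiltonian $\R^\ell$-action near any regular point: commutativity, Hamiltonian-ness, and $K_1\wedge\cdots\wedge K_\ell\neq 0$ come from parts (\ref{l2}) and (\ref{l6}), and the non-degeneracy of $g|_{\mathcal V}$ from part (\ref{l5}). Statement (\ref{l8}) says $J\nabla_{K_i}K_j\in\mathcal V$, which by the remark following Proposition~\ref{prop:rigidity} is equivalent to rigidity of this action. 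Thus Proposition~\ref{prop:locrigid} applies and produces a local product $V(t_1,\dots,t_\ell)\times U(\mu_1,\dots,\mu_\ell)\times S(y_1,\dots,y_{2k})$ in which $g$, $\omega$, $J$ have the forms \eqref{eq1}--\eqref{eq4} with $\theta_i=\d t_i+\alpha_i$, $\d\alpha_i=\omega_i$, and $H_{ij}G_{jk}=\delta_{ik}$.

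Second, I pass to the quotient $Q=M/G\cong U\times S$ and apply the analysis of \S\ref{sec:cprorpro}. By Lemma~\ref{lem:redtorealproj}, $(g_Q,A_Q)$ is a compatible pair in the projective sense. The hypotheses of Proposition~\ref{prop:gQAQ} are precisely what Lemma~\ref{lem:properties} guarantees about $A$: the non-constant eigenvalues $\rho_1,\dots,\rho_\ell$ have multiplicity one (by part (\ref{l1})), have non-vanishing differentials at a regular point, and by assumption are disjoint from the constant eigenvalues collected in $A_{\mathrm c}$. Proposition~\ref{prop:gQAQ} then gives the block-diagonal decomposition: $U$ carries a compatible pair $(h,L)$ whose eigenvalues are the $\rho_i$, while $S$ carries a K\"ahler structure $(g_{\mathrm c},\omega_{\mathrm c},J_S)$ with parallel hermitian $A_{\mathrm c}$, and the $S$-block of $g_Q$ equals $g_{\mathrm c}\cdot\chi_L(A_{\mathrm c})=\sum_{i=0}^\ell(-1)^i\mu_i\,g_{\mathrm c}(A_{\mathrm c}^{\ell-i}\cdot,\cdot)$.

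Third, I change coordinates on $U$ from $(\mu_1,\dots,\mu_\ell)$ to $(x_1,\dots,x_\ell)$ so that $h=\sum B_{ij}(x)\d x_i\d x_j$. The relation $G_{\alpha\beta}=\sum_{i,j}B_{ij}\,\D x_i/\D\mu_\alpha\cdot\D x_j/\D\mu_\beta$ inverts to $H_{\alpha\beta}=\sum_{i,j}B^{ij}\,\D\mu_\alpha/\D x_i\cdot\D\mu_\beta/\D x_j$, which is exactly the formula for $H_{\alpha\beta}$ appearing in item (6). Substituting this back into \eqref{eq1} yields \eqref{eq:forH}, and \eqref{eq3} together with $\d\alpha_i=\omega_i=(-1)^i\omega_{\mathrm c}(A_{\mathrm c}^{\ell-i}\cdot,\cdot)$ (reading off from Proposition~\ref{prop:gQAQ} by differentiating with respect to $\mu_i$) yields \eqref{eq:foromega}.

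Finally, to derive \eqref{eq:forA}, I exploit that $A$ preserves both $\mathcal V=\operatorname{span}(\D_{t_\alpha})$ and $\mathcal Q=\mathcal V^\perp$ (the first because $A$ commutes with the $K_i$ by Lemma~\ref{lem:properties}~(\ref{l7}), the second by $g$-selfadjointness), and that on $\mathcal Q$ it descends to $A_Q$. Since $\theta_i$ annihilates $\mathcal Q$, the horizontal lift of $\D_{y_q}\in TS$ is $\D_{y_q}-\sum_i\alpha_{iq}\D_{t_i}$, and $A$ must map this lift to $\sum_q(A_{\mathrm c})^q_p$ times the horizontal lift of $\D_{y_q}$, which produces the third group of terms in \eqref{eq:forA}; the horizontal lift of $\D_{x_j}$ is $\D_{x_j}$ itself, giving the second group. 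The first group is pinned down by the requirement that $A$ be hermitian: using \eqref{eq4} to compute $J\D_{t_\alpha}$ and comparing $A\circ J=J\circ A$ on vertical vectors forces $A\D_{t_\alpha}=\sum_\beta M^\beta_\alpha\D_{t_\beta}$ with $M=P L P^{-1}$, $P=(\D\mu_\alpha/\D x_i)$. The main obstacle is precisely this last step: verifying that the formula so obtained satisfies \eqref{eq:main} requires that the metric components $G_{ij}(\mu)$ obey the additional symmetry $\D_{\mu_k}G_{ij}=\D_{\mu_i}G_{kj}$ of Proposition~\ref{prop:locrigid}; this is the point at which the compatibility of $(h,L)$ in the projective sense needs to be invoked, and its proof (promised as Proposition~\ref{prop:realising} and Lemma~\ref{lem:anotherone} in \S\ref{sec:realisation}) is the technical heart of the argument.
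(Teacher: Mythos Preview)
Your argument is essentially correct and follows the paper's own route: apply Proposition~\ref{prop:locrigid} via the rigidity furnished by Lemma~\ref{lem:properties}, then identify the quotient pair $(g_Q,A_Q)$ using Proposition~\ref{prop:gQAQ}, and finally read off the coordinate expressions. Your derivation of the $\mathcal V$-block of $A$ via $A\circ J=J\circ A$ is equivalent to the paper's slightly more direct computation $A\,\partial_{t_\beta}=A\,J\gr\mu_\beta=J\,L\,\gr_h\mu_\beta$, which immediately gives $M^\beta_\alpha=\sum L^i_j\,\partial_{x_i}\mu_\beta\,\partial_{\mu_\alpha}x_j$.

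The one place you go astray is the closing sentence. In Proposition~\ref{prop:locform} we are \emph{given} a c-compatible pair $(g,\omega,A)$ and merely writing it in coordinates; there is nothing to verify about \eqref{eq:main}, and the Hessian symmetry $\partial_{\mu_k}G_{ij}=\partial_{\mu_i}G_{kj}$ is not an obstacle here---it is part of the \emph{output} of Proposition~\ref{prop:locrigid} applied to the honest K\"ahler structure $(g,\omega)$. The role of Lemma~\ref{lem:anotherone} is the opposite direction (Proposition~\ref{prop:realising}): showing that \emph{any} compatible pair $(h,L)$ automatically produces this symmetry, so that the formulas \eqref{eq:forH}--\eqref{eq:forA} always define a K\"ahler structure. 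Drop that last sentence and your proof of Proposition~\ref{prop:locform} is complete.
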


\begin{proof}
The formulas \eqref{eq:forH} and \eqref{eq:foromega} follow from the discussion above. 
It remains to derive formula \eqref{eq:forA} for $A$. 
First of all we note that the basis dual to the coframe $\theta_i,\d x_j,\d y_q$ is given by 
$$
\frac{\D}{\D t_i},\quad\frac{\D}{\D x_j},\quad\frac{\D}{\D y_q}-\sum_{i=1}^\ell\alpha_{iq}\frac{\D}{\D t_i}.
$$
We see that the reduction of $A$ given by \eqref{eq:forA} is indeed given by $A_Q$ from Proposition \ref{prop:gQAQ}.
It remains to show how $A$ acts on the Killing vector fields $\D_{t_i}$.
Formula \eqref{eq:foromega} shows that $i_{\D_{t_\beta}}\omega=-\d\mu_\beta$, hence, $\D_{t_\beta}=J\gr\,\mu_\beta$.
Using that $A$ commutes with $J$ and that $L$ is $h$-selfadjoint, we obtain
$$
A\frac{\D}{\D t_\beta}=JA(\gr\,\mu_\beta)=JL(\gr_h\,\mu_\beta)=\sum_{i,j,\alpha=1}^\ell L^i_j\frac{\D\mu_\beta}{\D x_i}\frac{\D x_j}{\D\mu_\alpha}\frac{\D}{\D t_\alpha}
=\sum_{\alpha=1}^\ell M^\beta_\alpha\frac{\D}{\D t_\alpha}.
$$
which establishes formula \eqref{eq:forA}.
\end{proof}

Thus, we are lead to the situation described in Example~\ref{ex:now2} and, therefore,  the second part of Theorem~\ref{thm:invform} is proved.

The main ingredients in the above local formulas are the pair $(h, L)$
 on $U$ and the triple $(g_{\mathrm{c}},\omega_{\mathrm{c}}, A_{\mathrm{c}})$ on $S$. 
The 1-forms $\alpha_i$ on $S$ are determined by $(\omega_{\mathrm{c}}, A_{\mathrm{c}})$ only up to the transformation
$\alpha_i\longmapsto \alpha_i+\d f_i$ for arbitrary functions $f_i$ on $S$. However, such functions define
a fiber-preserving local transformation $f:M\rightarrow M$, $f(t,x,y)=(t_1+f_1(y),\dots,t_\ell+f_\ell(y),x,y)$, that fulfils $f^*\theta_i=\theta_i+\d f_i$,
$f^*\d x_j=\d x_j$ and $f^*\d y_q=\d y_q$ and pulls back the objects in Proposition \ref{prop:locform}  written down w.r.t. $\theta_i$ 
to the corresponding objects written down w.r.t. $\tilde \theta_i=\theta_i+\d f_i$. 
All the other ingredients appearing in the formulas of Proposition \ref{prop:locform} 
can be uniquely reconstructed from $(h, L)$ and $(g_{\mathrm{c}},\omega_{\mathrm{c}}, A_{\mathrm{c}})$. 
However, we do not know yet whether these ingredients can be arbitrarily chosen or should,
 perhaps, satisfy some additional restrictions which are not mentioned in Proposition~\ref{prop:locform}.  
The next section shows that there are no more restrictions and  
\eqref{eq:forH}, \eqref{eq:foromega} and \eqref{eq:forA}  can be used for the local 
description of c-compatible $g$ and $A$.  To that end, we only need to substitute into these formulas the local normal forms
 for $(h, L)$  and  $(g_{\mathrm{c}},\omega_{\mathrm{c}}, A_{\mathrm{c}})$ 
which were previously found in \cite{BM} and \cite{Boubel} respectively.

\subsection{Realisation}\label{sec:realisation}

The purpose of this section is to prove the following result which is equivalent to the first part of Theorem~\ref{thm:invform}.

\begin{prop}
\label{prop:realising}
Let $h=\sum_{i,j=1}^\ell B_{ij}(x)\d x_i \d x_j$ be a pseudo-Riemannian metric and 
$L(x)$ an endomorphism on $U$ forming a compatible pair (in the projective sense) and let
$(g_{\mathrm{c}},\omega_{\mathrm{c}})$ be a K\"ahler structure of arbitrary signature and $A_{\mathrm{c}}$ 
a parallel endomorphism on $S$.   Suppose that the eigenvalues of $L$ 
and $A_{\mathrm{c}}$ satisfy condition (3) from Proposition~\ref{prop:locform}  
 and  1-forms $\alpha_i$ on $S$ are chosen as in condition (5). Then 
 
 \begin{itemize} 
 
 \item $g$ and $\omega$ given by \eqref{eq:forH} and \eqref{eq:foromega} 
 define a K\"ahler structure on $V\times U \times S$;
 
 \item $A$  given by \eqref{eq:forA} is hermitian w.r.t. $(g,\omega)$  
and satisfies  \eqref{eq:main}, in other words $A$ and $(g,\omega)$ are c-compatible.

\end{itemize}
\end{prop}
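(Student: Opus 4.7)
The plan is to verify, in turn, (a) that $(g,\omega,J)$ is a K\"ahler structure, (b) that $A$ is hermitian with respect to $(g,J)$, and (c) the PDE~\eqref{eq:main}.

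For (a), I would invoke the converse direction of Proposition~\ref{prop:locrigid}. The metric~\eqref{eq:forH} and 2-form~\eqref{eq:foromega} have exactly the block form of \eqref{eq1}--\eqref{eq3} with $g_\mu=\sum_i\mu_ig_i+g_0$ and $\omega_\mu=\sum_i\mu_i\omega_i+\omega_0$, where $g_i=(-1)^ig_{\mathrm{c}}(A_{\mathrm{c}}^{\ell-i}\cdot,\cdot)$ and $\omega_i=(-1)^i\omega_{\mathrm{c}}(A_{\mathrm{c}}^{\ell-i}\cdot,\cdot)$. The three hypotheses of Proposition~\ref{prop:locrigid} are checked as follows: (i) the triples $(g_\mu,\omega_\mu,J_S)$ are K\"ahler on $S$ for each $\mu\in U$, since $A_{\mathrm{c}}$ is parallel and hermitian so the tensors $g_{\mathrm{c}}(A_{\mathrm{c}}^k\cdot,\cdot)$ and $\omega_{\mathrm{c}}(A_{\mathrm{c}}^k\cdot,\cdot)$ are parallel and $J_S$-compatible and satisfy $\omega_\mu=g_\mu(J_S\cdot,\cdot)$; (ii) the identity $\d\alpha_i=\omega_i$ is precisely the hypothesis on $\alpha_i$; (iii) the integrability condition $\D_{\mu_i}G_{jk}=\D_{\mu_j}G_{ik}$ should follow from the projective compatibility of $(h,L)$ after $h$ is rewritten in the coordinates $\mu_1,\dots,\mu_\ell$; I expect to state this as a separate lemma.

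Hermiticity of $A$ (step (b)) is a block-by-block check using~\eqref{eq:forA}: the $S$-block is immediate from $A_{\mathrm{c}}$ being hermitian, while on $V\oplus U$ the identities $M^\beta_\alpha=\sum_{i,j}L^i_j\D_{x_i}\mu_\beta\,\D x_j/\D\mu_\alpha$ and $\d\mu_i\circ J=-\sum_j H_{ij}\theta_j$, together with the $h$-self-adjointness of $L$, deliver both $AJ=JA$ and $g$-self-adjointness across these two blocks. For the PDE (step (c)), I first compute $\tr A=\tr M+\tr L+\tr A_{\mathrm{c}}=2\tr L+\mathrm{const}$, so that $\Lambda=\tfrac14\gr_g\tr A=\tfrac12\gr_g\tr L$ is horizontal with respect to the submersion $M\to U\times S$ and coincides with the horizontal lift of the vector $\Lambda_h=\tfrac12\gr_h\tr L$ appearing in the projective equation~\eqref{eq:main:proj} for $(h,L)$. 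Then I would verify~\eqref{eq:main} separately for $X$ vertical ($X=\D_{t_i}$), $U$-horizontal ($X=\D_{x_j}$), and $S$-horizontal ($X=\D_{y_q}-\sum_i\alpha_{iq}\D_{t_i}$): the vertical case reduces to the Lie-derivative identity $\mathcal L_{K_i}A=0$ combined with rigidity of the $\R^\ell$-action (Proposition~\ref{prop:rigidity}), and the $S$-horizontal case follows from parallelism of $A_{\mathrm{c}}$ and a cancellation against connection terms produced by $\alpha_i$.

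The main obstacle will be the $U$-horizontal case. There, O'Neill's formula relates $\nabla^g$ to the Levi-Civita connection $\nabla^Q$ of $g_Q=h\oplus g_{\mathrm{c}}\cdot\chi_L(A_{\mathrm{c}})$ up to vertical corrections controlled by the curvature $\d\theta_i=(-1)^i\omega_{\mathrm{c}}(A_{\mathrm{c}}^{\ell-i}\cdot,\cdot)$ of the connection $\theta$. The horizontal component of $\nabla_X A$ will match the horizontal part of the right-hand side of~\eqref{eq:main} by Proposition~\ref{prop:split1} and Lemma~\ref{lem:redtorealproj}, which together guarantee that $(g_Q,L\oplus A_{\mathrm{c}})$ satisfies \eqref{eq:main:proj}; but the vertical component must reproduce the $J$-coupled terms $(JX)^\flat\otimes J\Lambda+(J\Lambda)^\flat\otimes JX$. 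Showing that the vertical contributions from the O'Neill correction, together with the hermiticity identities relating $M$ to $L$ and the precise structure of the $\alpha_i$, assemble exactly into these $J\Lambda$-terms is the technical heart of the argument.
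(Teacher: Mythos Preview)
Your plan for part~(a) matches the paper exactly: invoke the converse direction of Proposition~\ref{prop:locrigid}, and prove the Hessian condition $\partial_{\mu_i}G_{jk}=\partial_{\mu_j}G_{ik}$ as a separate lemma. The paper does this via Lemma~\ref{lem:anotherone}, which shows $[\gr\mu_i,\gr\mu_j]=0$ for any compatible pair $(h,L)$; this is equivalent to the Hessian condition because the $\gr\mu_i$ are dual to the 1-forms $\beta_k=\sum_i G_{ik}\,\d\mu_i$. Part~(b) is also handled the same way (a direct block check).

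For part~(c), however, the paper takes a genuinely different and more efficient route than your direct verification of~\eqref{eq:main}. Instead of computing $\nabla_X A$ in three directions, the paper defines $\hat g=(\det_{\mathbb C}A)^{-1}g(A^{-1}\cdot,\cdot)$ and proves two things: first, that $\hat g$ is K\"ahler (Proposition~\ref{prop:hatgiskaehler}), by checking that $\hat\omega$ is closed via an algebraic identity relating the elementary symmetric functions of $L$ and of $L^{-1}$ (Lemma~\ref{lem:aboutmu}); second, that $g$ and $\hat g$ are c-projectively equivalent (Lemma~\ref{lem:hatgcproequivg}), by analysing the difference tensor $T=\hat\nabla-\nabla$. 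The key observation is that $T$ automatically satisfies $T(JX,Y)=T(X,JY)=JT(X,Y)$ since both connections preserve $J$; combined with O'Neill's formula and the projective equivalence of $g_Q,\hat g_Q$ on the quotient, this determines $T$ on horizontal vectors, and the $J$-symmetry then transports the formula to vertical vectors for free. The equation~\eqref{eq:main} then follows from the classical equivalence between c-projective equivalence and the form~\eqref{eq:cproequiv} of $T$.

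What this buys: the paper's approach sidesteps exactly the computation you flag as the ``technical heart''---matching the vertical O'Neill corrections against the $(JX)^\flat\otimes J\Lambda+(J\Lambda)^\flat\otimes JX$ terms. That cancellation is encoded automatically in the symmetry $T(JX,Y)=JT(X,Y)$, so no explicit curvature-of-$\theta$ bookkeeping is needed. Your direct approach should work in principle, but the case-by-case verification (especially the vertical case, where $\nabla_{K_i}A$ is not simply $\mathcal L_{K_i}A$) will be substantially longer.
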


\begin{proof}

To verify that $g$ and $\omega$ define a K\"ahler structure, we use Proposition~\ref{prop:locrigid}.   
Formulas \eqref{eq:forH} and \eqref{eq:foromega} are
 similar to \eqref{eq1}  and \eqref{eq3} but we still need to verify some conditions.
  First of all,  we can use $\mu_1,\dots, \mu_\ell$ as local coordinates on $U$ to
	rewrite the term $h=\sum_{i,j} B_{ij}\d x_i \d x_j$  as
$\sum_{\alpha,\beta} G_{\alpha\beta}\d \mu_\alpha \d \mu_\beta$, where 
$B_{ij} = \sum_{\alpha,\beta} G_{\alpha\beta}\frac{\D\mu_\alpha}{\D x_i} \frac{\D\mu_\beta}{\D x_j}$
 and then the matrices $H_{\alpha\beta}$ and $G_{\alpha\beta}$ are inverse to each other as required
 in Proposition~\ref{prop:locrigid}.   Next we need to check that 
$g_\mu=\sum_{i=0}^\ell \mu_i \cdot (-1)^i g_{\mathrm{c}}(A_{\mathrm{c}}^{\ell -i} \cdot, \cdot)$ 
and $\omega_\mu=\sum_{i=0}^\ell \mu_i \cdot (-1)^i  \omega_{\mathrm{c}}(A_{\mathrm{c}}^{\ell -i} \cdot, \cdot)$ 
define a K\"ahler structure on $S$ for any $\mu$,  but this condition immediately follows from the
 fact that $(g_{\mathrm{c}}, \omega_{\mathrm{c}})$ is K\"ahler and $A_{\mathrm{c}}$ is hermitian
 and parallel with respect to it.  Notice that the complex structure $J_S$ is, 
by construction, the same for all $(g_\mu,\omega_\mu)$.

Less trivial is the fact that $h$ is a Hessian metric in the coordinates
 $\mu_1,\dots, \mu_\ell$, i.e., that $\partial_{\mu_i} G_{jk} = \partial_{\mu_j} G_{ik}$ holds 
for all $i,j,k$ (condition (3) from Proposition~\ref{prop:locrigid}).  To prove it,  we first 
notice that this condition is equivalent to the fact that the vector fields $\gr\,\mu_1,\dots,\gr\,\mu_\ell$ commute.
 Indeed,  $\partial_{\mu_i} G_{jk} = \partial_{\mu_j} G_{ik}$ means that the 1-forms $\beta_k=\sum_{i} G_{ik}\d \mu_i$
 are all closed.  Hence,  the statement immediately follows from the observation that the forms 
$\beta_1,\dots, \beta_\ell$  are dual to the vector fields $\gr\,\mu_1,\dots,\gr\,\mu_\ell$, i.e., $\beta_k(\gr\,\mu_j)=\delta_{kj}$.

Thus, it remains to prove the following lemma (cf.~Lemma~\ref{lem:properties}~\eqref{l6} which is a c-projective analogue of this statement). 

\begin{lem}
\label{lem:anotherone}
Let $h$ be a pseudo-Riemannian metric on $U\subset \R^\ell$  and $L$ be an $h$-selfadjoint endomorphism compatible with $h$ in the projective sense. Let 
$$
\mathrm{det}(t\cdot\mathrm{Id}-L)=\sum_{i=0}^\ell(-1)^i\mu_it^{\ell-i},
$$
where the functions $\mu_{i}$, $i=1,\dots,\ell$, are the elementary symmetric functions in the eigenvalues of $L$ and $\mu_0=1$. 
Then, 
$$
[\gr\,\mu_i,\gr\,\mu_j]=0\mbox{ for all }i,j.
$$
\end{lem}

\begin{proof}
First of all, since the vector fields $\gr\,\mu_1,\dots,\gr\,\mu_\ell$ are 
constant linear combinations of the vector fields of the form $v_t=\gr\,\mathrm{det}(t\cdot\mathrm{Id}-L)$ 
for $t\in \R$ and vice versa, it suffices to prove that
\begin{align}
[v_t,v_s]=0\label{eq:KtKscommute}
\end{align}
for all $t,s\in \R$. Moreover, it suffices to prove \eqref{eq:KtKscommute} for
 $t,s$ which do not belong to the spectrum of $L$ locally in a neighbourhood of a point. For $L:TM\rightarrow TM$ 
an arbitrary non-degenerate endomorphism and $X$ an arbitrary vector field, recall the
general formula
$$
X(\mathrm{det}\,L)=(\mathrm{det}\,L)\mathrm{tr}(L^{-1}\nabla_X L).
$$
In our case, $L$ and therefore $L_s=s\cdot\Id-L$ satisfy \eqref{eq:main:proj}, i.e.
\begin{align}
\nabla_X (s\cdot\Id-L)=-\nabla_X L=-X^\flat\otimes \Lambda-\Lambda^\flat\otimes X\label{eq:Lmain}
\end{align}
holds for a certain vector field $\Lambda$. Defining $f_s=\mathrm{det}\,L_s$ and 
combining the previous two equations we obtain
\begin{align}
X(f_s)=-2f_s h(X,L_s^{-1}\Lambda).\label{eq:verifycompat1}
\end{align}
or equivalently, $v_s=\gr\,f_s=-2f_sL_s^{-1}\Lambda$.
Note that this formula is meaningful and holds true even if $s$ is in the spectrum of $L$. 
We calculate
$$
\nabla_X v_s=2f_s\left[h(X,L_s^{-1}\Lambda)L_s^{-1}\Lambda-h(\Lambda,L_s^{-1}\Lambda)L_s^{-1} X-L_s^{-1}\nabla_X\Lambda\right].
$$
It is a well-known statement in projective geometry that the endomorphisms $L$ 
and $\nabla\Lambda$ commute, see for example the discussion below Theorem 7 in \cite{Fubini}.
 Replacing $X$ by $v_t$ in the last equation and using $[L_t^{-1},\nabla\Lambda]=0$, we obtain  
$$
\nabla_{v_t} v_s=4f_s f_t\left[-h(L_t^{-1}\Lambda,L_s^{-1}\Lambda)L_s^{-1}\Lambda
+h(\Lambda,L_s^{-1}\Lambda)L_s^{-1}L_t^{-1} \Lambda+L_s^{-1}L_t^{-1}\nabla_{\Lambda}\Lambda\right].
$$
Thus,
$$
[v_t,v_s]=\nabla_{v_t} v_s-\nabla_{v_s} v_t
$$
$$
=4f_s f_t\left[-h(\Lambda,L_s^{-1}L_t^{-1}\Lambda)(L_s^{-1}-L_t^{-1})\Lambda+h(\Lambda,(L_s^{-1}-L_t^{-1})\Lambda)L_s^{-1}L_t^{-1} \Lambda\right].
$$
Inserting the identity $L_s^{-1}-L_t^{-1}=(t-s)L_s^{-1}L_t^{-1}$ into the last equation,
 we obtain \eqref{eq:KtKscommute} as we claimed.
\end{proof}

Applying this lemma to $h$ and $L$ from Proposition~\ref{prop:realising},
we get  condition (3) from Proposition~\ref{prop:locrigid}.   
Thus,  now Proposition~\ref{prop:locrigid} implies that 
$g$ and $\omega$ given by \eqref{eq:forH} and \eqref{eq:foromega}  indeed
 define a K\"ahler structure on $V\times U \times S$ which completes the proof of the first statement of Proposition~\ref{prop:realising}.

\medskip

It is easy to see that $A$ is hermitian w.r.t. $(g,\omega)$. It remains to show that $A$ satisfies  \eqref{eq:main} 
and we will proceed as follows.  Consider the hermitian metric 
\begin{align}
\label{eq:hatg}
\hat{g}=(\mathrm{det}_{\mathbb{C}}\,A)^{-1}g(A^{-1}\cdot,\cdot)
\end{align}
obtained from $g$ and $A$ by solving \eqref{eq:defA} w.r.t. $\hat{g}$. First, we show that $\hat{g}$ 
is a K\"ahler metric on $(M,J)$. Then we show that $g$ and $\hat{g}$ are c-projectively equivalent. 
This implies that $A$ satisfies \eqref{eq:main} and we are done.

\begin{prop}
\label{prop:hatgiskaehler}
The metric $\hat{g}$ is a K\"ahler metric on $(M,J)$.
\end{prop}

\begin{proof} 
We use the (local) formulas for $g$, $\omega$ and $A$ from Proposition~\ref{prop:locform} and the notation introduced there.    
Since $\hat g$ is hermitian w.r.t. $J$ by construction, we only need to check that 
$\hat \omega = \hat g(J \cdot , \cdot)=
(\mathrm{det}_{\mathbb{C}}\,A)^{-1}\omega (A^{-1}\cdot,\cdot)$ is closed. 

Notice that $\det_{\mathbb C} A =c\cdot \mu_\ell$ for some constant $c$  so that we may, 
without loss of generality, replace $\hat \omega$ by $\mu_\ell^{-1} \omega  (A^{-1}\cdot,\cdot)$.

Then, by using \eqref{eq:foromega} and \eqref{eq:forA},  we get
$$
c\cdot \hat \omega = \sum_{i=1}^\ell \mu_\ell^{-1} \bigl(\dd \mu_i \circ L^{-1}\bigr) \wedge \theta_i 
+ \sum_{k=0}^\ell \frac{\mu_k}{\mu_\ell}  \omega_k (A_{\mathrm c}^{-1}\cdot, \cdot),
$$
where $\omega_k = (-1)^k \omega_{\mathrm{c}} (A_{\mathrm{c}}^{\ell - k}\cdot,\cdot)$.

The closeness of $\hat \omega$ now follows from two facts  

\begin{itemize}

\item  $ \mu_\ell^{-1} \bigl(\dd \mu_i \circ L^{-1}\bigr) = -\dd \hat {\mu}_{\ell +1 - i}$, 
where $\hat \mu_k$ denotes the $k$th elementary symmetric polynomial in the eigenvalues of $L^{-1}$, i.e. in 
$\rho_1^{-1}, \dots, \rho_\ell^{-1}$.   This is a general property of a compatible pair $(h, L)$, see Lemma~\ref{lem:aboutmu} below.

\item  $\sum_{k=0}^\ell \frac{\mu_k}{\mu_\ell}  \omega_k (A_{\mathrm c}^{-1}\cdot, \cdot) = 
           \sum_{k=0}^\ell \hat \mu_{\ell-k}\omega_{k+1}  =
            \sum_{i=1}^\ell  \hat \mu_{\ell+1-i}\omega_{i}  +   \omega_{\ell + 1}$.   This relation is straightforward.

\end{itemize}

Hence 
$$
c\cdot \hat \omega =  \sum_{i=1}^\ell \dd \hat {\mu}_{\ell +1 - i} \wedge \theta_i +   \sum_{i=1}^\ell  \hat \mu_{\ell+1-i}\omega_{i}  +   \omega_{\ell + 1}
$$
and the property $\d\hat\omega = 0$ becomes obvious, as $\omega_k$'s are all closed and $\d\theta_i=\omega_i$ by construction.

Thus, in order to complete the proof of Proposition~\ref{prop:hatgiskaehler} it remains to prove

\begin{lem}
\label{lem:aboutmu}
Let  $h$ and $L$ be compatible in the projective sense. Then the following relation holds
$$
\frac{1}{\det L} \bigl(\dd \mu_i \circ L^{-1}\bigr) = -\dd \hat {\mu}_{\ell +1 - i}
$$ 
for $i=1,\dots,\ell$, where $\hat \mu_k$ is the $k$th symmetric polynomial in $\rho_1^{-1}, \dots , \rho_\ell^{-1}$.
\end{lem}

\begin{proof}
Recall that the compatibility condition \eqref{eq:main:proj}  implies that the Nijenhuis torsion of $L$ vanishes
(see for instance \cite[Theorem 1]{BolMatBen}).
 Lemma 10 from \cite{BMgluing}   states that for such $L$ the following formula holds:
$$
\dd\chi_L(t) \circ L - t \cdot \dd\chi_L(t) = \chi_L(t) \cdot \dd\tr L,
$$
where $\chi_L(t)=\mathrm{det}(t\cdot\Id-L)$ is the characteristic polynomial of $L$.
Let us multiply the both sides of this formula by $L^{-1}$:
$$
\dd\chi_L(t)- t \cdot \dd\chi_L(t) \circ L^{-1}= \chi_L(t) \cdot \dd\tr L \circ L^{-1}
$$
Hence,
$$
\dd\chi_L(t) \circ L^{-1} = \frac{1}{t} \left(  \dd\chi_L(t) - \chi_L(t) \cdot \dd\tr L \circ L^{-1}\right)
$$
Using another nice formula $\dd\tr L \circ L^{-1} = \dd ( \ln\det L)$, we get
$$
 \dd\chi_L(t) \circ L^{-1} = \frac{1}{t} \left(  \dd\chi_L(t) - \chi_L(t) \cdot \dd ( \ln\det L)  \right) = 
\frac{1}{t} \frac {\det L \cdot \dd\chi_L(t) - \chi_L(t) \cdot \dd \det L}{\det L} =
$$
$$
\frac{\det L}{t} \frac {\det L \cdot \dd\chi_L(t) - \chi_L(t) \cdot \dd \det L}{\det^2 L} = \frac{\det L}{t} \cdot \dd\left(   \frac{\chi_L(t)}{\det L}   \right),
$$
or equivalently
$$
\frac{1}{\det L} \dd\chi_L(t) \circ L^{-1} = t^{-1}  \dd\left(   \frac{\chi_L(t)}{\det L}   \right),
$$
which coincide with the desired relation if we take into account that $\chi_L(t) =  \sum_{i=0}^\ell (-1)^i \mu_i t^{\ell-i}$ 
and consider $t$ as a formal parameter. 
\end{proof}

\begin{rem}
We can derive the formulas in Lemma~\ref{lem:aboutmu} in an alternative way: by the same arguments as used in the proof 
of Lemma~\ref{lem:properties}~(7), one easily derives the formula
\begin{equation}
\label{eq:gradchar}
\gr_{\hat h}\chi_{L^{-1}}(1/t)=\frac{(-1)^\ell}{t^{\ell-1}}\gr_h\chi_L(t),
\end{equation}
where $\hat h$ is the metric given by \eqref{eq:defhath}. The only thing we used to derive 
\eqref{eq:gradchar} is that $\gr_h\rho_i$ is in the $\rho_i$-eigenspace of $L$ for each eigenvalue $\rho_i$ of $L$ (and, of course,
that each $\rho_i$ is smooth with $\d\rho_i\neq 0$).
The polynomial expression \eqref{eq:gradchar} in $t$ resp. $1/t$ gives rise to equivalent equations on the coefficients. These equations are given by 
$\gr_h\mu_i=-\gr_{\hat h}\hat\mu_{\ell+1-i}$ (or, what is equivalent, $\gr_{\hat h} \hat\mu_i=-\gr_h\mu_{\ell+1-i}$) for all $i$.
Taking into account formula \eqref{eq:defhath} for $\hat h$ and the fact that $L$ is $h$-selfadjoint, one sees that the latter equations are just the gradient version 
of the formulas in Lemma~\ref{lem:aboutmu}.
\end{rem}

Thus, $\hat g$ defined by  \eqref{eq:hatg} is a K\"ahler metric on $(M,J)$. \end{proof}

Consider now the K\"ahler metrics $g,\hat{g}$ on $(M,J)$ with Levi-Civita connections
$\nabla,\hat{\nabla}$ respectively. Let $T$ 
be the $(1,2)$-tensor defined by
$$T(X,Y)=\hat{\nabla}_X Y-\nabla_X Y.$$
Since  $\nabla,\hat{\nabla}$ are both torsion-free, $T$ is symmetric in $X,Y$. 
Moreover, since both $\nabla,\hat{\nabla}$ preserve $J$, we have the symmetry
\begin{align}
T(X,JY)=T(JX,Y)=JT(X,Y).\label{eq:Tsymmetry}
\end{align}

\begin{lem}
\label{lem:hatgcproequivg}
The tensor $T$ satisfies
\begin{align}
T(X,Y)=\Phi(X)Y+\Phi(Y)X-\Phi(JX)JY-\Phi(JY)JX.\label{eq:cproequiv}
\end{align}
for a certain $1$-form $\Phi$ on $M$.
\end{lem}

\begin{proof}
First we recall that $A$ preserves the vertical distribution $\mathcal{V}$ and 
the horizontal distribution $\mathcal{Q}=\mathcal{V}^\perp$ so that $\mathcal{Q}$ 
does not change if we consider $\hat{g}$ instead of $g$. We will use the same symbol 
for a vector field on $Q$ and its horizontal lift to $M$. 

Denoting by $\mathrm{pr_{\mathcal{Q}}}:TM\rightarrow \mathcal{Q}$ the projection to 
the horizontal distribution, the O'Neill formula \eqref{eq:ONeill} implies
$$\mathrm{pr_{\mathcal{Q}}}(T(X,Y))=\hat{\nabla}^Q_X Y-\nabla^Q_X Y$$
for vector fields $X,Y\in \Gamma(TQ)$, where $\nabla^Q,\hat{\nabla}^Q$ are the Levi-Civita 
connections of the horizontal parts $g_Q$ and $\hat{g}_Q$ of $g$ and $\hat{g}$ respectively.

From formula \eqref{eq:hatg} we see that $\hat g_Q=c\cdot(\mathrm{det}\,A_Q)^{-1}g_Q(A_Q^{-1}\cdot,\cdot)$ 
for some constant $c$, where $A_Q$ is the quotient of $A$ and we used that $\mathrm{det}_\C A$ equals $\mathrm{det}\,A_Q$ 
up to multiplying with a constant. Comparing this formula for $\hat g_Q$ with \eqref{eq:defhath} and noting that, by construction, 
$g_Q$ and $A_Q$ are compatible on $Q$, we see that $\hat{g}_Q$ is projectively equivalent to $g_Q$. Thus, we have that
\begin{align}
\hat{\nabla}^Q_X Y-\nabla^Q_X Y=\Phi(X)Y+\Phi(Y)X\label{eq:proequiv}
\end{align}
is satisfied for all $X,Y\in \Gamma(Q)$ for a $1$-form $\Phi$ on $Q$. Indeed, 
the fact that \eqref{eq:proequiv} is equivalent to $g_Q,\hat{g}_Q$ being projectively equivalent
is a classical statement in projective geometry, see \cite{Levi-Civita}. 

Using \eqref{eq:proequiv}, we obtain that the horizontal part of $T$ is given by 
$$
\mathrm{pr_{\mathcal{Q}}}(T(X,Y))=\Phi(X)Y+\Phi(Y)X.
$$
However, since $\mathcal{V}$ is spanned by the Killing vector fields $K_i$, any 
$g$- or $\hat{g}$-geodesic $\gamma(t)$ in $M$ being initially tangent to $\mathcal{Q}$ 
remains tangent to it for all values of $t$. It follows that 
$\nabla_X X,\hat{\nabla}_X X\in \mathcal{Q}$ whenever $X\in \mathcal{Q}$. 
Then $T(X,X)$ in $\mathcal{Q}$ for all $X\in \mathcal{Q}$ and by polarisation 
(recall that $T$ is symmetric) we have $T(X,Y)\in \mathcal{Q}$ for all $X,Y\in \mathcal{Q}$. 
Thus, we obtain
\begin{align}
T(X,Y)=\Phi(X)Y+\Phi(Y)X.\label{eq:Thorizontal}
\end{align}
for all $X,Y\in \mathcal{Q}$. Since the form $\Phi$ in \eqref{eq:proequiv} is explicitly 
given by the formula
$$
\Phi=-\frac{1}{2}\d\,\ln (\mathrm{det}\,L),
$$
(which is a classical formula that can be obtained from  \eqref{eq:proequiv} by contraction),
 we see that it vanishes upon insertion of vector fields that are contained in the
 generalised eigenspaces of $A$ corresponding to constant eigenvalues. Thus, 
\eqref{eq:Thorizontal} establishes formula \eqref{eq:cproequiv} for vector fields tangent to $\mathcal Q$. 

It remains to verify equation \eqref{eq:cproequiv} upon insertion of vertical 
vector fields $JX,JY$, where $X,Y\in J\mathcal{V}\subseteq \mathcal{Q}$. 
Using \eqref{eq:Tsymmetry}, we obtain 
$$
T(JX,JY)=-T(X,Y)\overset{\eqref{eq:Thorizontal}}{=}-\Phi(X)Y-\Phi(Y)X
$$
$$
=\underbrace{\Phi(JX)JY+\Phi(JY)JX}_{=0}-\Phi(JJX)JJY-\Phi(JJY)JJX
$$
which establishes \eqref{eq:cproequiv} evaluated on $JX,JY$. Further, for 
arbitrary $Z$ tangent to $\mathcal Q$, we obtain 
$$
T(Z,JX)=JT(Z,X)\overset{\eqref{eq:Thorizontal}}{=}\Phi(Z)JX+\Phi(X)JZ
$$
$$
=\Phi(Z)JX+\underbrace{\Phi(JX)}_{=0}Z-\underbrace{\Phi(JZ)}_{=0}JJX-\Phi(JJX)JZ
$$
establishing \eqref{eq:cproequiv} when evaluated on $Z,JX$. Thus, 
we verified \eqref{eq:cproequiv} on all possible combinations of tangent 
vectors and the claim follows.
\end{proof}

It is a classical statement in c-projective geometry, see for example \cite{Mikes,Tashiro}, and we used this fact
already in the proof of Lemma~\ref{lem:killing} that two complex torsion-free connections 
$\nabla,\hat{\nabla}$ on a complex manifold $(M,J)$ are c-projectively equivalent (i.e. their $J$-planar curves coincide) if and 
only if \eqref{eq:cproequiv} is satisfied for a certain $1$-form $\Phi$. Lemma~\ref{lem:hatgcproequivg} 
then shows that $g,\hat{g}$ are c-projectively equivalent. 
This implies that $A=A(g,\hat{g})$ is a solution of equation \eqref{eq:main} and completes 
the proof of the second part of Proposition~\ref{prop:realising}.   \end{proof}

\subsection{Explicit formulas}
\label{explicitform}

In the preceding sections, we have proved Theorem~\ref{thm:invform}  (see Propositions~\ref{prop:locform} and~\ref{prop:realising}) 
which can be understood as an invariant version of
 Theorem~\ref{thm:localclassification}.
 We are now going to derive the formulas 
from Example~\ref{ex:main}. Our starting point is Proposition~\ref{prop:locform}. 
We will derive explicit formulas for all the objects that have been introduced there and  thereby prove Theorem~\ref{thm:localclassification}.

The compatible pair $h$, $L$ can be described explicitly 
by using the results from \cite{BM}. The latter article contains explicit formulas 
for a compatible pair in the general pseudo-Riemannian case.  
In our case, there are no Jordan blocks (with non-constant eigenvalues) and the formulas become similar to the classical Levi-Civita theorem -- 
the only modification being signs $\varepsilon_i=\pm 1$ for each non-constant real eigenvalue $\rho_i$ (which allow us to ``produce'' 
an arbitrary signature) and the occurrence of complex eigenvalues. Let 
$$
E_{\mathrm{nc}}=\{\rho_1,\overline{\rho}_1,\dots,\rho_r,\overline{\rho}_r,\rho_{r+1},\dots,\rho_{r+q}\}
$$
denote the set of (non-constant) eigenvalues of $L$ ($r$ pairs of complex-conjugate eigenvalues and $q$ real eigenvalues). 
Recall that the ``gluing data'' in \cite[Theorem 1.3]{BM} 
takes the form of a $1$-dimensional block
$$
h_i=\varepsilon_i \d x_i^2,\,\,\,L_i=\rho_i(x_i)\partial_{x_i}\otimes \d x_i,
$$
for a real eigenvalue $\rho_i$ and (as follows from \cite[Theorems 5]{BM}
 or \cite[Theorem 2]{Liouville}) the form of a $2$-dimensional block 
$$
h_i=\frac{1}{4}(\overline{\rho}_i(\overline{z}_i)-\rho_i(z_i))(\d z_i^2-\d \overline{z}_i^2),\,\,\,
L_i=\rho_i(z_i)\partial_{z_i}\otimes \d z_i+\overline{\rho}_i(\overline{z}_i)\partial_{\overline{z}_i}\otimes \d\overline{z}_i,
$$
if $\rho_i,\overline{\rho}_i$ is a pair of complex conjugate eigenvalues, where $z_j$ 
is a complex coordinate w.r.t. which $\rho_j$ is a \emph{holomorphic} function.

Thus, by \cite[Theorem 1.3]{BM}, we find local coordinates 
$z_1,\dots,z_r,x_{r+1},\dots,x_{r+q}$\footnote{These coordinates should note be confused with the general 
coordinate system $x_1,\dots,x_\ell$ used, for instance, in Proposition~\ref{prop:locform}.} 
(where the $z_j$ are complex coordinates and the $x_j$ are
 real coordinates) such that
\begin{align}
\begin{array}{c}\displaystyle
h=-\frac{1}{4}\sum_{i=1}^r \left(\Delta_i \d z_i^2+c.c.\right)+\sum_{i=r+1}^{r+q}\Delta_i \varepsilon_i \d x_i^2,\vspace{1mm}\\
\displaystyle L=\sum_{i=1}^r(\rho_i\partial_{z_i}\otimes \d z_i+c.c.)+\sum_{i=r+1}^{r+q}\rho_i\partial_{x_i}\otimes \d x_i,
\end{array}
\label{eq:normalformquotient}
\end{align}
where for $1\leq i\leq r$, $\rho_i(z_i)$ is a holomorphic function of $z_i$,
 for $r+1\leq i\leq r+q$, $\rho_i(x_i)$ only depends on $x_i$, ``$c.c.$'' 
denotes the complex conjugate of the preceding term and 
$$
\Delta_i=\prod_{\rho\in E_{\mathrm{nc}}\setminus\{ \rho_i\}}(\rho_i-\rho).
$$

The parts of $g,\omega$ and $A$ in \eqref{eq:localclassificationgJw} and \eqref{eq:localclassificationA}, which correspond
to the ``constant'' block, are obtained from the expressions
$$
\sum_{i=0}^\ell \mu_i \cdot (-1)^i g_{\mathrm{c}}(A_{\mathrm{c}}^{\ell -i} \cdot, \cdot),\,\,\,
\sum_{i=0}^\ell \mu_i \cdot (-1)^i  \omega_{\mathrm{c}}(A_{\mathrm{c}}^{\ell -i} \cdot, \cdot)\mbox{ and }
\sum_{p,q=1}^{2k}    (A_{\mathrm{c}})^q_p  \,  \d y_p  \otimes \left( \frac{\partial}{\partial {y_q}}    - \sum_{i=1}^\ell\alpha_{iq} \frac{\partial}{\partial {t_i}}   \right)
$$
in the formulas \eqref{eq:forH}, \eqref{eq:foromega} and \eqref{eq:forA} by decomposing $(g_\mathrm{c},\omega_\mathrm{c})$ in the sense of 
de~Rham--Wu
\cite{deRham,Wu} according to the parallel distributions belonging to the generalised eigenspaces of $A_{\mathrm{c}}$. 
Each component $(g_\gamma,\omega_\gamma,A_\gamma)$ of this decomposition, as we have already mentioned, can be described explicitly using the results of Boubel \cite{Boubel}.

To establish the formulas \eqref{eq:localclassificationgJw} and \eqref{eq:localclassificationA} for $g,\omega$ and $A$,
it remains to find the formulas for the parts of $g,\omega$ and $A$ in the direction tangent to the Killing vector fields. 
We will specialise the general coordinate system $x_1,\dots,x_\ell$ in formula \eqref{eq:forH} by choosing 
$\mu_1,\dots,\mu_\ell$ as coordinates, compare also the formulas \eqref{eq1}--\eqref{eq4} in Proposition~\ref{prop:locrigid}.
With this choice of coordinates, the matrix $H_{ij}$ (inverse to $G_{ij}$ defined by $h=G_{ij}\d\mu_i\d\mu_j$)
is just given by 
\begin{equation}
\label{eq:Hij}
H_{ij}=g(K_i,K_j)=g(\gr_g \mu_i,\gr_g \mu_j)
=h(\gr_{h} \mu_i,\gr_{h} \mu_j).
\end{equation}
Let $\mu_{i}(\hat{\rho})$ denote the $i$th elementary symmetric polynomial in the $\ell-1$ variables 
$E_{\mathrm{nc}}\setminus\{\rho\}$. Writing $\d\mu_i=\sum_{\rho\in E_{\mathrm{nc}}}\mu_{i-1}(\hat{\rho})\d\rho$ in 
the coordinates from formula \eqref{eq:normalformquotient} we have
\begin{align}
\displaystyle \d \mu_i=\sum_{s=1}^r\left(\mu_{i-1}(\hat{\rho_s})\frac{\partial \rho_s}{\partial z_s}\d z_s
+c.c.\right)+\sum_{s=r+1}^{r+q}\mu_{i-1}(\hat{\rho}_s)\frac{\partial \rho_s}{\partial x_s}\d x_s.\label{eq:dmui}
\end{align}
Using the formula \eqref{eq:normalformquotient} for $h$ together with \eqref{eq:dmui}, we obtain
\begin{align}
\label{eq:gradmui}
\gr_{h}\,\mu_j=-4\sum_{s=1}^r\left(\frac{\mu_{j-1}(\hat{\rho_s})}{\Delta_s}\frac{\partial \rho_s}{\partial z_s}\frac{\partial}{\partial z_s}
+c.c.\right)+\sum_{s=r+1}^{r+q}\varepsilon_s\frac{\mu_{j-1}(\hat{\rho}_s)}{\Delta_s}\frac{\partial \rho_s}{\partial x_s}\frac{\partial}{\partial x_s}.
\end{align}
From \eqref{eq:Hij}, \eqref{eq:dmui} and \eqref{eq:gradmui}, we obtain
\begin{align}
\begin{array}{c}
\displaystyle H_{ij}=-4\sum_{s=1}^r\left(\frac{\mu_{i-1}(\hat{\rho_s})\mu_{j-1}(\hat{\rho_s})}{\Delta_s}\left(\frac{\partial \rho_s}{\partial z_s}\right)^2
+c.c.\right)+\sum_{s=r+1}^{r+q}\varepsilon_s \frac{\mu_{i-1}(\hat{\rho}_s)\mu_{j-1}(\hat{\rho}_s)}{\Delta_s}\left(\frac{\partial \rho_s}{\partial x_s}\right)^2.
\end{array}\label{eq:formulaforHij}
\end{align}
Inserting the formulas \eqref{eq:normalformquotient} and \eqref{eq:formulaforHij} into \eqref{eq:forH}, 
we obtain the formula \eqref{eq:localclassificationgJw} for $g$.
It remains to find the formula for $A$ and the formulas 
for $J$ acting on $\theta_i$ and $\d\rho_j$ respectively.

Taking the differential of the identity
$$
\prod_{\rho\in E_{\mathrm{nc}}}(t-\rho)=\sum_{s=0}^\ell (-1)^s \mu_s t^{\ell-s}
$$
and inserting $t=\rho_i$, we obtain 
$$
\d\rho_i=\frac{1}{\Delta_i}\sum_{s=1}^\ell (-1)^{s-1}  \rho_i^{\ell-s}\d\mu_s.
$$
By \eqref{eq4}, we have
$$
\d\rho_i\circ J=-\frac{1}{\Delta_i}\sum_{s,t=1}^\ell (-1)^{s-1}  \rho_i^{\ell-s}H_{st}\theta_t.
$$
Inserting \eqref{eq:formulaforHij} into this and applying standard Vandermonde identities (see the Appendix of \cite{ApostolovI}), we obtain
$$
\d z_i\circ J=4\frac{1}{\Delta_i}\frac{\partial \rho_i}{\partial z_i}\sum_{j=1}^\ell\mu_{j-1}(\hat{\rho_i})\theta_j\mbox{ for }1\leq i\leq r
$$
and 
$$
\d x_i\circ J=-\frac{\varepsilon_i}{\Delta_i}\frac{\partial \rho_i}{\partial x_i}\sum_{j=1}^\ell\mu_{j-1}(\hat{\rho}_i)\theta_j\mbox{ for }r+1\leq i\leq r+q.
$$
Inverting these formulas by using Vandermonde identities shows the formula for $\theta_i\circ J$ 
expressed as a linear combination of the $\d z_i$ and $\d x_i$. This establishes the formula for $J$ in \eqref{eq:localclassificationgJw}. 

Let us derive the remaining part of the formula \eqref{eq:localclassificationA} for $A$. The formula for $\omega$ 
in Proposition~\ref{prop:locform} shows that the Killing vector fields $\frac{\D}{\D t_i}$ coincide with $K_i=J\gr\,\mu_i$. We show
\begin{equation}
\label{eq:AonK}
AK_i=\mu_i K_1-K_{i+1}\mbox{ for all }i=1,\dots,\ell
\end{equation}
(where we put $K_{\ell+1}=0$) which immediately implies \eqref{eq:localclassificationA}. 
Hence, as soon as \eqref{eq:AonK} is derived, all formulas from Example~\ref{ex:main} are established and 
Theorem~\ref{thm:localclassification} is proven.

Formula \eqref{eq:AonK} is in fact the reformulation to our setting of \cite[formula (58)]{ApostolovI} and it can be proven 
in the same way: let $v_t=\gr_h\chi_{L}(t)$, where as usual 
$\chi_{L}(t)=\mathrm{det}(t\cdot\Id-L)=\sum_{i=0}^\ell(-1)^i\mu_i t^{\ell-i}$ denotes the characteristic polynomial 
of $L$ (in the terminology of Proposition~\ref{prop:locform}). For abbreviation define $v_i=\gr_h\mu_i$ such that $K_i=Jv_i$ and $2\Lambda=v_1$. 
Recall that, using the compatibility of $L$ and the metric $h$, we have derived the identity
\begin{equation}
\label{eq:LonK}
(t\cdot \Id-L) v_t=-\mathrm{det}(t\cdot \Id-L)v_1=-\sum_{i=0}^\ell(-1)^i\mu_it^{\ell-i}v_1
\end{equation}
in the proof of Lemma~\ref{lem:anotherone}. Inserting $v_t=\sum_{i=1}^\ell(-1)^i t^{\ell-i}v_i$ into the 
left-hand side of \eqref{eq:LonK} and setting $v_{\ell+1}=0$, we obtain 
$0=\sum_{i=1}^{\ell}(-1)^{i}( v_{i+1}+Lv_i-\mu_iv_1)t^{\ell-i}$, hence, $Lv_i=\mu_iv_1-v_{i+1}$. 
Of course, this holds also with $L$ replaced by $A$ and \eqref{eq:AonK} follows after 
multiplying with $J$ and using that $A$ commutes with $J$.


\section{Proof of the Yano-Obata conjecture (Theorem~\ref{thm:yano_obata}) }
\label{sec:yano_obata}

The main goal of this section is to prove Theorem~\ref{thm:yano_obata}.  
Simultaneously, we will give a proof of an important special case of  the projective Lichnerowicz  conjecture (Theorem~\ref{thm:lichnerowicz}), see Theorem~\ref{specialcase2}.  

Recall that the existence of a projective vector field $v$ for a (pseudo)-Riemannian metric $g$  implies the existence of an endomorphism $A$ compatible with $g$.   According to Proposition~\ref{prop:split1}, in a neighbourhood of a regular point, $A$ naturally splits into two blocks $A_{\mathrm{c}}$ and $L$  with constant and non-constant eigenvalues respectively.

If the non-constant block  $L$ (see Proposition~\ref{prop:split1}) is diagonalisable\footnote{Recall that in the c-projective setting this condition is fulfilled automatically  (see Lemma~\ref{lem:properties}~\eqref{l1}), whereas in the projective setting non-constant Jordan blocks are allowed.}, i.e. contains no Jordan blocks, then the proofs of Theorems~\ref{thm:yano_obata} and~\ref{thm:lichnerowicz} are almost identical. For that reason, 
  parallel to the proof of the Yano-Obata conjecture, we will prove the following version of the (pseudo-Riemannian) projective Lichnerowicz  conjecture: 
 
 \begin{thm} 
 \label{specialcase2}
 Let $M$ be a closed connected manifold   
 with an indefinite metric  $g$ on it.  Assume that $(M,g)$ admits a projective vector field $v$ and let $A$ be an endomorphism compatible with $g$ in the projective sense, $A\ne c\cdot\mathrm{Id}$. If there exists a regular point $p\in M^0$ at which the non-constant block of $A$  is diagonalisable, then $v$ is affine.
\end{thm}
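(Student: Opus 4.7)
The plan is to argue by contradiction, in direct analogy with the proof of the Yano--Obata conjecture in this section but carried out directly in the projective setting, so that no c-projective-to-projective reduction is required. Suppose $v$ is projective but not affine. Then $\mathcal{L}_v A$ is itself a $g$-selfadjoint solution of \eqref{eq:main:proj} which is not a constant multiple of $\Id$, and hence the space $\mathcal{S}$ of such solutions has dimension at least $2$; the flow $\Phi_t$ of $v$ acts linearly on $\mathcal{S}$. As in the Yano--Obata proof, the decisive case is degree of mobility exactly $2$, and higher mobility is reduced to this by choosing, via the linear action of $\Phi_t$ on $\mathcal{S}$ and suitable algebraic normalizations, a two-parameter family of compatible pairs $(g,\,\alpha\Id + \beta A)$ on which $v$ acts non-trivially.

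Once in the degree-of-mobility-$2$ case, I will work in a neighborhood of a regular point $p$ at which the non-constant block $L$ of $A$ is diagonalisable. By Proposition~\ref{prop:split1} combined with the local description in \cite{BM}, the pair $(g,A)$ decomposes near $p$ into a constant block $(S, g_{\mathrm c}, A_{\mathrm c})$ and a non-constant block $(U, h, L)$ in Levi-Civita/Dini normal form \eqref{eq:normalformquotient}, in which each non-constant eigenvalue $\rho_i$ depends only on a single coordinate. Because $v$ preserves the projective equivalence class, it must respect this decomposition up to permutations of isomorphic constant blocks, and on the non-constant block $v$ must transport the eigenvalues $\rho_i$ by an affine transformation whose coefficients are determined by the $2 \times 2$ matrix that describes the action of $\Phi_t$ on $\mathrm{span}\{\Id, A\}$.

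The contradiction is then extracted from combining this rigid affine behavior with the closedness of $M$. The elementary symmetric functions $\mu_1,\dots,\mu_\ell$ in the non-constant eigenvalues extend to globally defined smooth functions on $M$ and are therefore bounded; the only affine transformations of $\R$ (or $\C$) whose orbits on a bounded invariant subset are bounded are either trivial or have a real fixed point, and in each case one concludes that $v$ must preserve $A$ (contradicting non-affineness) or else must produce a boundary point of $M^0$ at which the eigenvalue structure degenerates incompatibly with the regularity assumption. The indefinite signature hypothesis enters at this last stage to exclude the exceptional Fubini--Study / round-sphere-like configurations that appear in the Riemannian Lichnerowicz and Yano--Obata settings, since no closed indefinite manifold supports the corresponding global symmetric structure.

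The principal obstacle, and the point where this argument departs most from the Riemannian proof in \cite{Matveev2007}, is exactly the last step: in indefinite signature the global ordering of the eigenvalues of $L$, the unboundedness of sectional curvature along integral curves of $v$, and Bochner-type integrations are all unavailable, as explained in the introduction. The argument must proceed purely algebraically, using only the linear action of $\Phi_t$ on the two-dimensional space $\mathcal{S}$, the explicit Levi-Civita normal form on the non-constant block, and the compactness of $M$. The diagonalisability hypothesis in the statement of the theorem is precisely what guarantees the availability of this normal form; the alternative case of a non-constant Jordan block is genuinely more intricate and is handled separately in Section~\ref{sec:Jblock}.
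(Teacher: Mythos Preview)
Your proposal has a genuine gap at the crucial step. You claim that $v$ ``must transport the eigenvalues $\rho_i$ by an affine transformation'' and then try to extract a contradiction from boundedness of orbits. But this is not what happens: the flow $\Phi_t$ acts linearly on the two-dimensional space $\mathrm{span}\{\Id,A\}$, and the induced evolution of an eigenvalue $\rho$ along $v$ is governed by the \emph{quadratic} Riccati equation
\[
\dot\rho = -\beta\rho^2 + (\delta-\alpha)\rho + \gamma
\]
(Proposition~\ref{prop:PDE_gen}), not an affine one. After normalisation this becomes $\dot\rho=\rho(1-\rho)$, which \emph{does} have bounded non-constant solutions (those with $\rho\in(0,1)$). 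So boundedness of the $\mu_i$ alone gives no contradiction, and your sketch stops precisely where the real work begins.

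The paper's argument is far more specific. After the reduction to $D(g)=2$ (for $D(g)\ge 3$ one invokes \cite[Corollary~5.2]{Mounoud} directly, not an ad hoc two-parameter reduction), one normalises to $\mathcal L_v A=A(\Id-A)$ and $\mathcal L_v g=-gA-(\sum\rho_i+\CC)g$, then computes the restriction of $g$ to leaves of the distribution $\mathcal U$ spanned by the $\gr\rho_i$'s explicitly: the functions $F_i$ in the Levi-Civita normal form are forced to be $F_i(t)=a_i(1-t)^{-\CC}t^{1+\ell+\CC}$. Boundedness of $g(\gr\,\tr A,\gr\,\tr A)$ forces all $a_i$ equal, and then an \emph{explicit} eigenvalue of the curvature operator (computed via Proposition~\ref{prop:eigvalcurv}) is shown to blow up unless $\ell=1$. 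A volume argument on the level sets $\{\rho=c\}$ then pins down $\CC=-1$, the absence of Jordan blocks in the constant part is established by another curvature-eigenvalue estimate, and finally one shows that $\nabla\Lambda=\mu\,\Id+BA$ globally. This forces $\tr A$ to satisfy the Gallot--Tanno equation \eqref{one}, and \cite[Theorem~1]{Mounoud} then yields that $-Bg$ is positive definite --- the contradiction with indefinite signature. None of these steps is captured by your ``bounded orbit of an affine map'' heuristic.
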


In other words, this theorem says that in the absence of Jordan blocks with non-constant eigenvalues,  the (pseudo-Riemannian) projective Lichnerowicz  conjecture holds true, i.e.,  non-affine projective vector fields do not exist on compact manifolds with indefinite metrics. 
To complete the proof of this conjecture in full generality, it remains to show that Jordan blocks with non-constant eigenvalues are also ``forbidden''.
For Lorentzian metrics, this will be done in Section~\ref{sec:Jblock} which is the final step of the proof of Theorem~\ref{thm:lichnerowicz}.

The proof  of Theorem~\ref{specialcase2} is organised   as a series of remarks:  at any step  of the proof of 
 Theorem~\ref{thm:yano_obata}  we put a remark explaining how to change, if necessary, the proof in order to obtain a 
proof of an analogous step of Theorem~\ref{specialcase2}.  In particular,  we use similar notations for the projective and c-projective cases.

\subsection{Conventions and degree of mobility} \label{conv}   
Within the whole \S\ref{sec:yano_obata}  (except Remarks~\ref{rem:first}--\ref{rem:last} for Theorem~\ref{specialcase2} 
where we use similar notation in the projective setting)
we assume that  $(M,g,J)$ is  a closed connected K\"ahler manifold of (a priori) arbitrary signature and of dimension $2n\ge 4$   
(though the case $2n=4$ has been settled in \cite{BMMR}),
 and that  $v$ is a c-projective vector field which is not an affine vector field. We denote by 
 $\Phi^v_t$ its flow, by definition the  pullback  of $g$ w.r.t. $\Phi^t_v$ is a metric which is 
 c-projectively equivalent to $g$.

 We define the  \emph{degree of (c-projective) mobility} $D(g,J)$ of $(g, J)$  as 
  the dimension of the space of hermitian solutions of equation \eqref{eq:main}. 
  If $D(g,J)= 1$, the flow of  $v$ acts by  homotheties, since  otherwise $\Phi_t^* g$ is non-proportional to $g$ and
 hence $D(g,J)\geq 2$. Thus, this case is impossible since we assumed that $v$ is not an affine vector field. 
  
  Now, in the case  $D(g,J)\geq 3$, Theorem~\ref{thm:yano_obata} follows from \cite[Theorem 1]{FKMR}, where it is proved that   a closed connected K\"ahler manifold of arbitrary signature 
   with $D(g,J)\geq 3$ is either 
$(\C P(n),c\cdot g_{FS},J_{\mathrm{standard}})$ for some $c\in \mathbb{R}\setminus \{0\}$
 or any metric which is  c-projectively equivalent to $g$  is affinely  equivalent to $g$. Thus, if $D(g,J)\geq 3$, we are done. 
 In the remaining part of this section, we shall treat the case $D(g,J)=2$.

Here is our first remark related to the proof of Theorem~\ref{specialcase2}.

 \begin{remthm}[for Theorem \ref{specialcase2}]
 \label{rem:first}
 For a (pseudo)-Riemannian metric $g$, the \emph{degree of (projective) mobility}  $D(g)$ is the dimension of the vector space of $g$-selfadjoint solutions of \eqref{eq:main:proj}. If $D(g)= 1$, every projective transformation is a homothety and is an affine transformation. If $D(g)\ge 3$ and $g$ has indefinite signature, then by \cite[Corollary 5.2]{Mounoud} (which plays here the role analogous to  the role  of \cite{FKMR} in the K\"ahler setting)  each projective transformation is an affine transformation.  Therefore, in the rest of the proof of Theorem~\ref{thm:yano_obata} we may (and will) assume that $D(g)=2$.
 \end{remthm}

\subsection{Scheme of the proof}

Let us outline the steps of the proof of Theorem~\ref{thm:yano_obata} under the assumption $D(g,J)=2$. 
In \S\ref{ssec:PDE}, we will derive the PDE's that describe the evolution of the c-compatible pair $g$ and $A$ along a projective vector field $v$.

In \S\ref{sssec:nocomplex} we show that the equation for $A$ can be reduced to one of three a priori possible canonical forms.  
By using these forms we show that $A$ cannot have non-constant complex eigenvalues and moreover,  the real eigenvalues are bounded 
only for one particular canonical form, namely $\mathcal L_v A = A(\mathrm{Id}-A)$. This equation will automatically imply that the 
constant eigenvalues of $A$, if they exist, are $0$ and $1$.  

This simplifies the formulas in Theorem~\ref{thm:localclassification} considerably. 
In the local classification, $g$ and $A$ are in block-diagonal form. In \S\ref{sssec:explicit}, we will obtain a partial 
solution to the PDE system by only considering the part that corresponds to the block spanned
 by the gradients of the non-constant eigenvalues $\rho_1,\dots,\rho_\ell$ of $A$  (that is the $L$-block from Theorem~\ref{thm:invform} 
and Example~\ref{ex:now2}). The PDE system
 restricted to this block reduces to ordinary differential equations on a certain set of functions 
$F_1(\rho_1),\dots,F_\ell(\rho_\ell)$ and on the eigenvalues $\rho_i$. Thus, we obtain quite explicit formulas for $A$, $g$ and $v$ 
involving some yet unknown constants $a_1,\dots, a_\ell$ and $\CC$ as parameters.

In \S\ref{ssec:l23}, we will use these formulas  to analyse the asymptotic properties
of the scalar products $g(K_i,K_j)$ of the Killing vector fields and 
the eigenvalues of the curvature operator.  We will conclude from this analysis that there cannot 
be more than one non-constant eigenvalue $\rho$ (otherwise the eigenvalues of the curvature
 operator are unbounded for $t\to\pm \infty$ (which is impossible on a closed manifold) or $g(K_i,K_j)$
 does not tend to zero for $t\to\pm\infty$ (as it should)).

Now we are left with a very specific form of the formulas in Theorem~\ref{thm:localclassification}: 
there is only one non-constant eigenvalue $\rho$ and at most two constant eigenvalues $0$ and $1$. 
In \S\ref{sssec:l11} we complete the proof of Theorem~\ref{thm:yano_obata}. 
We do this by deriving further restrictions on the constant $\CC$ from above that appears 
as a parameter in the metric. Using results of \cite{FKMR} we are then able to conclude that the metric, up to a sign, is positive
definite. This traces back the proof of Theorem~\ref{thm:yano_obata} to the corresponding result \cite{YanoObata} for positive signature.

\subsection{C-projectively  invariant form of equation \eqref{eq:main} and special features of $D(g,J)=2$}
\label{ssec:PDE}

Consider the canonical line bundle $\mathcal{E}=\Lambda^{2n}T^* M$ over the K\"ahler manifold $(M,g,J)$. 
For a real number $w$, we define the line bundle $\mathcal{E}(w)$ whose transition functions are given
 by the transition functions of $\mathcal{E}$ to the power $w$. Note that $M$, as a complex manifold, 
has a canonical orientation and we can assume positivity of the transition functions of $\mathcal{E}$. 
For an arbitrary tensor bundle $E$ over $M$, we can then define the ``weighted version'' 
$E(w)=E\otimes \mathcal{E}(w)$. Let $S^2_J TM$ denote the bundle of hermitian contravariant $2$-tensors 
and denote by $\nabla$ the Levi-Civita connection of $g$. In the Appendix of \cite{YanoObata} it was 
shown that the PDE
\begin{align}
\nabla_X \sigma=X\odot\Lambda_\sigma+JX\odot J\Lambda_\sigma,\,\,\,X\in TM \label{eq:maininvariant}
\end{align}
on sections $\sigma$ of $S^2_J TM(\frac{1}{n+1})$ ($2n=\mathrm{dim}_{\R}\,M$), where $\Lambda_\sigma\in \Gamma(TM(\frac{1}{n+1}))$
 is the $\nabla$-divergence of $\sigma$ divided by $2n$ and $X\odot Y=X\otimes Y+Y\otimes X$ denotes the symmetric product, 
is c-projectively invariant, that is, it does not depend on the choice of connection $\nabla$ in the class 
$[\nabla]$ of c-projectively equivalent connections.
 Let us denote by $\mathcal{A}([g],J)$ the space of solutions of \eqref{eq:maininvariant}. There is an 
isomorphism between the space $\mathcal{A}(g,J)$ of hermitian solutions to \eqref{eq:main} (i.e., the space of hermitian
endomorphism c-compatible with $(g,J)$) and $\mathcal{A}([g],J)$ given by
$$
\varphi:\mathcal{A}([g],J)\longmapsto\mathcal{A}(g,J),\,\,\,\varphi(\sigma)=\sigma\sigma_g^{-1},
$$
where
$$
\sigma_g=g^{-1}\otimes \mathrm{vol}_g^{\frac{1}{n+1}}
$$
and $\mathrm{vol}_g$ is the volume form of $g$. As described in more detail in the appendix of 
\cite{YanoObata}, taking the Lie derivative $\mathcal{L}_v \sigma$ of a solution $\sigma$ to 
\eqref{eq:maininvariant} w.r.t. the c-projective vector field $v$ yields again a solution to 
\eqref{eq:maininvariant}. Thus, we obtain a linear mapping 
$$
\mathcal{L}_v:\mathcal{A}([g],J)\longmapsto\mathcal{A}([g],J).
$$
Under the assumption $D(g,J)=2$, we can chose a basis $\sigma,\hat \sigma$ of $\mathcal{A}([g],J)$ 
and find the equations
\begin{align}
\begin{array}{c}
\mathcal{L}_v\sigma=\alpha\sigma+\beta\hat\sigma,\vspace{1mm}\\
\mathcal{L}_v\hat\sigma=\gamma\sigma+\delta\hat\sigma
\end{array}\label{eq:PDEgeneral}
\end{align} 
for certain real numbers $\alpha,\beta,\gamma,\delta$.

Using \eqref{eq:PDEgeneral} we can easily derive the Lie derivatives of $A\in \mathcal{A}(g,J)$  
and $g$ along $v$.

\begin{prop}
\label{prop:PDE_gen}
Let $v$ be a $c$-projective vector field for $g$ and $A\in \mathcal{A}(g,J)$, $A\ne c \cdot \Id$.
 Then we have the equations
\begin{align}
\mathcal{L}_v A=-\beta A^2+(\delta -\alpha) A+\gamma\Id.\label{eq:PDEA_general}
\end{align}
and 
\begin{align}
\mathcal{L}_v g=\left(-\frac{\beta}{2} \mathrm{tr}\,A-(n+1)\alpha\right) g-\beta gA,
\label{eq:PDEg_general}
\end{align}
for some constants $\alpha, \beta, \gamma,\delta\in \R$.
Moreover, the restriction $\rho(t)=\rho(\Phi_t^v(p))$ of every eigenvalue $\rho$ of $A$ to an 
integral curve of $v$ satisfies the ODE
\begin{align}
\label{eq:ODEeigenvalues_gen}
\dot\rho=-\beta \rho^2+(\delta -\alpha) \rho+\gamma.
\end{align}
\end{prop}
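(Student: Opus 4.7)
The plan is to exploit the hypothesis $D(g,J)=2$ by adapting the basis of $\mathcal{A}([g],J)$ to the pair $(\mathrm{Id},A)$ on the endomorphism side via the isomorphism $\varphi$, and then translate the linear action of $\mathcal{L}_v$ described by \eqref{eq:PDEgeneral} into the corresponding equations for $A$ and $g$. Concretely, I would set $\sigma := \sigma_g = \varphi^{-1}(\mathrm{Id})$ and $\hat\sigma := A\,\sigma_g = \varphi^{-1}(A)$. Since $A \ne c\cdot \mathrm{Id}$, the images $\mathrm{Id}$ and $A$ are linearly independent in $\mathcal{A}(g,J)$, so $\sigma_g$ and $A\,\sigma_g$ form a basis of the two-dimensional space $\mathcal{A}([g],J)$. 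With this choice, \eqref{eq:PDEgeneral} produces constants $\alpha,\beta,\gamma,\delta\in\R$ satisfying
$$
\mathcal{L}_v\sigma_g = (\alpha\,\mathrm{Id}+\beta A)\sigma_g, \qquad \mathcal{L}_v(A\sigma_g) = (\gamma\,\mathrm{Id}+\delta A)\sigma_g,
$$
where multiplication by $\sigma_g^{-1}$ on the right has been identified with $\varphi$.

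Next, to derive \eqref{eq:PDEg_general}, I would compute $\mathcal{L}_v\sigma_g$ directly from the definition $\sigma_g = g^{-1}\otimes \mathrm{vol}_g^{1/(n+1)}$ using the standard identities $\mathcal{L}_v g^{-1}=-g^{-1}(\mathcal{L}_v g)g^{-1}$ and $\mathcal{L}_v\mathrm{vol}_g = \tfrac{1}{2}\mathrm{tr}(g^{-1}\mathcal{L}_v g)\,\mathrm{vol}_g$. Writing $C := g^{-1}\mathcal{L}_v g$ for the endomorphism form of $\mathcal{L}_v g$, the Leibniz rule gives
$$
(\mathcal{L}_v\sigma_g)\,\sigma_g^{-1} \;=\; -C + \frac{\mathrm{tr}\,C}{2(n+1)}\,\mathrm{Id}.
$$
Equating this with $\alpha\,\mathrm{Id}+\beta A$ and taking the trace yields $\mathrm{tr}\,C = -2n(n+1)\alpha - (n+1)\beta\,\mathrm{tr}\,A$, which substituted back gives $C = -(n+1)\alpha\,\mathrm{Id}-\beta A -\tfrac{\beta}{2}(\mathrm{tr}\,A)\,\mathrm{Id}$; multiplying by $g$ on the left produces \eqref{eq:PDEg_general}.

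Then I would apply the Leibniz rule to $\hat\sigma = A\,\sigma_g$: comparing
$$
(\mathcal{L}_v A)\sigma_g + A\,\mathcal{L}_v\sigma_g \;=\; \mathcal{L}_v(A\sigma_g) \;=\; (\gamma\,\mathrm{Id}+\delta A)\sigma_g
$$
with the already-obtained expression for $\mathcal{L}_v\sigma_g$ and cancelling $\sigma_g$ immediately rearranges to \eqref{eq:PDEA_general}. Finally, for \eqref{eq:ODEeigenvalues_gen}, I would pick a local smooth eigenvector field $X$ of $A$ with eigenvalue $\rho$; the Leibniz rule applied to $AX=\rho X$ yields $(\mathcal{L}_v A)X = (\mathcal{L}_v\rho)X - (A-\rho\,\mathrm{Id})\,\mathcal{L}_v X$. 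Combining this with the identity $(\mathcal{L}_v A)X = \bigl(-\beta\rho^2+(\delta-\alpha)\rho+\gamma\bigr)X$ just established, and projecting onto the kernel of $A-\rho\,\mathrm{Id}$ (where the displacement term $(A-\rho\,\mathrm{Id})\,\mathcal{L}_v X$ vanishes), one obtains $\mathcal{L}_v\rho = -\beta\rho^2+(\delta-\alpha)\rho+\gamma$, which along an integral curve of $v$ is precisely $\dot\rho$.

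All steps amount to linear-algebraic manipulations in the weighted bundle together with the derivation property of $\mathcal{L}_v$; there is no genuine obstacle, the only mild care being in tracking the $(1/(n+1))$-density weight via the identity for $\mathcal{L}_v\mathrm{vol}_g$.
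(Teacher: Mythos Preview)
Your proposal is correct and follows essentially the same approach as the paper: both choose the basis $\sigma=\sigma_g$, $\hat\sigma=A\sigma_g$ of $\mathcal{A}([g],J)$, apply the Leibniz rule to pass from \eqref{eq:PDEgeneral} to the equations for $\mathcal{L}_v A$ and $\mathcal{L}_v g$, and take a trace to pin down the scalar coefficient. The only cosmetic difference is that the paper differentiates $g=(\det g)^{1/(2(n+1))}\sigma_g^{-1}$ while you differentiate $\sigma_g=g^{-1}\otimes\mathrm{vol}_g^{1/(n+1)}$, and you supply an explicit eigenvector argument for \eqref{eq:ODEeigenvalues_gen} where the paper simply says it follows immediately.
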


\begin{proof}
 Take $\sigma=\sigma_g$ and $\hat \sigma=\phi^{-1} (A )= A\sigma_g$ as a basis  of $\mathcal{A}([g],J)$.  
 Then by using   \eqref{eq:PDEgeneral},   we get
$$
\mathcal{L}_v A=\mathcal{L}_v(\hat\sigma\sigma_g^{-1})=(\gamma\sigma_g+\delta\hat\sigma)\sigma_g^{-1}-
\hat\sigma \sigma_g^{-1}(\alpha\sigma_g+\beta\hat\sigma)\sigma_g^{-1}
$$
$$
=\gamma\Id+\delta A-\alpha A-\beta A^2.
$$
This yields \eqref{eq:PDEA_general}. Moreover, a straightforward calculation yields
$$
\mathcal{L}_v g=\mathcal{L}_v\left(\left(\mathrm{det\,g}\right)^{\frac{1}{2(n+1)}}\sigma_g^{-1}\right)=
\frac{1}{2(n+1)}\mathrm{tr}(g^{-1}\mathcal{L}_v g)g-g(\alpha\sigma_g+\beta\sigma)\sigma_g^{-1}
$$
$$
=\left(\frac{1}{2(n+1)}\mathrm{tr}(g^{-1}\mathcal{L}_v g)-\alpha\right) g-\beta gA.
$$
Taking the trace of this equation gives us
$$
\frac{1}{2(n+1)}\mathrm{tr}(g^{-1}\mathcal{L}_v g)=-\alpha n-\frac{\beta}{2} \mathrm{tr}\,A
$$
and inserting this back into the equation for the Lie derivative of $g$, we obtain \eqref{eq:PDEg_general}.

The remaining formula \eqref{eq:ODEeigenvalues_gen} immediately follows from \eqref{eq:PDEA_general}. \end{proof}

\begin{remthm}[for Theorem \ref{specialcase2}]
The projective (and projectively invariant) 
 analogue of \eqref{eq:maininvariant}  was obtained in \cite{EM}.  Arguing as above we obtain 
the following version of Proposition~\ref{prop:PDE_gen}:
 
 {\it Let $v$ be a projective vector field for $g$ and let $A$ be compatible to $g$ in the projective sense, $A\ne c \cdot \Id$.  
Then we have \eqref{eq:PDEA_general}  and \eqref{eq:ODEeigenvalues_gen}  and the Lie derivative of $g$ satisfies} 
 $$
 \mathcal{L}_v g=\bigl(-\beta \, \mathrm{tr}\,A-(n+1)\alpha\bigr) g-\beta gA, \quad n=\dim M,
 $$
 i.e., in the first term on the right-hand side of \eqref{eq:PDEg_general} we simply need to replace $\frac{\beta}{2}$ by $\beta$.
\end{remthm} 


\subsection{Properties of the eigenvalues of $A$}
\label{sssec:nocomplex}

The equation \eqref{eq:ODEeigenvalues_gen} allows us to make several important conclusions about 
the eigenvalues of $A\in \mathcal{A}(g,J)$.

First of all we notice that the coefficient $\beta$ in equation \eqref{eq:ODEeigenvalues_gen} 
does not vanish.  Indeed, otherwise this equation takes 
the form $\dot \rho = (\delta - \alpha) \rho + \gamma$ and its non-constant solutions are unbounded  
which is impossible due to compactness of $M$.  Hence the eigenvalues of $A$ are all 
constant which contradicts the assumption that $v$  is not affine. Thus, $\beta\ne 0$.

Next,  we  see that the constant eigenvalues of $A$ are solutions to the quadratic equation 
$0=-\beta x^2+(\delta -\alpha) x+\gamma$.  Hence, there are at most two constant eigenvalues.  

To simplify the further discussion,  
without loss of generality we may assume that  the evolution of a non-constant eigenvalue 
$\rho$ of $A$ along $v$ is given by one of the ODEs
\begin{align}
\dot \rho=\rho^2+1,\,\,\,\dot \rho=\rho(1-\rho)\mbox{ or }\dot \rho=\rho^2. \label{eq:complexODEs}
\end{align}
Indeed,  the equations \eqref{eq:ODEeigenvalues_gen} can be reduced to one of these canonical
 forms by rescaling  $v$ and replacing $A$ by $c_1 A+c_2\Id$ for an appropriate choice of 
constants $c_1\neq 0,c_2\in\R$.

We now show that the eigenvalues of $A$  cannot be complex.

\begin{prop}
\label{thm:nocomplex}
Let $(M,g,J)$ be a closed connected K\"ahler manifold of degree of mobility $D(g,J)=2$ 
and let $v$ be a $c$-projective vector field which is not affine. 
Then all non-constant eigenvalues of $A\in \mathcal{A}(g,J)$ are real-valued.
\end{prop}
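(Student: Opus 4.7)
The plan is to analyse, for each of the three canonical normal forms \eqref{eq:complexODEs} of the ODE satisfied by an eigenvalue of $A$ along an integral curve of $v$, what would happen if a non-constant eigenvalue $\rho$ had non-vanishing imaginary part at some point $p\in M$. In all three cases, compactness of $M$ forces the trajectory $t\mapsto\rho(\Phi^v_t(p))$ to remain bounded in $\mathbb{C}$ for all $t\in\mathbb{R}$; together with the continuity of the eigenvalues of $A$ as a characteristic polynomial, this is the main tool.

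First I would dispose of the two cases whose right-hand side has a real fixed point. For $\dot\rho=\rho(1-\rho)$ the solution is $\rho(t)=\rho_0 e^t/(1-\rho_0+\rho_0 e^t)$, while for $\dot\rho=\rho^2$ it is $\rho(t)=\rho_0/(1-t\rho_0)$. A direct inspection shows that any non-constant complex solution asymptotes to the real fixed points ($\{0,1\}$ and $\{0\}$ respectively) as $t\to\pm\infty$. Choosing a convergent subsequence $\Phi^v_{t_n}(p)\to q$ by compactness, the values of the characteristic polynomial of $A$ at $\Phi^v_{t_n}(p)$ pass to the limit at $q$, so the constant $0$ (or $1$) must be an eigenvalue at $q$. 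Since the complex pair $\rho,\bar\rho$ collapses simultaneously at $q$, this raises the algebraic multiplicity by at least two beyond what the rest of the spectrum contributes. Combining with a neighbourhood analysis and Lemma~\ref{lem:properties}\eqref{l1}, I would push this to an algebraic multiplicity $\geq 4$, forcing the eigenvalue in question to be constant of multiplicity $\geq 4$ everywhere on $M$; this contradicts the existence of the non-constant complex eigenvalue $\rho$ near regular points away from $q$.

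The remaining and principal case is $\dot\rho=\rho^2+1$. The real solutions $\rho(t)=\tan(t+c)$ blow up in finite time and are incompatible with compactness of $M$; hence, if any non-constant eigenvalue exists, it must be complex. Complex solutions $\tan(t+c_1+ic_2)$ with $c_2\neq 0$ are bounded but $\pi$-periodic in $t$. My approach would be to exploit this periodicity, coupled with \eqref{eq:PDEA_general}, to conclude that $(\Phi^v_\pi)^*A=A$; equation \eqref{eq:PDEg_general} then reduces to a scalar factor, showing that $\Phi^v_\pi$ acts on $g$ by a conformal rescaling. Since $M$ is closed of finite volume, the rescaling factor must be $1$, so $\Phi^v_\pi$ is a (non-trivial) isometry preserving the whole compatible pair $(g,A)$. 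The contradiction should then arise from the hypothesis $D(g,J)=2$ and non-affinity of $v$, which do not leave room for such a symmetry — for instance by tracking the action of $\Phi^v_\pi$ on the canonical Killing vector fields $K(t)$ from Lemma~\ref{lem:killing} and on the two-dimensional space $\mathcal{A}([g],J)$ on which $\mathcal{L}_v$ has no real eigenvalues in this case.

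The hardest step is the periodic case $\dot\rho=\rho^2+1$: compactness alone does not rule out complex eigenvalues, so one must really use the K\"ahler structure and the global interaction between $\Phi^v_\pi$ and the pair $(g,A)$ to extract the contradiction. I would expect the cases~2 and~3 arguments to be comparatively routine, with the delicate point being the multiplicity jump argument at accumulation points, for which the local normal forms of Theorem~\ref{thm:localclassification} may be needed to ensure that the limiting behaviour is genuinely inconsistent with the structure of regular points.
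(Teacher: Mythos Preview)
Your approach for the cases $\dot\rho=\rho(1-\rho)$ and $\dot\rho=\rho^2$ has a genuine gap. You argue that as $t\to\pm\infty$ the complex pair $\rho,\bar\rho$ collapses to a real fixed point, and at the limit point $q$ the multiplicity of that fixed eigenvalue jumps. But Lemma~\ref{lem:properties}\eqref{l1} requires multiplicity $\ge 4$ on an \emph{open set}, not at an isolated point. At regular points near $q$ the eigenvalues $\rho,\bar\rho$ are still distinct from the fixed value, so the multiplicity there is the baseline $2m_0$ (or $2m_1$); the jump occurs only at $q$ itself, which is a non-regular point. There is no mechanism here that propagates the high multiplicity to a neighbourhood, and I do not see how the local normal forms of Theorem~\ref{thm:localclassification} help, since they are valid only at regular points. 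So the ``neighbourhood analysis'' you allude to would have to be supplied, and it is not clear what it would be.

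The paper's proof is quite different and treats all three ODEs uniformly. The key observation is that each non-real trajectory lies on a circle in $\mathbb{C}$, so the imaginary part of any eigenvalue attains a global maximum at some point $p\in M$ at a value not equal to the imaginary part of any constant eigenvalue. At $p$ one then shows (Lemma~\ref{lem:complex}) that the smooth function $\rho_1+\cdots+\rho_k$ (the sum of the eigenvalues near $\rho(p)$) has vanishing differential: its imaginary part has a critical point by maximality, and a short linear-algebra argument using $\d\rho_i\circ A=\rho_i\,\d\rho_i$ forces the real part of the differential to vanish as well, since a non-zero real covector cannot lie in the generalised eigenspace of a genuinely complex eigenvalue. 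This contradicts $v(\rho_1+\cdots+\rho_k)=k\dot\rho(p)\ne 0$, which follows directly from the ODE. No case distinction, no asymptotics, no local normal forms.

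Your periodicity idea for $\dot\rho=\rho^2+1$ is salvageable, though the endgame you sketch is not quite right. In that case the matrix of $\mathcal{L}_v$ on $\mathcal{A}([g],J)$ has complex eigenvalues $\alpha\pm i$, so $(\Phi^v_\pi)^*$ acts on $\mathcal{A}([g],J)$ as $-e^{\pi\alpha}\,\mathrm{Id}$; in particular $(\Phi^v_\pi)^*\sigma_g=-e^{\pi\alpha}\sigma_g$, which unwinds to $(\Phi^v_\pi)^*g$ being a \emph{negative} constant multiple of $g$ --- impossible for a diffeomorphism. But this handles only one of the three cases, and it is precisely the two cases you regarded as ``routine'' where your argument actually breaks down.
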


\begin{proof}
If we allow $\rho$ to be complex-valued,  each of the above ODEs  \eqref{eq:complexODEs} should
 be considered as a system of two ODEs on the real and imaginary part of $\rho$. The phase 
portraits corresponding to these systems  are shown in Figure~\ref{fig:phase}. It can be seen 
from the pictures or shown directly using the ODEs \eqref{eq:complexODEs}, that a non-constant 
solution to one of these equations with imaginary part not identically zero is given by a circle 
in the complex plane. 

\begin{figure}[ht]
  \includegraphics[width=.30\textwidth]{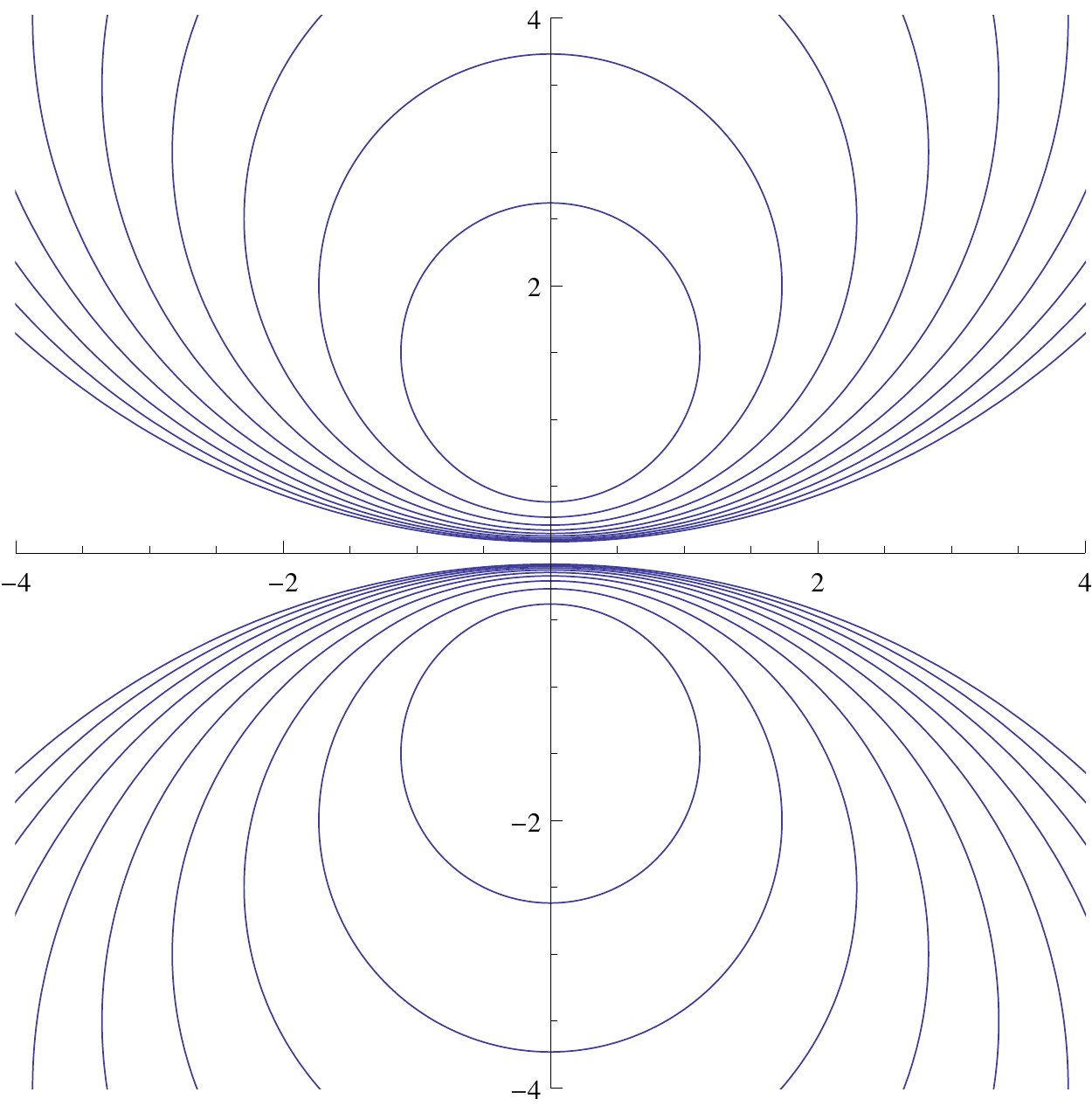}\hspace{4mm}
	\includegraphics[width=.30\textwidth]{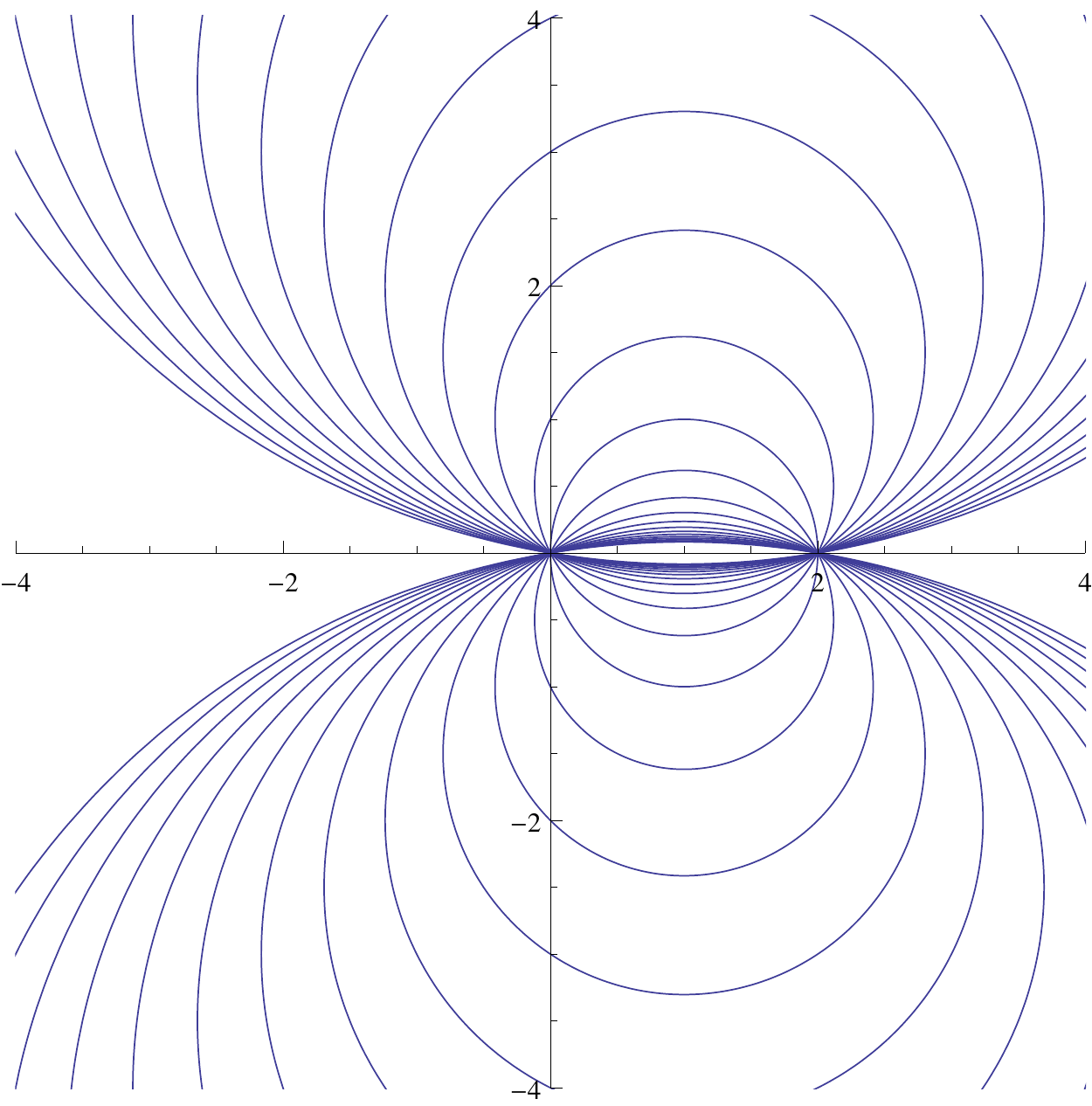}\hspace{4mm}
	\includegraphics[width=.30\textwidth]{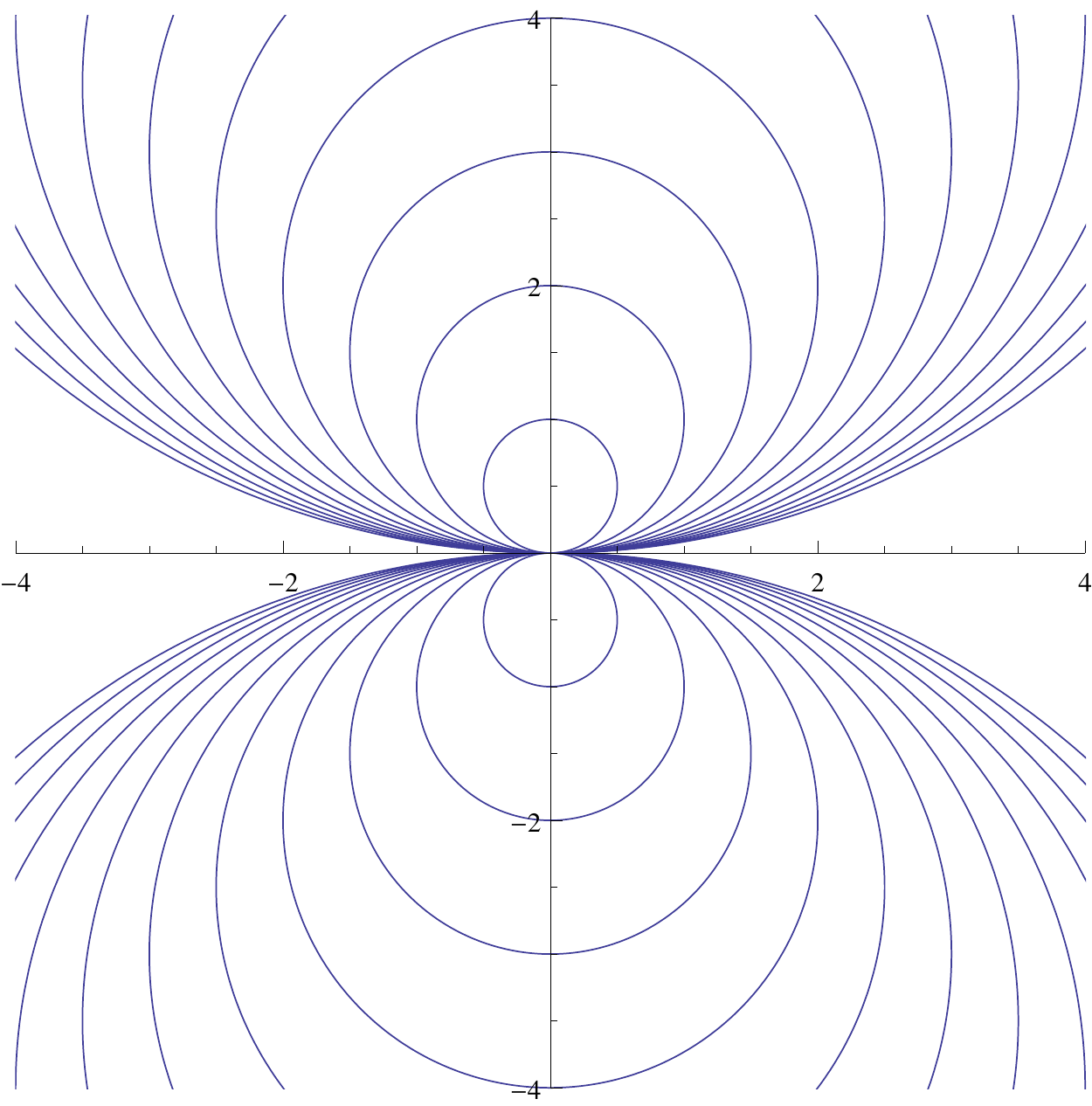}
  \caption{The phase portraits for the ODEs $\dot \rho=\rho^2+1$, $\dot \rho=\rho(1-\rho)$, $\dot \rho=\rho^2$ 
	(from left to right).}\label{fig:phase}
\end{figure}

As we see from Figure~\ref{fig:phase}, the maximal value of an imaginary part of a complex eigenvalue,
 taken over all points of the manifold and all eigenvalues, is not equal to the imaginary part of a
 constant eigenvalue (for each case in \eqref{eq:complexODEs}, the constant eigenvalues are $\pm i$,  
 $0$ and $1$ or $0$ resp.). In particular, the derivative of this eigenvalue in the direction of the 
c-projective vector field is well-defined and non-zero.  More precisely, the derivative of the imaginary 
part of $\rho$ is zero whereas the derivative of the real part is not.  We now show that for 
complex eigenvalues such a situation is impossible.

Basically this fact follows from our local description of c-projectively equivalent metrics 
(Theorem~\ref{thm:localclassification})  which states that the complex eigenvalues are holomorphic 
in an appropriate local coordinate system.  However, such a coordinate system exists only at 
generic points, so we need to modify this  idea and in particular to take into account the fact that
the eigenvalues cannot be considered as smooth functions at ``collision'' points where the 
multiplicities of the eigenvalues change.

Assume that at a point $p\in M$,  a complex eigenvalue $\rho$ of $A$, $\mathrm{Im}\,\rho(p)\ne 0$,  has multiplicity $k$.  
It follows from standard facts that for small 
neighbourhoods $U(p)\subseteq M$ of $p$ and $U(\rho(p))\subseteq \C$ of $\rho(p)$, $A$ 
has precisely $k$ eigenvalues $\rho_1,\dots,\rho_k$ at each point of $U(p)$ contained 
in $U(\rho(p))$ and that the elementary symmetric functions in the variables $\rho_1,\dots,\rho_k$
 are smooth complex-valued functions on $U(p)$. In particular, the function $\rho_1+\dots+\rho_k$ 
is smooth in 
$U(p)$.

\begin{lem}
\label{lem:complex}  
Let the differential of the imaginary part of $\rho_1+\dots+\rho_k$ vanishes at $p$.  
Then    $$\d(\rho_1+\dots+\rho_k) = 0.$$
\end{lem}

\begin{proof}
By contradiction,  assume that $\d(\rho_1+\dots+\rho_k)\ne 0$.
Consider the smooth endomorphism 
$$
\tilde A = (A-\rho_1\cdot\Id)\dots (A-\rho_k\cdot\Id):T_\C^* U(p)\rightarrow T_\C^* U(p),
$$
where $T_\C^* U(p)$ denotes the complexified cotangent bundle of $U(p)$. The kernel of 
$\tilde A$ defines a smooth complex $k$-dimensional distribution $D$ in $T_\C^* U(p)$ whose
 value $D(p)$ at the point $p$ coincides with the kernel of $(A-\rho(p)\cdot\Id)^k$. The subspace
 $D(p)$ is therefore the generalised $\rho(p)$-eigenspace of $A$. 

The differential of the smooth function $\rho_1+\dots+\rho_k$ at a generic point of $U(p)$ 
is $\d \rho_1+\dots+\d\rho_k$ and since $\d\rho_i \circ A=\rho_i\d\rho_i$  (because, by Lemma~\ref{lem:properties}, 
$\gr\,\rho$ is an eigenvector with eigenvalue $\rho$ of the 
$g$-selfadjoint endomorphism $A$), we obtain that $\d(\rho_1+\dots+\rho_k)$ is contained in $D$ 
at every point of $U(p)$. In particular we have that at $p$, $\d(\rho_1+\dots+\rho_k)$ is 
contained in $D(p)$, i.e., it is a generalised eigenvector of $A$ corresponding to $\rho(p)$. 
 On the other hand, since $\d \bigl( \mathrm{Im} (\rho_1+\dots + \rho_k)\bigr) (p)=0$,   
$\d(\rho_1+\dots+\rho_k)$ is a real 1-form at $p$.  
This contradicts to the following simple fact from Linear Algebra:   if $\rho$ is a complex 
eigenvalue of a real linear endomorphism $A$, then the generalised $\rho(p)$-eigenspace of $A$ 
contains no real vectors.\end{proof}

Now take a point $p\in M$ such that the imaginary part of an eigenvalue $\rho$ takes its maximum 
in the sense that it is greater or equal than the imaginary parts of all eigenvalues at all 
points of $M$ and apply Lemma~\ref{lem:complex}.  The imaginary part of $\rho_1+\dots+\rho_k$
  obviously satisfies $\d \left(\mathrm{Im} \sum \rho_i(p)\right)=0$.   On the other hand, the 
	Lie derivative of $\sum\rho_i$   along $v$  at the point $p$ is  $k\cdot  \dot \rho\ne 0$, 
	where $\dot \rho$ is defined by one of the equations \eqref{eq:complexODEs}. Thus, $\d (\rho_1+\dots+\rho_k) \ne 0$ 
	in contradiction with Lemma~\ref{lem:complex}.  This completes the proof. \end{proof}

In fact, only the second equation of \eqref{eq:complexODEs} is allowed.  Indeed, since $v$ is not affine,  
$A$ must have at least one  non-constant eigenvalue and, moreover, this eigenvalue has to be 
real according to Proposition~\ref{thm:nocomplex}.   
But it is easy to see that the equations $\dot \rho=\rho^2+1$ and $\dot \rho=\rho^2$  
have no bounded  real-valued non-constant solutions  whereas $\rho(t)$ must be bounded due to compactness of $M$.   

Thus, we are left with the case when the eigenvalues of $A$ satisfy
\begin{align}
\label{eq:ODEeigenvalues1}
v(\rho)=\rho(1-\rho).
\end{align}
and we may summarize the above discussion in the following

\begin{prop}
\label{thm:Bols}
Let $(M,g,J)$ be a closed connected K\"ahler manifold of real dimension $2n\geq 4$ such that $D(g,J)=2$ 
and let $v$ be a c-projective vector field that is not affine.  Then, after an appropriate rescaling of $v$, we can find $A\in \mathcal{A}(g,J)$ such that 
 
\begin{enumerate}

\item the eigenvalues of $A$ satisfy \eqref{eq:ODEeigenvalues1};

\item the eigenvalues of $A$ are all real;

\item $A$ has at most two constant eigenvalues $0$ and $1$.

\end{enumerate}
\end{prop}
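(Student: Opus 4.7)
The plan is to collect and organise the three facts already developed in this subsection. From Proposition~\ref{prop:PDE_gen} applied to any $A\in\mathcal{A}(g,J)$ with $A\ne c\cdot\Id$, each eigenvalue $\rho$ of $A$ restricted to an integral curve of $v$ satisfies the ODE $\dot\rho=-\beta\rho^{2}+(\delta-\alpha)\rho+\gamma$ with constants depending only on $A$ (and on the choice of basis of $\mathcal A([g],J)$ used to define $\alpha,\beta,\gamma,\delta$). The three assertions of the proposition correspond to choosing $A$ and rescaling $v$ so as to put this ODE into a very specific normal form, and then checking which normal forms can actually occur on a compact manifold.

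First I would show $\beta\ne 0$. If $\beta=0$, the ODE is linear in $\rho$, so every non-constant solution is unbounded on $\mathbb R$; but all eigenvalues of $A$ are bounded functions on $M$ since $M$ is compact, so every such solution must be constant. Consequently all eigenvalues of $A$ are constant along the flow of $v$, hence globally constant (by continuity and the openness of regular points). Then $\tr A$ is constant, so $\Lambda=\tfrac14\gr(\tr A)=0$ and \eqref{eq:main} degenerates to $\nabla A=0$. Combining this with the assumption $D(g,J)=2$ (so that $\mathcal A(g,J)=\mathrm{span}\{\Id,A\}$) and with the fact that $v$ preserves $\mathcal A(g,J)$, one easily deduces that $v$ acts on $\mathcal A(g,J)$ by constants, and since $A$ is parallel this forces $\mathcal L_v g$ to be a constant multiple of $g$; on a closed manifold a homothety is an isometry, so $v$ is affine, contradicting the hypothesis.

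Next, with $\beta\ne 0$, I would use the affine freedom $A\mapsto c_1 A+c_2\Id$ (which preserves $\mathcal A(g,J)$ since \eqref{eq:main} is linear and $\Id$ is a solution) together with a rescaling of $v$ to normalise the quadratic $-\beta x^{2}+(\delta-\alpha)x+\gamma$ to one of three canonical forms, according to the sign of its discriminant, producing the three ODEs displayed in \eqref{eq:complexODEs}. At this stage I would invoke Proposition~\ref{thm:nocomplex} to rule out non-real eigenvalues, so every non-constant eigenvalue is a real bounded solution of one of those three ODEs. The equations $\dot\rho=\rho^{2}+1$ and $\dot\rho=\rho^{2}$ admit no non-constant bounded real solutions, and since $v$ is not affine at least one non-constant eigenvalue must exist; therefore only $\dot\rho=\rho(1-\rho)$ survives, which proves (1) and (2). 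Assertion (3) is then immediate: any constant eigenvalue solves $\rho(1-\rho)=0$, so it equals $0$ or $1$.

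The only delicate point in the plan is the inference in Step 1 that constant eigenvalues of $A$ force $v$ to be affine; the remaining steps are direct consequences of the already established Propositions~\ref{prop:PDE_gen} and~\ref{thm:nocomplex} together with elementary ODE analysis.
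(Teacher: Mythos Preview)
Your proposal is correct and follows essentially the same route as the paper: show $\beta\ne 0$, normalise the quadratic ODE to one of the three forms \eqref{eq:complexODEs}, invoke Proposition~\ref{thm:nocomplex} to exclude complex eigenvalues, and then discard the two ODEs with no bounded non-constant real solutions.

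One remark on your handling of $\beta=0$. Your detour through ``eigenvalues constant along the flow $\Rightarrow$ globally constant $\Rightarrow$ $\Lambda=0$ $\Rightarrow$ $A$ parallel'' is more than you need, and the first implication is not obviously justified by ``continuity and the openness of regular points'' alone. The paper (and you, implicitly, at the end of your Step~1) can bypass this entirely: if $\beta=0$ then \eqref{eq:PDEg_general} reads $\mathcal L_v g=-(n{+}1)\alpha\,g$, so $v$ is a homothety vector field; on a closed manifold this forces $\alpha=0$, hence $v$ is Killing and in particular affine. This one-line argument replaces your whole first paragraph.
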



\begin{remthm}[for Theorem \ref{specialcase2}]
The reduction of \eqref{eq:ODEeigenvalues_gen} to one of the canonical forms \eqref{eq:complexODEs} 
is a simple general fact from the theory of ODE's.  The statement of Proposition~\ref{thm:nocomplex} 
was proved in \cite[Theorem 1.11]{BM}  even under more general assumptions:  on a compact manifold $M$,  
each non-real eigenvalue of an endomorphism $A$ compatible with $g$ is necessarily constant.  

The first and third canonical forms from \eqref{eq:complexODEs} are impossible for the same reason:  
in these two cases non-constant real eigenvalues are not bounded (along $v$).  Thus, the eigenvalues of $A$ 
satisfy \eqref{eq:ODEeigenvalues1} and the statement of Proposition~\ref{thm:Bols}  remains true without any changes.
\end{remthm} 


\subsection{Explicit formulas for the non-constant block} 
\label{sssec:explicit}

In what follows, we will work with  $A\in \mathcal{A}(g,J)$ and a c-projective vector field $v$ as in Proposition~\ref{thm:Bols}.  
In particular, we assume that the non-constant eigenvalues $\rho_1,\dots.,\rho_\ell$ of $A$ are 
real-valued and the constant eigenvalues, $0$ and $1$, have multiplicities $m_0\ge 0$ and $m_1\ge 0$ respectively.

Under these assumptions,  the PDE systems \eqref{eq:PDEA_general}  and  \eqref{eq:PDEg_general}   take the form
\begin{align}
\begin{array}{c}
\mathcal{L}_v A=A(\Id- A),\vspace{1mm}\\
\mathcal{L}_v g=-gA-\left(\sum_{i=1}^\ell \rho_i + \CC  \right)g.
\end{array}\label{eq:PDE}
\end{align}
where $\CC$ is some constant (yet unknown and playing the role of an additional parameter).

We are going to evaluate the PDE system \eqref{eq:PDE} component-wise.
To make this more transparent, we notice that on the set of regular points $M^0$ there is a 
natural structure of two mutually orthogonal foliations.  Recall that $\mathcal F$ denotes the integrable and totally geodesic
distribution spanned by the commuting vector fields given by the Killing vector fields $K_1,\dots,K_\ell$ and the 
vector fields $JK_1,\dots,JK_\ell$.
Now let $\mathcal U$ be the distribution generated  by $\gr\, \rho_1,\dots,\gr\, \rho_\ell$ 
(or, equivalently, by $JK_1,\dots,JK_\ell$) so that $\mathcal F=\mathcal U\oplus \mathcal V$, where
$\mathcal V$ is the distribution generated by $K_1,\dots, K_\ell$ and defined in the first part of the article.  
The leaves of the other distribution $\mathcal U^{\bot}$,  orthogonal to $\mathcal U$,   are just common level surfaces of 
the eigenvalues $\rho_1,\dots, \rho_\ell$.  Note that both distributions are integrable: for $\mathcal U^{\bot}$ this 
holds by definition and for $\mathcal U$ it follows from the fact that  $\mathcal U$ is generated by the commuting vector fields $JK_1,\dots,JK_\ell$.

The next  statement summarizes some general properties of $\mathcal U$  
which hold true for any metric $g$  from Theorem~\ref{thm:localclassification}.
\begin{prop}
\label{prop:distributions}
Let $M^0\subset M$ be the set of regular points  (in particular, the non-constant eigenvalues 
$\rho_1,\dots, \rho_\ell$ of $A$ are all distinct and independent on $M^0$).  Then on $M^0$ there is a structure 
of the integrable distribution  $\mathcal U$ generated by  $\gr\,\rho_i$ $(i=1,\dots,\ell)$ with the following properties:

\begin{enumerate}
\item The leaves of $\mathcal U$ are totally geodesic.

\item The leaves of $\mathcal U^{\bot}$ are common level surfaces of $\rho_1,\dots,\rho_\ell$. 

\item  Let  $\mathcal L\subset M^0$ be a leaf of $\mathcal U$, then $g|_{\mathcal L}$ and $A|_{\mathcal L}$ are 
compatible in the projective sense.

\item The non-constant eigenvalues $\rho_1,\dots, \rho_\ell$  can be considered as local coordinates on $\mathcal L$.

\item Locally the metric $g$ can be written as 
$$
g=\sum_{i=1}^\ell  g_{i}(\vec\rho) \d\rho_i^2   + \sum_{\alpha,\beta} b_{\alpha\beta}(\rho, y) \d y_\alpha \d y_\beta,
$$
where $\sum_{i=1}^\ell  g_{i}(\vec\rho) \d\rho_i^2$  is $g|_{\mathcal L}$.

\item  The vector fields $\gr\,\rho_i$ on $M$ and on $\mathcal L$ coincide (no matter which
metric, $g$ or $g|_{\mathcal L}$, is used to take the gradient)  and  
therefore the quantities  $g(\gr\, f_1(\vec\rho), \gr\, f_2(\vec\rho))$  
do not depend on how we compute them  (on $M$ or on $\mathcal L$).

\item The vector field $\Lambda = \frac{1}{4}\gr\,(\tr A)$ is the same on $\mathcal L$ and $M$. 
The tangent space of $\mathcal L$ is invariant with respect to the endomorphism $\nabla\Lambda$, moreover, 
the restriction  of $\nabla \Lambda$ on $\mathcal L$ coincides with $\nabla|_{\mathcal L} \Lambda$ computed on $\mathcal L$.  
This implies in particular that the eigenvalues of $\nabla|_{\mathcal L}\Lambda$ are some of the eigenvalues of $\nabla\Lambda$.

\item The leaves of $\mathcal U$ are (locally) isometric.   

\end{enumerate}
\end{prop}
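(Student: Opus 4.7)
The plan is to show that most items follow directly from Lemma~\ref{lem:properties} and the local description already established, with only item (8) requiring a separate argument. First, recall that by Lemma~\ref{lem:properties}(\ref{l4}) each $\gr\,\rho_i$ is an eigenvector of $A$ with eigenvalue $\rho_i$, so the $\gr\,\rho_i$ are mutually orthogonal (distinct eigenvalues of a $g$-selfadjoint endomorphism) and pointwise linearly independent on $M^0$, which gives (4) and makes $\mathcal U$ a regular $\ell$-dimensional distribution. Since $\mathcal U = J\mathcal V$, integrability follows from $[JK_i,JK_j]=0$ (Lemma~\ref{lem:properties}(\ref{l6})). Total geodesicity of $\mathcal U$-leaves was in fact established inside the proof of Lemma~\ref{lem:properties}(\ref{l8}): the $\mathcal F$-leaf is totally geodesic, and within it the $\mathcal U$-leaves are the orthogonal complements to the $\mathcal V$-distribution spanned by Killing fields, hence totally geodesic themselves. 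This proves (1). Statement (2) is definitional.

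For (5), orthogonality of the eigenspaces $\mathbb R\gr\,\rho_i$ forces $g|_{\mathcal L}$ to be diagonal in the coordinates $\rho_i$, and writing the $\mathcal U^{\perp}$-block explicitly gives the claimed form. For (3), I would evaluate \eqref{eq:main} on vectors $X,Y\in T\mathcal L=\mathcal U$ and project back to $T\mathcal L$. Because $\Lambda=\tfrac14 \gr(\tr A)$ is a gradient of a function of the $\rho_i$ (plus a constant coming from the constant eigenvalues), $\Lambda\in\mathcal U$, hence $J\Lambda\in J\mathcal U=\mathcal V\perp\mathcal U$; similarly $JX\in\mathcal V\perp\mathcal U$. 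The two ``complex'' terms in \eqref{eq:main} therefore pair trivially against vectors of $\mathcal U$, and combining with total geodesicity of $\mathcal L$ (so that $\nabla^{\mathcal L}=\nabla|_{T\mathcal L}$) yields exactly \eqref{eq:main:proj} for the induced pair $(g|_{\mathcal L},A|_{\mathcal L})$.

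Claims (6) and (7) are now automatic. For (6), if $f=f(\vec\rho)$ then $\d f$ vanishes on $\mathcal U^{\perp}$, so $\gr_g f\in\mathcal U=T\mathcal L$, and orthogonality of the decomposition $TM=\mathcal U\oplus\mathcal U^{\perp}$ means $\gr_g f$ and $\gr_{g|_{\mathcal L}}f$ solve the same defining equation. Applying this to $\tfrac14\tr A$ identifies $\Lambda$ as the same vector field on $M$ and on $\mathcal L$. Invariance of $T\mathcal L$ under $\nabla\Lambda$ and the coincidence $\nabla\Lambda|_{\mathcal L}=\nabla^{\mathcal L}\Lambda$ then follow from total geodesicity of $\mathcal L$ together with $\Lambda\in T\mathcal L$: for $X\in T\mathcal L$, $\nabla_X\Lambda$ lies in $T\mathcal L$ and equals the intrinsic covariant derivative.

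The main obstacle is (8). The cleanest argument uses the canonical Killing fields: each $K_i$ is a local isometry and, since $[K_i,JK_j]=J[K_i,K_j]=0$ (Lemma~\ref{lem:properties}(\ref{l6}) together with holomorphicity of $K_i$), the flow $\Phi^t_{K_i}$ preserves the distribution $\mathcal U$ and hence maps $\mathcal U$-leaves to $\mathcal U$-leaves isometrically. Because $K_1,\dots,K_\ell$ span $\mathcal V$, which is transverse to $\mathcal U$ and of the complementary dimension inside $\mathcal F$, any two nearby $\mathcal U$-leaves in the same $\mathcal F$-leaf are related by a composition of such flows; different $\mathcal F$-leaves can be handled similarly using the horizontal part of the product structure (or, equivalently, reading off the claim from Proposition~\ref{prop:locform}, where the metric $h=\sum B_{ij}(x)\,\d x_i\d x_j$ on the $U$-factor is the metric of every $\mathcal U$-leaf independent of the point in $V\times S$). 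This establishes local isometry of the leaves and completes the proof.
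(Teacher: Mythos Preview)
Your argument tracks the paper's closely for items (1)--(4), (6), (7): both reduce these to Lemma~\ref{lem:properties} and total geodesicity, and you simply spell out more of the detail (e.g.\ why the $J$-terms in \eqref{eq:main} drop out for (3)). One small gap in your (5): orthogonality of the eigenspaces gives you that $g|_{\mathcal L}$ is diagonal in the $\rho_i$, but not that the diagonal entries $g_i$ depend on $\vec\rho$ alone (rather than also on the transverse $y$-coordinates). That stronger statement is exactly what is needed for (8), and the paper obtains it by invoking the local classification (Theorem~\ref{thm:localclassification}) directly for (5).

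For (8) your route differs from the paper's. You argue geometrically, using the flows of the Killing fields $K_i$ to carry $\mathcal U$-leaves to one another within a fixed $\mathcal F$-leaf, and then invoke the product structure (Proposition~\ref{prop:locform}) to handle leaves in different $\mathcal F$-leaves. The paper instead reads (8) off immediately from the full strength of (5): once $g|_{\mathcal L}=\sum_i g_i(\vec\rho)\,\d\rho_i^2$ with the same functions $g_i(\vec\rho)$ on every leaf, and $\rho_1,\dots,\rho_\ell$ serve as coordinates on each leaf, all leaves are manifestly isometric via the identity in $\rho$-coordinates. Your argument is correct, but note that its second half (appealing to Proposition~\ref{prop:locform}) already contains the paper's one-line proof, so the Killing-flow step for leaves within an $\mathcal F$-leaf, while pleasant, is not needed.
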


\begin{proof}
In view of $\nabla J=0$ and the fact that $K_i,JK_i$ are holomorphic, (1) follows immediately from Lemma~\ref{lem:properties}~(8).

(2) is clear from the definition.
 
(3) follows immediately from (1) combined with equation \eqref{eq:main} (compare also Lemma~\ref{lem:redtorealproj}).

(4) follows from the assumption that  $\rho_1,\dots,\rho_\ell$ are independent.

(5) is immediate from the local classification Theorem~\ref{thm:localclassification} and (6) follows directly from (5). 
The first statement of (7) follows from (6) and the formula $\Lambda=\frac{1}{2}\sum_{i=1}^\ell\gr\,\rho_i$.
The remaining statements of (7) then follow directly from (1).

The last statement (8) can be seen from the formula for $g$ in (5).
\end{proof}


\begin{remthm}[for Theorem~\ref{specialcase2}]
\label{rem:rem9}
In the projective setting, the definition of the set $M^0$ of regular points remains essentially the same. We say that $p\in M$ is regular if: 1)   the algebraic type of $A$ does not change in a (sufficiently small) neighbourhood $U$ of $p$,  2) each eigenvalue $\rho$ is either constant on $U$ (and then $\rho$ equals either 0 or 1)  or, if $\rho$ is not constant on $U$, then  $\d \rho(p)\ne 0$. Recall that in view of \eqref{eq:ODEeigenvalues1}:
$$
\rho (p) \ne 0 \mbox{ or } 1   \quad \Rightarrow \quad  \d\rho(p) \ne 0.
$$
Clearly,  $M^0$ is open and everywhere dense on $M$.  However,  a priori  $M^0$ might contain several components related to different algebraic types.  In particular,  the number $\ell$ of non-constant eigenvalues may be different for different components of $M^0$.  We continue to work with one of the components of $M^0$  (for simplicity, we will still denote this component by $M^0$). 
Since we want to prove Theorem~\ref{specialcase2} by contradiction, we assume from now on that there is a regular point  (or, which is the same, one of the connected components of $M^0$) where the non-constant part of $A$ is diagonalisable (over $\R$), i.e. with no Jordan blocks. Recall  from \cite{BM} that in this case each non-constant eigenvalue has multiplicity one.

Summarizing we have an open subset $M^0 \subset M$ with the following properties:

\begin{enumerate}

\item  at each point $p\in M^0$,  the endomorphism $A$ has $\ell$  non-constant real eigenvalues $\rho_1<\dots<\rho_\ell$, each of multiplicity one;

\item  $\d \rho_i \ne 0$ everywhere on $M^0$;  

\item  the ``constant part'' of $A$ has some fixed algebraic type with at most two eigenvalues $0$ and $1$ of multiplicity $m_0$ and $m_1$ respectively.

\end{enumerate}

In particular, according to \cite{BM}, locally in a neighbourhood of every point $p\in M^0$, we can choose a coordinate system  $\rho_1, \dots, \rho_\ell, y_1, \dots, y_s$  (cf. Proposition 4.1) in which both $g$ and $A$ split:
$$
g = 
\begin{pmatrix}  
h(\rho) & 0 \\ 0 & g_{\mathrm{c}}(y)\cdot \chi_L\bigl(A_{\mathrm{c}}(y)\bigr) 
\end{pmatrix} 
\quad\mbox{and}\quad 
A = 
\begin{pmatrix} 
L(\rho) & 0 \\ 0 & A_{\mathrm{c}}(y)
\end{pmatrix} 
$$
Here $L=\mathrm{diag}(\rho_1,\dots,\rho_\ell)$,  the metric $h$ and the endomorphism $L$ are compatible in the projective sense,   $A_{\mathrm{c}}$ is parallel w.r.t. $g_{\mathrm{c}}$ and $\chi_L(t)=(t-\rho_1)(t-\rho_2)\dots (t-\rho_\ell)$ is the characteristic polynomial of $L$.
This decomposition into ``constant'' and ``non-constant'' blocks can naturally be reformulated in terms of two orthogonal foliations $\mathcal U$ and $\mathcal U^\bot$.  Proposition~\ref{prop:distributions}  remains true without any changes.

Also notice that \eqref{eq:PDE} holds without any change in spite of the fact that in \eqref{eq:PDEg_general}  we should replace $\frac{\beta}{2}$ by $\beta$.  This happens because $\frac 1 2 $ is compensated by the factor $2$ in the formula for the trace in the c-projective setting where $\tr A =  2 \sum\rho_i + const$  (whereas in the real case $\tr A = \sum\rho_i + const$). 

The further analysis deals with  the restrictions of $g$ and $v$ to leaves of $\mathcal U$.  These restrictions are exactly the same for the projective and c-projective cases and, until \S\ref{sssec:l11}, the proof for the both cases will be the same. The difference which appears in \S\ref{sssec:l11} will be clearly described. 
\end{remthm}


We now want to derive explicit formulas for the restrictions  of  
$g$ and $v$ to leaves of $\mathcal U$. This is sufficient for many discussions since 
some of the globally defined objects derived from $g$ and $A$ only depend on the level sets 
of the non-constant eigenvalues, see \S\ref{ssec:l23} below.

We first notice that according to Theorem~\ref{thm:localclassification}, at regular points 
$p\in M^0$ we have $\rho_i = \rho_i(x_i)$ and moreover $\frac{\partial \rho_i}{\partial x_i}\ne 0$.  
This means, in particular, that we can use $\rho_i$ as local coordinates instead of $x_i$.   
This change of variables will simplify further computations.

Let $v=v_1+v_2$ be the decomposition of the c-projective vector field $v$ w.r.t. $TM=\mathcal U\oplus \mathcal U^\bot$.
According to Proposition~\ref{prop:distributions}, there are certain functions $v^i_{1}$ 
such that $v$ can be written as
\begin{align}
v=\sum_{i=1}^\ell v^i_{1}\frac{\D}{\D \rho_i}+v_2.\label{eq:Ansatz_v}
\end{align}

Since each eigenvalue $\rho_i$  satisfies \eqref{eq:ODEeigenvalues1} and is constant in the direction of $\mathcal U^\bot$,  
we immediately see that
$$
v(\rho_i) = v^i_{1} \, \frac{\D \rho_i}{\D \rho_i} = \rho_i (1-\rho_i),
$$
that is,  $v^i_{1} = \rho_i (1-\rho_i)$.

From Theorem~\ref{thm:localclassification}, we know that in our simplified situation (no complex non-constant eigenvalues), 
$g$ and $gA$ can be written in the form (after the above change of variables $x_i \leftrightarrow \rho_i$)
\begin{align}
\begin{array}{c}
\displaystyle g=\sum_{i=1}^\ell \frac{\Delta_i}{F_i} \d \rho_i^2+\sum_{i,j=1}^\ell H_{ij}\theta_i\theta_j+g_{\mathrm {c}}(\chi_{\mathrm{nc}}(A_{\mathrm {c}})\cdot,\cdot),\vspace{1mm}\\
\displaystyle gA=\sum_{i=1}^\ell \rho_i \frac{\Delta_i}{F_i} \d \rho_i^2+\sum_{i,j=1}^\ell \tilde H_{ij}\theta_i\theta_j+g_{\mathrm {c}}(\chi_{\mathrm{nc}}(A_{\mathrm {c}})A_{\mathrm {c}}\cdot,\cdot),
\end{array}\label{eq:localclass_simple1}
\end{align}
where $\Delta_i=\prod_{j\neq i}(\rho_i-\rho_j)$,   $F_i = \varepsilon_i \left(\frac{\partial \rho_i}{\partial x_i} \right)^2$, 
the functions $H_{ij},\tilde H_{ij}$ only depend on $\rho_1,\dots,\rho_\ell$ and $\chi_{\mathrm{nc}}(t)=\prod_{i=1}^\ell(t-\rho_i)$.

\begin{prop}
\label{prop:localclassgvFi}
Locally, the metric $g$ and the c-projective vector field $v$ in the coordinates $\rho_1,\dots,\rho_\ell$ are given by the formulas
\begin{align}
\label{eq:localclassgvFi}
g=\sum_{i=1}^\ell\frac{\Delta_i}{F_i}\d\rho_i^2+\dots,\qquad v=\sum_{i=1}^\ell \rho_i(1-\rho_i)\frac{\D}{\D \rho_i}+\dots,
\end{align}
where $\Delta_i=\prod_{j\neq i}(\rho_i-\rho_j)$ and 
\begin{align}
F_i(t)=a_i(1-t)^{-\CC}t^{1+\ell+\CC}.\label{eq:Fi}
\end{align}
for some real constants $a_i$ and $\CC$.
\end{prop}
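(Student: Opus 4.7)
The plan is to combine the PDE system \eqref{eq:PDE} with the local normal form from Theorem~\ref{thm:localclassification}, exploiting the crucial fact that in the normal form \eqref{eq:localclass_simple1} each function $F_i$ depends only on the single variable $\rho_i$ (indeed, in the coordinates of Example~\ref{ex:main} one has $F_i=\varepsilon_i(\partial\rho_i/\partial x_i)^2$, a function of $x_i$ alone). The first formula in \eqref{eq:localclassgvFi} is then just the restriction of \eqref{eq:localclass_simple1} to the $\mathcal U$-direction after the coordinate change $x_i\mapsto\rho_i$, and the second follows immediately from \eqref{eq:ODEeigenvalues1}: the $\mathcal{U}$-component of $v$ in the $\rho$-coordinates must satisfy $v(\rho_i)=\rho_i(1-\rho_i)$. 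So the only substantive task is to derive the explicit expression \eqref{eq:Fi} for $F_i$.

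I would do this by evaluating the dual of \eqref{eq:PDE} on pairs $(\d\rho_i,\d\rho_i)$. Dualising $\mathcal{L}_v g=-gA-(\sum_k\rho_k+\CC)g$ via $\mathcal L_v g^{-1}=-g^{-1}(\mathcal L_v g)g^{-1}$ gives
\[
(\mathcal{L}_v g^{-1})^{ij}=A^i_k g^{kj}+\Bigl(\sum_k\rho_k+\CC\Bigr)g^{ij}.
\]
Since $\gr\rho_i$ is an eigenvector of $A$ with eigenvalue $\rho_i$ (Lemma~\ref{lem:properties}~(\ref{l4})), the $1$-form $\d\rho_i$ satisfies $\d\rho_i\circ A=\rho_i\,\d\rho_i$, and eigenvectors of the $g$-selfadjoint $A$ corresponding to distinct eigenvalues are mutually orthogonal; hence
\[
(\mathcal{L}_v g^{-1})(\d\rho_i,\d\rho_i)=\Bigl(\rho_i+\sum_k\rho_k+\CC\Bigr)\frac{F_i}{\Delta_i}.
\]
On the other hand, the Leibniz rule together with $\mathcal{L}_v \d\rho_i=\d(v(\rho_i))=(1-2\rho_i)\d\rho_i$ yields
\[
(\mathcal{L}_v g^{-1})(\d\rho_i,\d\rho_i)=v(F_i/\Delta_i)-2(1-2\rho_i)\frac{F_i}{\Delta_i}.
\]

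Because $F_i$ depends only on $\rho_i$ one has $v(F_i)=F_i'(\rho_i)\rho_i(1-\rho_i)$, and a short computation using $v(\rho_i-\rho_j)=(\rho_i-\rho_j)(1-\rho_i-\rho_j)$ gives $v(\Delta_i)/\Delta_i=(\ell-1)-(\ell-2)\rho_i-\sum_k\rho_k$. Equating the two expressions for $(\mathcal L_v g^{-1})(\d\rho_i,\d\rho_i)$, the terms involving $\sum_k\rho_k$ cancel on both sides, leaving the separated first-order linear ODE
\[
\frac{F_i'(t)}{F_i(t)}=\frac{(\ell+1)(1-t)+\CC}{t(1-t)}=\frac{\ell+1+\CC}{t}+\frac{\CC}{1-t},
\]
whose general solution is $F_i(t)=a_i\,t^{\ell+1+\CC}(1-t)^{-\CC}$ for an integration constant $a_i\in\R$, which is exactly \eqref{eq:Fi}.

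The subtle point in carrying out this plan is making sure that the constant $\CC$ in the final formula for $F_i$ is the \emph{same} for every index $i=1,\dots,\ell$; this is automatic from the derivation above since $\CC$ enters through a single scalar coefficient in \eqref{eq:PDE}, whereas the integration constants $a_i$ are genuinely allowed to differ. A minor technical point to verify is that the transverse component $v_2$ of $v$ does not contribute to $v(F_i/\Delta_i)$, but this is immediate since $F_i/\Delta_i$ is a function of $\rho_1,\dots,\rho_\ell$ alone and, by the construction of the decomposition $v=v_1+v_2$, the vector field $v_2$ is tangent to the common level surfaces of these functions.
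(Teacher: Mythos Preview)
Your proposal is correct and follows essentially the same strategy as the paper: both use the second equation of \eqref{eq:PDE} restricted to the $\rho$-block of the normal form \eqref{eq:localclass_simple1} to derive the separated ODE
\[
\frac{\D \ln F_i}{\D\rho_i}=\frac{\ell+1+\CC}{\rho_i}+\frac{\CC}{1-\rho_i},
\]
and then integrate. The only difference is one of packaging. The paper works covariantly: it first argues that the Lie derivative of the $\mathcal U$-block $\sum_i\frac{\Delta_i}{F_i}\d\rho_i^2$ along $v_1$ decouples from the rest (equation \eqref{Lvrho}), then expands $\mathcal L_{v_1}$ term by term. You instead dualize to $g^{-1}$ and evaluate on the pair $(\d\rho_i,\d\rho_i)$; since $g^{-1}(\d\rho_i,\d\rho_j)=\delta_{ij}F_i/\Delta_i$ by the block-diagonal form, this automatically isolates the relevant component without a separate decoupling argument. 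Your route is slightly slicker for exactly this reason, and your handling of the potential contribution from $v_2$ (it annihilates $F_i/\Delta_i$ because the latter is a function of the $\rho_k$ alone) is correct. The intermediate identities $v(\rho_i-\rho_j)=(\rho_i-\rho_j)(1-\rho_i-\rho_j)$ and $v(\Delta_i)/\Delta_i=(\ell-1)-(\ell-2)\rho_i-\sum_k\rho_k$ match the paper's computation line for line.
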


Note that 
the term $g-\sum_{i=1}^\ell\frac{\Delta_i}{F_i}\d\rho_i^2$ in formula \eqref{eq:localclassgvFi}, which is not written down explicitly,
coincides with $\sum_{\alpha,\beta} b_{ij}(\rho, y) dy_\alpha dy_\beta$ from part $(5)$ of Proposition~\ref{prop:distributions}. The expression 
$v-\sum_{i=1}^\ell \rho_i(1-\rho_i)\frac{\D}{\D \rho_i}$ is just the projection of $v$ onto $\mathcal U^\bot$.

\begin{proof}
It easily follows from the explicit form of $g$ given by \eqref{eq:localclass_simple1}  
that the second equation of  \eqref{eq:PDE} implies
\begin{equation}
\label{Lvrho}
\mathcal L_{v_1} \left( \sum_{i=1}^\ell\frac{\Delta_i}{F_i}\d\rho_i^2 \right) =  -\sum_{i=1}^\ell \left(\rho_i  
+  \sum_{j=1}^\ell \rho_j +\CC\right) \frac{\Delta_i}{F_i} \d \rho_i^2
\end{equation}
where $v_1 = \sum \rho_j(1-\rho_j) \frac{\D}{\D \rho_j}$. In other words, the first part of $g$ can be 
differentiated along $v$ independently of the remaining terms.

From this equation we can easily derive the explicit formulas for $F_i$. To that end, we compute 
the left hand side of \eqref{Lvrho}:

$$
\begin{aligned}
&\mathcal L_{v_1} \left( \sum_{i=1}^\ell\frac{\Delta_i}{F_i}\d\rho_i^2 \right) = \\
& \sum_{i=1}^\ell  \sum_{j\ne i} \rho_j(1-\rho_j) \frac{\D}{\D \rho_j} \left( \frac{\Delta_i}{F_i}\right) \d\rho_i^2  +
\sum_{i=1}^\ell  \rho_i(1-\rho_i) \frac{\D}{\D \rho_i} \left( \frac{\Delta_i}{F_i}\right) \d\rho_i^2  +   \sum_{i=1}^\ell\frac{\Delta_i}{F_i} L_{v_1}( \d\rho_i^2) =\\
&
\sum_{i=1}^\ell  \sum_{j\ne i} \frac{\rho_j(1-\rho_j)}{\rho_j-\rho_i}  \cdot \frac{\Delta_i}{F_i} \d\rho_i^2  +
\sum_{i=1}^\ell  \sum_{j\ne i}  \frac {\rho_i(1-\rho_i)}{\rho_i-\rho_j} \cdot \frac{\Delta_i}{F_i} \d\rho_i^2 
-  \sum_{i=1}^\ell  \rho_i(1-\rho_i) \frac{\D F_i}{\D\rho_i} \frac{\Delta_i}{F_i^2} \d\rho_i^2 + \\  
&+ 2 \sum_{i=1}^\ell\frac{\Delta_i}{F_i} \frac{\D}{\D\rho_i} \bigl(\rho_i (1-\rho_i)\bigr)\d\rho_i^2 =\\
&\sum_{i=1}^\ell  \sum_{j\ne i} \left( \frac{\rho_j(1-\rho_j)}{\rho_j-\rho_i}  + \frac {\rho_i(1-\rho_i)}{\rho_i-\rho_j}\right)  \cdot \frac{\Delta_i}{F_i}  \d\rho_i^2 
 -  \sum_{i=1}^\ell  \rho_i(1-\rho_i) \frac{\D F_i}{\D\rho_i} \frac{\Delta_i}{F_i^2} \d\rho_i^2 + 2  \sum_{i=1}^\ell\frac{\Delta_i}{F_i} (1-2\rho_i)\d\rho_i^2 =\\
& \sum_{i=1}^\ell  \left(   \sum_{j\ne i}( 1-\rho_i - \rho_j  )
-  \rho_i(1-\rho_i) \frac{\D \ln F_i}{\D\rho_i}  + 2 (1-2\rho_i) \right)  \frac{\Delta_i}{F_i} \d\rho_i^2 = \\
&\sum_{i=1}^\ell  \left(  \ell - 1 -(\ell-2)\rho_i - \sum_{j=1}^\ell \rho_j 
-  \rho_i(1-\rho_i) \frac{\D \ln F_i}{\D\rho_i}  + 2 (1-2\rho_i) \right)  \frac{\Delta_i}{F_i} \d\rho_i^2 = \\
&\sum_{i=1}^\ell  \left(  \ell + 1 -(\ell+2)\rho_i - \sum_{j=1}^\ell \rho_j 
-  \rho_i(1-\rho_i) \frac{\D \ln F_i}{\D\rho_i}   \right)  \frac{\Delta_i}{F_i} \d\rho_i^2
\end{aligned}
$$

Comparing this expression with the right hand side of \eqref{Lvrho}, we obtain a simple ODE on $F_i$:
$$
 \ell + 1 -(\ell+2)\rho_i - \sum_{j=1}^\ell \rho_j 
-  \rho_i(1-\rho_i) \frac{\D \ln F_i}{\D\rho_i}  = - \left(\rho_i  +  \sum_{j=1}^\ell \rho_j +\CC\right)
$$
which after simplification becomes
$$
 \frac{\D \ln F_i}{\D\rho_i}  = \frac{ \ell + 1 + \CC -(\ell+1)\rho_i}{\rho_i(1-\rho_i)} = \frac{\CC}{1-\rho_i} + \frac{1+\ell + \CC}{\rho_i}
$$
and we get
$$
F_i = a_i (1-\rho_i)^{-\CC} \rho_i^{1+\ell+\CC},
$$
as required. \end{proof}

In the proof of Proposition~\ref{prop:localclassgvFi}, we have actually shown that for $D(g,J)=2$, not only
$g$ and $A$ restricted to integral leaves $\mathcal L$ of $\mathcal U$ are compatible in the projective sense, but also
the projection $v_1$ of $v$ to $\mathcal L$ is a projective vector field for $h=g|_{\mathcal L}$. Indeed,  the second equation of 
\eqref{eq:PDE} we used in the construction implies  the second equation of \eqref{eq:PDE1} which is equivalent to the property of 
$v_1$ to be a projective vector field for  $h$. Moreover, we already know that $v_1=v_1(\rho)$ depends only on $\rho$ which is equivalent
to the fact that $v$ preserves the distribution $\mathcal{U}^\perp$.
Similarly, we have that $v_2=v_2(y)$ (in the notation of Proposition~\ref{prop:distributions}) for the component of $v$ tangent to $\mathcal{U}^\perp$ 
or, equivalently, that $v$ preserves $\mathcal{U}$.
To see this, note that we have $[v,K_i](\rho_j)=0$ for all $i,j=1,\dots,\ell$, hence, $[v,K_i]\in \mathcal{U}^\perp$. 
Since $v$ is holomorphic and, by the first equation in \eqref{eq:PDE}, preserves the generalised eigenspaces of $A$, we obtain that 
$[v,JK_i]\in \mathcal U$ for all $i=1,\dots,\ell$, hence, $v$ preserves $\mathcal U$. We summarize this discussion in the following 

\begin{cor}
\label{cor:reductionvf}
Let $v$ be a c-projective vector field and $D(g,J)=2$. Consider the natural decomposition of $v$  
associated with the distributions $\mathcal U$ and $\mathcal U^\bot$:
$$
v = v_1 + v_2,   \quad   v_1 \in \mathcal U,  v_2 \in \mathcal U^\bot.
$$  
Let $\mathcal L$ denote an integral leaf of $\mathcal U$ and denote by $h=g|_{\mathcal L}$ and $L=A|_{\mathcal L}$
the restrictions of $g$ resp. $A$ to $\mathcal L$. Then 

\begin{enumerate}

\item The vector field $v$ preserves the both distributions $\mathcal U$ and $\mathcal U^\bot$, that is,  
$v_1=v_1(\rho)$ and $v_2=v_2(y)$ (in the notation of Proposition~\ref{prop:distributions}).

\item The projection $v_1$ of $v$ onto $\mathcal L$ is a projective vector field for $h$ and the 
equations \eqref{eq:PDE} can be naturally restricted onto 
$\mathcal L$,  namely we have 
\begin{equation}
\label{eq:PDE1}
\begin{aligned}
&\mathcal{L}_{v_1} L=L(\Id- L),\\
&\mathcal{L}_{v_1} h=-h L -\left(\sum_{i=1}^\ell \rho_i + \CC  \right)h.
\end{aligned}
\end{equation}

\end{enumerate}
\end{cor}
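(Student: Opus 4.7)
The plan is to establish both claims by combining the explicit formula for $v$ from Proposition~\ref{prop:localclassgvFi}, the first equation of \eqref{eq:PDE}, and the holomorphicity of $v$ (which holds because the flow $\Phi^v_t$ sends the K\"ahler structure $(g,J)$ to a c-projectively equivalent K\"ahler structure $(g_t,J_t)$ with $J_t=\pm J$, hence $J_t=J$ by continuity). First I would verify that $v$ preserves $\mathcal U^\perp$. By Proposition~\ref{prop:localclassgvFi} the tangential component is $v_1=\sum_i \rho_i(1-\rho_i)\partial_{\rho_i}$, so for any $X\in\mathcal U^\perp$ one has $X(\rho_j)=0$, whence
\[
[v,X](\rho_j) = v\bigl(X(\rho_j)\bigr) - X\bigl(v(\rho_j)\bigr) = -(1-2\rho_j)X(\rho_j) = 0,
\]
so $[v,X]\in\mathcal U^\perp$. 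Equivalently, $v_1$ depends only on $\rho$.

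Next I would show that $v$ preserves $\mathcal U$ via an intersection argument with the canonical Killing vector fields $K_i=J\gr\,\mu_i$. By Lemma~\ref{lem:properties}~\ref{l7}, each $K_i$ preserves $A$, so $K_i(\rho_j)=0$ and $K_i\in\mathcal U^\perp$. The computation above with $X=K_i$ then gives $[v,K_i]\in\mathcal U^\perp$. On the other hand, the first equation of \eqref{eq:PDE}, namely $\mathcal L_v A=A(\mathrm{Id}-A)$, is a polynomial evolution of $A$ along the flow of $v$; integrating this ODE shows that $(\Phi^v_t)^*A$ is a smooth function of $A$ and therefore has the same generalised eigenspaces as $A$. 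Consequently $v$ preserves the distribution $\mathcal F=\mathcal U\oplus\mathcal V$, i.e.\ the direct sum of the non-constant generalised eigenspaces of $A$. Since $K_i\in\mathcal F$, we obtain $[v,K_i]\in\mathcal F\cap\mathcal U^\perp=\mathcal V$, where the last equality uses that $g|_{\mathcal U}$ is non-degenerate. Applying $J$ and invoking holomorphicity, $[v,JK_i]=J[v,K_i]\in J\mathcal V=\mathcal U$; as $\{JK_i\}$ spans $\mathcal U$, this proves $v$ preserves $\mathcal U$ as well. Equivalently, $v_2=v_2(y)$.

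For part (2), I would restrict the identities \eqref{eq:PDE} to a leaf $\mathcal L$ of $\mathcal U$. Because $v_1$ is tangent to $\mathcal L$, $v_2$ is $\rho$-independent, and the $\mathcal U$-block of $g$ depends only on $\rho$ (Proposition~\ref{prop:distributions}), the Lie derivatives along $v_2$ contribute nothing to $(\mathcal L_v g)|_{T\mathcal L}$ or $(\mathcal L_v A)|_{T\mathcal L}$. Hence $(\mathcal L_v g)|_{T\mathcal L}=\mathcal L_{v_1}h$ and $(\mathcal L_v A)|_{T\mathcal L}=\mathcal L_{v_1}L$, which, after substitution into \eqref{eq:PDE}, yields exactly \eqref{eq:PDE1}. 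The second of these identities identifies $v_1$ as a projective vector field for $h$: since $L$ is compatible with $h$ in the projective sense (Lemma~\ref{lem:redtorealproj}), the relation $\mathcal L_{v_1}h=-hL-(\sum_i\rho_i+\CC)h$ is the projective analogue of \eqref{eq:PDEg_general} and is equivalent to $v_1$ being projective for $h$.

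The main obstacle is the preservation of $\mathcal U$: it is the step where the holomorphicity of $v$, the polynomial form of $\mathcal L_v A$, and the special structure of the canonical Killing vector fields $K_i$ must all be used simultaneously. Each ingredient is standard in isolation, but it is their combination through the intersection $\mathcal F\cap\mathcal U^\perp=\mathcal V$ that delivers the result.
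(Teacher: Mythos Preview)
Your proof is correct and follows essentially the same line as the paper's own argument (given in the paragraph immediately preceding the corollary): both show $[v,K_i]\in\mathcal U^\perp$ from $[v,K_i](\rho_j)=0$, use the polynomial form of $\mathcal L_v A$ to conclude that $v$ preserves the non-constant eigenspace distribution $\mathcal F$, and then invoke holomorphicity to pass from $[v,K_i]\in\mathcal V$ to $[v,JK_i]\in\mathcal U$. You make the intersection step $\mathcal F\cap\mathcal U^\perp=\mathcal V$ explicit, which the paper leaves implicit, but the logic is identical.
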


\subsection{There is only one non-constant eigenvalue}
\label{ssec:l23}

The goal of this subsection is to prove

\begin{prop}
\label{thm:l23}
Let $A\in \mathcal{A}(g,J)$ and the assumptions be as in Proposition~\ref{thm:Bols}. Then $A$ cannot 
have more than one non-constant eigenvalue. 
\end{prop}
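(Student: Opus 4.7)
The plan is to argue by contradiction: assume $A$ has $\ell\ge 2$ non-constant eigenvalues at some regular point $p\in M^0$, and extract a contradiction with the compactness of $M$ by analysing how two families of globally defined scalar functions---the Killing products $g(K_i,K_j)$ and the eigenvalues of $\nabla\Lambda$---behave along the integral curve $\Phi_v^t(p)$. By Proposition~\ref{prop:distributions} I may restrict to the totally geodesic leaf $\mathcal L$ through $p$; on $\mathcal L$ one has
\[
h=\sum_{i=1}^\ell\frac{\Delta_i}{F_i}\,d\rho_i^2,\qquad L=\mathrm{diag}(\rho_1,\dots,\rho_\ell),\qquad v_1=\sum_{i=1}^\ell \rho_i(1-\rho_i)\,\D_{\rho_i},
\]
with $F_i(t)=a_i(1-t)^{-\CC}t^{1+\ell+\CC}$. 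The integral curves of $v_1$ are explicit logistic functions $\rho_i(t)=1/(1+e^{-(t-t_i^{(p)})})$ with $\ell$ pairwise distinct shifts $t_i^{(p)}$, so that $\rho_i(t)\sim e^{t-t_i^{(p)}}$ as $t\to-\infty$ and $1-\rho_i(t)\sim e^{-(t-t_i^{(p)})}$ as $t\to+\infty$.

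First I would work out the globally defined Killing products $H_{ij}=g(K_i,K_j)$. Using $K_i=J\gr\mu_i$, Proposition~\ref{prop:distributions}(6) to reduce the gradient to $\mathcal L$, and the diagonal form of $h$, one obtains
\[
H_{ij}=\sum_{k=1}^\ell \frac{F_k(\rho_k)}{\Delta_k}\,\mu_{i-1}(\hat\rho_k)\,\mu_{j-1}(\hat\rho_k),
\]
a smooth and hence bounded function on the compact manifold $M$. Substituting the asymptotics of the $\rho_k$ and simplifying the Vandermonde denominators $\Delta_k=\prod_{j\ne k}(\rho_k-\rho_j)$ produces leading terms of the form $C\cdot e^{\lambda t}$ with explicitly computable coefficients and exponents depending on $\CC$, $\ell$ and the differences $t_i^{(p)}-t_j^{(p)}$. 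In parallel I would compute the eigenvalues of $\nabla\Lambda$ restricted to $\mathcal L$; by Proposition~\ref{prop:distributions}(7) each such eigenvalue is also an eigenvalue of the globally defined endomorphism $\nabla\Lambda$ on $M$, hence bounded, and admits the same kind of asymptotic expansion.

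The concluding step is to show that these two boundedness requirements are incompatible as soon as $\ell\ge 2$. A direct case analysis on $\CC$ should show that, whatever its value, at least one of the following must occur along $\Phi_v^t(p)$: some $H_{ij}$ has a surviving leading term whose exponent forces it to diverge as $t\to\pm\infty$; or some eigenvalue of $\nabla\Lambda$ becomes unbounded; or $H_{ij}$ tends to a non-zero constant while the limit point forces it to vanish (by continuity of $g$ and the $K_i$, together with the fact that at the limit points the multiplicities of the eigenvalues $0$ and $1$ of $A$ jump, which via Lemma~\ref{lem:properties}(1) and (4) constrains the associated Killing vector fields to degenerate). Any of these outcomes contradicts compactness of $M$, forcing $\ell\le 1$.

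The main obstacle lies in the asymptotic bookkeeping of the previous step: the Vandermonde denominators $\Delta_k$ and the symmetric polynomials $\mu_{i-1}(\hat\rho_k)$ combine through Newton/Vandermonde identities to produce systematic cancellations, and one has to isolate the surviving leading terms and track their coefficients across the different ranges of $\CC$. It is precisely here that the assumption $\ell\ge 2$ is used essentially: for $\ell=1$ the sum collapses to a single term with no Vandermonde denominator and every boundedness constraint is automatically satisfied by a suitable choice of the single parameter $F_1$.
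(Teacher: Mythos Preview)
Your proposal is in the same spirit as the paper's argument---extract globally bounded scalar invariants and analyse their asymptotics---but as written it is a sketch, not a proof: the decisive step (``a direct case analysis on $\CC$ should show\dots'') is exactly the one you yourself flag as the obstacle and leave undone.

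More importantly, there is a structural gap. You propose to analyse everything along a \emph{single} integral curve of $v$. Along such a curve the $\rho_i(t)$ are logistic functions with pairwise distinct shifts, so for all $t$ one has $\rho_i(t)\ne\rho_j(t)$: integral curves of $v$ only probe the corners of the simplex $\{0<\rho_1<\dots<\rho_\ell<1\}$, never the diagonal walls $\rho_i=\rho_j$. The paper's proof shows that the essential information lives on the diagonal. It first proves (Proposition~\ref{prop:global}) that the \emph{entire} open simplex $U$ embeds isometrically into $M$ as a leaf of $\mathcal U$; hence any smooth invariant of $(g,A)$ must be bounded on all of $U$, not merely along one trajectory. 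Boundedness of $f=g(\gr\,\tr A,\gr\,\tr A)=\sum_i F_i(\rho_i)/\Delta_i$ on $U$, combined with the determinant identity of Lemma~\ref{lem:determinant}, then forces $a_1=\dots=a_\ell$ by letting $\rho_i\to\rho_j$---a limit your single-curve approach cannot access. Only after this does the corner analysis (Lemma~\ref{lem:zeros}) yield the clean constraint $-2<\CC<1-\ell$, which rules out $\ell\ge 3$.

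For $\ell=2$ (with $-2<\CC<-1$) the paper again uses a diagonal limit: it passes from the eigenvalues $m_i$ of $\nabla\Lambda$ to the \emph{curvature} eigenvalue $\lambda=(m_1-m_2)/(\rho_1-\rho_2)$ (Proposition~\ref{prop:eigvalcurv}), and shows via Lemma~\ref{lem:aboutF} that $\lambda\to\frac{1}{24}F'''(x)$ as $(\rho_1,\rho_2)\to(x,x)$, which blows up for these values of $\CC$. The two determinant lemmas (Lemmas~\ref{lem:determinant} and~\ref{lem:zeros}) are precisely the tools that dissolve the Vandermonde bookkeeping you identify as the main obstacle; the passage to the curvature eigenvalue, rather than the $m_i$ themselves, is what makes the $\ell=2$ case tractable.
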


The idea of the proof is based on the analysis of geometric properties of the metric $g$ given by  
\eqref{eq:localclassgvFi} or, more precisely, of the restriction $h=g|_{\mathcal L}$.  
By construction these explicit formulas for $g$ are local, but we will show that they 
make sense for all admissible values of $\rho_i$.

\begin{prop}
\label{prop:global}
Consider the domain $U=\{ 0< \rho_1 < \dots < \rho_\ell <1\}$ with the metric 
$$
\sum_{i=1}^\ell\frac{\Delta_i}{F_i}\d\rho_i^2
$$
(compare \eqref{eq:localclassgvFi}). There is a natural isometric embedding of $\phi: U \to M$  
(as a maximal leaf of the totally geodesic foliation $\mathcal U$).
\end{prop}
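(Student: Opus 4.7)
The plan is to build $\phi$ first as a local diffeomorphism near one point, then extend it to all of $U$ by using the $\mathbb{R}^\ell$-action on $M$ generated by the commuting complete vector fields $JK_1,\dots,JK_\ell$, and finally to rule out any obstruction to extension inside $U$ by a compactness argument.

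First I would fix a regular point $p_0\in M^0$ and set $\xi_0=(\rho_1(p_0),\dots,\rho_\ell(p_0))$. Because $M$ is closed and the eigenvalues satisfy $\dot\rho=\rho(1-\rho)$ along $v$, any non-constant $\rho_i$ stays bounded and avoids the equilibria $0,1$, hence $\rho_i\in(0,1)$ everywhere; at a regular point the ordering $\rho_1<\dots<\rho_\ell$ is also strict, so $\xi_0\in U$. By Proposition~\ref{prop:distributions} the leaf $\mathcal L_0$ of $\mathcal U$ through $p_0$ carries $(\rho_1,\dots,\rho_\ell)$ as local coordinates; Proposition~\ref{prop:localclassgvFi} then gives $g|_{\mathcal L_0}=\sum_i(\Delta_i/F_i)\,\d\rho_i^2$ with $F_i=a_i(1-\rho_i)^{-\CC}\rho_i^{1+\ell+\CC}$. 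Nondegeneracy of $g$ on $\mathcal U$ (Lemma~\ref{lem:properties}\eqref{l5}, transferred via $J$) forces $a_i\ne 0$, so the right-hand side is a smooth nondegenerate metric on all of $U$; this will be the target metric for $\phi$.

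Next I would construct $\phi$. By Lemma~\ref{lem:properties}\eqref{l6}, the vector fields $JK_1,\dots,JK_\ell$ commute, and since $JK_i=-\gr\mu_i$ with $\mu_i$ smooth on $M$, they are defined on all of $M$ and hence complete by compactness. They thus generate a smooth action $\Psi:\mathbb R^\ell\times M\to M$. On $M^0$ the $JK_i$ form a basis of $\mathcal U$, so $\Psi(\mathbb R^\ell,p_0)\subseteq \mathcal L_0$ and the Jacobian of $s\mapsto(\rho_1,\dots,\rho_\ell)(\Psi(s,p_0))$ is invertible on this set. Define $\phi(\xi)$ for $\xi$ in the open set $V\subseteq U$ of achievable values as $\Psi(s(\xi),p_0)$, where $s(\xi)$ is chosen (by the implicit function theorem, starting near $\xi_0$ and propagating) so that the $\rho$-coordinates of the image are~$\xi$; smoothness and injectivity follow because the $\rho_i$ are coordinates on $\phi(V)\subset\mathcal L_0$, and isometry follows because the metric formula depends only on the $\rho_i$ and is satisfied wherever we land in $\mathcal L_0\cap M^0$.

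The remaining and hardest step is to show that $V=U$. The set $V$ is open by construction; I would rule out $V\cap U\ne U$ by compactness of $M$. Assume $\xi^\ast\in U\cap\partial V$ and pick $\xi_n\to\xi^\ast$ with $p_n:=\phi(\xi_n)\in\mathcal L_0$; by compactness of $M$, a subsequence satisfies $p_n\to p^\ast\in M$, and continuity of the $\mu_i$ (hence of the $\rho_i$ at a regular point) yields $\rho_i(p^\ast)=\xi_i^\ast$. Since $\xi^\ast\in U$, the values $\xi_i^\ast$ are distinct and lie in $(0,1)$, hence differ from the only possible constant eigenvalues $0,1$, so $p^\ast\in M^0$ is regular. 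The foliation $\mathcal U$ is smooth near $p^\ast$, and a small $\mathcal U$-chart around $p^\ast$ contains $p_n$ for large $n$; connectedness of that chart together with $p_n\in\mathcal L_0$ forces $p^\ast\in\mathcal L_0$, and then $\xi^\ast\in V$, a contradiction. The main obstacle is precisely this step: it requires simultaneously (i) compactness of $M$ to extract $p^\ast$, (ii) the specific form of the ODE for non-constant eigenvalues to guarantee $\rho_i^\ast\notin\{0,1\}$, and (iii) the fact that the $\rho_i$-coordinates do not collide on $U$, so that $p^\ast$ is automatically regular and lies in the same leaf as the approximating sequence.
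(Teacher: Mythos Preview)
Your argument is correct and follows essentially the same route as the paper: define $\phi$ locally via the $\rho_i$-coordinates on a leaf, then use compactness of $M$ to show that any putative boundary point $\xi^\ast\in U$ of the domain of definition has a regular limit $p^\ast\in M^0$ (since $0<\xi_1^\ast<\dots<\xi_\ell^\ast<1$ forces the eigenvalues to stay distinct and away from the constant ones), so the extension continues. The paper phrases this as prolongation of $\phi$ along a path $a(t)$ with a supremum argument, whereas you make the extension more concrete by invoking the global $\mathbb R^\ell$-action generated by the complete commuting fields $JK_i=-\gr\,\mu_i$; this is a harmless and arguably cleaner variant, but the core idea (compactness $\Rightarrow$ limit point is regular $\Rightarrow$ stays in the leaf) is identical.
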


\begin{proof}
Locally,  $\phi$ is defined in a very natural way. We choose a particular leaf $\mathcal L$ with $\rho_i$ 
as local coordinates and say that  $\phi(\rho_1,\dots, \rho_\ell)$ is the point on $\mathcal L$ with the same coordinates  
$\rho_1,\dots, \rho_\ell$.  We start with a certain point $a_0\in U$ and then extend this map as long as we can.  
The map $\phi$ so obtained is obviously an isometry. We need to show that such a prolongation can be made to any point of $U$.  

The argument is standard.  Consider a smooth curve $a(t)$ with $a(0)=a_0\in U$ and $a(1)=a_1\in U$, 
and choose $T_0\in [0,1]$ to be the supremum of those $T\in [0,1]$  for which the extension along the curve $a(t)$, 
$t\in [0, T]$, is well defined.   Take the image  $\phi(a(t))$, $t\in [0,T_0)$. Since $M$ is compact,  
we can find a limit point $p$ of $\phi(a(t))$ as $t\to T_0$.  By continuity,  the eigenvalues of $A(p)$ 
coincide with the coordinates of $a(T_0)$ in $U$, i.e. $0< \rho_1 < \dots < \rho_\ell <1$.  
But this condition guarantees that $p\in M^0$, 
i.e. in a neighbourhood of $p$, the foliation $\mathcal U$ is defined.  This obviously implies that $p\in \mathcal L$, 
moreover $p=\phi(a(T_0))$ and we can extend $\phi$ to some neighbourhood of $a(T_0)\in U$.   
In particular,  $T_0$ cannot be an interior point of $[0,1]$. 
In other words,  the prolongation of $\phi$  along $a(t)$ is well defined for all $t\in[0,1]$.  \end{proof}

Consider the function $f:U \to \R$:
\begin{equation}
\label{frho}
f (\vec\rho)=h\bigl(\gr\sum_{i=1}^\ell \rho_i,\gr\sum_{i=1}^\ell \rho_i\bigr),
\end{equation}
where $h$ is the metric on $U$ defined explicitly by (the first term of $g$ in) \eqref{eq:localclassgvFi}.

According to Proposition~\ref{prop:global}, we can naturally identify the domain $U=\{ 0< \rho_1 < \dots < \rho_\ell <1\}$  
with a leaf $\mathcal L$ of $\mathcal U$.  Moreover, the function $f$ can be considered (up to a constant multiple) as the restriction to
$\mathcal L$ of the function $g(\gr\, \tr A, \gr\, \tr A)$ which is globally defined and smooth on $M$  
(see part $(6)$ of Proposition~\ref{prop:distributions}).  
This  immediately implies certain conditions on $f(\vec\rho)$.

\begin{prop}
\label{prop:limit}  
The function $f(\vec\rho)$ must be bounded on $U$.  Moreover, 
we have $f(\vec\rho)\to 0$ as $(\rho_1,\dots,\rho_\ell)\to (0,\dots,0)$ or $(\rho_1,\dots,\rho_\ell)\to (1,\dots,1)$. 
\end{prop}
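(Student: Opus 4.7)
The plan is to realize $f$ as the pullback under $\phi$ of a globally defined smooth function on the compact manifold $M$, and then exploit compactness together with extremality. Let $\mu_1=\rho_1+\dots+\rho_\ell$ be the first elementary symmetric function in the non-constant eigenvalues of $A$. Although individual $\rho_i$ may fail to extend smoothly to all of $M$, the polynomials $\mu_k$ are globally defined smooth functions on $M$ (as recalled just before \eqref{eq:defKi}). By Proposition~\ref{prop:distributions}~(6), the gradient of $\mu_1$ with respect to $h$ on $\mathcal{L}$ agrees with $\gr\mu_1$ computed on $(M,g)$, so
\[
f(\vec\rho) = g(\gr\mu_1,\gr\mu_1)\bigl(\phi(\vec\rho)\bigr) =: F\bigl(\phi(\vec\rho)\bigr),\quad F\in C^\infty(M).
\]
Boundedness of $f$ on $U$ is then immediate since $M$ is compact.

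For the limit statement, I would show that $0$ and $\ell$ are respectively the global minimum and maximum of $\mu_1$ on $M$. The key point is that every non-constant eigenvalue $\rho_i$ of $A$ takes values in the open interval $(0,1)$ on $M^0$. Indeed, along any orbit of $v$ staying inside $M^0$, the function $\rho_i(t)$ is smooth and satisfies $\dot\rho_i=\rho_i(1-\rho_i)$, whose only real non-constant solutions remaining bounded for all $t$ take values in $(0,1)$; combined with the uniform boundedness of the $\rho_i$ on $M^0$ (they are roots of the polynomial whose coefficients are the globally smooth, hence bounded, functions $\mu_k$), this forces $\rho_i\in(0,1)$. Consequently $\mu_1\in(0,\ell)$ on $M^0$, and by continuity together with density of $M^0$ in $M$ we obtain $\mu_1(M)\subseteq[0,\ell]$; the extreme values $0$ and $\ell$ are attained because along any orbit contained in $\phi(U)$ one has $\mu_1\to 0$ as $t\to-\infty$ and $\mu_1\to\ell$ as $t\to+\infty$, and compactness of $M$ produces accumulation points realising these values. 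Any such limit point $p_\infty\in M$ is a critical point of the smooth function $\mu_1$ on the closed manifold $M$, so $d\mu_1(p_\infty)=0$ and hence $F(p_\infty)=0$. Taking any sequence $\vec\rho_n\to(0,\dots,0)$ in $U$ and any convergent subsequence $\phi(\vec\rho_n)\to p_\infty$, continuity of $\mu_1$ forces $\mu_1(p_\infty)=0$, so $F(p_\infty)=0$ and $f(\vec\rho_n)=F(\phi(\vec\rho_n))\to 0$; the same argument applies to the limit $(1,\dots,1)$.

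The main obstacle is to rigorously establish $\rho_i\in(0,1)$ pointwise on $M^0$: an integral curve of $v$ through a point where $\rho_i>1$ could in principle leave $M^0$ at a finite time before the ODE would force $\rho_i$ to blow up, so the ODE analysis must be combined carefully with the global boundedness of the coefficients $\mu_k$ (or with a LaSalle-type argument applied to the $\omega$-limit set of an orbit) to exclude this scenario.
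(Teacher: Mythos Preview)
Your approach is correct and coincides with the paper's: realize $f$ as the restriction of the globally smooth function $g(\gr\,\tr A,\gr\,\tr A)$ (up to a constant), obtain boundedness from compactness of $M$, and deduce the limit statement from the fact that $\tr A$ attains its global extrema at the endpoints, where its gradient must therefore vanish. The paper's own proof is two sentences and does not engage with the issue you raise at the end.

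Your ``main obstacle'' is overstated, and no LaSalle-type argument is needed. Work with the endomorphism $A$ rather than with individual eigenvalues: since $\Phi_t^v$ is complete on the compact manifold $M$, the pullback $B(t)=(\Phi_t^v)^*A$ at any fixed point satisfies the matrix ODE $\dot B = B(\Id - B)$ for \emph{all} $t\in\R$ (this follows directly from $\mathcal L_v A = A(\Id-A)$). The family $\{B(t)\}$ is pairwise commuting, hence simultaneously triangularisable over $\C$, and the diagonal entries---which are the eigenvalues of $A$ at $\Phi_t^v(p)$---obey the scalar logistic equation $\dot\lambda=\lambda(1-\lambda)$ for all $t$, regardless of whether the orbit remains in $M^0$. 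Uniform boundedness of the eigenvalues on $M$ then forces each non-constant one into $(0,1)$, so $\mu_1$ indeed takes values in $(0,\ell)$ on $M^0$ and in $[0,\ell]$ on $M$.
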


\begin{proof} 
The first claim follows from the compactness of $M$. Since $\mathrm{tr}\,A$ takes 
its minimum resp. maximum value at the limit points $(0,\dots,0)$ resp. $(1,\dots,1)$, 
we obtain that $\gr\,\mathrm{tr}\,A$ tends to zero if $(\rho_1,\dots,\rho_\ell)$ tends 
to $(0,\dots,0)$ or $(1,\dots,1)$. The proves the proposition.
\end{proof}

These conditions will give us some further restrictions on $g$. It is straightforward to compute the
 function $f(\vec\rho)$ explicitly using \eqref{eq:localclassgvFi}:
\begin{align}
f (\vec\rho)=\sum_{i=1}^\ell \frac{F_i}{\Delta_i}
=\sum_{i=1}^\ell \frac{a_i(1-\rho_i)^{-\CC}\rho_i^{1+\ell+\CC}}{\prod_{j\neq i}(\rho_i-\rho_j)}
\label{eq:fPrho}
\end{align}

To study the limiting behaviour of such functions we use the following

\begin{lem}
\label{lem:determinant}
\begin{enumerate}
\item Let $k_1,\dots,k_\ell$ be functions of one variable. Then the function
$$
f=\sum_{i=1}^\ell\frac{k_i(\rho_i)}{\prod_{j\neq i}(\rho_i-\rho_j)},
$$
defined on the domain $\rho_1<\dots<\rho_\ell$, is equal to the quotient of determinants
$$
\mathrm{det}\left(\begin{array}{cccc}1&1&\dots&1\\\rho_1&\rho_2&\dots&\rho_\ell\\\vdots&\vdots&\ddots&\vdots\\\rho_1^{\ell-2}&\rho_2^{\ell-2}&\dots&\rho_\ell^{\ell-2}\\k_1(\rho_1)&k_2(\rho_2)&\dots&k_\ell(\rho_\ell)\end{array}\right)\mathrm{det}\left(\begin{array}{cccc}1&1&\dots&1\\\rho_1&\rho_2&\dots&\rho_\ell\\\vdots&\vdots&\ddots&\vdots\\\rho_1^{\ell-2}&\rho_2^{\ell-2}&\dots&\rho_\ell^{\ell-2}\\\rho_1^{\ell-1}&\rho_2^{\ell-1}&\dots&\rho_\ell^{\ell-1}\end{array}\right)^{-1}
$$ 
\item If  $f$ is bounded on the domain $\rho_1<\dots<\rho_\ell$, then $k_1=\dots=k_\ell$.
\item We have $f(\rho_1,\dots,\rho_\ell)\equiv 0$ if and only if $k_i(\rho_i)=p(\rho_i)$, 
where $p(\rho_i)$ is a polynomial in $\rho_i$ (independent of $i$) of degree $\leq \ell-2$.
\end{enumerate}
\end{lem}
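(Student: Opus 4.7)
The plan is to derive all three parts from a single determinantal identity, and then analyze the numerator of that identity as the $\rho_i$'s are allowed to coalesce.

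For part (1), I would expand the numerator determinant along its last row. The cofactor of $k_i(\rho_i)$ is the $(\ell-1)\times(\ell-1)$ Vandermonde determinant on the variables $\rho_1,\dots,\hat\rho_i,\dots,\rho_\ell$, namely $V_i = \prod_{\substack{a<b\\a,b\neq i}}(\rho_b-\rho_a)$, carrying a sign $(-1)^{\ell+i}$. The full Vandermonde denominator $V=\prod_{a<b}(\rho_b-\rho_a)$ satisfies $V/V_i = \prod_{a<i}(\rho_i-\rho_a)\prod_{b>i}(\rho_b-\rho_i) = (-1)^{\ell-i}\prod_{j\neq i}(\rho_i-\rho_j)$. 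Putting these together, the two signs absorb into $(-1)^{2\ell}=1$, and the quotient reduces to $\sum_i k_i(\rho_i)/\prod_{j\neq i}(\rho_i-\rho_j)$, which is exactly $f$.

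For part (2), I would argue that, holding all other $\rho_k$'s fixed, the denominator $V$ vanishes to first order as $\rho_{i+1}\to\rho_i$. Hence boundedness of $f=N/V$ on the open region forces the numerator $N$, viewed as a smooth function of $(\rho_1,\dots,\rho_\ell)$, to vanish on the coincidence hyperplane $\{\rho_{i+1}=\rho_i\}$. Evaluating $N$ on this hyperplane produces a determinant whose $i$-th and $(i+1)$-th columns coincide in all entries except possibly the last; expanding along the last row yields a factor of $\bigl(k_{i+1}(\rho_i)-k_i(\rho_i)\bigr)$ times a lower-dimensional Vandermonde that does not vanish on an open set. Since this has to hold for all $\rho_i$ in an interval, $k_i\equiv k_{i+1}$ as functions. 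Iterating in $i$ gives $k_1=\dots=k_\ell$. The main obstacle I anticipate is this step — one has to be careful that ``$N$ vanishes on the hyperplane'' really forces the pointwise identity $k_i=k_{i+1}$ as functions, but the preceding determinantal expansion makes this clean.

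For part (3), the ``only if'' direction combines the previous two parts: $f\equiv 0$ certainly implies $f$ is bounded, so by (2) all $k_i$ coincide with some function $p$, and the numerator becomes the determinant whose last row is $(p(\rho_1),\dots,p(\rho_\ell))$ and whose first $\ell-1$ rows are the Vandermonde rows $(\rho_j^k)_{k=0,\dots,\ell-2}$. That this determinant vanishes for all choices of $(\rho_1,\dots,\rho_\ell)$ means the Lagrange interpolant of $p$ through any $\ell$ distinct points has its leading (degree $\ell-1$) coefficient equal to zero; a short argument varying one node while keeping the others fixed then forces $p$ itself to agree with a fixed polynomial of degree $\leq \ell-2$. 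The ``if'' direction is immediate: if $p$ has degree $\leq \ell-2$, then the last row is a linear combination of the first $\ell-1$ rows, so the determinant vanishes identically.
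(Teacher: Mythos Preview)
Your proof is correct and follows essentially the same approach as the paper: Laplace expansion along the last row for part (1), letting two adjacent $\rho$'s coalesce so the Vandermonde denominator vanishes for part (2), and reading off part (3) from the determinantal identity. Your treatment of part (3) via the leading coefficient of the Lagrange interpolant is a nice way to make explicit what the paper leaves as ``follows immediately from the formula in part (1)''.
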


\begin{proof}
The proof of (1) follows from standard manipulations for calculating determinants and by applying 
the formula for the determinant of the Vandermonde matrix. 

To prove (2), consider for instance the limit $\rho_1\to\rho_2$ under which the Vandermonde determinant in the 
denominator of the expression for $f$ tends to zero. Since $f$ is bounded, the determinant in the numerator must 
also tend to zero which implies that $k_1$ and $k_2$ are equal.  

Part (3) follows immediately from the formula in part $(1)$ and the vanishing of the determinant in the nominator.
\end{proof}

We apply part $(2)$ of Lemma~\ref{lem:determinant} to the function $f(\vec\rho)$ from  \eqref{frho}   
written in the form \eqref{eq:fPrho} to obtain

\begin{cor}
\label{cor:equalconstants}
The parameters $a_i$'s  in  \eqref{eq:localclassgvFi}, \eqref{eq:Fi} are all equal. In other words, 
the functions $F_i$ from \eqref{eq:localclassgvFi} coincide and take the form
$$
F_i(t)=F(t)=a(1-t)^{-\CC}t^{1+\ell+\CC}
$$
\end{cor}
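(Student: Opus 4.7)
The plan is to invoke Lemma~\ref{lem:determinant}(2) directly on the explicit expression \eqref{eq:fPrho}. Setting $k_i(t) = a_i(1-t)^{-\CC} t^{1+\ell+\CC}$, the formula \eqref{eq:fPrho} for $f(\vec\rho)$ falls exactly into the framework of Lemma~\ref{lem:determinant}. All the $k_i$'s share the common transcendental factor $(1-t)^{-\CC} t^{1+\ell+\CC}$ and differ only by the scalar prefactors $a_i$, so an identification $k_i \equiv k_j$ as functions of one variable immediately yields $a_i = a_j$.

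To apply the lemma, one needs boundedness of $f$ on a domain where neighbouring eigenvalues are allowed to collide. This is supplied by combining Proposition~\ref{prop:global} and Proposition~\ref{prop:limit}: the set $U = \{0 < \rho_1 < \dots < \rho_\ell < 1\}$ embeds isometrically into the closed manifold $M$ as a leaf of $\mathcal U$, and along this leaf $f$ coincides (up to a positive multiplicative constant) with the restriction of the globally defined smooth function $g(\gr\, \tr A, \gr\, \tr A)$, which is bounded by compactness. The limiting argument in the proof of Lemma~\ref{lem:determinant}(2) takes $\rho_i \to \rho_{i+1}$ at an interior value of the shared variable; such limits can be performed entirely inside $U$ by fixing the remaining coordinates at any admissible values in $(0,1)$. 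Boundedness of $f$ on $U$ therefore forces $k_i \equiv k_{i+1}$ as functions on $(0,1)$ for each $i$, and iterating gives $k_1 = \dots = k_\ell$.

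Comparing the prefactors of the common function $(1-t)^{-\CC} t^{1+\ell+\CC}$ then yields $a_1 = \dots = a_\ell$, which is the claim; renaming the common value $a$ puts $F_i$ into the stated form $F(t) = a(1-t)^{-\CC} t^{1+\ell+\CC}$. There is no real obstacle here: this corollary is essentially a bookkeeping consequence of the determinantal identity of Lemma~\ref{lem:determinant} once the global boundedness supplied by Proposition~\ref{prop:limit} is in place.
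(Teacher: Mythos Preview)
Your proof is correct and follows essentially the same route as the paper: apply Lemma~\ref{lem:determinant}(2) to the explicit expression \eqref{eq:fPrho} for $f(\vec\rho)$, using the boundedness supplied by Proposition~\ref{prop:limit}. The paper's own proof is just a one-line invocation of these same ingredients; your version adds the helpful remark that the collisions $\rho_i\to\rho_{i+1}$ needed in Lemma~\ref{lem:determinant}(2) can indeed be performed inside the domain $U=\{0<\rho_1<\dots<\rho_\ell<1\}$, which is a point the paper leaves implicit.
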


To find restrictions on the constant $\CC$ and the number $\ell$ of non-constant eigenvalues we use 
another interesting property of functions of the form \eqref{eq:fPrho}.

\begin{lem}
\label{lem:zeros}
Let 
$$
f(\vec{\rho})=\sum_{i=1}^\ell\frac{k(\rho_i)}{\prod_{j\neq i}(\rho_i-\rho_j)},
$$
where $\vec{\rho}=(\rho_1,\dots,\rho_\ell)$. If $k$ is smooth in a neighbourhood of $x\in\R$, then 
$$
\lim_{\vec{\rho}\to \vec{x}}f(\vec{\rho})=\frac{1}{(\ell-1)!}k^{(\ell-1)}(x),
$$
where we define $\vec{x}=(x,\dots,x)$.
\end{lem}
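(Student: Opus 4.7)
The plan is to combine Taylor expansion of $k$ at $x$ with the algebraic identities already established in Lemma~\ref{lem:determinant}. Since $f$ is linear in $k$, I would split
\[
k(\rho) = P(\rho) + R(\rho), \qquad P(\rho)=\sum_{m=0}^{\ell-1}\frac{k^{(m)}(x)}{m!}(\rho-x)^m,
\]
where $P$ is the degree-$(\ell-1)$ Taylor polynomial of $k$ at $x$ and, by smoothness of $k$, the remainder factors as $R(\rho)=(\rho-x)^\ell \tilde R(\rho)$ for some smooth $\tilde R$ near $x$. By linearity $f(\vec\rho)=f_P(\vec\rho)+f_R(\vec\rho)$, and the goal becomes to show that $f_P$ is identically equal to $k^{(\ell-1)}(x)/(\ell-1)!$ on the open chamber, while $f_R$ vanishes in the limit $\vec\rho\to\vec x$.

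For the polynomial part, each summand $(\rho-x)^m$ of $P$ with $m\le\ell-2$ is a polynomial in $\rho$ of degree at most $\ell-2$ and contributes $0$ to $f$ by Lemma~\ref{lem:determinant}(3). For the top-degree term $(\rho-x)^{\ell-1}$, binomial expansion in $\rho$ yields $\rho^{\ell-1}$ plus a polynomial of degree at most $\ell-2$; the latter again contributes $0$ by Lemma~\ref{lem:determinant}(3), so only the $\rho^{\ell-1}$ part survives. Applying Lemma~\ref{lem:determinant}(1) with $k_i(\rho_i)=\rho_i^{\ell-1}$, the numerator determinant coincides with the Vandermonde determinant in the denominator, giving $\sum_{i=1}^\ell \rho_i^{\ell-1}/\prod_{j\ne i}(\rho_i-\rho_j)=1$. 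Therefore $f_P(\vec\rho)\equiv k^{(\ell-1)}(x)/(\ell-1)!$ throughout the chamber $\rho_1<\dots<\rho_\ell$.

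The main obstacle will be showing $f_R(\vec\rho)\to 0$ as $\vec\rho\to\vec x$, because in the sum $\sum_i R(\rho_i)/\prod_{j\ne i}(\rho_i-\rho_j)$ both numerator and denominator vanish on the diagonal. The approach I would take is to apply the determinant formula of Lemma~\ref{lem:determinant}(1) after the shift $h_i=\rho_i-x$: the numerator determinant $\det M_R$ has last row $\bigl(h_i^\ell\tilde R(x+h_i)\bigr)_{i}$ and, viewed as a smooth function of $\vec h$, vanishes whenever two $h_i$'s coincide (two columns become equal), so it is divisible by $\prod_{i<j}(h_i-h_j)$, which coincides up to sign with $\det V$. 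Counting degrees, each Leibniz term of $\det M_R$ has total degree at least $\binom{\ell-1}{2}+\ell$ in $\vec h$, whereas $\det V$ has degree $\binom{\ell}{2}=\binom{\ell-1}{2}+(\ell-1)$; hence $f_R=\det M_R/\det V$ extends smoothly to the diagonal and vanishes there to order $\ge 1$ in $\vec h$. A more transparent alternative is to invoke the Hermite--Genocchi integral representation
\[
f(\vec\rho)=\int_{\Sigma_{\ell-1}} k^{(\ell-1)}\Bigl(\sum_{i=1}^\ell t_i\rho_i\Bigr)\, \d t_1\cdots \d t_{\ell-1}
\]
over the standard simplex $\Sigma_{\ell-1}=\{t_i\ge 0,\ \sum_{i<\ell}t_i\le 1\}$ (with $t_\ell=1-\sum_{i<\ell}t_i$); dominated convergence then yields the whole limit at once as $k^{(\ell-1)}(x)\cdot\mathrm{vol}(\Sigma_{\ell-1})=k^{(\ell-1)}(x)/(\ell-1)!$.
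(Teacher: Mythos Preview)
Your proof is correct and follows essentially the same route as the paper: Taylor-expand $k$ at $x$, use Lemma~\ref{lem:determinant}(3) to kill the terms of degree $\le \ell-2$, identify the degree-$(\ell-1)$ contribution as the constant $k^{(\ell-1)}(x)/(\ell-1)!$, and then argue that the remainder of order $(\rho_i-x)^\ell$ vanishes in the limit. The paper's write-up is terser---it simply asserts ``the claim now follows by taking the limit'' for the remainder term---whereas you give two explicit justifications (Vandermonde divisibility with degree counting, or Hermite--Genocchi); either one is a legitimate way to fill what the paper leaves implicit, and the Hermite--Genocchi formula is indeed the cleanest route since it identifies $f(\vec\rho)$ directly as the divided difference $k[\rho_1,\dots,\rho_\ell]$ and handles the whole lemma in one stroke.
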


\begin{proof}
In a neighbourhood of $x$, we can write $k$ in the form
$$
k(t)=\sum_{j=0}^{\ell-1}\frac{1}{j!}k^{(j)}(x)(t-x)^j+\mathcal{O}((t-x)^\ell).
$$
Inserting this into the formula for $f$, we obtain 
$$
f(\vec{\rho})=\sum_{j=0}^{\ell-1}\frac{1}{j!}k^{(j)}(x)\sum_{i=1}^\ell\frac{(\rho_i-x)^j}{\prod_{r\neq i}(\rho_i-\rho_r)}
+\sum_{i=1}^\ell\frac{\mathcal{O}((\rho_i-x)^\ell)}{\prod_{j\neq i}(\rho_i-\rho_j)}.
$$
Applying Lemma~\ref{lem:determinant} (3) to the functions $k_i(\rho_i)$ of the form $(\rho_i-x)^j$, we obtain
$$
f(\vec{\rho})=\frac{1}{(\ell-1)!}k^{(\ell-1)}(x)+\sum_{i=1}^\ell\frac{\mathcal{O}((\rho_i-x)^\ell)}{\prod_{j\neq i}(\rho_i-\rho_j)}.
$$
The claim now follows by taking the limit $\vec{\rho}\to\vec{x}$.
\end{proof}

The limiting behaviour $f (\vec\rho) \to 0$ for $\vec\rho\to (1,\dots,1)$ and $\vec\rho\to (0,\dots,0)$ 
(see Proposition~\ref{prop:limit}) 
combined with Lemma~\ref{lem:zeros}  with $k(t)= F(t)=a (1-t)^{-\CC}t^{1+\ell+\CC}$
implies that the function $F(t)$ must have a zero of order $\ell-1$ at $t=0$ and $t=1$. Hence,
$$
-\CC>\ell-1\mbox{ and }1+\ell+\CC>\ell-1
$$
or equivalently
$$
-2<\CC<1-\ell.
$$
This inequality is not fulfilled for $\ell\geq 3$. Thus,  we have

\begin{cor}
\label{cor:aboutell}
The number $\ell$ of non-constant eigenvalues is either $1$ or $2$. Moreover, we have 
$$
-2<\CC<-1 \ \ \mbox{for }\ell=2 \qquad\mbox{and} \qquad -2<\CC<0 \ \ \mbox{for }\ell=1.
$$
\end{cor}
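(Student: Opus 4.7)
The plan is immediate from the material assembled in the paragraphs preceding the corollary; what is needed is to record the inequalities cleanly. By Corollary~\ref{cor:equalconstants}, the function $f(\vec\rho) = h(\gr\sum_i \rho_i,\gr\sum_i \rho_i)$ on the open simplex $U = \{0<\rho_1<\dots<\rho_\ell<1\}$ can be written in the uniform form
\[
f(\vec\rho) \;=\; \sum_{i=1}^\ell \frac{F(\rho_i)}{\prod_{j\ne i}(\rho_i-\rho_j)}, \qquad F(t) = a(1-t)^{-\CC}\,t^{1+\ell+\CC},
\]
with one and the same function $F$ in every summand. Proposition~\ref{prop:limit} asserts that $f(\vec\rho) \to 0$ as $\vec\rho$ approaches either of the two ``corners'' $(0,\dots,0)$ or $(1,\dots,1)$ of $U$.

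The next step is to apply Lemma~\ref{lem:zeros} at the two points $x=0$ and $x=1$: the vanishing of $\lim_{\vec\rho\to\vec x} f(\vec\rho)$ forces $F^{(\ell-1)}(x)=0$, i.e.\ $F$ must have a zero of order strictly greater than $\ell-1$ at each endpoint. Substituting the explicit form of $F$, the zero at $t=0$ has order $1+\ell+\CC$ and the zero at $t=1$ has order $-\CC$, yielding
\[
1+\ell+\CC > \ell-1 \quad\text{and}\quad -\CC > \ell-1,
\]
equivalently $-2<\CC<1-\ell$. This interval is empty whenever $\ell\geq 3$, so one is forced into $\ell\in\{1,2\}$, with the claimed ranges $-2<\CC<0$ for $\ell=1$ and $-2<\CC<-1$ for $\ell=2$.

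The one mildly delicate point in this plan is that Lemma~\ref{lem:zeros} is stated under a smoothness hypothesis on $F$ near $x$, while $F$ need not be smooth at $t=0$ or $t=1$ when the exponents are non-integer. I would handle this by invoking the Vandermonde-determinant form of $f$ given in Lemma~\ref{lem:determinant}(1): the numerator and denominator are both analytic in $\vec\rho$ on the closure of $U$ up to the $F(\rho_i)$-entries, so the limiting behaviour of $f$ along any sequence $\vec\rho\to\vec x$ is controlled entirely by the leading power of $F$ at $x$, and if that power does not exceed $\ell-1$ then $f$ stays bounded away from $0$ (or blows up) along a suitable radial sequence, contradicting Proposition~\ref{prop:limit}. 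This expected main ``obstacle'' is therefore a very mild technicality rather than a substantive difficulty.
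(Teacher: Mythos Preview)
Your proposal is correct and follows essentially the same route as the paper: apply Corollary~\ref{cor:equalconstants} to get a single $F$, use Proposition~\ref{prop:limit} for the vanishing of $f$ at the two corners, and invoke Lemma~\ref{lem:zeros} to force the two strict order inequalities on the zeros of $F$, which collapse to $-2<\CC<1-\ell$. Your closing remark about the smoothness hypothesis in Lemma~\ref{lem:zeros} is a fair point that the paper itself glosses over; the Vandermonde-determinant workaround you sketch is adequate, and in fact one can also argue directly that if the leading exponent of $F$ at an endpoint is at most $\ell-1$ then the Taylor-with-remainder manipulation in the proof of Lemma~\ref{lem:zeros} still goes through with a nonzero (or infinite) limit.
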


Now let us show that also $\ell=2$ contradicts our assumptions.  
If $\ell=2$, then the metric $h=g|_{\mathcal L}$  (see Proposition~\ref{prop:localclassgvFi}) takes the form
\begin{equation}
\label{eq:gindim2}
(\rho_1 - \rho_2) \left(  \frac{\d\rho^2_1}{F(\rho_1)}  - \frac{\d\rho^2_2}{F(\rho_2)}  \right) 
\end{equation}
where $F(t)= a (1-t)^{-\CC}t^{3+\CC}$ and  $-2<\CC<-1$. Without loss of generality we may assume that $a=1$.

Let us compute one special eigenvalue of the curvature operator of the metric $g$ on $M$ by using Proposition~\ref{prop:eigvalcurv}, see Appendix.   
We know that each eigenvector of $A$ (e.g., $\gr\,\rho_i$) is at the same time an eigenvector of $\nabla\Lambda$ (Lemma~\ref{lem:RicciId} and Remark~\ref{rem:commute}).  In particular, we must have 
$$
\nabla_{\gr\,\rho_i} \Lambda=m_i\gr\,\rho_i \quad \mbox{for }i=1,2,
$$
where the functions $m_1,m_2$ are the eigenvalues of the endomorphism $\nabla\Lambda$ 
(of course, this observation is not limited to the case $\ell=2$).  
Then according to Proposition~\ref{prop:eigvalcurv}  and Remark \ref{rem:eigvalcurv}, the function 
$$
\lambda=\frac{m_1-m_2}{\rho_1-\rho_2}
$$
is an eigenvalue of the curvature operator.

\begin{prop}
\label{prop:lambda}
The eigenvalue $\lambda$ of the curvature operator is given (up to multiplication with a non-zero constant) by the formula
\begin{align}
\lambda=\frac{(\rho_1-\rho_2)(F'(\rho_1)+F'(\rho_2))+2(F(\rho_2)-F(\rho_1))}{4(\rho_1-\rho_2)^3},\label{eq:lambda}
\end{align}
where $F(t)=(1-t)^{-\CC}t^{3+\CC}$.
\end{prop}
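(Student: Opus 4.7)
The plan is to compute $m_1$ and $m_2$ entirely on a single integral leaf $\mathcal L$ of $\mathcal U$, exploiting Proposition~\ref{prop:distributions}(7), which guarantees that $\Lambda$ and the tangential eigenvalues of $\nabla\Lambda$ can be obtained from the restricted metric $h=g|_{\mathcal L}$ given by~\eqref{eq:gindim2}. This reduces the verification to a purely two-dimensional calculation in the coordinates $\rho_1, \rho_2$.

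The first step is to identify $\Lambda$ on $\mathcal L$. Since $\Lambda = \tfrac{1}{4}\gr(\tr A)$ and each non-constant eigenvalue of the hermitian endomorphism $A$ has real multiplicity two (whereas the constant eigenvalues contribute only constants), one obtains $\tr A = 2(\rho_1+\rho_2) + \mathrm{const}$ and therefore
\[
\Lambda = \tfrac{1}{2}\gr_h(\rho_1+\rho_2)=\frac{F(\rho_1)}{2(\rho_1-\rho_2)}\frac{\D}{\D\rho_1}-\frac{F(\rho_2)}{2(\rho_1-\rho_2)}\frac{\D}{\D\rho_2},
\]
using the inverse of $h$ read off directly from~\eqref{eq:gindim2}.

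Next I would write down the Christoffel symbols of the diagonal metric $h$ in the $(\rho_1,\rho_2)$-coordinates and assemble $(\nabla\Lambda)^i{}_j=\D_j\Lambda^i+\Gamma^i_{jk}\Lambda^k$. Since $\D/\D\rho_i$ is proportional to $\gr\rho_i$, which is an eigenvector of $A$ by Lemma~\ref{lem:properties}~(\ref{l4}), and since $\nabla\Lambda$ commutes with $L$ (a classical projective identity already invoked in the proof of Lemma~\ref{lem:anotherone}), the off-diagonal entries $(\nabla\Lambda)^1{}_2$ and $(\nabla\Lambda)^2{}_1$ must vanish automatically; this provides a welcome internal consistency check. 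The two diagonal entries then supply the eigenvalues, which after collecting terms should take the form
\[
m_1=\frac{F'(\rho_1)}{4(\rho_1-\rho_2)}-\frac{F(\rho_1)-F(\rho_2)}{4(\rho_1-\rho_2)^2},\qquad m_2=-\frac{F'(\rho_2)}{4(\rho_1-\rho_2)}+\frac{F(\rho_1)-F(\rho_2)}{4(\rho_1-\rho_2)^2}.
\]

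The final step is purely algebraic: form $\lambda=(m_1-m_2)/(\rho_1-\rho_2)$, combine the two fractions over the common denominator $4(\rho_1-\rho_2)^3$, and recognise the resulting numerator as $(\rho_1-\rho_2)\bigl(F'(\rho_1)+F'(\rho_2)\bigr)+2\bigl(F(\rho_2)-F(\rho_1)\bigr)$, which is exactly~\eqref{eq:lambda}. The main obstacle is bookkeeping rather than conceptual: one must track the sign coming from the indefinite form $h_{22}=-(\rho_1-\rho_2)/F(\rho_2)$, and keep the factor $\tfrac{1}{2}$ relating the real trace of the hermitian $A$ to $\rho_1+\rho_2$. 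No new ideas beyond the leaf decomposition of Proposition~\ref{prop:distributions} are required.
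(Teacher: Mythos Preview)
Your approach is correct and follows the same overall strategy as the paper: restrict to a leaf $\mathcal L$, compute the eigenvalues $m_1,m_2$ of $\nabla\Lambda$ there, and form $\lambda=(m_1-m_2)/(\rho_1-\rho_2)$. Your closed formulas for $m_1,m_2$ agree exactly with the paper's.

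The only difference is in how the $m_i$ are extracted. You propose computing the Christoffel symbols of $h$ and assembling $(\nabla\Lambda)^i{}_j$ directly. The paper bypasses this by observing that, since $\nabla\Lambda$ is $g$-selfadjoint with eigenvectors $\gr\rho_i$, one has
\[
\d\, g(\Lambda,\Lambda)=m_1\,\d\rho_1+m_2\,\d\rho_2,
\]
so that $m_i=\partial_{\rho_i}\bigl(g(\Lambda,\Lambda)\bigr)$. Since $g(\Lambda,\Lambda)=\tfrac14\bigl(F(\rho_1)-F(\rho_2)\bigr)/(\rho_1-\rho_2)$ is read off immediately from \eqref{eq:gindim2}, this yields $m_1,m_2$ with a single partial derivative and no connection coefficients. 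Your route works but is slightly more laborious; the paper's shortcut is worth knowing.
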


\begin{proof}
We have $\Lambda=\frac{1}{2}(\gr\,\rho_1+\gr\,\rho_2)$, hence,
$$
\nabla_{\Lambda} \Lambda=\frac{1}{2}\nabla_{\gr\,\rho_1}\Lambda+\frac{1}{2}\nabla_{\gr\,\rho_2}\Lambda=\frac{1}{2}(m_1\gr\,\rho_1+m_2\gr\,\rho_2).
$$
Dualizing this (and using that $\nabla\Lambda$ is $g$-selfadjoint), we obtain
$$
\d g(\Lambda,\Lambda)=m_1\d\rho_1+m_2\d\rho_2.
$$
Thus, to find the formulas for $m_1$ and $m_2$ it remains to calculate the differential of $g(\Lambda,\Lambda)$.

By \eqref{eq:gindim2} we have  
$$
g(\Lambda,\Lambda)=\frac{1}{4}(|\gr\rho_1|^2+|\gr\rho_2|^2)=\frac{1}{4}\frac{F_1(\rho_1)-F_2(\rho_2)}{\rho_1-\rho_2}.
$$
The functions $F_i(t)$ are given by formula \eqref{eq:Fi} (with $\ell=2$) and by Corollary~\ref{cor:equalconstants}, 
they are all equal. Then,
$$
\d g(\Lambda,\Lambda)=\frac{1}{4(\rho_1-\rho_2)^2}\Big[((\rho_1-\rho_2)F'(\rho_1)+F(\rho_2)-F(\rho_1))\d\rho_1
$$
$$
-((\rho_1-\rho_2)F'(\rho_2)+F(\rho_2)-F(\rho_1))\d\rho_2\Big]
$$
and we obtain
$$
m_1=\frac{(\rho_1-\rho_2)F'(\rho_1)+F(\rho_2)-F(\rho_1)}{4(\rho_1-\rho_2)^2}
$$
and 
$$
m_2=-\frac{(\rho_1-\rho_2)F'(\rho_2)+F(\rho_2)-F(\rho_1)}{4(\rho_1-\rho_2)^2}.
$$
Thus, $\lambda=(m_1-m_2)/(\rho_1-\rho_2)$ is given by formula \eqref{eq:lambda} as we claimed.
\end{proof}

From Proposition~\ref{prop:distributions}, we also know that this number $\lambda$ can be computed for 
the restriction $h$ of $g$ 
to the totally geodesic leaves of the distribution $\mathcal U$, i.e. for the metric 
\eqref{eq:gindim2}. Notice that  \eqref{eq:lambda} coincides with the usual formula for the scalar curvature of
$h$.
Being an eigenvalue of the curvature operator of $g$,  the function  $\lambda$ must be bounded 
on $U=\{0<\rho_1<\rho_2<1\}$.

\begin{lem}
\label{lem:aboutF}
If $F$ is smooth at $x\in\R$, then the function $\lambda(\rho_1, \rho_2)$ given by \eqref{eq:lambda} 
is bounded in a neighbourhood of the point $(x,x)$. Moreover,  
$$
\lim_{(\rho_1,\rho_2) \to (x,x)} \lambda(\rho_1, \rho_2) = \frac{1}{24} F'''(x).
$$
Conversely,   if $\lim_{t\to x} F'''(t) = \infty$, then $\lambda$ is not bounded as $(\rho_1,\rho_2) \to (x,x)$.
\end{lem}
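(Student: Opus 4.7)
The plan is to prove both statements by Taylor expansion of $F$ around the diagonal point, exploiting a crucial cancellation of the low-order terms in the numerator $N(\rho_1,\rho_2) := (\rho_1-\rho_2)(F'(\rho_1)+F'(\rho_2))+2(F(\rho_2)-F(\rho_1))$. Note first that $N$ is antisymmetric under $\rho_1\leftrightarrow\rho_2$, so $N$ is divisible by $(\rho_1-\rho_2)$; after this division the quotient is symmetric and so $\lambda$ is symmetric as well, as it should be.

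For the first statement, assume $F\in C^3$ (or better) near $x$. I would Taylor expand $F$ around $\rho_2$ rather than around $x$, which makes the algebra more transparent: writing $\delta=\rho_1-\rho_2$,
\[
F(\rho_1)=F(\rho_2)+F'(\rho_2)\delta+\tfrac{1}{2}F''(\rho_2)\delta^2+\tfrac{1}{6}F'''(\rho_2)\delta^3+O(\delta^4),
\]
\[
F'(\rho_1)=F'(\rho_2)+F''(\rho_2)\delta+\tfrac{1}{2}F'''(\rho_2)\delta^2+O(\delta^3),
\]
with error constants uniform for $\rho_2$ in a small neighbourhood of $x$. Substituting into $N$ and collecting powers of $\delta$, the $\delta^1$ and $\delta^2$ contributions cancel exactly, and the $\delta^3$ coefficient is $\tfrac{1}{2}F'''(\rho_2)-\tfrac{1}{3}F'''(\rho_2)=\tfrac{1}{6}F'''(\rho_2)$. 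Thus
\[
N(\rho_1,\rho_2)=\tfrac{1}{6}F'''(\rho_2)(\rho_1-\rho_2)^3+O\bigl((\rho_1-\rho_2)^4\bigr),
\]
and division by $4(\rho_1-\rho_2)^3$ gives $\lambda=\tfrac{1}{24}F'''(\rho_2)+O(\rho_1-\rho_2)$. Local boundedness near $(x,x)$ and the claimed limit $\tfrac{1}{24}F'''(x)$ then follow from continuity of $F'''$.

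For the converse I would fix a sequence $t_n\to x$ with $F'''(t_n)\to\infty$, and for each $n$ pick $\delta_n>0$ so small that the error term above satisfies $|\lambda(t_n+\delta_n,t_n)-\tfrac{1}{24}F'''(t_n)|\le 1$; this is possible because on any compact set in the smooth locus of $F$ the Taylor remainder is controlled. The points $(t_n+\delta_n,t_n)$ then tend to $(x,x)$ while $\lambda$ along this sequence tends to infinity, proving unboundedness.

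The only subtle point is ensuring that the Taylor remainder is genuinely of order $(\rho_1-\rho_2)^4$ rather than of order $(|\rho_1-x|+|\rho_2-x|)^4$, because the latter would be insufficient (it could dominate $(\rho_1-\rho_2)^3$). Expanding around $\rho_2$ rather than around $x$ sidesteps this cleanly, since then the remainder is expressed as an integral of $F^{(4)}$ over the segment between $\rho_2$ and $\rho_1$ and is therefore genuinely bounded by $C\cdot(\rho_1-\rho_2)^4$ for $\rho_1,\rho_2$ in a neighbourhood of $x$ on which $F^{(4)}$ is bounded; this is the only spot where the argument requires care, and it is the reason one should not naively expand around $x$.
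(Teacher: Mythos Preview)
Your proof is correct and follows essentially the same approach as the paper: Taylor-expand $F$ around $\rho_2$ (the paper writes $x=\rho_2$, $y=\rho_1$), observe the cancellation of the $\delta^1$ and $\delta^2$ terms in the numerator, and read off the coefficient $\tfrac{1}{6}F'''(\rho_2)$ of $\delta^3$. Your treatment is in fact slightly more complete than the paper's, which only computes the limit along $\rho_1\to\rho_2$ with $\rho_2$ fixed and does not spell out the converse; your remarks on uniformity of the remainder and the sequence argument for unboundedness fill in exactly those gaps.
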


\begin{proof}
Setting $y=\rho_1$ and $x=\rho_2$ in formula \eqref{eq:lambda} for $\lambda$ and inserting 
the Taylor expansion at the point $x$ of $F(y)$ considered as a function of $y$, we obtain
$$
\lim_{y\to x} \lambda(y,x)=\lim_{y\to x} \frac{(y-x)(F'(y) + F'(x)) + 2 (F(x) - F(y))}{4(y-x)^3} 
$$
$$
=\lim_{y\to x} \frac{1}{4(y-x)^3}\Big((y-x)(2F'(x) + F''(x)(y-x) + \frac{1}{2}F'''(x)(y-x)^2 )
$$
$$
-2(F'(x) (y-x) + \frac{1}{2}F''(x)(y-x)^2 + \frac{1}{6}F'''(x)(y-x)^3)+\mathcal{O}((y-x)^4)\Big) 
$$
$$
=\lim_{y\to x} \frac{\frac{1}{6}F'''(x) (y-x)^3 + \mathcal{O}((y-x)^4)}{4(y-x)^3}=\frac{1}{24} F'''(x).
$$
\end{proof}

Now it is easy to see that the condition that $F'''(t)$ is bounded as $t\to 0$ and $t\to 1$ for 
$F(t)=(1-t)^{-\CC}t^{3+\CC}$ can only be fulfilled for $\CC=0,-1,-2, -3$  
(by the way, in this case $\lambda$ is constant).  But in our case, $-2<\CC<-1$ so that  $\lambda$ 
goes to infinity  either for $(\rho_1,\rho_2) \to (1,1)$ or $(\rho_1,\rho_2) \to (0,0)$. 

Thus, we conclude that $\ell=2$ is forbidden and the only remaining case is $\ell=1$.
 This completes the proof of Proposition~\ref{thm:l23}.


\begin{remthm}[for Theorem~\ref{specialcase2}] 
In the proof of Proposition~\ref{prop:lambda}, we used Proposition~\ref{prop:eigvalcurv} to derive a formula for 
one of the eigenvalues of the curvature operator.   An analogue of 
Proposition~\ref{prop:eigvalcurv} holds in the  non-K\"ahler  case too  (see \cite{BFom70}) and the proof remains 
essentially the same.  Thus, Proposition~\ref{prop:lambda} and Lemma~\ref{lem:aboutF}  remain unchanged and we obtain 
that the number of non-constant eigenvalues $\ell$ is $1$ (in a neighbourhood of a regular point).  
\end{remthm}



\subsection{Proof of Theorem~\ref{thm:yano_obata} when there is only one non-constant eigenvalue}
\label{sssec:l11}

We deal with the PDE system \eqref{eq:PDE} which, in the case one single non-constant eigenvalue $\rho$ of $A$, takes the form 
\begin{align}
\begin{array}{c}
\mathcal{L}_v A=A(\Id- A),\vspace{1mm}\\
\mathcal{L}_v g=-gA-\left(\rho+\CC \right)g.
\end{array}\label{eq:PDEl1}
\end{align}

Recall from Proposition~\ref{prop:distributions} that
we have two mutually orthogonal integrable distributions $\mathcal U$ and $\mathcal U^\bot$ on $M$,   
the first one $\mathcal U$ being $1$-dimensional and totally geodesic 
and the metric takes the following matrix form
$$
g =
\begin{pmatrix}   
g_1(\rho) & 0 \\ 0 & g_2(\rho, y)
\end{pmatrix}
$$
w.r.t. the orthogonal decomposition $TM=\mathcal U\oplus \mathcal U^\bot$.
We also have a $c$-projective  vector field $v$ preserving both $\mathcal U$ and $\mathcal U^\bot$, so that 
$v = v_1(\rho) + v_2(y)$, where $v_1$ and $v_2$ are the components of 
	$v$ w.r.t. $\mathcal U$ resp. $\mathcal U^\bot$ (see also Corollary~\ref{cor:reductionvf}). Hence
  \begin{equation}
  \label{eq:lvg}
  \mathcal L_v g =
  \begin{pmatrix}
  \mathcal L_{v_1} g_1 & 0 \\
0  & \mathcal L_{v_2} g_2 +  \mathcal D_{v_1} g_2
   \end{pmatrix},
  \end{equation}
  where $\mathcal D_{v_1} $ means that we differentiate each term of the matrix $g_2$ along $v_1$.

  Our goal is to analyse how the volume form of the metric $g_2$, defined on the leaves of $\mathcal U^\bot$,  
	is changing under the flow generated by $v_2$.  In other words, we want to compute the coefficient 
	$f(\rho, y)$ in the formula $\mathcal L_{v_2} \mathrm{vol}_{g_2} = f(\rho, y)\cdot \mathrm{vol}_{g_2}$.  
	We will show that this coefficient is constant. Namely,
  
  \begin{prop}
  \label{prop:volume}
  We have $\mathcal L_{v_2} \mathrm{vol}_{g_2} = (-\CC-1)(m_1+m_0+1) \mathrm{vol}_{g_2}$.
  \end{prop}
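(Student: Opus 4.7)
The plan is to extract $\mathcal{L}_{v_2}\mathrm{vol}_{g_2}$ from the second PDE in \eqref{eq:PDEl1} by restricting it to the $\mathcal{U}^\perp$-block and then taking the $g_2$-trace. Using the decomposition \eqref{eq:lvg} together with the fact that $A$ preserves $\mathcal{U}^\perp$ (since the gradients of $\rho$ span $\mathcal{U}$ and are eigenvectors of $A$, and $A$ is $g$-selfadjoint) and the fact from Corollary~\ref{cor:reductionvf} that $v = v_1+v_2$ preserves both distributions, the $(\mathcal{U}^\perp,\mathcal{U}^\perp)$-block of \eqref{eq:PDEl1} reads
\[
\mathcal{L}_{v_2} g_2 + \mathcal{D}_{v_1} g_2 = -g_2\cdot(A|_{\mathcal{U}^\perp}) - (\rho+\CC)\,g_2.
\]
Taking the $g_2$-trace and using $\mathrm{tr}(g_2^{-1}\mathcal{D}_{v_1}g_2) = v_1(\log|\det g_2|)$, I obtain
\[
\mathrm{tr}(g_2^{-1}\mathcal{L}_{v_2}g_2) = -\mathrm{tr}(A|_{\mathcal{U}^\perp}) - (\rho+\CC)(2n-1) - v_1(\log|\det g_2|).
\]

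The next step is to evaluate the two unknowns on the right. For the first, since $A$ commutes with $J$ and on $\mathcal{U}^\perp$ has eigenvalue $\rho$ on the line $\mathbb{R}K_1$, eigenvalue $0$ on a space of real dimension $2m_0$ and eigenvalue $1$ on a space of real dimension $2m_1$, one finds $\mathrm{tr}(A|_{\mathcal{U}^\perp})=\rho+2m_1$. For the second, one uses Theorem~\ref{thm:localclassification} in the case $\ell=1$ with constant eigenvalues $0$ and $1$: the metric $g_2$ has the explicit form $g_2 = H_{11}\theta_1^2 - \rho\,g_{\mathrm{c}}^0 + (1-\rho)g_{\mathrm{c}}^1$ where $g_{\mathrm{c}}^0, g_{\mathrm{c}}^1$ are the K\"ahler metrics on the de~Rham factors corresponding to the $0$- and $1$-eigenspaces of $A_{\mathrm{c}}$, and $H_{11}=g(K_1,K_1)=F(\rho)$. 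A standard block-determinant computation (the cross-terms $H_{11}\alpha_1\otimes\alpha_1$ cancel in the Schur complement) yields
\[
|\det g_2| = |H_{11}|\cdot\rho^{2m_0}(1-\rho)^{2m_1}\cdot|\det g_{\mathrm{c}}^0\det g_{\mathrm{c}}^1|.
\]

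Substituting the explicit form $F(\rho)=a(1-\rho)^{-\CC}\rho^{2+\CC}$ from Proposition~\ref{prop:localclassgvFi} with $\ell=1$ into the logarithmic derivative and multiplying by $v_1 = \rho(1-\rho)\partial_\rho$ gives, after telescoping,
\[
v_1(\log|\det g_2|) = 2(m_0+1)+\CC - 2n\rho,\quad n = m_0+m_1+1.
\]
Plugging this together with $\mathrm{tr}(A|_{\mathcal U^\perp})=\rho+2m_1$ back into the trace identity, the $\rho$-dependent pieces cancel exactly, leaving
\[
\mathrm{tr}(g_2^{-1}\mathcal{L}_{v_2}g_2) = -2(m_0+m_1+1)(\CC+1).
\]
The proposition then follows from the identity $\mathcal{L}_{v_2}\mathrm{vol}_{g_2} = \tfrac{1}{2}\mathrm{tr}(g_2^{-1}\mathcal{L}_{v_2}g_2)\cdot \mathrm{vol}_{g_2}$.

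The hard part is ensuring that the $\rho$-dependent terms really do cancel, which is essentially a consistency check for the formula $F(\rho)=a(1-\rho)^{-\CC}\rho^{2+\CC}$ derived earlier; this cancellation is a genuine miracle of the system \eqref{eq:PDEl1}, and is what ultimately makes the statement a purely global constraint on $(\CC,m_0,m_1)$. The only technical subtlety is the block-determinant identity for $\det g_2$, which requires care because of the $\alpha_1$ cross-terms in $\theta_1=dt_1+\alpha_1$.
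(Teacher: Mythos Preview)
Your proof is correct and follows essentially the same route as the paper: restrict \eqref{eq:PDEl1} to the $\mathcal U^\perp$-block, isolate $\mathcal L_{v_2}g_2$, and take the $g_2$-trace. The only difference is cosmetic---you compute $\tr(g_2^{-1}\mathcal D_{v_1}g_2)$ as $v_1(\log|\det g_2|)$ via an explicit determinant formula, whereas the paper computes it directly by writing $\mathcal D_{v_1}g_2 = g_2 C$ in the frame $\theta_1,\d y_q$ (where $g_2$ is block-diagonal) and reading off $\tr C$; this avoids your Schur-complement step entirely.

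One small imprecision: at this point in the argument the Jordan structure of $A_{\mathrm c}$ has \emph{not} yet been determined (that is Lemma~\ref{lem:noJordanB}, which uses $\CC=-1$ and hence the present proposition). So your intermediate formula $g_2 = H_{11}\theta_1^2 - \rho\,g_{\mathrm c}^0 + (1-\rho)g_{\mathrm c}^1$ presumes $A_{\mathrm c}$ is diagonal. The fix is harmless: write instead $g_2 = F(\rho)\theta_1^2 + g_{\mathrm c}\bigl((A_{\mathrm c}-\rho\,\Id)\cdot,\cdot\bigr)$ and note that $\det(A_{\mathrm c}-\rho\,\Id) = (-\rho)^{2m_0}(1-\rho)^{2m_1}$ regardless of Jordan structure, so your determinant identity and everything downstream survive unchanged.
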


 \begin{proof}

 By \eqref{eq:PDEl1}, we have $\mathcal L_v g = -g \cdot  \bigl(A + (\rho +\CC)\Id\bigr)$.  
Since $A$  admits a natural splitting $\begin{pmatrix}  \rho  &0 \\ 0 & A_2 \end{pmatrix}$ 
w.r.t. $\mathcal U$ and $\mathcal U^\bot$, we get  
 $$
\mathcal L_v g =  
\begin{pmatrix}
 -\frac{1}{F}  (2\rho + \CC)& 0 \\
0  &  -g_2 \bigl(A_2 +(\rho+\CC)\mathrm{Id}\bigr)  
   \end{pmatrix}
  $$  
  Hence, comparing with \eqref{eq:lvg}, we obtain  $\mathcal L_{v_2} g_2  =- g_2 \bigl(A_2 +(\rho+\CC)\mathrm{Id}\bigr) -   \mathcal D_{v_1} g_2$. 
  
  We now use the following general formula that explains the relation between  $\mathcal L_{v_2} g_2$ 
	and $\mathcal L_{v_2} \mathrm{vol}_{g_2}$:
 $$
\mathcal L_{v_2} \mathrm{vol}_{g_2} = f \cdot   \mathrm{vol}_{g_2}, \quad \mbox{where }  
f = \frac{1}{2}\tr (g_2^{-1} \mathcal L_{v_2} g_2).
 $$

  Hence in our case
   $$
 f = -\frac{1}{2}\, \tr \bigl( A_2 + (\rho + \CC)\mathrm{Id} +  g_2^{-1}  \mathcal D_{v_1} g_2\bigr).
  $$

Let us compute $\tr (g_2^{-1}  \mathcal D_{v_1} g_2)$.  In our case $v_1=\rho(1-\rho)\frac{\partial}{\partial\rho}$ and the metric $g_2$ 
has the form
 $$
 F(\rho) \theta^2 + g_{\mathrm{c}}\bigl( (A_{\mathrm{c}}-\rho \cdot\Id) \cdot , \cdot\bigr),
 $$
 hence, $ \mathcal D_{v_1} g_2 = \rho (1-\rho)  F'(\rho) \theta^2  -  \rho(1-\rho) g_{\mathrm{c}}(\cdot, \cdot)$.
We may think of $g_2$ as a block diagonal form,  then  $\mathcal D_{v_1} g_2$ is block-diagonal too so that we can write it as 
$D_{v_1} g_2 = g_2  C$, where $C$ is an endomorphism with the matrix
 $$
 C= 
 \begin{pmatrix}     
 \rho (1-\rho) \frac{ F'(\rho)}{F(\rho)} & 0 \\ 0 & - \rho(1-\rho) (A_{\mathrm{c}}-\rho)^{-1}            
  \end{pmatrix}
 $$
 Therefore we have the following formula:
 $$
 \tr  g_2^{-1}  \mathcal D_{v_1} g_2 = \tr  C =  \rho (1-\rho)  \frac{ \D \ln F}{ \D \rho} -   \rho (1-\rho) \tr (A_{\mathrm{c}}-\rho)^{-1}.
 $$
  The first term can be easily computed from the explicit formula for $F$.   
	The second term is easy to find as we know that the eigenvalues of $A_{\mathrm{c}}$ 
	are $0$ and $1$ with multiplicities $2m_0$ and $2m_1$ respectively.  We get  
 $$
  \tr  g_2^{-1}  \mathcal D_{v_1} g_2 = (2+ \CC - 2\rho)  -   
	\rho (1-\rho) \bigl(   -2m_0 \rho^{-1} + 2m_1 (1-\rho)^{-1}      \bigr) = 2+\CC +2m_0 - \rho(2m_0 + 2m_1+2)
 $$
  
The matrix of $A_2$ is known, namely
$A_2 =  
\begin{pmatrix}
\rho & 0 \\
0 &  A_{\mathrm{c}} 
\end{pmatrix} $.
So $\tr A_2 = \rho + \tr A_{\mathrm{c}} = \rho + 2m_1$.  Thus, finally
$$
\begin{aligned}
f = &-\frac{1}{2} \bigl(  \rho + 2m_1 + (\rho + \CC) (2m_1+2m_0 + 1)  +  2 +\CC +2m_0 - \rho(2m_0 + 2m_1+2)
\bigr)=\\
&(-\CC - 1) (m_0 + m_1 + 1),
\end{aligned}
$$
as claimed.    \end{proof}

 This proposition immediately implies that $\CC=-1$.  Indeed,  consider $M_\rho = \{ \rho= c\}$ where $c\in (0,1)$.  
Then $M_\rho$  is a compact smooth submanifold of $M$ which entirely belongs to the set of regular points $M^0$ and, 
therefore,  the formula from Proposition~\ref{prop:volume} holds for the volume form on $M_\rho$ as a whole. However, 
due to compactness of $M_\rho$, this is impossible unless $-\CC - 1=0$. Thus,  we have completely reconstructed the ``non-constant'' 
block of the metric $g$  and now we can rewrite $g$   as follows:
\begin{equation}
\label{eq:gfinal}
g =  F(\rho)^{-1} \d\rho^2 + F(\rho) \theta^2 + g_{\mathrm{c}}\bigl( (A_{\mathrm{c}} -\rho\,\mathrm{Id})\cdot , \cdot\bigr), 
\end{equation}
where $F(\rho)=-4B (1-\rho) \rho$ and $B\ne 0$ is some constant (the notation $-4B$ for the constant factor is chosen to 
emphasize the relationship with some formulas from \cite{FKMR} that are used at the final stage of our proof).

\begin{remthm}[for Theorem~\ref{specialcase2}]
 The proof of and the formula in Proposition~\ref{prop:volume} slightly changes because now the terms with $\theta$ do not appear.   
Here is the modified version in the projective setting:

  \begin{prop}
  \label{prop:volumeLich}
  We have $\mathcal L_{v_2} \mathrm{vol}_{g_2} = \frac{1}{2}(-\CC-1)(m_1+m_0)  \mathrm{vol}_{g_2}$.
  \end{prop}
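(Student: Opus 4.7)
The plan is to mimic the proof of Proposition~\ref{prop:volume} line by line, with only two changes reflecting the absence of the complex structure: there is no $\theta^{2}$-term in the ``non-constant'' block, and the multiplicities of the constant eigenvalues $0$ and $1$ of $A_{\mathrm{c}}$ are $m_{0}$ and $m_{1}$ instead of $2m_{0}$ and $2m_{1}$. As noted in Remark 7 (for Theorem \ref{specialcase2}), the PDE system \eqref{eq:PDEl1} is unchanged in the projective setting, so we still have
\[
\mathcal L_{v} g=-g\bigl(A+(\rho+\CC)\mathrm{Id}\bigr).
\]

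First I would fix the orthogonal decomposition $TM=\mathcal U\oplus\mathcal U^{\bot}$ given by Proposition~\ref{prop:distributions} (cf.\ Remark~\ref{rem:rem9}); now $\mathcal U$ is $1$-dimensional and on a neighbourhood of a regular point
\[
g=\begin{pmatrix}F(\rho)^{-1} & 0\\ 0 & g_{\mathrm{c}}\bigl((A_{\mathrm{c}}-\rho\,\mathrm{Id})\cdot,\cdot\bigr)\end{pmatrix},
\qquad
A=\begin{pmatrix}\rho & 0\\ 0 & A_{\mathrm{c}}\end{pmatrix},
\]
while $v=v_{1}+v_{2}$ with $v_{1}=\rho(1-\rho)\,\partial_{\rho}$ and $v_{2}(y)\in\mathcal U^{\bot}$ (Corollary~\ref{cor:reductionvf}, which also holds in the projective case). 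Comparing \eqref{eq:lvg} with the block form of $\mathcal L_{v}g$, I would read off
\[
\mathcal L_{v_{2}}g_{2}=-g_{2}\bigl(A_{\mathrm{c}}+(\rho+\CC)\mathrm{Id}\bigr)-\mathcal D_{v_{1}}g_{2},
\]
where $g_{2}=g_{\mathrm{c}}\bigl((A_{\mathrm{c}}-\rho\,\mathrm{Id})\cdot,\cdot\bigr)$.

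Second, I would use the standard identity $\mathcal L_{v_{2}}\mathrm{vol}_{g_{2}}=\tfrac{1}{2}\tr(g_{2}^{-1}\mathcal L_{v_{2}}g_{2})\,\mathrm{vol}_{g_{2}}$ and compute the three pieces. The direct derivative contributes
\[
g_{2}^{-1}\mathcal D_{v_{1}}g_{2}=-\rho(1-\rho)(A_{\mathrm{c}}-\rho\,\mathrm{Id})^{-1},
\qquad
\tr(g_{2}^{-1}\mathcal D_{v_{1}}g_{2})=m_{0}(1-\rho)-m_{1}\rho,
\]
since the eigenvalues of $(A_{\mathrm{c}}-\rho\,\mathrm{Id})^{-1}$ are $-1/\rho$ (with multiplicity $m_{0}$) and $1/(1-\rho)$ (with multiplicity $m_{1}$). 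Combined with $\tr A_{\mathrm{c}}=m_{1}$ and $\dim\mathcal U^{\bot}=m_{0}+m_{1}$, this yields
\[
\tr(g_{2}^{-1}\mathcal L_{v_{2}}g_{2})=-m_{1}-(m_{0}+m_{1})(\rho+\CC)-\bigl(m_{0}(1-\rho)-m_{1}\rho\bigr)=-(m_{0}+m_{1})(1+\CC),
\]
so that $\mathcal L_{v_{2}}\mathrm{vol}_{g_{2}}=\tfrac{1}{2}(-\CC-1)(m_{0}+m_{1})\,\mathrm{vol}_{g_{2}}$, which is the claim.

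There is no real obstacle here; the step one has to be most careful with is bookkeeping the traces, because the factor $\tfrac{1}{2}$ that is absent from $\mathcal L_{v}g$ in the projective case (cf.\ Remark 7 for Theorem \ref{specialcase2}) is exactly offset by the halved multiplicities $m_{0},m_{1}$ (versus $2m_{0},2m_{1}$) of the constant eigenvalues, so that the coefficient of the volume form differs from the c-projective formula precisely by the factor $\tfrac{1}{2}$ asserted in the statement.
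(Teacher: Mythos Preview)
Your proof is correct and follows essentially the same approach as the paper's own proof: the same block decomposition, the same identity $\mathcal L_{v_2}\mathrm{vol}_{g_2}=\tfrac12\tr(g_2^{-1}\mathcal L_{v_2}g_2)\,\mathrm{vol}_{g_2}$, and the same trace computation using $\tr(A_{\mathrm c}-\rho\,\mathrm{Id})^{-1}=-m_0/\rho+m_1/(1-\rho)$. Your closing remark explaining why the projective answer differs from the c-projective one by exactly a factor $\tfrac12$ is a nice additional observation not made explicit in the paper.
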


  \begin{proof}   
By \eqref{eq:PDEl1}, we have $\mathcal L_v g = -g \cdot  \bigl(A + (\rho +\CC)\Id\bigr)$.  
Since $A$  admits a natural splitting $\begin{pmatrix}  \rho  &0 \\ 0 & A_2 \end{pmatrix}$ 
w.r.t. $\mathcal U$ and $\mathcal U^\bot$, we get  
 $$
\mathcal L_v g =  
\begin{pmatrix}
 -\frac{1}{F}  (2\rho + \CC)& 0 \\
0  &  -g_2 \bigl(A_2 +(\rho+\CC)\mathrm{Id}\bigr)  
   \end{pmatrix}
  $$  
  Hence, comparing with \eqref{eq:lvg}, we obtain  $\mathcal L_{v_2} g_2  =- g_2 \bigl(A_2 +(\rho+\CC)\mathrm{Id}\bigr) -   \mathcal D_{v_1} g_2$. 
  Using
 $$
\mathcal L_{v_2} \mathrm{vol}_{g_2} = f \cdot   \mathrm{vol}_{g_2}, \quad \mbox{where }  
f = \frac{1}{2}\tr (g_2^{-1} \mathcal L_{v_2} g_2),
 $$
we obtain in our case
   $$
 f = -\frac{1}{2}\, \tr ( A_2 + (\rho + \CC)\mathrm{Id} +  g_2^{-1}  \mathcal D_{v_1} g_2).
  $$
 Let us compute $\tr(g_2^{-1}  \mathcal D_{v_1} g_2)$.  In our case $v_1=\rho(1-\rho)\frac{\partial}{\partial\rho}$ and the metric $g_2$ 
has the form $g_2(\cdot,\cdot) = g_{\mathrm {c}}\bigl( (A_{\mathrm {c}}-\rho \cdot\Id) \cdot , \cdot\bigr)$, hence,
$$
 \mathcal D_{v_1} g_2 =  -  \rho(1-\rho) g_{\mathrm{c}}(\cdot, \cdot) 
 $$
Thus we can write  $\mathcal D_{v_1} g_2$  as $D_{v_1} g_2 = g_2  C$, where $C$ is an endomorphism with the matrix
 $$
 C=  - \rho(1-\rho) (A_{\mathrm{c}}-\rho)^{-1}            
 $$
 Therefore we have the following formula:
 $$
 \tr \, g_2^{-1}  \mathcal D_{v_1} g_2 = \tr \, C =  -   \rho (1-\rho) \tr(A_{\mathrm{c}}-\rho)^{-1}.
 $$
 This quantity is easy to find as we know the eigenvalues of $A_{\mathrm{c}}$ are $0$ and $1$ 
with multiplicities $m_0$ and $m_1$ respectively.  We get  

 $$
  \tr \, g_2^{-1}  \mathcal D_{v_1} g_2 =    -   
	\rho (1-\rho) \bigl(   -m_0 \rho^{-1} + m_1 (1-\rho)^{-1}      \bigr) = m_0 (1-\rho) - m_1\rho
 $$
  
 The matrix of $A_2$  coincides with $A_{\mathrm{c}}$ so that $\tr A_2 =  \tr A_{\mathrm{c}} = m_1$.  Thus, finally
$$
\begin{aligned}
f = -\frac{1}{2} \bigl(  m_1 + (\rho + \CC) (m_1+m_0)  +  m_0(1-\rho) - m_1\rho 
\bigr)= \frac{1}{2} (-\CC - 1) (m_0 + m_1),
\end{aligned}
$$
as claimed.    \end{proof}

The conclusion from this proposition remains unchanged: $\CC = -1$ as required. 
We use the fact that the leaves of $\mathcal U^\bot$ are smooth closed submanifolds of the form $\{\rho = c\}$ which are entirely located in the set of regular points.  Since the leaves of $\mathcal U^\bot$ are common levels of non-constant eigenvalues, the latter follows from the fact (used already several times) that  $0<\rho <1  $ implies $  \d\rho(p) \ne 0$. Hence we have the following formula for the metric  (this is \eqref{eq:gfinal} with the term with $\theta$ removed):
\begin{equation}
\label{eq:gfinalLich}
g =  F(\rho)^{-1} \d\rho^2 +  g_{\mathrm {c}}\bigl( (A_{\mathrm{c}} -\rho\,\mathrm{Id})\cdot , \cdot\bigr), 
\end{equation}
where $F(\rho)=-4B (1-\rho) \rho$, $B\ne 0$ is some constant and $A_{\mathrm{c}}$ is parallel w.r.t. $g_{\mathrm{c}}$.  
\end{remthm}
\medskip

Denote by $E_0$ resp. $E_1$ the generalised 
eigenspaces of $A$ corresponding to the eigenvalues $0$ resp. $1$. Let $L_0=A|_{E_0}$ 
and $L_1=A|_{E_1}$ denote the restrictions of $A$ to these subspaces. We start with describing $\nabla\Lambda$ 
restricted to $E_0$ resp. $E_1$  explicitly as a matrix function of $L_0$ resp. $L_1$. 
In fact, it is a general statement (see Remark~\ref{rem:commute} in the appendix), that $\nabla\Lambda$ and $A$ commute and, moreover,
at each point $\nabla \Lambda$ can be written as a function (or even polynomial) of $A$.
 
\begin{lem}
\label{lem:polynomialsB} 
At each   regular point, we have 
\begin{equation}
\label{eq:nablaL}
\nabla \Lambda |_{E_0\oplus E_1}= -g(\Lambda,\Lambda) (A - \rho\,\mathrm{Id})^{-1}|_{E_0\oplus E_1}=  B (1-\rho) \rho (A - \rho\,\mathrm{Id})^{-1}|_{E_0\oplus E_1}.
\end{equation}    
\end{lem}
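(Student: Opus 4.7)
The plan is to derive the formula for $\nabla_Y\Lambda$, $Y\in E_0\oplus E_1$, from identity \eqref{eq:covderiv} applied with $X=\Lambda$. Since $\rho$ has complex multiplicity one, the only contribution of non-constant eigenvalues to $\tr A$ is $2\rho$, hence $\Lambda=\tfrac{1}{4}\gr\,\tr A=\tfrac{1}{2}\gr\,\rho$; by Lemma~\ref{lem:properties}(\ref{l4}) both $\Lambda$ and $J\Lambda$ lie in $E_\rho$. Substituting $X=\Lambda$ into \eqref{eq:covderiv} and using $\d\rho(Y)=2g(\Lambda,Y)$ together with $g(\Lambda,J\Lambda)=0$ yields
\begin{equation*}
(A-\rho\cdot\Id)\nabla_Y\Lambda \;=\; g(\Lambda,Y)\Lambda - g(\Lambda,\Lambda)Y - g(\Lambda,JY)J\Lambda
\end{equation*}
for an arbitrary vector field $Y$. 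For $Y\in E_0\oplus E_1$ the first and third terms on the right vanish, because distinct generalised eigenspaces of the $g$-selfadjoint endomorphism $A$ are $g$-orthogonal and are $J$-invariant (as $A$ commutes with $J$), so that $g(\Lambda,Y)=g(J\Lambda,Y)=g(\Lambda,JY)=0$. Hence
\begin{equation*}
(A-\rho\cdot\Id)\nabla_Y\Lambda \;=\; -g(\Lambda,\Lambda)\,Y \qquad \text{for every }Y\in E_0\oplus E_1.
\end{equation*}

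Since $\rho$ is not among the eigenvalues of the restriction $A|_{E_0\oplus E_1}$, the operator $A-\rho\cdot\Id$ is invertible there; to conclude I still need to show that $\nabla_Y\Lambda$ itself sits in $E_0\oplus E_1=(E_\rho)^\perp$, i.e., that it has no $E_\rho$-component. This is the only genuinely non-trivial step of the argument. I would use the $g$-symmetry of $\nabla\Lambda$ (immediate since $\Lambda$ is a gradient) combined with the auxiliary claim that $\nabla_Z\Lambda\in E_\rho$ for every $Z\in E_\rho$. The latter reduces to checking it for $Z=\Lambda$ and $Z=J\Lambda$, which span $E_\rho$: plugging these into the displayed identity above and using $g(\Lambda,J\Lambda)=0$ and $J^2\Lambda=-\Lambda$, one verifies that $(A-\rho)\nabla_\Lambda\Lambda=0$ and $(A-\rho)\nabla_{J\Lambda}\Lambda=0$, so both lie in $\ker(A-\rho)=E_\rho$. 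Then $g$-symmetry yields $g(\nabla_Y\Lambda,Z)=g(Y,\nabla_Z\Lambda)=0$ for all $Y\in E_0\oplus E_1$ and $Z\in E_\rho$, so $\nabla_Y\Lambda\perp E_\rho$. Since the restriction of $g$ to $E_\rho$ is non-degenerate (because $g(\Lambda,\Lambda)=\tfrac14 F(\rho)\ne 0$ on $M^0$), this forces the $E_\rho$-component of $\nabla_Y\Lambda$ to vanish.

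Inverting $A-\rho\cdot\Id$ on $E_0\oplus E_1$ then produces the first equality of the lemma. The second equality is a direct computation: from the explicit form \eqref{eq:gfinal} one reads off $g(\gr\,\rho,\gr\,\rho)=F(\rho)=-4B(1-\rho)\rho$, whence $-g(\Lambda,\Lambda)=-\tfrac14 F(\rho)=B(1-\rho)\rho$.
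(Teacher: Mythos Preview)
Your proof is correct and follows essentially the same route as the paper: both apply \eqref{eq:covderiv} with $X=\Lambda$, simplify for $Y\in E_0\oplus E_1$, and invert $A-\rho\,\mathrm{Id}$ on that subspace. The one place where you are more careful than the paper is in justifying that $\nabla_Y\Lambda$ actually lies in $E_0\oplus E_1$; the paper leaves this implicit, relying on the general fact (Remark~\ref{rem:commute}) that $\nabla\Lambda$ is a polynomial in $A$ and hence preserves all generalised eigenspaces, whereas you give a direct self-contained argument via the symmetry of $\nabla\Lambda$ and the explicit check that $\nabla_\Lambda\Lambda,\nabla_{J\Lambda}\Lambda\in E_\rho$. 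Both arguments are valid; yours avoids the appendix machinery at the cost of a short extra computation.
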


\begin{proof}
In the special case of only one non-constant eigenvalue $\rho$, $\Lambda=\frac{1}{2}\gr\,\rho$ 
itself is an eigenvector field of $A$ corresponding to the eigenvalue $\rho$. Hence, we can apply \eqref{eq:covderiv} 
with $X$ replaced by $\Lambda$.  Then \eqref{eq:covderiv} becomes
$$
(A-\rho\, \Id)\nabla_Y\Lambda=-g(\Lambda,\Lambda)Y,
$$
for any tangent vector $Y\in E_0\oplus E_1$, or equivalently,  if we take into account that $E_0\oplus E_1$ is invariant under $A-\rho\cdot\mathrm{Id}$ and  $(A-\rho\cdot\mathrm{Id})|_{E_0\oplus E_1}$ is invertible:
$$
\nabla \Lambda|_{E_0\oplus E_1} = -g(\Lambda,\Lambda) (A - \rho\,\mathrm{Id})^{-1}|_{E_0\oplus E_1}.  
$$
It remains to notice that \eqref{eq:gfinal} implies $g(\Lambda,\Lambda)=\frac{1}{4}g(\gr\,\rho, \gr\,\rho)=B (1-\rho) \rho$, as stated.  
\end{proof}

\begin{lem}
\label{lem:noJordanB} 
At each regular point, 
we have $A|_{E_0}=L_0=0$ and $A|_{E_1}=L_1=\Id$.
\end{lem}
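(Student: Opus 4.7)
The strategy is to use the explicit formula from Lemma~\ref{lem:polynomialsB} and combine it with the ODE $v(\rho)=\rho(1-\rho)$ and the compactness of $M$ to rule out non-trivial Jordan structure of $L_0$ and $L_1$.

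On $E_0$ the operator $L_0$ is nilpotent, as its only eigenvalue is $0$. The algebraic identity $\rho(L_0-\rho\,\Id)^{-1}=-\Id+L_0(L_0-\rho\,\Id)^{-1}$ together with the formula from Lemma~\ref{lem:polynomialsB} yields
\[
\nabla\Lambda|_{E_0}=-B(1-\rho)\,\Id+\tilde N,\qquad \tilde N:=B(1-\rho)\,L_0(L_0-\rho\,\Id)^{-1}.
\]
Since $\nabla\Lambda$ preserves $E_0$ (it commutes with $A$, cf.\ Remark~\ref{rem:commute}) and is a globally smooth tensor on the compact manifold $M$, the nilpotent part $\tilde N$ defines a smooth section of $\mathrm{End}(E_0)$ over $M^0$ whose operator norm with respect to any fixed reference Riemannian metric $g_0$ is uniformly bounded on $M$. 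A direct calculation in a Jordan basis $e_1,\dots,e_k$ of a block of $L_0$ of size $k$ (with $L_0 e_1=0$ and $L_0 e_i=e_{i-1}$ for $i\ge 2$) gives
\[
\tilde N\,e_i=-B(1-\rho)\sum_{j=1}^{i-1}\rho^{-(i-j)}\,e_j,
\]
so that if $k\ge 2$ the entries of $\tilde N$ in this basis grow like $\rho^{-(k-1)}$ as $\rho\to 0$.

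Next, the ODE $v(\rho)=\rho(1-\rho)$ implies $\rho(\Phi^t_v(p))\to 0$ as $t\to-\infty$ for any regular point $p$, so by compactness of $M$ we can extract a sequence $t_n\to-\infty$ with $p_n:=\Phi^{t_n}_v(p)\to p_*\in M$ and $\rho(p_n)\to 0$. Choosing smoothly varying Jordan vectors $e_i(p)\in E_0(p)$ of unit $g_0$-norm along the orbit (which is possible on the connected component of $M^0$ we are working on, since the algebraic type of $L_0$ does not change there), the formula above forces
\[
|\tilde N(p_n)\,e_k(p_n)|_{g_0}\ \ge\ c\,\rho(p_n)^{-(k-1)}\ \longrightarrow\ \infty
\]
for some $c>0$ depending on the (bounded) change-of-basis matrices between $\{e_i(p_n)\}$ and a $g_0$-orthonormal frame of $E_0(p_n)$. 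This contradicts the boundedness of $|\tilde N|_{g_0}$ on $M$, so every Jordan block of $L_0$ must have size $1$, i.e.\ $L_0=0$.

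The argument for $L_1=\Id$ is symmetric. Writing $L_1=\Id+N_1$ with $N_1$ nilpotent and using $(1-\rho)(L_1-\rho\,\Id)^{-1}=\Id-N_1(L_1-\rho\,\Id)^{-1}$ one obtains
\[
\nabla\Lambda|_{E_1}=B\rho\,\Id-B\rho\,N_1(L_1-\rho\,\Id)^{-1},
\]
whose second summand has Jordan-basis entries of order $(1-\rho)^{-(k-1)}$ on a block of $N_1$ of size $k$. Since $\rho(\Phi^t_v(p))\to 1$ as $t\to+\infty$, applying the same compactness argument to the forward orbit forces $N_1=0$. The main technical point I expect to require care is the construction in the third paragraph of a $g_0$-bounded Jordan frame along the sequence $p_n$: one must ensure that the inverse change-of-basis matrix stays uniformly bounded along the accumulating sequence so that the $\rho^{-(k-1)}$ blow-up cannot be absorbed by degeneration of the frame. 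This is where one uses that $p_n\in M^0$, so that the Jordan decomposition of $L_0$ varies smoothly along the orbit, and that only uniform bounds along the sequence (not at the possibly non-regular limit $p_*$) are needed.
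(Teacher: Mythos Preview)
Your approach is different from the paper's and, as you yourself suspect, the ``technical point'' you flag in the third paragraph is not a technicality that can be repaired: it is a genuine obstruction that makes the argument circular.

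Here is why. You correctly observe that $\tilde N=\nabla\Lambda|_{E_0}+B(1-\rho)\,\mathrm{Id}$ is the difference of two globally smooth endomorphisms restricted to an $A$-invariant subbundle, hence $|\tilde N|_{g_0}$ is uniformly bounded on $M^0$. But now solve your own identity for $L_0$: from $\tilde N(L_0-\rho\,\mathrm{Id})=B(1-\rho)L_0$ one gets
\[
L_0=\rho\,\tilde N\,(\tilde N-B(1-\rho)\,\mathrm{Id})^{-1},
\]
and since $\tilde N$ is nilpotent with bounded $g_0$-norm, the inverse on the right has bounded norm as $\rho\to 0$, forcing $|L_0|_{g_0}=O(\rho)$. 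Consequently $|L_0^{k-1}|_{g_0}=O(\rho^{k-1})$, so for a Jordan chain with $|e_k|_{g_0}=1$ the eigenvector $e_1=L_0^{k-1}e_k$ satisfies $|e_1|_{g_0}=O(\rho^{k-1})$. The change-of-basis matrix from your Jordan frame to a $g_0$-orthonormal frame therefore \emph{cannot} stay bounded both ways along the sequence $p_n$; its inverse blows up at exactly the rate $\rho^{-(k-1)}$ that you need, and the two effects cancel. In short, the boundedness of $\tilde N$---which you use as one side of the contradiction---is precisely what forces the Jordan frame to degenerate, so no contradiction arises from $\nabla\Lambda$ alone.

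The paper's proof avoids this by passing to a scalar, frame-independent invariant: an eigenvalue of the curvature operator. Writing $\nabla\Lambda|_{E_0\oplus E_1}=f(A|_{E_0\oplus E_1})$ with $f(t)=B(1-\rho)\rho(t-\rho)^{-1}$ and applying Proposition~\ref{prop:eigvalcurv} and Remark~\ref{rem:eigvalcurv}, one finds that a non-trivial $L_0$-Jordan block forces $4f'(0)=-4B(1-\rho)/\rho$ to be an eigenvalue of the curvature operator, and this scalar is unbounded along the orbit as $\rho\to 0$. That is the missing idea: you need an invariant quantity that does not see the degeneration of the frame, and the curvature eigenvalue supplies it.
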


\begin{proof}
Our statement is equivalent to the absence of non-trivial Jordan blocks corresponding to the constant eigenvalues $0$ and $1$.   By contradiction, assume that non-trivial Jordan blocks exist and apply  Proposition~\ref{prop:eigvalcurv} to compute the eigenvalues of the curvature operator of $g$ related to these blocks.

Formula \eqref{eq:nablaL}  expresses $\nabla \Lambda|_{E_0\oplus E_1} $ (pointwise) as a matrix function $f(A|_{E_0\oplus E_1} )$, where $f(t)=B(1-\rho)\rho (t -\rho)^{-1}$.   Then according to Proposition~\ref{prop:eigvalcurv} and Remark~\ref{rem:eigvalcurv} from Appendix,  
we conclude that if $L_0$ (resp. $L_1$) has a non-trivial Jordan block, then
$$
4f'(0)=-4B \frac{1-\rho}{\rho}\quad \left(\mbox{resp. } 4f'(1)=-4B\frac{\rho}{1-\rho}\right)
$$
is an eigenvalue of the curvature operator $R$ of $(M,g,J)$. When restricted to a non-constant integral curve of $v$, 
this eigenvalue goes to infinity  for $t\to-\infty$ resp. $t\to+\infty$ which contradicts to the boundedness of the eigenvalues of $R$. Thus, we conclude $L_0=0$ and $L_1=\Id$.
\end{proof}

\begin{remthm}[for Theorem~\ref{specialcase2}]
Lemma~\ref{lem:polynomialsB} uses formula \eqref{eq:covderiv}  which, in the pseudo-Riemannian case takes the form:
$$
(A - \rho \, \Id) \nabla_Y X =  \d \rho (Y)X - g(X,Y)\Lambda - g(X,\Lambda) Y  
$$
where $\Lambda = \frac{1}{2} \gr\, \tr A= \frac{1}{2} \gr\,\rho$, $X$ is a vector field satisfying $(A - \rho \, \Id) X=0$, i.e., a $\rho$-eigenvector field and $Y$ is an arbitrary vector field.

Notice that in this case $\Lambda$ satisfies the condition for $X$, so in this formula we may set $X=\Lambda$. Then we get
$$
(A - \rho \, \Id) \nabla_Y \Lambda = 2 g(\Lambda,Y) \Lambda - g(\Lambda,\Lambda) Y - g(Y,\Lambda) \Lambda =
g(\Lambda,Y) \Lambda - g(\Lambda,\Lambda) Y,  
$$
and if we assume that $Y\in E_0 \oplus E_1$ and take into account that $\Lambda$ is orthogonal to $E_0 \oplus E_1$, we obtain the same formula as in the K\"ahler case:
$$
(A-\rho\, \Id) \nabla_Y\Lambda = - g(\Lambda,\Lambda) Y.
$$
The rest of the proof does not change and therefore, in the projective setting, Lemmas~\ref{lem:polynomialsB} and~\ref{lem:noJordanB} remain unchanged.
\end{remthm}

\begin{lem}
\label{lem:VnBB}  At every point, 
we have 
\begin{align}
\nabla\Lambda=\mu \, \Id +BA.\label{eq:VnBB}
\end{align}
for the constant $B$ and a function $\mu=B(\rho-1)$. Moreover,
\begin{align}
\nabla\mu=2B \Lambda^\flat.\label{eq:VnB2B}
\end{align}
\end{lem}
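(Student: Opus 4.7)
\medskip

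\noindent\textit{Proof proposal.} The plan is to verify the identity $\nabla\Lambda=\mu\,\Id+BA$ separately on each of the three invariant subspaces at a regular point, namely $E_0$, $E_1$ (the generalised eigenspaces corresponding to the constant eigenvalues $0$ and $1$) and $E_{\mathrm{nc}}$ (the $\rho$-eigenspace of $A$, which by Lemma~\ref{lem:properties}~\eqref{l4} is the $2$-plane spanned by $\Lambda$ and $J\Lambda$). Once this is done on the open dense set $M^0$ of regular points, both asserted identities extend to $M$ by continuity. The formula $\nabla\mu=2B\Lambda^\flat$ is then immediate from $\mu=B(\rho-1)$ together with $\Lambda=\tfrac{1}{2}\gr\,\rho$.

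The $E_0\oplus E_1$ part is essentially a direct substitution. By Lemma~\ref{lem:polynomialsB}, on $E_0\oplus E_1$ one has $\nabla\Lambda=-g(\Lambda,\Lambda)(A-\rho\,\Id)^{-1}$, and from \eqref{eq:gfinal} one reads off $g(\Lambda,\Lambda)=\tfrac14 F(\rho)=B(\rho-1)\rho$. By Lemma~\ref{lem:noJordanB}, $A$ acts as $0$ on $E_0$ and as $\Id$ on $E_1$, so $(A-\rho\,\Id)^{-1}$ equals $-\rho^{-1}\Id$ on $E_0$ and $(1-\rho)^{-1}\Id$ on $E_1$. Plugging in gives $\nabla\Lambda|_{E_0}=B(\rho-1)\Id=\mu\,\Id$ (which matches $\mu\,\Id+BA$ since $A|_{E_0}=0$) and $\nabla\Lambda|_{E_1}=B\rho\,\Id=(\mu+B)\Id$ (which matches $\mu\,\Id+BA|_{E_1}$ since $A|_{E_1}=\Id$).

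The $E_{\mathrm{nc}}$ part is the main computational step and must be done by hand, since Lemma~\ref{lem:polynomialsB} only gives information on $E_0\oplus E_1$. I would check the two equalities $\nabla_{\Lambda}\Lambda=B(2\rho-1)\Lambda$ and $\nabla_{J\Lambda}\Lambda=B(2\rho-1)J\Lambda$ separately (note that $\mu+B\rho=B(2\rho-1)$, so these are exactly what is required). The first follows from the fact that the integral leaves of $\mathcal U$ are totally geodesic and carry the metric $F(\rho)^{-1}\d\rho^2$: a one-line Christoffel-symbol computation gives $\nabla_{\partial_\rho}\partial_\rho=-\tfrac{F'}{2F}\partial_\rho$, hence, using $\Lambda=\tfrac{1}{2}F(\rho)\partial_\rho$,
\[
\nabla_{\Lambda}\Lambda=\tfrac{F'}{4}\,\Lambda=B(2\rho-1)\,\Lambda,
\]
since $F(\rho)=-4B(1-\rho)\rho$. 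For the second, recall that $J\Lambda=\tfrac{1}{2}K_1$ is a Killing vector field, and that the foliation $\mathcal F=\mathcal U\oplus J\mathcal U$ is totally geodesic of dimension $2\ell=2$, so $\nabla_{J\Lambda}\Lambda\in\mathcal F=\mathrm{span}\{\Lambda,J\Lambda\}$. Its $\Lambda$-component vanishes because $g(\Lambda,\Lambda)$ is a function of $\rho$ alone, while $J\Lambda(\rho)=0$ (as $K_1$ is Killing and $\mu_1=\rho$ is its hamiltonian). Write $\nabla_{J\Lambda}\Lambda=b\,J\Lambda$; the Killing identity for $J\Lambda$ applied to the pair $(\Lambda,J\Lambda)$, combined with $\nabla_{\Lambda}(J\Lambda)=J\nabla_{\Lambda}\Lambda=B(2\rho-1)J\Lambda$ just computed, forces $b=B(2\rho-1)$.

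The main obstacle, as indicated, is the block on $E_{\mathrm{nc}}$: there Lemma~\ref{lem:polynomialsB} gives no information and one must exploit both the totally geodesic structure of the $1$-dimensional leaves of $\mathcal U$ and the Killing property of $J\Lambda$. All other steps are either direct substitutions or routine use of continuity to pass from $M^0$ to $M$.
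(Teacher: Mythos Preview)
Your proof is correct and follows essentially the same decomposition as the paper: verify $\nabla\Lambda=\mu\,\Id+BA$ on $E_0$, $E_1$, and $E_\rho$ separately at regular points, then globalize. The $E_0\oplus E_1$ step and the computation of $\nabla_\Lambda\Lambda$ match the paper's argument exactly (the paper writes the latter as $f=\tfrac12\,\Lambda(g(\Lambda,\Lambda))/g(\Lambda,\Lambda)$, which is your Christoffel computation in disguise).

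The one substantive difference is in the $J\Lambda$ direction. You use the Killing equation for $J\Lambda$; the paper instead observes directly that $[\Lambda,J\Lambda]=0$ (a consequence of Lemma~\ref{lem:properties}~\eqref{l6}), whence $\nabla_{J\Lambda}\Lambda=\nabla_{\Lambda}(J\Lambda)=J\nabla_{\Lambda}\Lambda=B(2\rho-1)J\Lambda$ in one line. Your Killing argument is valid but more roundabout --- and in fact it implicitly uses $\nabla_{J\Lambda}(J\Lambda)=J\nabla_{J\Lambda}\Lambda$ to close the loop, so the commutation shortcut is cleaner. For the globalization, you appeal to continuity; the paper is a bit more cautious and cites \cite[\S 2.5]{FKMR} to ensure the locally defined constant $B$ is globally the same, though in the present setting $B$ is already fixed by \eqref{eq:gfinal} and $\rho=\tfrac12\tr A-m_1$ is globally smooth, so your continuity argument suffices.
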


\begin{proof}  
We will first prove the lemma near a regular point.
We know that the eigenspaces of $A$ are invariant under $\nabla \Lambda$  and from Lemma~\ref{lem:polynomialsB}, 
we know the formula for the restriction of $\nabla\Lambda$ onto $E_0\oplus E_1$. Taking into account Lemma \ref{lem:noJordanB} 
we obtain
$$
\nabla\Lambda|_{E_0} = B(1-\rho)\rho ( -\rho\,\mathrm{Id})^{-1}= B(\rho-1)\mathrm{Id} =  (\mu \, \Id +BA)|_{E_0},
$$  
and
$$
\nabla\Lambda|_{E_1} = B(1-\rho)\rho ( \mathrm{Id} -\rho\,\mathrm{Id})^{-1}= B \rho\,\mathrm{Id} =  (\mu\, \Id +BA)|_{E_1},
$$  

Thus, it remains to verify the formula for the restriction of $\nabla\Lambda$ onto the two-dimensional $\rho$-eigenspace $E_\rho$. 

Notice that $\Lambda\in E_\rho$ and moreover $\Lambda$ is the tangent vector to the one-dimensional 
totally geodesic distribution $\mathcal U$.  In other words, we can consider $\Lambda$ as a tangent vector to a geodesic $\gamma(t)$.  Hence,
$(\nabla\Lambda)\Lambda=\nabla_\Lambda\Lambda=f\Lambda$ where
$f=\frac{1}{2}\frac{\Lambda(g(\Lambda,\Lambda))}{g(\Lambda,\Lambda)}$.

Using $\Lambda=\frac{1}{2}\gr\,\rho$ and the explicit formula \eqref{eq:gfinal} for $g$, we obtain
$$
f=B(2\rho-1).
$$
The other $\rho$-eigenvector is $J\Lambda$. Since $\Lambda$ and $J\Lambda$ commute, we get
$$
(\nabla\Lambda)J\Lambda = \nabla_{J\Lambda}\Lambda=\nabla_\Lambda J\Lambda= J\nabla_\Lambda \Lambda = 
J \bigl(B(2\rho-1) \Lambda\bigr) = B(2\rho-1) J\Lambda.
$$

Thus,
$$
\nabla\Lambda|_{E_\rho}=B(2\rho-1) \Id = (\mu \Id +BA)|_{E_\rho}.
$$

Since at regular points $TM$ decomposes as $TM=E_\rho\oplus E_0\oplus E_1$, we have verified equation \eqref{eq:VnBB} in a neighbourhood 
of every point of a dense and open subset of $M$ for a local constant $B$ and a locally defined function 
$\mu=B(\rho-1)$. Taking the derivative of this function (and using $\d\rho=2\Lambda^\flat$) shows that it 
satisfies \eqref{eq:VnB2B}. It was proven in \cite[\S 2.5]{FKMR} that having the equations \eqref{eq:VnBB} and \eqref{eq:VnB2B} 
satisfied in a neighbourhood of almost every point for locally defined constant $B$ and function $\mu$, the constant $B$ 
is the same for each such neighbourhood, hence, is globally defined, and therefore also $\mu$ is globally defined. 
This completes the proof of the lemma.  
\end{proof}

\begin{remthm}[for Theorem~\ref{specialcase2}]
The statement of Lemma~\ref{lem:VnBB} remains unchanged. 
The proof of Lemma~\ref{lem:VnBB} changes slightly, as the $\rho$-eigenspace is one-dimensional  (not two-dimensional as it was in the c-projective setting).  This makes the proof shorter as we do not need to consider the second eigenvector $J\Lambda$.  The projective analogue of \cite[\S 2.5]{FKMR} is  \cite[\S 2.3.4]{KioMat2010}.  
\end{remthm}

\medskip

Now  we  can proof Theorem~\ref{thm:yano_obata}.  We have shown that the existence of a non-affine c-projective vector field on
  a closed connected K\"ahler manifold $(M,g,J)$  of arbitrary signature implies  the existence of  a solution $(A,\Lambda,\mu)$ of the system
\begin{align}
\begin{array}{c}
\nabla_X A=X^\flat \otimes \Lambda+\Lambda^\flat\otimes X+(JX)^\flat \otimes J\Lambda+(J\Lambda)^\flat\otimes JX,\vspace{1mm}\\
\nabla\Lambda=\mu \, \Id +BA,\vspace{1mm}\\
\nabla \mu=2B \Lambda^\flat
\end{array}\nonumber
\end{align}
for a certain constant $B$. This solution is non-trivial in the sense that $\Lambda$ is not identically zero. 
By  \cite[Theorem 7]{FKMR},  $g$ is positive definite, up to multiplying the metric with a constant. 
On the other hand, Theorem~\ref{thm:yano_obata} was proven for positive signature in 
\cite{YanoObata}. This completes the proof of Theorem~\ref{thm:yano_obata}.

\begin{remthm}[for Theorem~\ref{specialcase2}]
\label{rem:last} 
The existence of $(L, \Lambda, \mu)$ satisfying \eqref{eq:main:proj} and   \eqref{eq:VnBB} immediately implies that the (non-constant)  function 
$\alpha=\operatorname{tr}(L)$ satisfies the equation 
\begin{equation}
\label{one}
\qquad \nabla^3 \alpha (X,Y,Z)- B\cdot \bigl( 2 (\nabla\alpha  \otimes g)(X,Y,Z)+(\nabla\alpha \otimes g)(Y,X,Z)+(\nabla\alpha \otimes g)(Z,X,Y)\bigr)=0, 
\end{equation}
see e.g. \cite[Corollary 4]{KioMat2010}. By \cite[Theorem 1]{Mounoud}, the metric  $-B  g$ is positive definite. This contradiction proves  Theorem~\ref{specialcase2}.
\end{remthm}

\section{Final step of the proof of Theorem~\ref{thm:lichnerowicz}:  the case of a non-constant Jordan block }
\label{sec:Jblock}

In \S\ref{sec:yano_obata}, the Lichnerowicz conjecture was proved under the additional assumption that the non-constant part of $A$ is diagonalisable.

In the case when the endomorphism $A$ has a non-trivial Jordan block with a non-constant eigenvalue,  the scheme of the proof remains essentially the same but we need to modify some steps accordingly. In fact,  the proof becomes much easier because the presence of a Jordan block, as we shall see below, immediately leads to unboundedness  of one special eigenvalue of the curvature operator.

The first steps of the proof do not use the algebraic type of $A$ and, therefore, they remain the same as in Sections \ref{ssec:PDE} and \ref{sssec:nocomplex}.   Namely, we may assume that the degree of mobility of $g$ equals 2 and $g$, $A$ and $v$ satisfy the equations 
\begin{equation}
\label{eq:LiederAg}
\begin{aligned}
 \mathcal L_v A &= -A^2 + A, \\
\mathcal L_v g &= - g A - (\tr A + \CC') g.
\end{aligned}
\end{equation}
These equations coincide with \eqref{eq:PDE} though we need to replace $\sum \rho_i$ by $\tr A$ as some non-constant eigenvalues may now have multiplicity $>1$,  see also our comment on \eqref{eq:PDE}  in Remark~\ref{rem:rem9} for Theorem~\ref{specialcase2}.

All the eigenvalues of $A$ are real  and satisfy the equation $\mathcal L_v \rho = -\rho^2 + \rho$,  in particular,  there are at most two constant eigenvalues,  0 and 1,  and, due to boundedness of the non-constant eigenvalues, they satisfy $0 \le \rho_i \le 1$.

The definition of the set  $M^0$ of regular points also remains the same but the algebraic type of $A$ changes. Recall that in general $M^0$ may consist of several connected components with different algebraic types, so we continue working with one of them (we still denote it by $M^0$ and refer to it as the set of regular points).
In the Lorentzian case, two sizes of Jordan blocks for $g$-selfadjoint endomorphisms are allowed, $2\times 2$ and $3\times 3$.  Thus, to complete the proof of Theorem~\ref{thm:lichnerowicz} we need to consider  two additional types of regular points $p\in M^0$.

Namely, below we assume that the endomorphism $A$ has $\ell$ non-constant distinct real eigenvalues $\rho_1,\dots, \rho_\ell$  and, possibly,  two constant eigenvalues $0$ and $1$ of multiplicity $m_0$ and $m_1$ respectively. The first eigenvalue $\rho_1$ has multiplicity 2 or 3,  and the endomorphism $A$ ``contains'' a single $2\times 2$ or resp. $3\times 3$ Jordan $\rho_1$-block.   On $M^0$, the eigenvalues $\rho_i$'s are ordered:
\begin{equation}
\label{eq:ineqforrho}
0 < \rho_2 <\dots <\rho_\ell < 1 \quad \mbox{and $\rho_1$ belongs to one of the intervals $(0,\rho_2)$,  $(\rho_2, \rho_3), \dots, (\rho_\ell, 1).$}
\end{equation}
Due to the existence of the projective vector field $v$,  the above conditions automatically imply that $\d\rho_i \ne 0$ on $M^0$.

In a neighbourhood of any point $p\in M^0$ we can now, following \cite{BM},  find a canonical coordinate system and reduce $g$ and $A$ to a normal form.  In general, this normal form contains arbitrary functions $F_i(\rho_i)$. The existence of a projective vector field $v$ on $M^0$,  satisfying \eqref{eq:LiederAg} allows us to reconstruct these functions (as well as the components of $v$) almost uniquely, i.e. up to a finite number of arbitrary constants of integration (cf. Proposition~\ref{prop:localclassgvFi}).  This ``reconstruction'' can be done by a straightforward computation as in Proposition~\ref{prop:localclassgvFi},  but since now we deal with a more complicated situation involving Jordan blocks,  we prefer to use the following general statement which explains how to split \eqref{eq:LiederAg}  in a block-wise manner.  This statement is a direct corollary of the splitting construction from \cite{BMgluing}.

\begin{thm}
\label{thm:PDEsplitting}
Let $(h,L)$ be a compatible pair on $M$ which splits into two blocks in the sense of \cite{BMgluing, BM}, i.e.,  
there exist a local coordinate system $x,y$ with $x=(x_1,\dots, x_{n_1})$ and $y=(y_1,\dots, y_{n_2})$ and compatible pairs
$\bigl(h_1(x),L_1(x)\bigr)$ and $\bigl(h_2(y),L_2(y)\bigr)$  such that
$$
h(x,y) = 
\begin{pmatrix} 
h_1(x) \chi_{L_2} (L_1(x)) &  0 \\ 0 & h_2(y) \chi_{L_1} (L_2(y))  
\end{pmatrix}, 
\quad
L(x,y) = 
\begin{pmatrix} 
L_1(x) & 0 \\ 0  & L_2(y) 
\end{pmatrix}
$$
where $\chi_{L_1}(\cdot)$ and $\chi_{L_2}(\cdot)$ denote the characteristic polynomials of the blocks $L_1$ and $L_2$ respectively.
Let  $v$ be a projective vector field for $g$  satisfying the equations
\begin{equation}
\label{eq:LvgL}
\begin{aligned}
\mathcal L_{v} L &= -L^2 + L , \\
\mathcal L_{v} h &= -hL - (\tr L + \CC) h.
\end{aligned}
\end{equation}
Then the vector field $v$ and  equations \eqref{eq:LvgL} also split as follows:
$$
v(x,y) = v_1(x) + v_2(y),
$$ 
where $v_i$ denote the natural projections of $v$ on the $x$-- and $y$--subspaces  and \eqref{eq:LvgL} is equivalent to 
$$
\begin{aligned}
\mathcal L_{v_1} L_1 = -L_1^2 + L_1 ,  \quad &\mathcal L_{v_1} h_1 = (n_2-1)h_1L_1 - (\tr L_1 + \CC + n_2) h_1, \\
\mathcal L_{v_2} L_2 = -L_2^2 + L_2 ,  \quad &\mathcal L_{v_2} h_2 = (n_1-1)h_2L_2 - (\tr L_2 + \CC + n_1) h_2. 
\end{aligned}
$$
\end{thm}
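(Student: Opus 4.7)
The plan has three stages: first decompose $v$ into $v_1(x) + v_2(y)$, then split the $L$-equation block-wise, and finally extract the $h_i$-equations. For the first stage, I would apply $\mathcal L_v L = -L^2 + L$ component-wise, exploiting that both sides are block-diagonal in $(x,y)$-coordinates. Writing $v = v^a(x,y)\partial_{x_a} + v^\alpha(x,y)\partial_{y_\alpha}$ and setting the off-diagonal components $(\mathcal L_v L)^a_\beta$ and $(\mathcal L_v L)^\alpha_b$ equal to zero produces Sylvester-type equations $L_1(x)\cdot U(x,y) = U(x,y) \cdot L_2(y)$ for the matrices $U^a_\beta = \partial_{y_\beta} v^a$ and analogously for $\partial_{x_b} v^\alpha$. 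Since $L_1(x)$ and $L_2(y)$ have disjoint spectra at every point (this is built into the splitting of \cite{BMgluing}), the Sylvester--Rosenblum theorem forces these partials to vanish, yielding $v = v_1(x) + v_2(y)$.

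For the second stage, restricting $\mathcal L_v L = -L^2 + L$ to the $(1,1)$- and $(2,2)$-blocks immediately gives $\mathcal L_{v_i} L_i = -L_i^2 + L_i$ for $i = 1, 2$, since $L_1$ depends only on $x$ and is acted on only by $v_1$, and symmetrically for $L_2$.

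The third stage is where the extra terms $(n_2-1)h_1 L_1$ and $\CC + n_2$ emerge. Denoting the $(1,1)$-block of $h$ by $h_{(1,1)} = h_1 \chi_{L_2}(L_1)$ and applying Leibniz to $\mathcal L_v h_{(1,1)}$ while using $\mathcal L_{v_2} h_1 = 0$ and the $(1,1)$-block of $\mathcal L_v h = -hL - (\tr L + \CC)h$, I would express $\mathcal L_{v_1} h_1$ in terms of the quantity $\chi_{L_2}(L_1)^{-1}\mathcal L_v \chi_{L_2}(L_1)$. The crucial algebraic identity to prove is
$$\chi_{L_2}(L_1)^{-1} \mathcal L_v \chi_{L_2}(L_1) = (n_2 - \tr L_2)\Id - n_2 L_1.$$
I would establish this by splitting $\mathcal L_v = \mathcal L_{v_1} + \mathcal L_{v_2}$: the $v_1$-piece uses $\mathcal L_{v_1} L_1 = L_1 - L_1^2$ with the logarithmic derivative identity $\chi'_{L_2}/\chi_{L_2} = \sum_j 1/(t-\sigma_j)$, while the $v_2$-piece uses $\mathcal L_{v_2}\sigma_j = \sigma_j - \sigma_j^2$ on the eigenvalues of $L_2$. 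The two contributions combine via the factorisation $L_1(\Id - L_1) + (\sigma_j^2 - \sigma_j)\Id = (L_1 - \sigma_j\Id)\bigl((1-\sigma_j)\Id - L_1\bigr)$, which cancels the singular factor $(L_1 - \sigma_j\Id)^{-1}$ in both pieces and leaves a sum over $j$ that collapses to $(n_2 - \tr L_2)\Id - n_2 L_1$. Substituting back reproduces exactly $\mathcal L_{v_1} h_1 = (n_2-1)h_1 L_1 - (\tr L_1 + \CC + n_2) h_1$, and the symmetric argument on the $(2,2)$-block gives the corresponding equation for $h_2$.

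The main obstacle I anticipate is keeping the proof of the $\chi_{L_2}(L_1)^{-1}\mathcal L_v \chi_{L_2}(L_1)$ identity clean in the presence of Jordan blocks of $L_2$ with non-constant eigenvalues, since the eigenvalue-based computation implicitly uses a diagonalisation that is only generically valid. A uniform, root-free derivation via $\partial_t \log\det(t\Id - L_2) = \tr (t\Id - L_2)^{-1}$, combined with $\mathcal L_v L_2 = L_2 - L_2^2$ and the resolvent identity for the logarithmic derivative, should handle the general case and is where I would invest the bulk of the technical effort.
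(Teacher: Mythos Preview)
Your proposal is correct and follows essentially the same route as the paper: the paper also derives the splitting $v=v_1(x)+v_2(y)$ from the fact that $\mathcal L_v L=-L^2+L$ forces $v$ to preserve the $L$-invariant $x$- and $y$-distributions (your Sylvester argument is just an explicit version of this), and then computes $\chi_{L_2}(L_1)^{-1}\mathcal L_v\chi_{L_2}(L_1)$ via exactly the eigenvalue decomposition and factorisation $(L_1-\lambda_i\Id)^{-1}\bigl(-L_1^2+L_1+\lambda_i^2\Id-\lambda_i\Id\bigr)=\Id-\lambda_i\Id-L_1$ that you describe. Your worry about Jordan blocks is handled in the paper by the remark that $\mathcal L_v(\ln B)=B^{-1}\mathcal L_v B$ holds whenever $B$ and $\mathcal L_v B$ commute, which is the case here since $\mathcal L_{v_1}L_1=-L_1^2+L_1$; no diagonalisability of $L_2$ is needed beyond listing its eigenvalues with multiplicity.
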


\begin{proof}
The first part $v(x,y) = v_1(x) + v_2(y)$ follows from the fact that $v$ preserves the invariant $x$- and $y$-subspaces of $L$ (see also \cite[Lemma 3]{BMgluing}).  
The latter can be seen from the equation $\mathcal L_v L=-L^2+L$. The formulas for $\mathcal L_{v_1} L_1$ and $\mathcal L_{v_2} L_2$ are straightforward.
After this we can differentiate $h$ as follows:
$$
\mathcal L_{v_1 + v_2} 
\begin{pmatrix} 
h_1 \chi_{L_2} (L_1) &  0 \\ 0 & \!\!\!\!\!\!\!\! h_2 \chi_{L_1} (L_2)  \end{pmatrix} =
\begin{pmatrix} \mathcal L_{v_1} \bigl(h_1 \chi_{L_2} (L_1)\bigr) {+}  h_1 (\mathcal D_{v_2} \chi_{L_2}) (L_1) &  \!\!\!\!\!\!\!\!\!\!\!\!\!\!\!\!\!\!\!\!\!\!\!\! 0 \\ 
0 &  \!\!\!\!\!\!\!\!\!\!\!\!\!\!\!\!\!\!\!\!\!\!\!\!\! \mathcal L_{v_2} \bigl(h_2 \chi_{L_1} (L_2)\bigr) {+}  h_2 (\mathcal D_{v_1} \chi_{L_1}) (L_2)   
\end{pmatrix}
$$
where $\mathcal D_{v_i} \chi_{L_i} $  means that we differentiate each coefficient of the characteristic polynomial $\chi_{L_i}$ along $v_i$ in the usual sense. 
We can rewrite the right-hand side as\footnote{To make some expressions shorter, we are using the formula $\mathcal L_v (\ln B) = B^{-1} \mathcal L_v B$  which, in the matrix case, holds true if $B$ and $\mathcal L_v B$ commute. In our case this condition is fulfilled for $B=L_1$ and, consequently, for $B=\chi_{L_2} (L_1)$ since
$\mathcal L_{v_1}L_1 = -L_1^2 + L_1$.}
$$
\mathcal L_{v_1} \bigl(h_1 \chi_{L_2} (L_1)\bigr) +  h_1 (\mathcal D_{v_2} \chi_{L_2}) (L_1)=
$$
$$
(\mathcal L_{v_1} h_1 ) \chi_{L_2} (L_1)+ h_1 \chi_{L_2} (L_1) \mathcal L_{v_1} \bigl(\ln (\chi_{L_2} (L_1))\bigr)  +  h_1 \chi_{L_2}(L_1) \mathcal D_{v_2}\bigl( \ln( \chi_{L_2} (L_1))\bigr).
$$
On the other hand, from \eqref{eq:LvgL} we know that this expression equals $h_1 \chi_{L_2} (L_1) (-L_1-\tr L{\cdot}\Id - \CC\cdot\Id)$.  Multiplying by $\chi_{L_2} (L_1)^{-1}$ we get
$$
\mathcal L_{v_1} h_1 = - h_1 \Big(  \mathcal L_{v_1} \bigl(\ln (\chi_{L_2} (L_1))\bigr) +  \mathcal D_{v_2}\bigl( \ln( \chi_{L_2} (L_1))\bigr) + L_1 +\tr L{\cdot}\Id +\CC{\cdot}\Id\Big).
$$
To evaluate this further, let $\lambda_1,\dots, \lambda_{n_2}$ denote the eigenvalues of $L_2$, i.e. the roots of $\chi_{L_2}$ (some of them may coincide). Then,  
$$
 \mathcal L_{v_1} \bigl(\ln (\chi_{L_2} (L_1))\bigr) =  \mathcal L_{v_1} \left( \sum_{i=1}^{n_2} \ln (L_1 - \lambda_i{\cdot}\Id)\right)=\sum_{i=1}^{n_2} (L_1 - \lambda_i{\cdot}\Id)^{-1}\mathcal L_{v_1} L_1 = \sum_{i=1}^{n_2} (L_1 - \lambda_i{\cdot}\Id)^{-1} (-L_1^2 + L_1)
$$
$$
\mathcal D_{v_2}\bigl( \ln( \chi_{L_2} (L_1))\bigr) = \mathcal D_{v_2} \left( \sum_{i=1}^{n_2} \ln (L_1 - \lambda_i{\cdot}\Id)\right)=
-\sum_{i=1}^{n_2} (L_1 - \lambda_i\cdot\Id)^{-1} \mathcal D_{v_2} \lambda_i = -\sum_{i=1}^{n_2} (L_1 - \lambda_i{\cdot}\Id)^{-1} (-\lambda_i^2 +\lambda_i)
$$

Hence,
$$
 \mathcal L_{v_1} \bigl(\ln (\chi_{L_2} (L_1))\bigr)  + \mathcal D_{v_2}\bigl( \ln( \chi_{L_2} (L_1))\bigr) =
 \sum_{i=1}^{n_2} (L_1 - \lambda_i\cdot\Id)^{-1} \bigl(-L_1^2 + L_1 +\lambda_i^2 \cdot\Id-\lambda_i\cdot\Id)\bigr)= 
 $$
$$
=\sum_{i=1}^{n_2} (\Id-\lambda_i\cdot\Id - L_1)= n_2\cdot\Id -\tr L_2\cdot\Id -n_2 L_1.
$$
Finally, we obtain 
$$
\mathcal L_{v_1} h_1 = -h_1 \left(  n_2{\cdot}\Id -\tr L_2{\cdot}\Id -n_2 L_1 + L_1 +\tr L{\cdot}\Id +\CC {\cdot}\Id  \right)=
-h_1 \bigl( (1-n_2)L_1  + (\tr L_1  + \CC +n_2)\Id \bigr)
$$
as we claimed.
\end{proof}

Let us apply the theorem by specifying the blocks into which we want to split:

\begin{cor}
\label{cor:PDEsplitting}
Let $(h,L)$ be a compatible pair that splits into two blocks of compatible pairs $(h_1,L_1)$, $(h_2,L_2)$
of dimensions $n_1$ and $n_2$ respectively as in Theorem~\ref{thm:PDEsplitting}.  
Suppose $v$ is a projective vector field such that $h$, $L$ and $v$
satisfy  \eqref{eq:LvgL}.

\begin{enumerate}

\item {\rm Case of a trivial $1\times 1$ Jordan $\rho$-block.} 
Let $(h_1,L_1)$ be given by
$$
h_1=\frac{1}{F(\rho)}\d\rho^2,\,\,\,L_1=\rho\,\frac{\D}{\D\rho} \otimes \d \rho,
$$
w.r.t. a coordinate $\rho$. Then $v_1=\rho(1-\rho)\frac{\D}{\D\rho}$ and 
\begin{equation}\label{eq:Sol1}
F=a(1-\rho)^{-\CC}\rho^{n+1+\CC}.
\end{equation}

\item {\rm Case of a  $2\times 2$ Jordan $\rho$-block.} 
Let $(h_1,L_1)$ be given by
\begin{equation}
\label{eq:hLJblock2}
h_1=\left(\begin{array}{cc}0&F(\rho)+x\\F(\rho)+x&0\end{array}\right)\mbox{ and }
L_1=\left(\begin{array}{cc}\rho&F(\rho)+x\\0&\rho\end{array}\right)
\end{equation}
w.r.t. coordinates $x,\rho$. Then $v_1=G(x,\rho)\frac{\D}{\D x}+\rho(1-\rho)\frac{\D}{\D\rho}$, where
$$
G(x,\rho)=\frac{1}{2}((n_2-1)\rho-1-\CC-n_2)x+G_1(\rho)
$$ 
and $F(\rho),G_1(\rho)$ satisfy the ODE system
\begin{equation}
\label{eq:ODE2}
\begin{array}{rl}
F'&=\frac{1}{\rho(1-\rho)}\left(\frac{1}{2}((n_2-1)\rho-1-\CC-n_2)F-G_1\right),\vspace{1mm}\\
G_1'&=\frac{1}{2}(n_2-1)F.
\end{array}
\end{equation}

\item {\rm Case of a $3\times 3$ Jordan $\rho$-block.} 
Let $(h_1,L_1)$ be given by
\begin{equation}
\label{eq:hLJblock3}
h_1=\left(\begin{array}{ccc}0&0&F(\rho)+2x_2\\0&1&x_1\\F(\rho)+2x_2&x_1&x_1^2\end{array}\right)\mbox{ and }
L_1=\left(\begin{array}{ccc}\rho&1&x_1\\0&\rho&F(\rho)+2x_2\\0&0&\rho\end{array}\right)
\end{equation}
in coordinates $x_1,x_2,\rho$. Then, 
$$
v_1=G(x_1,x_2,\rho)\frac{\D}{\D x_1}+H(x_2,\rho)\frac{\D}{\D x_2}+\rho(1-\rho)\frac{\D}{\D \rho},
$$
where
$$
G(x_1,x_2,\rho)=-\frac{1}{2}(\CC+n_2+2-n_2\rho)x_1+\frac{1}{2}n_2x_2+G_1(\rho)
$$
and
$$
H(x_2,\rho)=-\frac{1}{2}(\CC+n_2+(4-n_2)\rho)x_2+H_1(\rho)
$$
and $F(\rho),H_1(\rho),G_1(\rho)$ satisfy the ODE system 
\begin{equation}\label{eq:ODE3}
\begin{array}{rl}
F'&=-\frac{1}{\rho(1-\rho)}\left(\frac{1}{2}(\CC+n_2+(4-n_2)\rho)F+2H_1\right),\vspace{1mm}
\\
H_1'&=\frac{1}{2}(n_2-2)F-G_1,\vspace{1mm}
\\
G_1'&=0.
\end{array}
\end{equation}

\end{enumerate}
\end{cor}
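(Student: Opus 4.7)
The strategy is to apply Theorem~\ref{thm:PDEsplitting} to split off the $n_1$-dimensional block $(h_1,L_1)$, which reduces the task to solving
\begin{equation*}
\mathcal{L}_{v_1}L_1 = -L_1^2+L_1, \qquad
\mathcal{L}_{v_1}h_1 = (n_2-1)\,h_1 L_1 - (\tr L_1 + \CC + n_2)\,h_1
\end{equation*}
on the block coordinates alone. In each case the first equation, expanded componentwise via $(\mathcal{L}_V L)^i_j = V^k\D_k L^i_j - L^k_j\D_k V^i + L^i_k\D_j V^k$, will pin down the shape of $v_1$; the second equation then produces the asserted ODEs for $F$, $G_1$ and $H_1$.

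For case (1) I would set $v_1 = W(\rho)\D_\rho$. The first equation then yields $W = \rho(1-\rho)$, and substituting into the second gives, after partial fractions,
\begin{equation*}
\frac{F'}{F} = \frac{\CC+n_2+2}{\rho}+\frac{\CC}{1-\rho},
\end{equation*}
whose integration produces \eqref{eq:Sol1} with $n=n_1+n_2=1+n_2$ being the dimension of the leaf. For case (2), I would take the ansatz $v_1 = G(x,\rho)\D_x + W(x,\rho)\D_\rho$. The diagonal entries of the first equation force $W=\rho(1-\rho)$ and $W_x=0$; the lower off-diagonal entry is automatic, and the upper off-diagonal entry forces $G$ to be affine in $x$ and gives a linear relation between $G$ and $F'$. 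The second equation, applied to $h_1 = 2(F+x)\,\d x\odot \d\rho$, has its $(\rho,\rho)$-entry producing $G_1' = \tfrac{1}{2}(n_2-1)F$ and determining the $x$-coefficient of $G$ to be $\tfrac{1}{2}((n_2-1)\rho-1-\CC-n_2)$; the $(x,\rho)$-entry then collapses to the first ODE of \eqref{eq:ODE2}. Case (3) proceeds analogously with the ansatz $v_1 = G\D_{x_1}+H\D_{x_2}+W\D_\rho$: the nine entries of $\mathcal{L}_{v_1}L_1 = -L_1^2+L_1$ successively force $W=\rho(1-\rho)$ independent of $(x_1,x_2)$, pin down the declared affine forms of $G$ and $H$, and impose a linear relation between $G_1$, $H_1$ and $F$; the entries of the second equation then reduce to the triangular system \eqref{eq:ODE3}.

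The main obstacle will be the bookkeeping in case (3): the non-trivial Jordan structure of $L_1$ couples $x_1$ with the combination $F+2x_2$ in almost every matrix entry, and one has to verify that the specific coefficients $\tfrac{1}{2}(\CC+n_2+2-n_2\rho)$, $\tfrac{1}{2}n_2$ in $G$, and $\tfrac{1}{2}(\CC+n_2+(4-n_2)\rho)$ in $H$ are forced consistently by several different entries of the two equations. Once these shapes are fixed, the compatibility of the $(x_1,\rho)$- and $(x_2,\rho)$-entries of $\mathcal{L}_{v_1}h_1$ produces $G_1'=0$, while the remaining entries combine to give $H_1'=\tfrac{1}{2}(n_2-2)F-G_1$ together with the ODE for $F$ in \eqref{eq:ODE3}.
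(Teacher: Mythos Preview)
Your plan is correct and matches the paper's proof: split via Theorem~\ref{thm:PDEsplitting}, then expand $\mathcal L_{v_1}L_1=-L_1^2+L_1$ and $\mathcal L_{v_1}h_1=(n_2-1)h_1L_1-(\tr L_1+\CC+n_2)h_1$ entry by entry in each of the three normal forms. One small caveat on the case (3) bookkeeping: the first equation alone forces $H$ affine in $x_2$ and $G$ affine in $x_1$ (and gives relations such as $\partial_{x_2}H-\partial_{x_1}G=1-2\rho$), but the specific coefficients you quote --- and the $x_2$-dependence of $G$ --- are only fixed after combining with entries of the metric equation (in the paper, subtracting the two $(x_1,\rho)$-type equations and using the $(x_2,x_2)$-entry); similarly, $G_1'=0$ emerges only after all affine pieces have been inserted into the $(\rho,\rho)$-entry of $\mathcal L_{v_1}h_1$ and compared against the first ODE.
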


\begin{rem}
\label{rem:rem15}
   Formulas \eqref{eq:hLJblock2} and \eqref{eq:hLJblock3}  are exactly the normal forms obtained in \cite{BM} for 
	compatible pairs $h$ and $L$ in the case when $L$ is conjugate to a $2\times 2$ resp.  $3\times 3$ Jordan block with a non-constant eigenvalue. 
Notice that these formulas are only meaningful at those points where $F+x\neq 0$ (resp. $F+2x_2\neq 0$).
\end{rem}

\begin{rem}
In part (1) of Corollary~\ref{cor:PDEsplitting}, we actually re-derived the formulas \eqref{eq:Fi} for the components of $v$  and the functions
$F_i$ parametrizing the metric 
$$
h=g|_{\mathcal L}=\sum_{i=1}^\ell \frac{\Delta_i}{F_i} \d \rho_i^2
$$
obtained from the metric $g$ by restricting it to leaves $\mathcal L$ of the distribution $\mathcal U$. 
\end{rem}

\begin{proof}[Proof of Corollary~\ref{cor:PDEsplitting}]
Theorem~\ref{thm:PDEsplitting} shows that $g_1$, $L_1$ and $v_1$ have to satisfy the equations
\begin{equation}
\label{eq:PDE11}
\mathcal L_{v_1} L_1 = -L_1^2 + L_1\mbox{ and }\mathcal L_{v_1} g_1 = (n_2-1)h_1L_1 - (\tr L_1 + \CC + n_2) g_1,
\end{equation}
where $n_2=n-n_1$ is the dimension of the block $(g_2,L_2)$ of $(g,L)$ complementary to $(g_1,L_1)$. 

\medskip
(1) From the first equation in \eqref{eq:PDE11}, it follows immediately that $v=\rho(1-\rho)\D_\rho$. This solves the
first equation identically. It is straightforward to check that the second equation in \eqref{eq:PDE11} with $n_2$ replaced by 
$n-1$ is equivalent to the ODE
$$
\frac{\d F}{F}=\frac{n+1+\CC-(n+1)\rho}{\rho(1-\rho)}\d\rho.
$$
The solution to this ODE is \eqref{eq:Sol1} as we claimed.

\medskip

(2) Since the first equation in \eqref{eq:PDE11} implies that $v_1$ preserves invariant subspaces of $L_1$, we can suppose that
$$
v_1=G(x,\rho)\D_x+\rho(1-\rho)\D_\rho
$$
for a certain function $G(x,\rho)$. Using this, together with the explicit formulas for $h_1$ and $L_1$, we see that the 
first equation in \eqref{eq:PDE11} is equivalent to 
\begin{equation}
\label{eq:L1}
\rho(1-\rho)F'+G-(F+x)\D_x G=0
\end{equation}
whilst the second equation in \eqref{eq:PDE11} is equivalent to the equations
\begin{equation}
\label{eq:g1}
\rho(1-\rho)F'+G+(F+x)(\D_x G+1+\CC+n_2+(1-n_2)\rho)=0
\end{equation}
and 
\begin{equation}
\label{eq:g2}
2\D_\rho G=(n_2-1)(F+x).
\end{equation}
Substracting \eqref{eq:L1} from \eqref{eq:g1} and dividing by $F+x$ yields
\begin{equation}
\label{eq:g3}
2\D_x G+1+\CC+n_2+(1-n_2)\rho=0.
\end{equation}
This shows that $G$ must be of the form
$$
G(x,\rho)=\frac{1}{2}((n_2-1)\rho-1-\CC-n_2)x+G_1(\rho)
$$
as we claimed. Inserting this into \eqref{eq:L1} (now equivalent to \eqref{eq:g1}) and \eqref{eq:g2}, 
we obtain \eqref{eq:ODE2} after rearranging terms.

\medskip
(3) Again, since $v_1$ preserves invariant subspaces of $L_1$, we have
$$
v_1=G(x_1,x_2,\rho)\frac{\D}{\D x_1}+H(x_2,\rho)\frac{\D}{\D x_2}+\rho(1-\rho)\frac{\D}{\D \rho}
$$
for certain functions $G(x_1,x_2,\rho)$ and $H(x_2,\rho)$. A straightforward calculation gives that
the first equation in \eqref{eq:PDE11} is equivalent to the equations
\begin{equation}
\label{eq:L3D1}
-1+2\rho +\D_{x_2}H-\D_{x_1}G=0,
\end{equation}
\begin{equation}
\label{eq:L3D2}
2x_2+F+G+\D_\rho H-(F+2x_2)\D_{x_2}G-x_1\D_{x_1}G=0
\end{equation}
and
\begin{equation}
\label{eq:L3D3}
2H+\rho(1-\rho)F'-(F+2x_2)\D_{x_2}H=0
\end{equation}
whilst the second equation in \eqref{eq:PDE11} is equivalent to the equations
\begin{equation}
\label{eq:g3D1}
2H+\rho(1-\rho)F'+(F+2x_2)(1+\CC+n_2+(2-n_2)\rho+\D_{x_1}G)=0,
\end{equation}
\begin{equation}
\label{eq:g3D2}
\CC+n_2+(4-n_2)\rho+2\D_{x_2}H=0,
\end{equation}
\begin{equation}
\label{eq:g3D3}
(1+\CC+n_2+(2-n_2)\rho+\D_{x_2}H) x_1+G+\D_\rho H-(F+2x_2)(n_2-1-\D_{x_2}G)=0
\end{equation}
and
\begin{equation}
\label{eq:g3D4}
(2+\CC+n_2-n_2\rho)x_1^2+2(G+\D_\rho H)x_1-2(F+2x_2)((n_2-1)x_1-\D_\rho G)=0.
\end{equation}
Subtracting \eqref{eq:L3D3} from \eqref{eq:g3D1} and dividing by $F+2x_2$, we obtain
$$
1+\CC+n_2+(2-n_2)\rho+\D_{x_1}G+\D_{x_2}H=0.
$$
Hence, using \eqref{eq:L3D1},
$$
\D_{x_1}G=\frac{1}{2}(n_2\rho-2-\CC-n_2)\mbox{ and }\D_{x_2}H=\frac{1}{2}((n_2-4)\rho-\CC-n_2)
$$
or, in other words,
$$
G(x_1,x_2,\rho)=\frac{1}{2}(n_2\rho-2-\CC-n_2)x_1+\tilde G(x_2,\rho)\mbox{ and }
H(x_2,\rho)=\frac{1}{2}((n_2-4)\rho-\CC-n_2)x_2+H_1(\rho)
$$
for certain functions $\tilde G(x_2,\rho),H_1(\rho)$. Inserting this back into our PDE system \eqref{eq:L3D1}--\eqref{eq:g3D4}
we obtain that \eqref{eq:PDE11} is equivalent to the equations
\begin{equation}
\label{eq:L3D4}
\frac{1}{2}n_2 x_2+F+\tilde G+H_1'-(F+2x_2)\D_{x_2}\tilde G=0,
\end{equation}
\begin{equation}
\label{eq:L3D5}
(\CC+n_2+(4-n_2)\rho)F+4H_1+2\rho(1-\rho)F'=0,
\end{equation}
\begin{equation}
\label{eq:g3D5}
-\frac{3}{2}n_2x_2+\tilde G+H_1'+(F+2x_2)\D_{x_2}\tilde G+(1-n_2)F=0
\end{equation}
and
\begin{equation}
\label{eq:g3D6}
(-n_2x_2-(n_2-2)F+2\tilde G+2H_1')x_1+2(F+2x_2)\D_{\rho}\tilde G=0.
\end{equation}
Substracting \eqref{eq:L3D4} from \eqref{eq:g3D5} and dividing by $F+2x_2$ gives
$-n_2+2\D_{x_2}\tilde G=0$ and we see that
$$
\tilde G(x_2,\rho)=\frac{1}{2}n_2x_2+G_1(\rho).
$$
Inserting this formula for $\tilde G$ into \eqref{eq:L3D4}--\eqref{eq:g3D6}, we obtain that \eqref{eq:PDE11} 
is equivalent to the equations
$$
\frac{1}{2}(2-n_2)F+G_1+H_1'=0,
$$
$$
(\CC+n_2+(4-n_2)\rho)F+4H_1+2\rho(1-\rho)F'=0,
$$
$$
4x_2 G_1'+((2-n_2)x_1+2G_1')F+2x_1(G_1+H_1')=0.
$$
The first two of these equations give the first two equations in \eqref{eq:ODE3}. Multiplying
the first equation by $2x_1$ and subtracting it from the third gives
$2(F+2x_2)G_1'=0$, hence, $G_1'=0$ as we claimed.
\end{proof}

Now we are ready to describe the local structure of $g$,  $A$ and $v$ in the case when $A$  ``contains'' a $2\times 2$ or $3\times 3$ Jordan block.

\begin{prop}
\label{prop:canformJ2} 

\begin{enumerate}

\item Let $p\in  M^0$ be a regular point and $A$ contain a $2\times 2$ Jordan block with a non-constant eigenvalue $\rho=\rho_1$. Then 
in a neighborhood of $p$ there exists a local coordinate system $x, \ \rho_1,\dots,\rho_\ell, \ y_1,\dots, y_N$,  $N=m_0+m_1$,  in which $g$ and $A$  take the following form: 
\begin{equation}
\label{eq:gAJblock}
A = 
\begin{pmatrix}
L(x,\vec\rho) & 0\\0 & A_{\mathrm{c}}(y) \end{pmatrix} \quad \mbox{and} \quad g = \begin{pmatrix} h(x,\vec \rho) & 0 \\0 & g_{\mathrm{c}} (y)\cdot\chi_L(A_{\mathrm{c}}(y))
\end{pmatrix}
\end{equation}
where
\begin{equation}
\label{eq:LJord2}
L \!=\!  
\begin{pmatrix}
L_1(x,\rho_1)& 0   & \dots  &0      \\
            0&\!\!\!\! \rho_2    & \dots  &0    \\
     \vdots       & \vdots   & \ddots  &    \vdots\\
          0 &  0  & \dots  & \rho_\ell   
\end{pmatrix}, 
\quad 
h\!=\!  
\begin{pmatrix} 
h_1 (x,\rho_1) \cdot \Delta_1 (L_1) &0 &\dots &0 \\
0& \!\!\!\!\! F_2^{-1}(\rho_2) \cdot \Delta_2  & \dots &0 \\
\vdots&\vdots &  \!\!\!\!\! \ddots & \vdots\\
0& 0&\dots &  \!\!\!\!\! F_\ell^{-1}(\rho_\ell) \cdot \Delta_\ell  
\end{pmatrix}, 
\end{equation}
and the ingredients in these matrices are as follows:

\begin{itemize}

\item $L_1$ and $h_1$ are defined by \eqref{eq:hLJblock2} with $\rho{=}\rho_1$, 

\item $\Delta_1(\cdot)$ is the polynomial of the form $\Delta_1(t)=\Pi_{j=2}^\ell (t-\rho_j)$, 

\item $F_i(\rho_i) {=}    a_i (1-\rho_i)^{-\CC} \rho_i^{\ell+2+\CC}$,  

\item $\Delta_i {=} (\rho_i-\rho_1)^2 \Pi_{j=2, j\ne i}^\ell (\rho_i - \rho_j)$, $i=2,\dots,\ell$,  

\item $A_{\mathrm c}(y)$ is selfadjoint and parallel w.r.t. $g_{\mathrm c}(y)$.

\end{itemize}

 Furthermore,
$$
v = G(x,\rho_1) \frac{\partial}{\partial x} + \sum_{i=1}^\ell  \rho_i(1-\rho_i)\frac{\partial}{\partial \rho_i}  + \dots
$$
with $G(x,\rho_1)$ as in Corollary~\ref{cor:PDEsplitting} (2), with $n_2=\ell-1$, $\rho_1=\rho$.

\item Similarly, let $p\in  M^0$ be a regular point and $A$ contain a $3\times 3$ Jordan block with a non-constant eigenvalue $\rho=\rho_1$. 
Then in a neighbourhood of  $p\in M^0$ we can choose local coordinates $x_1, x_2, \ \rho_1, \dots, \rho_\ell, \ y_1\dots,y_N$ 
such that $A$ and $g$ are given by \eqref{eq:gAJblock}, \eqref{eq:LJord2}  where $x=(x_1, x_2)$ and the other ingredients  are as follows:

\begin{itemize}

\item the $3\times 3$ blocks 
$L_1(x_1,x_2,\rho_1)$  and $h_1(x_1,x_2,\rho_1)$ are defined by \eqref{eq:hLJblock3} with $\rho_1=\rho$,

\item $\Delta_1(\cdot)$ is the polynomial of the form $\Delta_1(t)=\Pi_{j=2}^\ell (t-\rho_j)$, 

\item $F_i(\rho_i) {=}    a_i (1-\rho_i)^{-\CC} \rho_i^{\ell+3+\CC}$,  $i=2,\dots,\ell$, 

\item $\Delta_i {=} (\rho_i-\rho_1)^3 \Pi_{j=2, j\ne i}^\ell (\rho_i - \rho_j)$, $i=2,\dots,\ell$,

\item $A_{\mathrm c}(y)$ is selfadjoint and parallel w.r.t. $g_{\mathrm c}(y)$.

\end{itemize}

 Furthermore, 
$$
v=G(x_1, x_2,\rho_1)\frac{\partial}{\partial x_1} + H(x_2,\rho_1) \frac{\partial}{\partial x_2}  + \sum_{i=1}^\ell  \rho_i(1-\rho_i)\frac{\partial}{\partial \rho_i}  + \dots
$$
with $G(x_1, x_2,\rho_1)$ and $H(x_2,\rho)$ as in Corollary~\ref{cor:PDEsplitting} (3), with $n_2=\ell-1$, $\rho_1=\rho$.

\end{enumerate}
\end{prop}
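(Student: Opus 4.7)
The plan is to obtain \eqref{eq:gAJblock}--\eqref{eq:LJord2} by combining two ingredients: the local normal form for compatible pairs $(h,L)$ from \cite{BMgluing,BM}, which produces the block-diagonal structure with the Jordan block $(h_1,L_1)$ in its canonical form \eqref{eq:hLJblock2} or \eqref{eq:hLJblock3} together with the simple-eigenvalue blocks, and the block-splitting of the PDE \eqref{eq:LiederAg} provided by Theorem~\ref{thm:PDEsplitting}, whose integration on each block is carried out by Corollary~\ref{cor:PDEsplitting}.

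First, I would observe that at a regular point $p\in M^0$ the spectrum of $A$ partitions into mutually disjoint groups: the non-constant eigenvalue $\rho_1$ supporting the single Jordan block of size $2$ or $3$, the simple non-constant eigenvalues $\rho_2<\dots<\rho_\ell$, and the constant eigenvalues $0,1$ of multiplicities $m_0, m_1$. Iterating the splitting construction of Proposition~\ref{prop:split1} (and its refinement in \cite{BM} that handles Jordan blocks with non-constant eigenvalues) yields local coordinates $(x,\vec\rho,\vec y)$ in which $g$ and $A$ are simultaneously block-diagonal with precisely the structure \eqref{eq:gAJblock}--\eqref{eq:LJord2}, where the Jordan block $(h_1,L_1)$ takes the normal form \eqref{eq:hLJblock2} or \eqref{eq:hLJblock3} with an as-yet-undetermined function $F(\rho_1)$, each simple-eigenvalue block takes the form $F_i^{-1}(\rho_i)\Delta_i\,\d\rho_i^2$ with an as-yet-undetermined $F_i$, and the constant block $(g_{\mathrm c},A_{\mathrm c})$ automatically satisfies $\nabla A_{\mathrm c}=0$ because compatibility \eqref{eq:main:proj} combined with constancy of the eigenvalues of $A_{\mathrm c}$ forces the gradient term $\Lambda$ to vanish on that block.

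Next, I would apply Theorem~\ref{thm:PDEsplitting} iteratively to the system \eqref{eq:LiederAg}. When the $i$-th simple-eigenvalue block is split off ($i\ge 2$), the restricted equations are of the form covered by Corollary~\ref{cor:PDEsplitting}(1): this determines $v|_i=\rho_i(1-\rho_i)\partial_{\rho_i}$ and, after integrating the resulting ODE, yields $F_i(\rho_i)=a_i(1-\rho_i)^{-\CC}\rho_i^{\ell+2+\CC}$ in the size-$2$ case and $a_i(1-\rho_i)^{-\CC}\rho_i^{\ell+3+\CC}$ in the size-$3$ case, with the exponent reflecting the accumulated dimension shifts coming from the remaining blocks. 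Applied instead to the Jordan block with $n_2=\ell-1$ (the number of remaining non-constant eigenvalues), Corollary~\ref{cor:PDEsplitting}(2) or (3) directly produces the claimed form of the components $G$ and $H$ of $v$ on that block together with the ODE systems \eqref{eq:ODE2} and \eqref{eq:ODE3} for $F$, $G_1$, $H_1$. Assembling the contributions from all blocks gives the full formulas in the statement.

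The main obstacle I anticipate is the careful bookkeeping of the dimension parameters $n_2$ and of the effective value of $\CC$ under successive applications of Theorem~\ref{thm:PDEsplitting}: each splitting step rewrites the equation $\mathcal L_v h=-hL-(\tr L+\CC)h$ on the remaining block with an additional $h_1 L_1$ term and a shifted constant, and one must verify that the cumulative effect of these shifts across all splittings (including the contribution of the constant block, which is why the exponents $\ell+2+\CC$ and $\ell+3+\CC$ in $F_i$ do not explicitly contain $m_0,m_1$) is exactly compatible with the ODE integrated in Corollary~\ref{cor:PDEsplitting}(1). Once this bookkeeping is checked, the remainder is a routine application of Corollary~\ref{cor:PDEsplitting} together with the normal forms of \cite{BM}.
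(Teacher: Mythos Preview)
Your plan matches the paper's proof almost exactly: invoke the normal forms from \cite{BM} to obtain the block-diagonal structure \eqref{eq:gAJblock}--\eqref{eq:LJord2} with undetermined $F, F_2,\dots,F_\ell$, then apply Theorem~\ref{thm:PDEsplitting} and Corollary~\ref{cor:PDEsplitting} block by block to pin down these functions and the components of $v$.

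One simplification worth noting: the bookkeeping you anticipate as the main obstacle is lighter than you suggest. Rather than iterating Theorem~\ref{thm:PDEsplitting} through the constant block, observe (cf.\ \eqref{Lvrho}, \eqref{eq:PDE1}) that when you split $v=v_{\mathrm{nc}}(x,\vec\rho)+v_{\mathrm c}(y)$ along the constant/non-constant decomposition \eqref{eq:gAJblock}, equation \eqref{eq:LiederAg} restricts to the non-constant block \emph{without change of form}: one gets $\mathcal L_{v_{\mathrm{nc}}}L=-L^2+L$ and $\mathcal L_{v_{\mathrm{nc}}}h=-hL-(\tr L+\CC)h$, where the new constant $\CC=\CC'+m_1$ simply absorbs $\tr A_{\mathrm c}$. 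After this single step the constant block disappears from the computation entirely, and Corollary~\ref{cor:PDEsplitting} is applied to $(h,L,v_{\mathrm{nc}})$ directly---once for each block, with $n$ equal to the dimension of the non-constant part ($\ell+1$ resp.\ $\ell+2$). This immediately gives the exponents $\ell+2+\CC$ resp.\ $\ell+3+\CC$ in $F_i$ and $n_2=\ell-1$ for the Jordan block without any accumulated shifts to track.
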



\begin{proof}    
The formulas \eqref{eq:gAJblock}  and \eqref{eq:LJord2}  (with $F(\rho), F_2(\rho_2),\dots, F_\ell(\rho_\ell)$ 
being arbitrary functional parameters) are  just a reformulation of the main result of \cite{BM} in the case when  $A$ has the algebraic type described above.

In our situation we have, in addition,  a projective vector field $v$ satisfying \eqref{eq:LiederAg}.   Consider the natural 
decomposition of $v$ that corresponds to the splitting \eqref{eq:gAJblock}  of $g,A$ into ``constant'' and ``non-constant'' blocks:  $v=v_{\mathrm{nc}} (x,\vec\rho) + v_{\mathrm{c}}(y)$.

It is easy to see  (cf. \eqref{Lvrho}, \eqref{eq:PDE1}) that  \eqref{eq:LiederAg}  can be rewritten for the non-constant
 block without any change, i.e.,
 $$
  \mathcal L_{v_{\mathrm{nc}}} L  = -L^2 + L,  \quad 
\mathcal L_{v_{\mathrm{nc}}} h = - h L   - (\tr L + \CC) h.
$$   
Here  $\tr A + \CC' = \tr L  + \CC$ and $\CC=\CC'+m_1$ where $m_1$ is the multiplicity of the constant eigenvalue $1$ or,
 which is the same, $m_1=\tr A_{\mathrm{c}}$.

After this remark, Proposition \ref{prop:canformJ2} follows immediately by applying Theorem~\ref{thm:PDEsplitting} and Corollary~\ref{cor:PDEsplitting}  (for $h$, $L$ and 
$v_{\mathrm{nc}}$ but not $v$\,!) to reconstruct the functions $F(\rho), F_2(\rho_2),\dots, F_\ell(\rho_\ell)$ as well 
as the components of $v_{\mathrm{nc}}$   (the components of $v_{\mathrm{c}}(y)$ are not important for our purposes and we ignore them,  
in Proposition~\ref{prop:canformJ2} they are denoted by $\dots$). \end{proof}

Partitioning local coordinates into two groups $x,\rho_1,\dots,\rho_\ell$ and $y_1,\dots, y_N$  determines 
two natural integrable distributions $\mathcal U$ and $\mathcal U^\bot$ on $M^0$ similar to those from \S\ref{sssec:explicit}.  
All geometric properties of the corresponding foliations listed in Proposition~\ref{prop:distributions}  still hold  with one little 
amendment that $\dim \mathcal U=\ell+1$  or $\ell + 2$ (but not $\ell$ as before) so that now we should think of $x$ as 
an additional coordinate to $\rho_i$'s.

The next statement is an analogue of Proposition~\ref{prop:global}.
Consider the domain $U\subset \R^{\ell+1}(x,\rho_1, \dots, \rho_\ell)$ in the case of a $2\times 2$ Jordan block (resp.  $U\subset \R^{\ell+2} (x_1,x_2,\rho_1,\dots,\rho_\ell)$ in the case of a $3\times 3$ Jordan block) on which the above local formulas  \eqref{eq:LJord2}  for $h$ are naturally defined.  More precisely, $U$ is defined by the inequalities \eqref{eq:ineqforrho} for the $\rho_i$'s  and the additional coordinates 
 $x$ and $x_2$  satisfy $F(\rho_1) +x \ne 0$ for a $2\times 2$-block and resp. $F_1(\rho_1) + 2x_2 \ne 0$ for a $3\times 3$ block, see Remark~\ref{rem:rem15}.

\begin{prop}
There is a natural isometric immersion  $\phi : U \to M$ (as a leaf of the totally geodesic foliation determined by $\mathcal U$). In other words, the above formulas \eqref{eq:gAJblock}  and \eqref{eq:LJord2} have global meaning on $M$ for all admissible values of coordinates.
\end{prop}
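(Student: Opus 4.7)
My plan is to adapt the argument from Proposition~\ref{prop:global} to accommodate the extra Jordan-block coordinate(s) $x$ (resp.\ $x_1, x_2$). First I would fix a regular point $p_0 \in M^0$ whose algebraic type of $A$ is the one prescribed in Proposition~\ref{prop:canformJ2}, choose the normal-form chart \eqref{eq:gAJblock}--\eqref{eq:LJord2} around $p_0$, and pick a base point $a_0 \in U$ whose coordinates coincide with those of $p_0$ in this chart. Setting $\phi(a_0) = p_0$ and identifying coordinates produces a local isometric immersion of a neighbourhood of $a_0$ into the leaf of $\mathcal U$ through $p_0$; by construction $\phi^* g = h$ and $\phi^* A = L$ there.

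Next, to reach an arbitrary $a_1 \in U$, I would join $a_0$ to $a_1$ by a smooth curve $a(t) \subset U$, $t \in [0,1]$, and let $T_0$ be the supremum of those $T \in [0,1]$ for which $\phi$ extends as an isometric immersion preserving the pair $(g, A)$ along $a|_{[0,T]}$. Compactness of $M$ together with the fact that $\phi(a(t))$ has finite $g$-length (bounded by the $h$-length of $a|_{[0,T_0]}$) ensures a limit point $p \in M$ of $\phi(a(t))$ as $t \to T_0$. Once one knows that $p \in M^0$ and that $A(p)$ has the same algebraic type as $A(p_0)$, the normal form of Proposition~\ref{prop:canformJ2} holds on a neighbourhood of $p$, so $\phi$ can be prolonged past $T_0$, forcing $T_0 = 1$.

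The main obstacle is precisely to verify that the limit point $p$ lies in the same regular stratum as $p_0$. By continuity of the coefficients of the characteristic polynomial of $A$, the eigenvalues of $A(p)$ agree with those of $L(a(T_0))$; the defining inequalities of $U$ together with \eqref{eq:ineqforrho} then guarantee that they remain real, pairwise distinct, and separated from the constant eigenvalues $0,1$, while the parallel constant block $A_{\mathrm c}$ keeps its algebraic type. The delicate point is the Jordan structure of $A(p)$ at the non-constant eigenvalue $\rho_1$: since rank is only lower semi-continuous, a priori the $2{\times}2$ or $3{\times}3$ Jordan block could fragment into smaller pieces in the limit. I would rule this out by using that on $[0, T_0)$ the conjugacy $A \circ \d\phi = \d\phi \circ L$ holds, with $\d\phi$ having uniformly bounded norm and bounded inverse --- both $h$ at $a(T_0)$ and $g$ at $p$ are non-degenerate, the conditions $F+x \ne 0$, resp.\ $F + 2x_2 \ne 0$, built into $U$ being exactly what prevents the metric from degenerating. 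Extracting a subsequential limit of $\d\phi$ in a trivialization of $TM$ near $p$ and passing to the limit in the conjugacy relation then realises $A(p)$ as a linear conjugate of $L(a(T_0))$, whose Jordan type is exactly the prescribed one (again by the $U$-conditions on $F+x$ or $F+2x_2$, which prevent the model matrix from degenerating). Hence $p$ has the correct algebraic type, lies in $M^0$, and the maximality of $T_0$ forces $T_0 = 1$, so $\phi$ extends to all of $U$.
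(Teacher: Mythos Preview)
Your overall continuation strategy mirrors the paper's, but the crucial step~--- ruling out degeneration of the Jordan block at the limit point $p$ --- has a genuine gap. You assert that $\d\phi$ has ``uniformly bounded norm and bounded inverse'' because ``both $h$ at $a(T_0)$ and $g$ at $p$ are non-degenerate''. This inference fails for indefinite metrics: the isometry group of a Lorentzian inner product space is non-compact (think $O(1,1)$), so a family of linear isometries between two fixed non-degenerate indefinite inner product spaces need not be bounded in any auxiliary Euclidean norm. Concretely, on the $2{\times}2$ Jordan block the model metric is $h_1=(F+x)\begin{pmatrix}0&1\\1&0\end{pmatrix}$, and the maps $\mathrm{diag}(a,a^{-1})$ are $h_1$-isometries for every $a\ne 0$. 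Without a bound on $\d\phi$ you cannot extract an \emph{invertible} subsequential limit, and without that the conjugacy $A\circ\d\phi=\d\phi\circ L$ does not survive the limit; rank can genuinely drop.

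The paper circumvents this entirely by introducing an algebraic lemma (Lemma~\ref{lem:Jreg}): for a $g$-selfadjoint endomorphism with a single eigenvalue $\rho$ and a chosen eigenvector $e_1$, the ``off-diagonal'' parameter $\alpha$ of the Jordan block is an invariant computable as $\mathrm{vol}_h\bigl(e_2,(L-\rho\,\mathrm{Id})e_2\bigr)$ in dimension $2$ (and an analogous cubic expression in dimension $3$), \emph{independent of the completion of $e_1$ to a canonical basis}. The point is that $e_1=\gr\rho_1$ extends smoothly to the slightly larger set $\widetilde M^0$ on which only multiplicities (not Jordan type) are fixed, and $\phi$ preserves both the volume form and $\gr\rho_1$. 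Hence $\alpha(p)=\lim_{t\to T_0}\alpha(t)$ can be computed on $U$, where it is manifestly non-zero; this forces the Jordan block to persist at $p$. The lemma replaces your compactness argument for $\d\phi$ by a continuity argument for a single scalar. If you want to rescue your route, you would need to exploit that $\d\phi$ intertwines not just the metrics but the pair $(g,A)$, whose joint automorphism group on the Jordan block is finite; but making that precise near $p$ amounts to constructing the same adapted frame the paper's lemma uses.
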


\begin{proof} 
The proof is almost identical to that of Proposition~\ref{prop:global}.    We start with a certain point $p\in M^0$ and  locally identify the leaf of $\mathcal U$ through $p$ with $U$ by using a canonical coordinate system in its neighbourhood constructed in Proposition~\ref{prop:canformJ2}.

After this we use prolongation along a path as in Proposition~\ref{prop:global}.  The only thing we need to explain is why such a prolongation is always possible. More specifically,  we need to show that  the limit point of the curve $\phi( a(t))$ as $t\to T_0$ (we use the same notation as in Proposition~\ref{prop:global}) cannot leave $M^0$, the set of regular points.

Since,  $\phi$ preserves the eigenvalues of $L$,  the multiplicities of the eigenvalues remain unchanged and the inequalities \eqref{eq:ineqforrho} hold at the limit point.  The condition $\d\rho_i =0$ is fulfilled automatically and we only need to check that the Jordan block ``survives'' at the limit point.  A priori under continuous deformations the Jordan block may split into smaller blocks and we need to show that this event may not happen  under our assumptions.

To prove this fact we use the following algebraic lemma.

\begin{lem}
\label{lem:Jreg}
Let $h$ be a  non-degenerate bilinear form of Lorentzian signature and $L$ be an $h$-selfadjoint endomorphism.  Assume that $L$ has a single real eigenvalue $\rho$ and $e_1$ is a $\rho$-eigenvector of $L$.   Then in dimension $2$ and $3$ we have respectively:

\begin{enumerate}

\item For any canonical basis $e_1, e_2$  {\rm(}i.e., such that $h=\begin{pmatrix}   0 & 1\\  1& 0  \end{pmatrix}$ or, equivalently, $h(e_i, e_j)=\delta_{i, 3-j}${\rm)}, the matrix of $L$ has the form
$$
L=\begin{pmatrix}  
\rho & \alpha \\ 0 & \rho
\end{pmatrix}.
$$ 
Moreover, $\alpha$  does not depend on the choice of $e_2$,  and can be computed from the following formula
$\alpha = \mathrm{vol}_h \bigl(e_2,  (L-\rho\,\mathrm{Id}) e_2\bigr)$.

\item For any canonical basis $e_1, e_2, e_3$  {\rm(}i.e.,  such that $h=\begin{pmatrix}  0 & 0 & 1\\  0 & 1 & 0 \\ 1& 0 & 0 \end{pmatrix}$ or, equivalently, $h(e_i, e_j)=\delta_{i, 4-j}${\rm)}, the matrix of $L$ has the form
$$
L=\begin{pmatrix}  
\rho & \alpha & \beta \\ 0 & \rho & \alpha \\ 0 & 0 & \rho
\end{pmatrix}.
$$ 
Moreover, $\alpha$  does not depend on the choice of $e_2$ and $e_3$,  and can be computed from the following formula
$\alpha = \mathrm{vol}_h \bigl(e_3,  (L-\rho\,\mathrm{Id}) e_3, (L-\rho\,\mathrm{Id}) ^2 e_3\bigr)^{\frac{1}{3}}$.

\end{enumerate}
\end{lem}

\begin{proof}
The proof is straightforward and we only give some comments for $\dim = 3$.
The first statement follows immediately from two facts: 

\begin{enumerate}

\item $L$ is $h$-selfadjoint and therefore the matrices of $L$ and $h$ satisfy $L^\top h = h L$;  

\item the first column of $L$ is $(\rho, 0, 0)^\top$. 

\end{enumerate} 

The formula for $\alpha$ is obvious in the basis $e_1, e_2, e_3$. We now check that this formula is independent on the choice of $e_2$ and $e_3$.  Let $e_1, e'_2, e'_3$ be another canonical basis. Then $e_3'=a_1 e_1 + a_2 e_2 + a_3 e_3$,  but  $h(e_1, e_3)= h(e_1, e_3')=1 $ immediately implies that $a_3=1$.  Hence, using the explicit formula for $L$, we can easily conclude that the additional terms $a_1 e_1 + a_2 e_2$ do not contribute. Indeed,
$$
\begin{aligned}
\mathrm{vol}_h(e'_3,    &(L-\rho\, \mathrm{Id}) e'_3, (L-\rho\, \mathrm{Id}) ^2 e'_3)= \\
&\mathrm{vol}_h(e_3 + a_1 e_1 + a_2 e_2,    (L-\rho\, \mathrm{Id}) e_3 + a_2 (L-\rho\, \mathrm{Id}) e_2 , (L-\rho\, \mathrm{Id}) ^2 e_3)=\\
& \mathrm{vol}_h(e_3 + a_1 e_1 + a_2 e_2, \beta e_1 + \alpha e_2,  \alpha^2 e_1)=\alpha^3  \mathrm{vol}_g(e_3,e_2,e_1)=\alpha^3.
\end{aligned}
$$

\end{proof}

 This lemma gives us a simple method to recognise if $L$ has a Jordan block of maximal size or not.

To verify that the limit point $p$ of the curve $\phi(a(t))$, as $t\to T_0$, is regular, i.e., the Jordan block ``survives'',  we will use the fact that the eigenvalue $\rho_1$ of the Jordan block is a smooth function on $M^0$.  Moreover, the vector field $e_1=\gr\, \rho_1$ does not vanish and is an eigenvector of the $\rho_1$-block.  Notice that these conditions hold not only on $M^0$,  but also on a slightly bigger set $\widetilde M^0$   ($M^0\subset \widetilde M^0$) which can be characterised by the property that the multiplicities of eigenvalues are fixed but the algebraic type of $L$ is allowed to change, namely, the Jordan $\rho_1$-block may split into smaller $\rho_1$-blocks.  Notice that the natural splitting into blocks corresponding to the eigenvalues of $g$ makes sense on $\widetilde M^0$,  so we can work with each block separately.    An important additional fact, we are going to use, is that $\phi$, by construction, preserves $\gr\, \rho_1$.     

We use Lemma~\ref{lem:Jreg}  to verify that the parameter $\alpha$ in the matrix of  $L(p)$ does not vanish.   Since both $L$ and $e_1$ are smooth, we have by continuity $\alpha=\lim_{t\to T_0} \alpha(t) $, where $\alpha(t)$ is computed at the point $\phi(a(t))$  (w.r.t. to $e_1=\gr\, \rho$).  But since $\phi$ is an isometry whenever it  is well defined, then the limit can be computed  on $U$. Since all the points of $U$ are regular by construction, we have $\lim_{t\to T_0} \alpha(t)\ne 0$ as required.

Thus the limit point of $\phi (a(t))$, $t\to T_0$, is regular  and the further continuation of $\phi$  is possible which completes the proof. \end{proof}

Now to prove that Jordan blocks with non-constant eigenvalues cannot appear on compact manifolds,  we compute one special eigenvalue of the curvature operator of the metric $g$  given by the formulas 
from Proposition~\ref{prop:canformJ2}.  For this computation we use the following real analogue of Proposition~\ref{prop:eigvalcurv}, which can be proved in a similar way.

\begin{prop} [\cite{BFom70}]
Let $g$ and $L$ be compatible in the projective sense and $\Lambda$ be as in \eqref{eq:main:proj}.  Let $\nabla\Lambda= f (L)$ at some point $p\in M$, where $f(\cdot)$ is a polynomial (or, more generally, an analytic function) and suppose $L(p)$ has a non-trivial Jordan $\rho$-block.  Then one of the eigenvalues of the curvature operator of $g$ at the point $p$ takes the form 
$$
f'(\rho).
$$ 
\end{prop}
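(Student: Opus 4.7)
My plan is to follow the pattern that was sketched for the c-projective analogue and established in \cite{BFom70}: apply the Ricci identity to the endomorphism $L$, feed in the compatibility equation \eqref{eq:main:proj} together with the hypothesis $\nabla\Lambda=f(L)$ at $p$, and extract the eigenvalue $f'(\rho)$ from the resulting algebraic identity via the Jordan structure.

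\textbf{Step 1: the algebraic curvature identity.} Differentiating \eqref{eq:main:proj} once more and antisymmetrising, one obtains for all $X,Y,Z \in T_pM$
\[
[R(X,Y),L]\,Z \;=\; g(X,Z)\,\nabla_Y\Lambda - g(Y,Z)\,\nabla_X\Lambda + g(\nabla_Y\Lambda,Z)\,X - g(\nabla_X\Lambda,Z)\,Y.
\]
Substituting $\nabla\Lambda=f(L)$ and using the $g$-self-adjointness of $f(L)$ yields the pointwise identity
\[
[R(X,Y),L]\,Z \;=\; g(X,Z)\,f(L)Y - g(Y,Z)\,f(L)X + g(Y,f(L)Z)\,X - g(X,f(L)Z)\,Y.
\]

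\textbf{Step 2: exploiting the Jordan block.} Fix a Jordan basis $e_1,\dots,e_k$ for the non-trivial $\rho$-block, adapted to $g$ in its canonical anti-diagonal form $g(e_i,e_j)=\delta_{i+j,k+1}$ (the normal forms recalled in Lemma~\ref{lem:Jreg} for $k=2,3$). Then $Le_1=\rho e_1$ and $Le_j=\rho e_j+e_{j-1}$ for $j\ge 2$, whence $f(L)e_j = f(\rho)e_j + f'(\rho)e_{j-1}+\cdots$. Setting $X=e_k$, $Y=e_1$ in the identity of Step~1 and evaluating on $Z=e_1$ forces $(L-\rho\,\Id)R(e_k,e_1)e_1=0$, i.e.\ $R(e_k,e_1)e_1$ lies in the $\rho$-eigenspace of $L$. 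Evaluating instead on $Z=e_k$ produces, after cancellation of the $f(\rho)$-terms,
\[
[R(e_k,e_1),L]\,e_k \;=\; -2f'(\rho)\,e_1.
\]

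\textbf{Step 3: from the identity to an eigenvalue of $\mathcal{R}$.} Writing $R(e_k,e_1)e_1=\alpha e_1$ and decomposing $R(e_k,e_1)e_k$ in the Jordan basis, the pair-symmetry $R(X,Y,Z,W)=R(Z,W,X,Y)$ of the Riemann tensor combined with its skew-symmetry in $(Z,W)$ gives $g(R(e_k,e_1)e_1,e_k)=-g(R(e_k,e_1)e_k,e_1)$, which together with the equation of Step~2 pins down $\alpha=-f'(\rho)$ and $g(R(e_1,e_k)e_k,e_1)=f'(\rho)$. Hence the skew-adjoint operator $R(e_1,e_k)$ acts on the span of $\{e_1,e_k\}$ as the diagonal $\operatorname{diag}(f'(\rho),-f'(\rho))$, matching precisely the action of the canonical skew-adjoint operator associated to the bivector $e_1\wedge e_k$ (which has non-null length $-1$ in the induced metric on $\Lambda^2 T_pM$). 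Thus $e_1\wedge e_k$ is an eigenvector of the curvature operator $\mathcal{R}\in\operatorname{End}(\Lambda^2 T_pM)$ with eigenvalue $f'(\rho)$, as claimed.

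\textbf{Main obstacle.} The delicate point is Step~3: the identity of Step~1 only controls $R(X,Y)$ modulo the commutant of $L$, so one must use the algebraic symmetries of $R$ together with the degeneracy of $g$ on the Jordan block (which makes ``sectional curvature'' meaningless there) in order to pin down enough components of $R$ to exhibit a genuine eigenbivector of $\mathcal{R}$ rather than merely a partial relation. The choice of the extreme pair $(e_1,e_k)$ of the Jordan chain---whose mutual pairing is non-zero while both have null length---is what makes the algebra close. For a $3\times 3$ block one additionally verifies that the extra off-diagonal contributions from $f(L)e_3=f(\rho)e_3+f'(\rho)e_2+\tfrac{1}{2}f''(\rho)e_1$ either cancel or are absorbed into components transverse to $e_1\wedge e_3$, so that the eigenvalue $f'(\rho)$ persists.
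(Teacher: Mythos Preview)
Your Steps~1--2 are essentially correct and coincide with the starting point in the paper: the Ricci identity combined with \eqref{eq:main:proj} and the hypothesis $\nabla\Lambda=f(L)$ indeed yields the relation $[R(X),L]=[X,f(L)]$ for every $X\in\mathfrak{so}(g)$ (your Step~1 is this identity evaluated on decomposable bivectors). You also correctly identify the main obstacle, namely that this relation only determines $R$ modulo operators taking values in the centraliser $\mathfrak{g}_L=\{X\in\mathfrak{so}(g):[X,L]=0\}$.

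The gap is in Step~3. Knowing that $R(e_1,e_k)$ restricts to $\operatorname{span}\{e_1,e_k\}$ in a specific way does \emph{not} show that $e_1\wedge e_k$ is an eigenvector of the curvature operator $\mathcal R:\Lambda^2T_pM\to\Lambda^2T_pM$. The element $\mathcal R(e_1\wedge e_k)\in\Lambda^2T_pM$ encodes \emph{all} components $R(e_1,e_k,\cdot,\cdot)$, and nothing in your argument controls $R(e_1,e_k,X,Y)$ for $X,Y$ outside the Jordan block (or even for $X,Y$ in the interior of the block when $k\ge3$). You have computed one diagonal matrix entry of $\mathcal R$ in a suitable basis, not an eigenvalue. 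The symmetries of the Riemann tensor do not fill this gap: they relate different components of $R$ but do not force the off-diagonal components $\langle \mathcal R(e_1\wedge e_k),\, e_i\wedge e_j\rangle$ to vanish.

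The paper's route around this obstacle is conceptually different from a direct computation. One observes that the relation $[R(X),L]=[X,f(L)]$ forces $R(\mathfrak{g}_L)\subseteq\mathfrak{g}_L$, so $R$ descends to a well-defined operator $\tilde R$ on the quotient $\mathfrak{so}(g)/\mathfrak{g}_L$, and this quotient operator is \emph{the same} for every solution of the relation. One then exhibits a particular solution $R_0(X)=\tfrac{d}{dt}\big|_{t=0}f(L+tX)$ for which the computation is transparent: $R_0(e_1\wedge e_2)=f'(\rho)\,e_1\wedge e_2$ (your Jordan-chain computation, but now for $R_0$ rather than for $R$). Since $e_1\wedge e_2\notin\mathfrak{g}_L$ (one checks $[e_1\wedge e_2,L]=-e_1\odot e_1\neq0$), the eigenvalue $f'(\rho)$ survives on the quotient, hence is an eigenvalue of $\tilde R$, hence of the true curvature operator $R$.
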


This number can be computed for our metric $g$ (equivalently, for $h$ given by \eqref{eq:LJord2})
A straightforward calculation shows the following:

\begin{prop}
\begin{enumerate}

\item For a $2\times 2$ Jordan $\rho_1$-block, we have
$$
f'(\rho_1)=-\frac{F_1'(\rho_1)}{(F_1(\rho_1)+x)^3\prod_{i\geq 2}(\rho_1-\rho_i)}
+\sum_{i\geq 2}\frac{F_i(\rho_i)}{4(\rho_i-\rho_1)^4\prod_{j\notin \{1,i\}}(\rho_i-\rho_j)}.
$$

\item For a $3\times 3$ Jordan $\rho_1$-block, we have
$$
f'(\rho_1)=-\frac{3}{4(F_1(\rho_1)+2x_2)^2\prod_{i\geq 2}(\rho_1-\rho_i)}
+\sum_{i\geq 2}\frac{F_i(\rho_i)}{4(\rho_i-\rho_1)^5 \prod_{j\notin \{1,i\}}(\rho_i-\rho_j)}.
$$

\end{enumerate}
\end{prop}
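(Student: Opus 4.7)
The proof is a direct, if involved, calculation based on the explicit normal forms in Proposition~\ref{prop:canformJ2}. The strategy has three steps: reduce to a leaf of $\mathcal U$, compute $\Lambda$ explicitly there, and then identify $\nabla\Lambda = f(L)$ and extract $f'(\rho_1)$ by a Hermite-type interpolation argument.

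First, since $\mathcal U$ is totally geodesic (Proposition~\ref{prop:distributions}(1)) and the restriction $(h,L)$ of $(g,L)$ to a leaf $\mathcal L$ is again a compatible pair (Proposition~\ref{prop:distributions}(3)), one can compute both $\Lambda = \tfrac12\gr(\tr L)$ and $\nabla\Lambda$ intrinsically on $\mathcal L$. This reduces the computation to an $(\ell+1)$- or $(\ell+2)$-dimensional model with $h$ and $L$ given explicitly by \eqref{eq:LJord2} combined with \eqref{eq:hLJblock2} or \eqref{eq:hLJblock3}. Writing $\tr L = k\rho_1+\sum_{i\ge 2}\rho_i$ with $k\in\{2,3\}$ the Jordan block size, one has $\Lambda = \tfrac{k}{2}\gr\rho_1+\tfrac12\sum_{i\ge 2}\gr\rho_i$. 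For $i\ge 2$ the block structure gives $\gr\rho_i=(F_i/\Delta_i)\,\partial_{\rho_i}$. For $\gr\rho_1$ one inverts the Jordan block $h_1\cdot\Delta_1(L_1)$; using $N^k=0$ with $N=L_1-\rho_1\Id$, a short matrix calculation produces $\gr\rho_1 = \frac{1}{(F_1+x)\Delta_1}\partial_x$ in the $2\times 2$ case and an analogous expression built on $\partial_{x_2}$ in the $3\times 3$ case.

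The heart of the proof is the identification of $\nabla\Lambda$ with $f(L)$. By a standard fact in projective geometry (Remark~\ref{rem:commute}), $\nabla\Lambda$ commutes with $L$, so it equals $f(L)$ modulo the minimal polynomial of $L$. By Hermite interpolation, such $f$ is determined by the scalars $f(\rho_i)$ for $i\ge 2$ together with the truncated Taylor expansion of $f$ at $\rho_1$ of order $k-1$. The values $f(\rho_i)$ for $i\ge 2$ are read off from $\nabla_{\gr\rho_i}\Lambda=f(\rho_i)\gr\rho_i$, exactly as in Lemma~\ref{lem:polynomialsB}. The coefficient $f'(\rho_1)$ is obtained by evaluating $\nabla\Lambda$ on the cyclic generator of the Jordan block ($\partial_{\rho_1}$ for $k=2$, or the appropriate next-to-top generator for $k=3$) and reading off the component along the $\rho_1$-eigenvector $\gr\rho_1$. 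The ODEs \eqref{eq:ODE2}/\eqref{eq:ODE3} satisfied by $F_1$ and the universal identity \eqref{eq:covderiv} (specialised to $X=\Lambda$, which is itself a $\rho_1$-eigenvector of $L$) enter to simplify the resulting expression.

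The main obstacle will be the bookkeeping of cross-terms: the Jordan coordinates do not diagonalise $h$, so Christoffel symbols mix $\partial_x$ (or $\partial_{x_1},\partial_{x_2}$) with the $\partial_{\rho_i}$ directions for $i\ge 2$, and derivatives such as $\partial_{\rho_1}(F_i/\Delta_i)$ produce poles in $(\rho_i-\rho_1)$. Reorganising these contributions using $\partial_{\rho_1}\Delta_i = -k\Delta_i/(\rho_i-\rho_1)+\dots$ together with standard Vandermonde identities should yield precisely the two terms of the claimed formulas: the first, containing $F_1'(\rho_1)$ for $k=2$ or the universal constant $3/4$ for $k=3$, collects the intrinsic Jordan-block contribution (and acquires the denominator $(F_1+x)^3$ or $(F_1+2x_2)^2$ from the block inversions), while the sum over $i\ge 2$, with powers $(\rho_i-\rho_1)^{k+2}$, gathers all the cross contributions from the off-diagonal structure of $h$ in the Jordan coordinates.
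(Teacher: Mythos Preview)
Your overall strategy is correct and is precisely what the paper means by ``a straightforward calculation'': restrict to a totally geodesic leaf $\mathcal L$ of $\mathcal U$, use the explicit normal forms \eqref{eq:LJord2} together with \eqref{eq:hLJblock2} or \eqref{eq:hLJblock3}, compute $\nabla\Lambda$ there, and read off $f'(\rho_1)$ from the $N$-component of $\nabla\Lambda|_{\text{Jordan block}}$ via $f(L_1)=f(\rho_1)\Id+f'(\rho_1)N+\dots$ Your computation of $\gr\rho_1$ in the $2\times2$ case is correct.

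There are, however, three slips worth fixing before you carry out the details:
\begin{itemize}
\item $\Lambda$ is \emph{not} a $\rho_1$-eigenvector of $L$ when $\ell\ge 2$: it equals $\tfrac{k}{2}\gr\rho_1+\tfrac12\sum_{i\ge 2}\gr\rho_i$, a sum of eigenvectors for distinct eigenvalues. What you want in the real-projective analogue of \eqref{eq:covderiv} is $X=\gr\rho_1$, not $X=\Lambda$. In practice it is simpler to bypass \eqref{eq:covderiv} entirely and compute the relevant Christoffel symbols directly.
\item In the $3\times3$ case the eigenvector is $\partial_{x_1}$, not $\partial_{x_2}$; hence $\gr\rho_1$ is proportional to $\partial_{x_1}$. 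To extract $f'(\rho_1)$ you evaluate $\nabla_{\partial_{x_2}}\Lambda$ (since $N\partial_{x_2}=\partial_{x_1}$ and $N^2\partial_{x_2}=0$) and read off its $\partial_{x_1}$-component.
\item The ODEs \eqref{eq:ODE2}/\eqref{eq:ODE3} are not needed: the stated formulas hold for any compatible pair of this algebraic type. The $2\times2$ formula still contains $F_1'$, and the constant $\tfrac34$ in the $3\times3$ case arises from the structural derivatives $\partial_{x_2}(F_1+2x_2)=2$ in the metric entries, not from the projective-vector-field ODE.
\end{itemize}
With these corrections the calculation goes through exactly as you outline.
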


These formulas immediately imply that the quantity $f'(\rho_1)$ (which is some special eigenvalue of the curvature operator of $g$) is unbounded on $M^0$. Indeed,  $x$ and $x_2$ may vary independently of the other coordinates and, in particular, we may fix the values of all $\rho_i$'s  and then vary $x$ (resp.  $x_2$) so that 
$F_1(\rho_1)+x$  (resp. $F_1(\rho_1)+2x_2$) tends to $0$  and therefore $f'(\rho_1) \to \infty$,  which is impossible due to compactness of $M$.  Thus,   Jordan blocks with non-constant eigenvalues may not occur in our situation and this conclusion completes the proof of Theorem~\ref{thm:lichnerowicz}.

\appendix
\section{Eigenvalues of the curvature operator}
\label{app}

In what follows, we consider a real vector space $V$ with a complex structure $J$ and an inner product $g$ 
(not necessarily positive definite) such that $g(Ju,Jv)=g(u,v)$. Such a triple $(V,g,J)$ will be 
referred to as a \emph{pseudo-hermitian vector space}. We use the symbol 
$$
\mathfrak{u}(g,J)=\{X\in \mathfrak{gl}(V):[X,J]=0\mbox{ and }g(Xu,v)=-g(u,Xv)\}
$$
to denote the space (Lie algebra) of skew-hermitian endomorphisms on $V$.

Let us first reformulate the integrability condition for equation \eqref{eq:main} in a way adapted to the Lie theory. 
Recall that  the Riemann curvature operator (at a point $x\in M$) can be understood as a map $R:T_x M\otimes T_x M \to \mathfrak{so} (g)$, $R(u,v) =  \nabla_u\nabla_v -\nabla_v\nabla_u -\nabla_{[u,v]}$.  Taking into account the fact that we are dealing with a K\"ahler manifold and using the symmetries of the curvature tensor of a K\"ahler metric, we can also think of $R$ as an operator defined on the unitary Lie algebra (we still use the same notation)
$$
R: \mathfrak{u}(g,J) \to  \mathfrak{u}(g,J)
$$
by setting $R(u,v) = \frac{1}{4} R(u \wedge_J v)$, where
\begin{align}
\label{eq:basic0}
u\wedge_J v = u^\flat\otimes v- v^\flat\otimes u+(Ju)^\flat\otimes Jv-(Jv)^\flat\otimes Ju \in \mathfrak{u}(g,J)
\end{align}
and $u^\flat=g(u,\cdot)$ denotes the metric dual of $u$.

\begin{lem}
\label{lem:RicciId}
Let $(M,g,J)$ be a  K\"ahler manifold of arbitrary signature, $A\in \mathcal{A}(g,J)$ be a hermitian solution of \eqref{eq:main} and $\Lambda=\frac{1}{4}\gr(\tr A)$. 
Then the curvature operator 
$R:\mathfrak{u}(g,J)\rightarrow \mathfrak{u}(g,J)$ satisfies the relation
\begin{align}
[R(X),A]=4[X,\nabla\Lambda]\mbox{ for all }X\in \mathfrak{u}(g,J).\label{eq:RicciId}
\end{align}
\end{lem}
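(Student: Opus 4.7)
The plan is to establish \eqref{eq:RicciId} by applying the standard Ricci identity
$$\nabla_Y \nabla_Z A - \nabla_Z \nabla_Y A - \nabla_{[Y,Z]} A = [R(Y,Z), A]$$
to the hermitian tensor $A$, and then recognising the resulting expression as $[Y \wedge_J Z, \nabla\Lambda]$. Since $\mathfrak{u}(g,J) = \mathrm{span}\{Y \wedge_J Z : Y, Z \in T_x M\}$ at each point, and since the convention relating the two uses of the symbol $R$ reads $R(Y \wedge_J Z) = 4\, R(Y,Z)$, this reduction suffices to prove the lemma for arbitrary $X \in \mathfrak{u}(g,J)$; the factor $4$ on the right-hand side of \eqref{eq:RicciId} will arise precisely from this rescaling convention.

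First I would abbreviate the right-hand side of \eqref{eq:main} as a bilinear expression
$$F(Z,\Lambda) = Z^\flat \otimes \Lambda + \Lambda^\flat \otimes Z + (JZ)^\flat \otimes J\Lambda + (J\Lambda)^\flat \otimes JZ,$$
which is visibly symmetric in its two slots. Because $\nabla J = 0$, the Leibniz rule gives $\nabla_Y(\nabla_Z A) = F(\nabla_Y Z, \Lambda) + F(Z, \nabla_Y \Lambda)$, and the contributions involving $\nabla_Y Z$ and $\nabla_Z Y$ cancel against $\nabla_{[Y,Z]} A = F(\nabla_Y Z - \nabla_Z Y, \Lambda)$. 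Inserting these into the Ricci identity produces the compact reformulation
$$[R(Y,Z), A] = F(Z, \nabla_Y \Lambda) - F(Y, \nabla_Z \Lambda),$$
after which the remaining step is pure linear algebra.

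The task is then to expand both $F(Z, \nabla_Y\Lambda) - F(Y, \nabla_Z\Lambda)$ and $[Y \wedge_J Z, \nabla \Lambda]$ into their eight constituent rank-one tensors and verify a term-by-term match. The crucial input is that the endomorphism $\nabla\Lambda$ is hermitian, that is, both $g$-symmetric and $J$-commuting. Symmetry is immediate because $\Lambda = \tfrac{1}{4}\gr(\tr A)$ is a gradient, while $J$-invariance follows from the observation that $\nabla_Y A$ is already $J$-linear (a direct check from the explicit form of the right-hand side of \eqref{eq:main}); consequently $\tr A$ is a Killing potential in the sense of Lemma~\ref{lem:killing} and its Hessian commutes with $J$. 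Using these two properties one rewrites $(Y \wedge_J Z)\circ\nabla\Lambda$ and $\nabla\Lambda\circ(Y \wedge_J Z)$ in the basis built from $Y^\flat, Z^\flat, (JY)^\flat, (JZ)^\flat$ and their pairings against $\nabla_Y\Lambda, \nabla_Z\Lambda, J\nabla_Y\Lambda, J\nabla_Z\Lambda$, and the resulting eight-term expression coincides exactly with that obtained from the $F$-side.

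The main obstacle I anticipate is not conceptual but combinatorial: ensuring that the eight symmetric-product terms from the $F$-side align with the eight commutator terms from the wedge side with the correct signs. The hermitian character of $\nabla\Lambda$ is precisely what secures this alignment; without $J$-commutativity the $\flat$-contractions involving $\nabla_Y\Lambda$ and $J\nabla_Y\Lambda$ would transpose in the wrong way and the identification would fail. Once the decomposable case $X = Y \wedge_J Z$ is settled, linearity together with the spanning property mentioned above delivers \eqref{eq:RicciId} for every $X \in \mathfrak{u}(g,J)$.
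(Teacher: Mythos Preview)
Your proposal is correct and follows essentially the same route as the paper: apply the Ricci identity to $A$, use \eqref{eq:main} to compute the second covariant derivative, invoke the hermitian character of $\nabla\Lambda$ to recognise the result as $[u\wedge_J v,\nabla\Lambda]$, and then extend by linearity from decomposable generators to all of $\mathfrak{u}(g,J)$. The paper's proof is somewhat terser (it does not introduce your auxiliary notation $F$ and simply asserts that $\nabla\Lambda$ is hermitian without justification at that point), but the strategy and the key computation are the same.
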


\begin{proof}
The Ricci identity applied to an arbitrary field of endomorphisms $A$ reads
$$
\nabla_u\nabla_v A-\nabla_v\nabla_u A-\nabla_{[u,v]}A=[R(u,v),A]
$$
for any vector fields $u,v$. Let now $A\in \mathcal{A}(g,J)$ be a hermitian solution of \eqref{eq:main}. 
Since $\nabla\Lambda$ is $g$-selfadjoint and $J$-linear, i.e., hermitian too,  we have
$$
\nabla_u\nabla_v A-\nabla_v\nabla_u A-\nabla_{[u,v]}A
$$
$$
=v^\flat\otimes \nabla_u\Lambda+(\nabla_u\Lambda)^\flat\otimes v-u^\flat\otimes \nabla_v\Lambda-(\nabla_v\Lambda)^\flat\otimes u
$$
$$
+(Jv)^\flat\otimes \nabla_{Ju}\Lambda+(\nabla_{Ju}\Lambda)^\flat\otimes Jv-(Ju)^\flat\otimes \nabla_{Jv}\Lambda-(\nabla_{Jv}\Lambda)^\flat\otimes Ju
$$
$$
=[X,\nabla\Lambda],
$$
where  $X=u\wedge_J v $. This proves formula \eqref{eq:RicciId} for elements $X\in\mathfrak{u}(g,J)$ of the form  $u\wedge_J v$
and the claim follows from the fact that all skew-hermitian endomorphisms are sums of such elements.
\end{proof}

\begin{rem}
\label{rem:commute}
If formula \eqref{eq:RicciId} holds for an operator $R:\mathfrak{u}(g,J)\rightarrow \mathfrak{u}(g,J)$ 
and hermitian endomorphisms $A$ and $\nabla \Lambda$ then, in fact, $\nabla \Lambda$ can be presented in the form $\nabla \Lambda = p(A)$ for some polynomial $p(\cdot)$  with real coefficients.   To show this,  take an arbitrary  $J$-complex matrix $Y$  and consider the following algebraic relations:
$$
\tr \bigl( X \cdot [\nabla \Lambda, Y] \bigr) = \tr \bigl( Y \cdot [X, \nabla \Lambda]  \bigr) = \frac{1}{4}\tr \bigl( Y \cdot [R(X), A]  \bigr) = \frac{1}{4} \tr \bigl( R(X)\cdot [A,Y]  \bigr),
$$
 where $X\in \mathfrak u(g,J)$ and  $\tr$ denotes the complex trace. Since $\mathfrak u(g,J)$ spans $\mathfrak {gl} (T_xM,J)$ in the complex sense, we conclude that $[\nabla \Lambda, Y]=0$ for any $Y$ commuting with $A$. It is a well-known algebraic fact that in this case $\nabla \Lambda$ can be written as a polynomial of $A$. Moreover, as both $A$ and $\nabla \Lambda$ are hermitian,  this polynomial must be real, i.e. with real coefficients.
\end{rem}

Proposition~\ref{prop:eigvalcurv} below together with formula \eqref{eq:RicciId} allows us to calculate 
eigenvalues of the curvature operator in terms of the eigenvalues of $A$ and $\nabla\Lambda$. 
For the main concepts of the proof of this proposition and for the relation to sectional operators 
in the theory of integrable systems compare also with \cite[\S 3]{Fubini} and \cite{BFom70, BolHol}.

\begin{prop}
\label{prop:eigvalcurv}
Let $(V,g,J)$ be a pseudo-hermitian vector space and let $A:V\rightarrow V$ be a hermitian 
 endomorphism. Suppose an operator $R:\mathfrak{u}(g,J)\rightarrow \mathfrak{u}(g,J)$ satisfies 
\begin{align}
[R(X),A]=[X,B]\mbox{ for all }X\in \mathfrak{u}(g,J),\label{eq:LinAlg}
\end{align}
where $B=p(A)$ and $p(\cdot )$ is a polynomial with real coefficients. Then we have the following:

\begin{enumerate}

\item For all real eigenvalues $\lambda_i\neq \lambda_j$ of $A$, 
$$ 
\frac{p(\lambda_i)-p(\lambda_j)}{\lambda_i-\lambda_j}
$$
is an eigenvalue of $R$. 

\item If $A$ has a non-trivial $\lambda_i$-Jordan block, $\lambda_i\in \R$, then $p'(\lambda_i)$ is an eigenvalue of $R$.

\end{enumerate}
\end{prop}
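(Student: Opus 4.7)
The plan is to exhibit, for each asserted value $\mu$, a concrete element $X \in \mathfrak{u}(g,J)$ which descends to an eigenvector of an induced quotient operator. The essential first step is an $R$-invariance lemma. Let $Z(A) = \{Y \in \mathrm{End}(V) : [Y,A]=0\}$ denote the centralizer of $A$ in $\mathrm{End}(V)$. For any $Z \in Z(A) \cap \mathfrak{u}(g,J)$, $Z$ automatically commutes with $B = p(A)$, and so \eqref{eq:LinAlg} gives $[R(Z),A] = [Z,B] = 0$; combined with the hypothesis $R(Z) \in \mathfrak{u}(g,J)$ this shows that $Z(A) \cap \mathfrak{u}(g,J)$ is $R$-invariant. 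Consequently $R$ descends to a quotient operator $\bar R$ on $W = \mathfrak{u}(g,J)/(Z(A) \cap \mathfrak{u}(g,J))$, and every eigenvalue of $\bar R$ is automatically an eigenvalue of $R$. Together with this I record a simple sufficient criterion: \emph{if $X \in \mathfrak{u}(g,J) \setminus Z(A)$ satisfies $[X,B] = \mu\,[X,A]$, then $\mu$ is an eigenvalue of $R$}, since then $[R(X)-\mu X,A]=0$, i.e.\ $\bar R \bar X = \mu \bar X$, and $\bar X \neq 0$ because $[X,A]\neq 0$.

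It remains to exhibit such an $X$ in each case. For part (1), I pick real eigenvectors $u \in \ker(A-\lambda_i\,\mathrm{Id})$, $v \in \ker(A-\lambda_j\,\mathrm{Id})$ and set $X = u\wedge_J v$. Since $A$ commutes with $J$, the vectors $Ju, Jv$ are eigenvectors with the same eigenvalues. A direct computation, expanding $XA$ and $AX$ using \eqref{eq:basic0} and the hermiticity of $A$, yields
\begin{equation*}
[X,A] = (\lambda_i-\lambda_j)\,S, \qquad [X,B] = (p(\lambda_i)-p(\lambda_j))\,S,
\end{equation*}
where $S = u^\flat\otimes v + v^\flat\otimes u + (Ju)^\flat\otimes Jv + (Jv)^\flat\otimes Ju$ is a nonzero hermitian tensor (nonzero because $u, Ju, v, Jv$ are linearly independent, lying in distinct eigenspaces of $A$, and $g$ is nondegenerate). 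Hence $[X,B] = \mu\,[X,A]$ with $\mu = (p(\lambda_i)-p(\lambda_j))/(\lambda_i-\lambda_j)$, and $X\notin Z(A)$. The criterion above then gives the claim.

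For part (2), I pick $u,w \in V$ forming a Jordan pair at $\lambda_i$, i.e.\ $Au = \lambda_i u$ and $(A-\lambda_i\,\mathrm{Id})w = u$; such vectors exist by the hypothesis of a non-trivial Jordan block, and $Ju, Jw$ form an analogous pair. Setting $X = u\wedge_J w$, the same type of expansion yields
\begin{equation*}
[X,A] = -2\,(u^\flat\otimes u + (Ju)^\flat\otimes Ju).
\end{equation*}
For $B = p(A)$, the Taylor expansion on the Jordan block gives $p(A)u = p(\lambda_i)u$ and $p(A)w = p(\lambda_i)w + p'(\lambda_i)u$, so the identical calculation with $A$ replaced by $B$ simply substitutes $\lambda_i \mapsto p(\lambda_i)$ and $1 \mapsto p'(\lambda_i)$ in the off-diagonal action, producing
\begin{equation*}
[X,B] = -2\,p'(\lambda_i)\,(u^\flat\otimes u + (Ju)^\flat\otimes Ju) = p'(\lambda_i)\,[X,A].
\end{equation*}
Since $g$ is nondegenerate and $u \neq 0$, the linear form $g(u,\cdot)$ is nonzero so $u^\flat\otimes u \neq 0$; hence $[X,A]\neq 0$ and $X\notin Z(A)$, and the criterion yields $\mu = p'(\lambda_i)$ as an eigenvalue of $R$.

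The only conceptual subtlety is that the natural test element $X$ is generally \emph{not} an honest eigenvector of $R$—the relation \eqref{eq:LinAlg} only determines $R(X)$ modulo $Z(A)\cap\mathfrak{u}(g,J)$—so an ``extra'' piece landing in the centralizer is unavoidable and one cannot hope to compute $R(X)$ exactly without further structural information. The reduction to the quotient $\bar R$ in the first paragraph is precisely what bypasses this obstacle and makes the elementary computations decisive.
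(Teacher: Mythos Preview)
Your proof is correct and shares the paper's core structural idea---passing to the quotient $\mathfrak{u}(g,J)/\mathfrak{g}_A$ by the centralizer $\mathfrak{g}_A=Z(A)\cap\mathfrak{u}(g,J)$---but you reach the conclusion by a genuinely shorter route. The paper constructs an auxiliary explicit solution $R_0(X)=\frac{\d}{\d t}p(A+tX)\big|_{t=0}$ of \eqref{eq:LinAlg}, computes $R_0(u\wedge_J v)$ term by term via the expansion $R_0(u\wedge_J v)=\sum_k a_k\sum_{p+q=k-1}A^p u\wedge_J A^q v$, observes that all solutions of \eqref{eq:LinAlg} induce the same quotient operator, and then checks $u\wedge_J v\notin\mathfrak{g}_A$. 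Your criterion ``$[X,B]=\mu[X,A]$ and $[X,A]\neq 0$ implies $\bar R\bar X=\mu\bar X$'' bypasses $R_0$ entirely: it follows in one line from \eqref{eq:LinAlg} and makes the role of the quotient transparent. The subsequent computations of $[u\wedge_J v,A]$ and $[u\wedge_J w,A]$ are essentially the same as the paper's verification that these elements lie outside $\mathfrak{g}_A$ (the paper's formula $[u\wedge_J v,A]=Au\odot_J v-u\odot_J Av$ is exactly your $S$-computation), so nothing is lost. What the paper's approach buys is a concrete canonical representative $R_0$ which may be useful elsewhere; what yours buys is economy and a cleaner logical structure.
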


\begin{rem}
\label{rem:eigvalcurv}  
The first item of Proposition \ref{prop:eigvalcurv} can be understood in a slightly different way. Notice that  $B=p(A)$ implies that each eigenvector of $A$  with an eigenvalue $\lambda_i$ is, at the same time, an eigenvector of $B$ with the eigenvalue $m_i= p(\lambda_i)$. Hence the formula for the eigenvalue of $R$ from item (1) can be rewritten as $\frac{m_i - m_j}{\lambda_i - \lambda_j}$  so that we do not actually need to find $p(\cdot)$ explicitly; it is sufficient to know the eigenvalues  $m_i$ of $B$ corresponding to $\lambda_i$.

The second item of Proposition \ref{prop:eigvalcurv} can also be modified by using the following simple fact from Linear Algebra.   Let $\lambda$ be an eigenvalue of an endomorphism $A$ having a non-trivial     
$\lambda$-Jordan block. Let $p(\cdot)$ and $q(\cdot)$ be two polynomials (or even more generally,  analytic functions) such that $p(A)=q(A)$, then
$p(\lambda)=q(\lambda)$ and $p'(\lambda)=q'(\lambda)$.  It follows from this statement that the polynomial $p$  in the second item of Proposition \ref{prop:eigvalcurv} can be replaced by any other function $q$ satisfying  $p(A|_{V_{\lambda_i}})= q(A|_{V_{\lambda_i}})$, where $V_{\lambda_i}$ denotes the generalised $\lambda_i$-eigenspace of $A$.
\end{rem}

\begin{proof}[Proof of Proposition~\ref{prop:eigvalcurv}]
We start with some general considerations regarding formula \eqref{eq:LinAlg}. We view this formula as an 
equation on $R$ for fixed $A$ and $B=p(A)$. Suppose $R_1,R_2:\mathfrak{u}(g,J)\rightarrow \mathfrak{u}(g,J)$ 
are two solutions of \eqref{eq:LinAlg}. Then, 
$$
[R_1(X)-R_2(X),A]=0\mbox{ for all }X\in \mathfrak{u}(g,J),
$$
that is, $R_1-R_2$ takes values in the Lie algebra
$$
\mathfrak{g}_A=\{X\in \mathfrak{u}(g,J):[X,A]=0\},
$$
the centraliser of $A$ in $\mathfrak{u}(g,J)$. Thus, any solution $R$ of \eqref{eq:LinAlg} is unique up to 
adding an operator $ \mathfrak{u}(g,J)\rightarrow \mathfrak{g}_A$. Moreover, an operator $R$ satisfying 
\eqref{eq:LinAlg} preserves the centraliser $\mathfrak{g}_A$. Indeed, since $B=p(A)$ is a polynomial in $A$, 
we have $[X,B]=0$ for all $X\in  \mathfrak{g}_A$. Then \eqref{eq:LinAlg} implies $[R(X),A]=0$ for 
$X\in  \mathfrak{g}_A,$ showing 
$$
R(\mathfrak{g}_A)\subseteq \mathfrak{g}_A
$$
as we claimed. For any solution $R$ of \eqref{eq:LinAlg}, we may therefore consider the induced operator
$$
\tilde R:\mathfrak{u}(g,J)/\mathfrak{g}_A\rightarrow \mathfrak{u}(g,J)/\mathfrak{g}_A
$$
on the quotient space. It is a general fact that eigenvalues of the quotient operator $\tilde R$ 
are eigenvalues of the original operator $R$. On the other hand, we have just seen  that the quotient 
map $\tilde R$ is the same for all solutions $R$ of \eqref{eq:LinAlg}. We will use these facts by working 
with the quotient map $\tilde R_0$ coming from a special solution $R_0$ of \eqref{eq:LinAlg} defined by 
$$
R_0=\frac{\d}{\d t}p(A+tX)|_{t=0}.
$$
This is indeed a solution of \eqref{eq:LinAlg} as follows immediately from differentiating the identity 
$[p(A+tX),A+tX]=0$ at $t=0$. By definition, if $p(t)=\sum_{k=0}^m a_k t^k$, then
$$
R_0(X)=\sum_{k=1}^m a_k\sum_{p+q=k-1}A^p X A^q.
$$ 

Hence for a generating element $u\wedge_J v$, we obtain
\begin{align}
R_0(u\wedge_J v)=\sum_{k=1}^m a_k\sum_{p+q=k-1} A^p u\wedge_J A^q v. \label{eq:R0ongenerators}
\end{align}
We are now in the position to prove Proposition~\ref{prop:eigvalcurv}. First, we show that $R_0$ 
has eigenvalues as given in part $(1)$ and $(2)$ of the proposition:

$(1)$ Suppose $u$ and $v$ are eigenvectors of $A$ for real eigenvalues $\lambda_i$ and $\lambda_j$ respectively, $\lambda_i\neq \lambda_j$. 
Then \eqref{eq:R0ongenerators} becomes equal to
$$
R_0(u\wedge_J v)=\left(\sum_{k=1}^m a_k\sum_{r=0}^{k-1}\lambda_i^r\lambda_j^{k-1-r} \right)u\wedge_J  v
=\left(\sum_{k=1}^m a_k\frac{\lambda_i^k-\lambda_j^k}{\lambda_i-\lambda_j}\right)
u\wedge_J  v=\frac{p(\lambda_i)-p(\lambda_j)}{\lambda_i-\lambda_j}u\wedge_J  v.
$$
Hence, 
$$
\frac{p(\lambda_i)-p(\lambda_j)}{\lambda_i-\lambda_j}
$$
is an eigenvalue of $R_0$ with eigenvector $u\wedge_J  v$.

$(2)$ Let us first argue, that without loss of generality we can suppose that a fixed real eigenvalue $\lambda_i$ of $A$ 
is equal to zero. Indeed, using $\tilde A=A-\lambda_i{\cdot}\Id$ instead of $A$ in \eqref{eq:LinAlg}, the equation \eqref{eq:LinAlg} 
holds for $R_0$, $\tilde A$ and the same $B=\tilde p(\tilde A)$ for another polynomial 
$\tilde p(t)=\sum_{k=0}^{m}\tilde a_k t^k$ that is equal to $p(t+\lambda_i)$. Since $\tilde p'(0)=p'(\lambda_i)$, 
we may assume that the fixed eigenvalue $\lambda_i$ under consideration is equal to zero. Suppose $A$ 
has a non-trivial $\lambda_i$-Jordan block and denote by $V_{\lambda_i}$ the generalised $\lambda_i$-eigenspace. 
Let $u\in V_{\lambda_i}$ be an eigenvector of $A$, i.e. $Au=0$, and $v\in V_{\lambda_i}$ satisfy $A v=u$. 
Then \eqref{eq:R0ongenerators} becomes
$$
R_0(u\wedge_J v)=\sum_{k=1}^m  a_k(u\wedge_J A^{k-1} v)= a_1 u\wedge_J  v
$$
Thus, $a_1$ is an eigenvalue of $R_0$ with eigenvector $u\wedge_J  v$. Since $ a_1= p'(0)$, 
the eigenvalue is as in part $(2)$ of Proposition~\ref{prop:eigvalcurv}.

To summarize, we have shown that $R_0$ has eigenvalues as given in  part $(1)$ and $(2)$ of the proposition. 
It remains to show that these eigenvalues are also eigenvalues for the quotient map $\tilde R_0$. 
Since for an arbitrary operator $R$ solving \eqref{eq:LinAlg} we have $\tilde R=\tilde R_0$, 
we then obtain that $\tilde R$ and hence $R$, has eigenvalues as in part $(1)$ and $(2)$ of the proposition.

It is straightforward to show that for any operator $\varphi:V\rightarrow V$  with a $\varphi$-invariant 
subspace $U\subseteq V$, an eigenvalue $\lambda$ of $\varphi$ is also an eigenvalue of the quotient map
$$
\tilde\varphi:V/U\rightarrow V/U
$$
if and only if the generalised $\lambda$-eigenspace of $\varphi$ is not contained in $U$ 
(although, it may have a non-trivial intersection with $U$). 

To complete the proof of Proposition~\ref{prop:eigvalcurv}, it therefore suffices to show the following statements, 
each of which proves one of the parts of the proposition:

$(1)$ $u\wedge_J v\notin \mathfrak{g}_A$ for eigenvectors $u$ and $v$ of $A$ 
corresponding to real eigenvalues $\lambda_i$ and $\lambda_j$ respectively, $\lambda_i\neq \lambda_j$.

$(2)$ $u\wedge_J v\notin \mathfrak{g}_A$ for an eigenvector $u$ of $A$ corresponding to a 
real eigenvalue $\lambda_i$ and a vector $v\in V_{\lambda_i}$ such that $Av=u+\lambda_i v$.

Introducing the notation
$$
u\odot_J v=u^\flat\otimes v+v^\flat\otimes u+(Ju)^\flat\otimes Jv+(Jv)^\flat\otimes Ju,
$$
we have $[u\wedge_J v,A]=Au\odot_J v-u\odot_J Av$. Thus, for case $(1)$ we obtain
$$
[u\wedge_J v,A]=(\lambda_i-\lambda_j) u\odot_J v
$$
which is non-zero, hence, $u\wedge_J v\notin \mathfrak{g}_A$. For case $(2)$ we obtain
$$
[u\wedge_J v,A]=\lambda_i u\odot_J v-u\odot_J (u+\lambda_i v)=-u\odot_J u
$$
which is non-zero, hence, $u\wedge_J v\notin \mathfrak{g}_A$. This finishes the proof of the proposition.
\end{proof}

\nocite{*}

\end{document}